\def\acts{\mathrel{\reflectbox{$\righttoleftarrow$}}}
\theoremstyle{plain}
\newtheorem{theorem}{Theorem}[section]
\newtheorem{lemma}[theorem]{Lemma}
\newtheorem{proposition}[theorem]{Proposition}
\newtheorem{corollary}[theorem]{Corollary}
\theoremstyle{definition}
\newtheorem{definition}[theorem]{Definition}
\newtheorem{example}[theorem]{Example}
\newtheorem{notation}[theorem]{Notation}
\newtheorem{remark}[theorem]{Remark}
\newtheorem{conventions}[theorem]{Conventions}
\DeclareMathOperator {\Hom}{Hom}
\DeclareMathOperator {\Map}{Map}
\DeclareMathOperator {\coker}{coker}
\DeclareMathOperator {\Aut}{Aut}
\DeclareMathOperator {\Tor}{Tor}
\def \G {\bbT \ltimes \widetilde{LG}}
\def \Z {\mathbb{Z}}
\def \C {\mathbb{C}}
\def \Q {\mathbb{Q}}
\def \D {\mathbb{D}}
\def \R {\mathbb{R}}
\def \bT {\mathbb{T}}
\def \bH {\mathbb{H}}
\def \bD {\mathcal{D}}
\def \< {\langle}
\def \> {\rangle}
\def \Ell {\mathcal{E}\ell\ell}
\def \S {\mathcal{S}}
\def \O {\mathcal{O}}
\def \N {\mathbb{N}}
\def \E {\mathcal{E}}
\def \F {\mathcal{F}}
\def \U {\mathcal{U}}
\def \L {\mathcal{L}}
\def \H {\mathcal{H}}
\def \G {\mathcal{G}}
\def \K {\mathcal{K}}
\def \X {\mathcal{X}}
\def \I  {\mathcal{I}}
\def \to {\rightarrow}
\def \t {\mathfrak{t}}
\def \H {\mathcal{H}}
\def \SL {\mathrm{SL}}
\def \GL {\mathrm{GL}}
\def \pt {\mathrm{pt}}
\def \id {\mathrm{id}}
\begin{document}
\frontmatter
\thispagestyle{empty}
\begin{center}
\begin{minipage}{0.75\linewidth}
    \centering
 \vspace{4cm}
    {\uppercase{\Large Comparison theorems for torus-equivariant elliptic cohomology theories\par}}
    \vspace{2cm}
    {\Large Matthew James Spong\par}
    {\small ORCID iD: https://orcid.org/0000-0001-6339-1211\par}
    \vspace{2cm}
    {\Large A thesis submitted in total fulfilment of the requirements for the degree of Doctor of Philosophy\par}
    \vspace{2cm}
    {\Large February, 2019\par}
    {\Large School of Mathematics and Statistics\par}
    {\Large The University of Melbourne}
\end{minipage}
\end{center}

\chapter*{Abstract}
\chaptermark{}
In 1994, Grojnowski gave a construction of an equivariant elliptic cohomology theory associated to an elliptic curve over the complex numbers. Grojnowski's construction has seen numerous applications in algebraic topology and geometric representation theory, however the construction is somewhat ad hoc and there has been significant interest in the question of its geometric interpretation.

We show that there are two global models for Grojnowski's theory, which shed light on its geometric meaning. The first model is constructed as the Borel-equivariant cohomology of a double free loop space, and is a holomorphic version of a construction of Rezk from 2016. The second model is constructed as the loop group-equivariant K-theory of a free loop space, and is a slight modification of a construction given in 2014 by Kitchloo, motivated by ideas in conformal field theory. We investigate the properties of each model and establish their precise relationship to Grojnowski's theory.

\chapter*{Declaration}
\chaptermark{}
The author declares that: \par
(i) this thesis comprises only the original work of the author towards the degree of Doctor of Philosophy; \par
(ii) due acknowledgement has been made in the text to all other material used; and \par
(iii) this thesis is fewer than 100,000 words in length, exclusive of tables, maps, bibliographies and appendices. \par

Signature of the author: 

\clearpage

\begin{center}
    \vspace*{\fill}
    \textit{To Mum and Dad}
    \vspace*{\fill}
\end{center}

\chapter*{Acknowledgements}
\chaptermark{}
I thank my supervisor, Nora Ganter, for all of her of guidance and encouragement, and for sharing with me her numerous insights. I thank the members of my supervisory panel, which included Marcy Robertson, Arun Ram and Yaping Yang, for their advice and support. In particular, I would like to thank Marcy for meeting with me often. I owe a very large debt to Nitu Kitchloo and Charles Rezk, as this thesis is based almost entirely on their ideas, and I thank them for explaining their work to me. I am especially grateful to Nitu for his generosity during my stay in San Diego. I received much inspiration from the work of Ioanid Rosu, who certainly deserves my thanks. I also thank Gufang Zhao, Zhen Huan, Christian Haesemeyer, Matthew Ando and Paul Zinn-Justin for useful conversations. Finally, I would like to express my deep gratitude to my partner, to my parents, and to the rest of my family for their patience and loving support. \par

This research was supported by an Australian Government Research Training Program (RTP) Scholarship.

\tableofcontents
\mainmatter

\chapter{Introduction} 

Ever since the desire to realise the Witten genus as a map of cohomology theories, there have been two major driving forces in elliptic cohomology. On one hand, there have been purely formal constructions, such as the first definition of elliptic cohomology in \cite{LRS}, and more recently \cite{AHS}, \cite{Hopkins}, and \cite{Gr}. On the other hand, there have been developments in the geometric interpretation of elliptic cohomology, from Segal's notion of an elliptic object in \cite{Segal1}, and continuing notably in \cite{ST}, and in \cite{KrizHu}. It is only now that, in the equivariant setting, a precise comparison between these two pictures is emerging (\cite{BET}, \cite{Huan}). The paper at hand is a contribution in this direction.

Equivariant elliptic cohomology has two traditions, one which has been focused on finite group actions, and the other on compact Lie group actions. In the finite case, Devoto \cite{Devoto} gave a definition using equivariant K-theory, which was further developed by Ganter \cite{Nora1} in the context of orbifold loop spaces. The finite case is related not only to physics (see Berwick-Evans \cite{BE}) but also moonshine phenomena (see Ganter \cite{Nora1}, \cite{Nora2} and Morava \cite{Morava}). In the Lie case, Grojnowski gave a purely formal, but ad hoc construction of a complex analytic equivariant elliptic cohomology for the purpose of constructing elliptic affine algebras \cite{Groj}. Although it is defined over the complex numbers, Grojnowski's construction has attracted a lot of interest and is continuing to find numerous applications (see for example \cite{AO}, \cite{Ando}, \cite{GKV}, \cite{Rezk}, and \cite{Rosu}). We are interested in a geometric model for Grojnowski's construction in the case that the Lie group is a torus.


There are striking similarities between the formal properties of elliptic cohomology and the representation theory of loop groups (see for example \cite{Ando}, \cite{Nora}, and \cite{KM}). Indeed, much of the work on elliptic cohomology was stimulated by the relationship (\cite{W1}, \cite{W2}) between elliptic genera and the index of the Dirac operator on a free loop space, where loop groups naturally arise. However, until now a precise connection has been difficult to state.

Motivated by the strong relationship between the representation theory of loop groups and conformal field theory (see section 4 of \cite{Segal2}), Kitchloo has constructed in \cite{Kitch1} a version of $G$-equivariant elliptic cohomology using loop group techniques, for a simple, simply-connected compact Lie group $G$. In this paper, we spell out Kitchloo's construction $\F_T$ for a torus $T$ and compare this to Grojnowski's definition $\G_T$, and find that the two are equivalent on compact, equivariantly formal\footnote{For the definition of equivariantly formal, see Definition \ref{eqfor}} $T$-spaces, for a particular elliptic curve. We thus establish a precise link between Kitchloo's physical definition and Grojnowski's formal definition. Along the way, we find that the formalism of Kitchloo's definition may be used to construct a holomorphic version $\Ell_T$ of a $T$-equivariant elliptic cohomology theory defined by Rezk in \cite{Rezk} (see Section 5). Following Rezk's ideas, we construct $\Ell_T$ over the moduli space of framed elliptic curves in such a way that it is equipped with a natural $\C^\times \times \SL_2(\Z)$-action. We then show that, over a particular elliptic curve, $\Ell_{T}$ is equivalent to $\G_T$ for all compact $T$-spaces.

There have been several recent contributions to the geometric interpretation of elliptic cohomology. Berwick-Evans and Tripathy give (\cite{BET}) a physical interpretation of complex analytic equivariant elliptic cohomology which unifies the discrete and Lie group points of view, inspired by the work of Stolz and Teichner \cite{ST}. Also, in \cite{Huan}, Huan constructs a quasi-elliptic cohomology theory, modeled on orbifold loop spaces. 

We now state our main results, leaving until afterwards the task of explaining the notation. 

\begin{theorem}[Corollary \ref{glutes}]
There is an isomorphism of cohomology theories 
\[
\Ell^*_{T,t} \cong \G^*_{T,t},
\]
on finite $T$-CW complexes with values in coherent holomorphic sheaves of $\Z/2\Z$-graded $\O_{E_{T,t}}$-algebras.
\end{theorem}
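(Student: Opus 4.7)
The plan is to construct a natural morphism of sheaves
\[
\Phi_X \colon \Ell^*_{T,t}(X) \longrightarrow \G^*_{T,t}(X)
\]
of $\Z/2\Z$-graded $\O_{E_{T,t}}$-algebras for every finite $T$-CW complex $X$, and then argue that $\Phi_X$ is an isomorphism. Since both target sheaves are coherent, it suffices to verify this stalkwise on $E_{T,t}$; equivalently, one may reduce to checking that $\Phi$ gives an isomorphism of cohomology theories on $T$-orbits $T/H$ and then invoke $T$-CW induction via Mayer-Vietoris and the suspension axiom, both of which are available for $\Ell^*_{T,t}$ and $\G^*_{T,t}$.

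The main content is the local comparison. By construction, the stalk of $\G^*_{T,t}(X)$ at a point $a \in E_{T,t}$ is the completion at $a$ of the Borel-equivariant cohomology $H^*_T(X^{T_a})$ of the fixed-point subspace under the stabilizer subgroup $T_a \subseteq T$ determined by $a$. On the $\Ell_{T,t}$ side, the double free loop space model inherits a decomposition into components indexed by the homotopy classes of loops in $BT$, and a neighbourhood of $a$ in $E_{T,t}$ selects precisely the component whose Borel cohomology computes (after completion) the same local object $H^*_T(X^{T_a})$. The candidate map $\Phi_{X,a}$ is then induced by the inclusion of constant loops into the double free loop space, followed by restriction to the fixed-point subspace.

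The step I expect to be hardest is checking that $\Phi$ is compatible with the holomorphic gluing data on both sides. Grojnowski's theory is assembled using explicit translation isomorphisms on overlapping neighborhoods of $E_{T,t}$, whereas $\Ell_{T,t}$ carries its sheaf structure intrinsically from the geometry of the loop space construction. Matching these amounts to showing that the translation-by-$a$ isomorphisms on $E_{T,t}$ correspond, at the level of the loop space model, to the natural shift maps between components; this in turn reduces to a functoriality statement in Borel-equivariant cohomology for the inclusions of fixed-point subspaces $X^{T_a} \hookrightarrow X^{T_b}$ whenever $T_a \supseteq T_b$. Once this compatibility is verified, the stalkwise isomorphism upgrades to a global isomorphism of coherent sheaves, and the extension from orbits to arbitrary finite $T$-CW complexes follows automatically from the axioms satisfied by both theories.
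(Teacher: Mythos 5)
Your overall strategy — compare stalks locally, then check that the local isomorphisms are compatible with the gluing data — is the right shape, and it matches the structure of the paper's argument (Theorems \ref{iso} and \ref{duhh}). But the candidate map you propose is wrong, and the two hardest ingredients of the actual proof are either missing or heavily underestimated.

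First, the map. You propose that $\Phi_{X,a}$ be \emph{induced by the inclusion of constant loops into the double free loop space, followed by restriction to the fixed-point subspace}. This does not land in the right place: for $z \in X^a$, the constant loop at $z$ lies in $L^2X^{t,x}$ only if $z$ is fixed by the entire closure of the one-parameter subgroups $\pi(x_1 r)$, $\pi(x_2 r)$, which is strictly larger than $T(a) = \langle \pi(x_1), \pi(x_2)\rangle$ unless $x_1,x_2$ are rational. The correct map is the \emph{evaluation} $ev_{t,x}\colon L^2X^{t,x} \to X^a$, $\gamma \mapsto \gamma(0,0)$, which is a homeomorphism whose inverse sends $z$ to the generically \emph{non-constant} loop $s \mapsto \pi(x_1 s_1 + x_2 s_2)\cdot z$ (Remark \ref{pushy}, Lemma \ref{equal}).

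Second, the group bookkeeping. The evaluation map is equivariant only with respect to the isomorphism $\nu\colon (\bT^2\times T)/T(t,x) \cong T/T(a)$ of \emph{quotient} groups (Lemma \ref{groupss}), not the ambient groups. The comparison therefore proceeds through a chain: localize $\H^*_{\bT^2\times T}(L^2X^{t,x})$ to $V_{t,x}$, apply the change-of-groups isomorphism of Proposition \ref{changeh} to pass to $(\bT^2\times T)/T(t,x)$-equivariant cohomology, use $ev_{t,x}$ and $\nu$ to identify this with $T/T(a)$-equivariant cohomology of $X^a$, then change groups again to $T$. The compatibility of these change-of-groups maps with translations (the commutativity of diagram \eqref{dg}) is nontrivial and is the crux of Lemma \ref{equal1}. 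Your proposal's ``restriction to the fixed-point subspace'' does not account for any of this. Separately, the sheaf $\Ell^*_{T,t}(X)$ is the $\check{T}^2$-\emph{invariants} of the pushforward along $\zeta_{T,t}$; one must verify (as in the second half of the proof of Theorem \ref{iso}) that the $\check{T}^2$-action merely permutes the factors of the product over $\zeta_{T,t}^{-1}(a)$, using the explicit formula \eqref{earl1}. That step is entirely absent from your outline.

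Third, you correctly flag the gluing check as the hard part, but reducing it to ``a functoriality statement in Borel-equivariant cohomology for the inclusions of fixed-point subspaces'' substantially undersells it. The gluing map involves four different types of isomorphisms (inclusions, two directions of change-of-groups, and translations), and Theorem \ref{duhh} has to establish the commutativity of a fairly large diagram in which these interact with the evaluation maps and with passing to the intermediate subgroup $K = \langle T(t,x), T(t,y)\rangle$. Functoriality alone does not do it; one needs Lemma \ref{kurt}, the observation that $T/(T\cap K) \cong (\bT^2\times T)/K$, and a careful compatibility of the evaluation maps with inclusions (diagram \eqref{dia2}). Your final remark, that one could alternatively induct from orbits via Mayer--Vietoris and suspension, is in principle a valid way to \emph{verify} that a given natural transformation is an isomorphism, but constructing the natural transformation globally (i.e., proving gluing compatibility) is precisely what the induction does not give you for free.
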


\begin{theorem}[Corollary \ref{gluten}]
Let $q = e^{2\pi i\tau}$. There is an isomorphism of cohomology theories 
\[
\F^*_{T,q} \cong (\sigma_{T,\tau})_*\, \G^*_{T,\tau}
\]
on equivariantly formal, finite $T$-CW complexes with values in coherent holomorphic sheaves of $\Z/2\Z$-graded $\O_{C_{T,q}}$-algebras.
\end{theorem}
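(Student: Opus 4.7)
Given Corollary \ref{glutes}, which identifies $\Ell^*_{T,\tau} \cong \G^*_{T,\tau}$ as sheaves of $\Z/2\Z$-graded $\O_{E_{T,\tau}}$-algebras, it suffices to establish a natural isomorphism
\[
\F^*_{T,q}(X) \cong (\sigma_{T,\tau})_*\, \Ell^*_{T,\tau}(X)
\]
for every equivariantly formal finite $T$-CW complex $X$, as sheaves of $\Z/2\Z$-graded $\O_{C_{T,q}}$-algebras. The plan is to construct such a comparison locally over $C_{T,q}$ via an equivariant Chern character, and then show the local data glue to a global morphism of coherent sheaves.

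First, I would make the structure of both sides explicit over a small analytic open set $U \subseteq C_{T,q}$. Both constructions are assembled from data of fixed-point sets $X^H$ for closed subgroups $H \leq T$: Kitchloo's $\F^*_{T,q}$ packages loop-group-equivariant $K$-theoretic contributions supported over (twisted) conjugacy classes in $C_{T,q}$, while $(\sigma_{T,\tau})_* \Ell^*_{T,\tau}$ reorganises the Borel-equivariant holomorphic cohomology appearing in Rezk's model along the fibres of $\sigma_{T,\tau}$. At a point $c \in C_{T,q}$ with associated isotropy $H$, the stalks of both sheaves should reduce to data extracted from $X^H$ equipped with its residual $T/H$-action, so identifying $\sigma_{T,\tau}$ set-theoretically is the first concrete subtask.

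The comparison on stalks then proceeds by an equivariant Chern character combined with the Atiyah-Segal completion theorem: for an equivariantly formal $T$-space, the completion of $K^*_T(X^H) \otimes \C$ at the augmentation ideal determined by $c$ agrees with the corresponding completion of $H^*_T(X^H;\C)$. Equivariant formality is needed both to ensure the relevant Atiyah-Hirzebruch spectral sequences collapse integrally and to match the $\Z/2\Z$-gradings on the two sides, since odd-degree classes would otherwise obstruct the identification. I would verify naturality of this pointwise comparison in $X$ and multiplicativity with respect to the ring structures, which follows formally once the Chern character is used.

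The main obstacle will be globalisation. The local Chern-character isomorphisms must be shown to respect the transition data used to glue Kitchloo's sheaf from its loop-group building blocks — in particular, compatibility with the line bundle on $C_{T,q}$ arising from the level-zero central extension of $\widetilde{LG}$, which must correspond after pushforward along $\sigma_{T,\tau}$ to the analogous data appearing in Rezk's construction. Once compatibility of the transition functions is checked, Mayer-Vietoris for finite $T$-CW complexes, together with the orbit case $X = T/K$ as the base of the induction, extends the stalkwise isomorphism to a natural isomorphism of cohomology theories on all equivariantly formal finite $T$-CW complexes.
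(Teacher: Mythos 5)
Your high-level plan --- compare locally over an adapted cover of $C_{T,q}$ via an equivariant Chern character and then check compatibility of gluing data --- is the right shape, but the details diverge from the paper's proof at several points, and two of them are genuine errors.

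First, the mention of ``the line bundle on $C_{T,q}$ arising from the level-zero central extension of $\widetilde{LG}$'' conflates this statement with Corollary \ref{wheat}. The sheaf $\F^*_{T,q}$ appearing in Corollary \ref{gluten} is the \emph{untwisted} theory of Chapter \ref{Kitch}, built from $\bT\times T$-equivariant K-theory of finite subcomplexes of $LX$ (Definition \ref{varlim}); no central extension and no line bundle appear. Those only enter for $^k\F^*_{T,q}$ in Chapter \ref{twist}. There is therefore no ``transition data coming from a central extension'' to be matched here, and a proof that attempted to track such data would be chasing a phantom. Second, the role of equivariant formality is misattributed: it is not about collapsing Atiyah--Hirzebruch spectral sequences or matching $\Z/2\Z$-gradings, but is the hypothesis under which Rosu's construction (Theorem \ref{chern}) upgrades the Chern-character isomorphism on completions to an isomorphism of holomorphic sheaves over a neighbourhood of the identity.

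Beyond these errors, your proposal skips the key technical input and substitutes an unnecessary detour. Routing through Corollary \ref{glutes} and $\Ell^*_{T,\tau}$ buys nothing, since the local description of $\Ell^*_{T,\tau}$ over an adapted cover is \emph{by construction} identical to Grojnowski's $H^*_T(X^a)\otimes_{H_T}\O_{E_{T,\tau}}(U-a)$; the paper compares $\F^*_{T,q}$ directly with $(\sigma_{T,\tau})_*\G^*_{T,\tau}$. More importantly, the comparison requires the local description of $\F^*_{T,q}(X)_{U_a}$ as $(t_{a^{-1}}^*\K_T(X^a))_{U_a}$, with its explicit gluing map (Theorems \ref{isoo} and \ref{duh}); this is what one applies the Chern character to. The globalisation step is then a concrete verification that $ch_T$ intertwines the gluing composites on both sides --- naturality of $ch_T$, $K_{T/H}$- vs.\ $H_{T/H}$-linearity of the translations, and the identity $ch_T([\C_{\bar\lambda}]) = e^{\bar\lambda}$ so that the change-of-groups maps of Propositions \ref{changeh} and \ref{Segal1} match --- not a Mayer--Vietoris induction over cells. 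Finally, Corollary \ref{gluten} needs compatibility with the suspension isomorphism, which in the paper follows because both sides inherit their suspension from the underlying theories ($K$ and $H$) and the Chern character is already a map of cohomology theories; your proposal does not address this.
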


\begin{theorem}[Corollary \ref{wheat}]
Let $q = e^{2\pi i\tau}$. There is an isomorphism of cohomology theories 
\[
^k\F^*_{T,q} \cong  (\sigma_{T,\tau})_*\, (\G^*_{T,\tau}) \otimes_{\O_{C_{T,q}}} \L^k_q
\]
on equivariantly formal, finite $T$-CW complexes with values in coherent holomorphic sheaves of $\Z/2\Z$-graded $\O_{C_{T,q}}$-modules. 
\end{theorem}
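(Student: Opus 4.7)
The plan is to bootstrap from Corollary \ref{gluten} (the untwisted $k=0$ comparison) by tracking the level-$k$ central-extension twist that defines $^k\F_{T,q}$ through to the Grojnowski side. Since $^k\F_{T,q}$ is built from level-$k$ positive-energy representations of the central extension $\widetilde{LT}$, raising the level by one should tensor the entire sheaf by a single fixed line bundle, and the content of the corollary is to pin down this line bundle as $\L_q$. In particular, the statement is $\O_{C_{T,q}}$-linear and reduces to an identification of sheaves that becomes the identity upon setting $k=0$.

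To make this precise I would work locally on $C_{T,q} = T_\C/q^{\Lambda}$. Pick $a \in C_{T,q}$ and a small analytic neighbourhood $\U_a$. Near $a$, level-$k$ positive-energy representations of $\widetilde{LT}$ admit a local trivialization in terms of an equivariant theta-type line bundle of level $k$ on a formal neighbourhood in $T_\C$; at $k=0$ this recovers the stalk of $(\sigma_{T,\tau})_*\, \G^*_{T,\tau}(X)$ by Corollary \ref{gluten}, so the level-$k$ stalk is the $k=0$ stalk tensored with the fiber of $\L^k_q$ at $a$. Globalizing these local isomorphisms, the transition cocycle between two such trivializations along the loop in $C_{T,q}$ corresponding to $\lambda \in \Lambda$ is the $k$-th power of the 2-cocycle defining $\widetilde{LT}$, which in the uniformizing coordinate $q^\Lambda$ is precisely the automorphy factor defining $\L^k_q$ as an $\O_{C_{T,q}}$-module.

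The main obstacle I anticipate is exactly this last cocycle identification: the normalization of the basic central extension of $\widetilde{LT}$ must be matched with the automorphy factor defining $\L_q$, with careful attention to sign conventions, the framing of the elliptic curve used in the construction of $C_{T,q}$, and the action of the coroot lattice $\Lambda$. Once this matching is verified, naturality in the $T$-CW complex $X$ together with the equivariantly-formal gluing argument used in the proof of Corollary \ref{gluten} --- patching the local data by Mayer--Vietoris on the elliptic curve --- promotes the pointwise statement to an isomorphism of sheaves on $C_{T,q}$, yielding the theorem on equivariantly formal finite $T$-CW complexes.
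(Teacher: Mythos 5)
Your high-level strategy --- reduce to the untwisted comparison of Corollary \ref{gluten} and identify the twist as the Looijenga line bundle --- is the same as the paper's. The paper proves exactly your intermediate claim as Theorem \ref{theo}, namely $^k\F^*_T(X) \cong \F^*_T(X) \otimes_{\O_{C_T}} \L^k$, naturally in $X$, and then composes with the character-map isomorphism of Theorem \ref{character}. Where you diverge is in how you propose to establish the twist identification, and there is a genuine gap there.

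Your plan is to produce local trivializations of $^k\F_{T,q}(X)$ over small opens of $C_{T,q}$, declare that each local piece is the untwisted stalk tensored with a fiber of $\L^k_q$, and then match transition cocycles. But the local trivialization you invoke is not given; it is the thing to be proved. Concretely, $^k\K_{\widetilde{LT}}(LX)$ is built from $\bT\times T\times U(1)$-equivariant families of Fredholm operators on $\H_k$ parametrized over finite subcomplexes of $LX$ (Definition \ref{Fredholm}), and what is needed is a \emph{natural}, $\check{T}$-equivariant isomorphism between this and the untwisted K-theory of $LX$ tensored with the level-$k$ character module $^kK_{\bT\times T\times U(1)}$. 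The mechanism the paper uses for this is the Atiyah--Segal equivariant index map (Remark \ref{eqind} and Proposition \ref{index}), which converts a Fredholm family into a finite-rank equivariant vector bundle and thereby supplies the isomorphism $^kK_{\bT\times T\times U(1)}(Y)\cong K_{\bT\times T}(Y)\otimes_{K_{\bT\times T}}\,{}^kK_{\bT\times T\times U(1)}$. You never mention the index map, and without it there is no way to compare the level-$k$ Fredholm picture with the untwisted bundle picture, even locally.

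You also mislocate the main difficulty. You anticipate that the hard part is matching the normalization of the $\widetilde{LT}$-cocycle against the automorphy factor of $\L^k_q$. In fact, once one has the explicit formula for the affine Weyl action on characters of $\bT\times T\times U(1)$ (Proposition \ref{formula}), that matching is immediate: the factor $u^{I(m)}q^{\phi(m)}$ is read off directly, and this is what identifies $((\psi_T)_*\,{}^k\O_{\D^\times\times T_\C})^{\check{T}}$ with $\L^k$. The genuinely delicate step, which your proposal does not surface, is Proposition \ref{obg}: checking that the index-map isomorphism is $\check{T}$-equivariant, i.e. that it intertwines the Weyl action on Fredholm families $[F]\mapsto[F^m]$ with the Weyl action $[V]\otimes\C_{(n,\lambda,k)}\mapsto m^*[V]\otimes m^*\C_{(n,\lambda,k)}$ on the tensor factorization. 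Only after this equivariance is established can one pass to $\check{T}$-invariants of the pushforward along $\psi_T$ and split the invariants of a tensor product (using freeness of the $\check{T}$-action on $\D^\times\times T_\C$) to obtain Theorem \ref{theo}. Your local-patching plan would need to reprove this equivariance in disguised form on each overlap, so it does not avoid the work; it just leaves it implicit.
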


The parameters $t,q$ and $\tau$ denote the restriction to particular complex elliptic curves $E_t$, $C_q$ and $E_\tau$. We write 
\[
E_{T,t} := \check{T} \otimes_\Z E_t \quad \text{and} \quad  C_{T,q} := \check{T} \otimes_\Z C_q
\]
for the underlying geometric objects of the respective theories. If $q = e^{2\pi i\tau}$, then there is an isomorphism $\sigma_{\tau}: E_\tau \cong C_q$ which induces 
\[
\sigma_{T,\tau}: \check{T} \otimes_\Z E_\tau \cong \check{T} \otimes_\Z C_q.
\]
It is the pushforward over this map which appears in the second and third results. The symbol $k$ represents a positive integer which arises as a parameter in the representation theory of loop groups, out of which $^k\F^*_{T,q}$ is constructed. Finally, the notation $\L_q^k$ denotes a certain line bundle over $C_{T,q}$, sometimes known as the \textit{Looijenga line bundle}, whose sections are degree $k$ theta functions. All sheaves are holomorphic.

We take the opportunity here to establish some conventions.

\begin{conventions}\label{convent}
Suppose that a group $G$ acts on a space $X$ from the left. We use $g \cdot x$ to denote the group action of $g\in G$ on $x \in X$, and we use $gg'$ to denote the group product of $g,g' \in G$. All group actions are assumed continuous, and all subgroups of Lie groups are assumed to be closed \par

If $G$ is a topological group, we denote the connected component of $G$ containing the identity by $G^0$. We will often just use the word \lq component' to mean a connected component. \par

All maps of topological spaces are assumed to be continuous. If $X$ and $Y$ are topological spaces, then the set of continuous maps $\Map(X,Y)$ is regarded as a space with the compact-open topology. \par

All vector bundles are complex vector bundles, and all sheaves are holomorphic.\par

Let $A$ be an abelian group, let $H$ be an arbitrary group, and let $A$ act on $H$. Our convention for the group law of the semidirect product $A \ltimes H$ is
\[
(a',h')(a,h) = (a'a, a^{-1}\cdot h' h).
\]
The tensor product $A \otimes B$ of two $\Z$-modules is over $\Z$, unless otherwise specified. All rings are assumed to have a multiplicative identity. By a $\Z$-graded commutative ring we mean a $\Z$-graded ring $R$ such that for two homogeneous elements $a \in R^i$ and $b \in R^j$, we have
\[
a b = (-1)^{ij}ba.	
\]
For (not necessarily square) matrices $A$, $m$, and $t$, we use expressions such as $Am$, $mt$, and $mA$ to mean matrix multiplication. So, for example, if $m = (m_1,m_2)$ and $t = (t_1,t_2)$ are vectors, then $mt$ means the dot product, where $t$ is understood to mean the transpose of $t$. The transpose of a vector should be understood whenever it is necessary to make sense of an expression.
\end{conventions}

\chapter{Background}\label{ChBack}

In this chapter, we set out the background material that will be used in the following chapters. In the first section we treat a variety of basic objects, the most important among which is perhaps the moduli stack of elliptic curves over $\C$, which we present as an equivariant space. In the second and third sections we introduce torus-equivariant ordinary cohomology and torus-equivariant K-theory, along with some of their properties and holomorphic analogues. In the fourth section we introduce Rosu's version of the equivariant Chern character (see \cite{Rosu03}), which will be immensely useful to us in comparing various cohomology theories. In the final section, we give the construction of Grojnowski's torus-equivariant elliptic cohomology, which is central to all of our results.

\section{Elliptic curves over $\C$}

Our account of the classification of elliptic curves over $\C$ is based on the short summary in Rezk's paper \cite{Rezk}, which is particularly convenient for our constructions in the next chapter. 

\begin{remark}
Consider the subspace 
\[
\X := \{ (t_1,t_2) \in \C^2 \, | \, \R t_1 + \R t_2 = \C \} \subset \C^2.
\]
An element $t = (t_1,t_2) \in \X$ defines a lattice
\[
\Lambda_t := \Z t_1 + \Z t_2 \subset \C.
\]
It is easily verified that $\X$ is preserved under left multiplication by $\GL_2(\Z)$, and that $\Lambda_t = \Lambda_{t'}$ if and only if there is a matrix $A \in \GL_2(\Z)$ such that $At = t'$. 
\end{remark}

\begin{definition}
An \textit{elliptic curve over $\C$} is a complex manifold
\[
E_t := \C/\Lambda_t,
\]
along with the quotient group structure induced by the additive group $\C$. 
\end{definition}

\begin{remark}
A map of elliptic curves $E_t \to E_{t'}$ is a map of complex manifolds which is also a homomorphism of groups. It is straightforward to show that any map of elliptic curves $E_t \to E_{t'}$ is induced by multiplication by a nonzero complex number $\lambda$ satisfying that $\lambda \Lambda_t \subset \Lambda_{t'}$. Furthermore, the map is an isomorphism if and only if $\lambda \Lambda_t = \Lambda_{t'}$. 
\end{remark}

\begin{remark}\label{dominic}
By our previous remarks, for any two elliptic curves $E_t$ and $E_{t'}$,
\begin{itemize}
\item $E_t$ and $E_{t'}$ are identical if and only if there exists $A \in \GL_2(\Z)$ such that $At = t'$; and
\item isomorphisms $E_t \cong E_{t'}$ correspond bijectively to pairs $(\lambda,A) \in \GL_2(\Z)$ such that $\lambda A t = t'$. 
\end{itemize}
Therefore, elliptic curves over $\C$ are classified by the action of $\C^\times \times \GL_2(\Z)$ on $\X$ given by $(\lambda,A)\cdot t = \lambda At$. Alternatively, they are also classified by the action of the subgroup 
\[
\C^\times\times \SL_2(\Z) \subset \C^\times \times \GL_2(\Z)
\]
on the subspace
\[
\X^+ = \{(t_1,t_2) \in \X \, |\, \mathrm{Im}(t_1/t_2) > 0\} \subset \X,
\]
which is easily seen to inherit such an action. While we will use the $\C^\times \times \SL_2(\Z)$-action on $\X^+$ repeatedly in this thesis, we also mention a third classifying action, which is obtained by taking the quotient of $\X^+$ by the free action of $\C^\times$. Let $\bH = \{\tau \in \C \, | \, \mathrm{Im}(\tau) > 0\}$ denote the upper half plane and identify $\C^\times \backslash \X^+ \cong \bH$ via $[(t_1,t_2)] \mapsto t_1/t_2$. The action of $\SL_2(\Z)$ descends to an action on $\bH$ given by
\[
\left(\begin{array}{rcl}a&b\\c&d\end{array}\right)  \cdot \tau = \frac{a\tau + b}{c\tau + d}.
\]
Elliptic curves over $\C$ are also classified by this action.
\end{remark}

\begin{remark}
Let $\C^\times$ denote the multiplicative group of complex numbers. The image of $\X^+$ under the map $(t_1,t_2) \mapsto e^{2\pi i\tau}$, where $\tau = t_1/t_2$, is the punctured open unit disk 
\[
\D^\times := \{ z \in \C^\times \, |  \, |z| < 1\}.
\]
For $q \in \D^\times$, consider the complex analytic group $C_q = \C^\times/q^\Z$ obtained as the quotient of the multiplicative group $\C^\times$ by the subgroup generated by $q$. Let $\exp$ denote the complex exponential map $z \mapsto e^{2\pi i z}$. We have a commutative diagram of complex analytic groups
\begin{equation}\label{cut}
\begin{tikzcd}
\C \ar[r,"{\exp}"] \ar[d,two heads] & \C^\times \ar[d,two heads] \\
E_\tau := \C/\langle \tau,1 \rangle \ar[r,"{\cong}"] & \C^\times/q^\Z =: C_q
\end{tikzcd}
\end{equation}
whenever $q = e^{2\pi i \tau}$. 
\end{remark}

\begin{remark}\label{fad}
Let $T$ be a compact abelian Lie group and write $\hat{T} := \Hom(T,U(1))$ for the character group of $T$. We have four covariant functors  
\[
\begin{tikzcd}
\t_\C := \Hom(\hat{T},\C) & T_\C := \Hom(\hat{T},\C^\times) \\ 
E_{T,t} := \Hom(\hat{T},E_t) & C_{T,q}:= \Hom(\hat{T},C_q)
\end{tikzcd}
\]
from the category of compact abelian Lie groups into the category of complex manifolds. Note that an embedding $H \hookrightarrow T$ of compact abelian Lie groups induces an embedding of complex manifolds. We similarly write 
\[
E_{T,\tau} := \Hom(\hat{T},E_\tau).
\]
Applying $\Hom(\hat{T},-)$ to diagram \eqref{cut} yields a commutative diagram
\begin{equation}
\begin{tikzcd}
\t_\C \ar[r,"{\exp_T}"] \ar[d,two heads] & T_\C \ar[d,two heads] \\
E_{T,\tau} \ar[r,"{\cong}"] & C_{T,q}
\end{tikzcd}
\end{equation}
of natural transformations.
\end{remark}

\begin{remark}
Let $\bT$ be the circle group with fixed parametrisation. For example, in Chapter \ref{Rezk}, we will set $\bT = \R/\Z$, and in Chapter \ref{Kitch} we will set $\bT = S^1 \subset \C^\times$. We write 
\[
\check{T} := \Hom(\bT,T)
\]
for the lattice of homomorphisms $\bT \to T$, which does not depend on the parametrisation on $\bT$. If $T$ is a torus, then there is a canonical isomorphism $\check{T} = \Hom(\hat{T},U(1))$ given by the perfect pairing 
\[
\begin{array}{rcl}
\hat{T} \times \check{T} &\to& \Z \\
(\mu,m) &\mapsto & \mu \circ m =:\mu(m).
\end{array}
\]
We therefore have identifications
\[
\t_\C = \check{T} \otimes \C, \quad T_\C = \check{T} \otimes \C^\times \quad E_{T,t} = \check{T} \otimes E_t \quad \mathrm{and} \quad C_{T,q} = \check{T} \otimes C_q.
\]
\end{remark}

\begin{remark}
The complex exponential map 
\[
\exp: \C \longrightarrow \C^\times
\]
defined by $x \mapsto e^{2\pi i x}$ induces the map
\[
\t_\C := \check{T} \otimes \C \longrightarrow \check{T} \otimes \C^\times
\]
by tensoring with $\check{T}$, and the map
\[
\t := \check{T} \otimes \R \longrightarrow \check{T} \otimes U(1) 
\]
by restriction. We will also refer to each of these maps as $\exp$, relying on context to avoid confusion. The kernel of $\exp$ is $\check{T} \otimes \Z$, which is canonically isomorphic to $\check{T}$.
\end{remark}

\begin{notation}
If $T$ is a torus, we use the notation 
\[
u^m := m \otimes u \in T_\C  \quad \mathrm{and} \quad mx := m \otimes x \in \t_\C,
\]
so that $\exp(mx)$ is equal to
\[
e^{2\pi i mx} := (e^{2\pi i x})^m \in T_\C.
\]
Similarly, if $z = e^{2\pi imx} \in T_\C$ and $\mu \in \hat{T}$, then we write
\[
z^\mu := e^{2 \pi i x \mu(m)}.
\]
The group operations are written additively on the lattices $\hat{T}$ and $\check{T}$, and multiplicatively on $T_\C$ and $C_{T,q}$. If $T$ is a torus, if $q^{\check{T}}$ denotes the subgroup $\{q^m\}_{m\in \check{T}}$ then
\[
C_{T,q} = q^{\check{T}}\backslash T_\C
\]
so that $\psi_{T,q}$ is identified with the quotient map
\[
T_\C \twoheadrightarrow q^{\check{T}}\backslash T_\C.
\]
\end{notation}

\begin{definition}
A $T$-CW complex $X$ is a union 
\[
\bigcup_{n\in \N} X^n
\]
of $T$-subspaces $X^n$ such that
\begin{enumerate}
\item $X^0$ is a disjoint union of orbits $T/H$, where $H \subset T$ is a closed subgroup; and
\item $X^{n+1}$ is obtained from $X^n$ by attaching $T$-cells $T/H \times \D^{n+1}$ along $T$-equivariant attaching maps $T/H \times S^n \rightarrow X^n$, where $T$ acts trivially on $\D^{n+1}$.
\end{enumerate}
A \textit{finite $T$-CW complex} is a $T$-CW complex which is a union of finitely many $T$-cells. A \textit{pointed $T$-CW complex} is a $T$-CW complex along with a distinguished $T$-fixed basepoint in the $0$-skeleton of $X$. Given a $T$-CW complex $X$, we write $X_+$ for the pointed $T$-CW complex which is the disjoint union of $X$ and the basepoint $* = T/T$. A map $f: X \to Y$ of (pointed) $T$-CW complexes is a $T$-equivariant map such that $f(X^n) \subset Y^n$ for all $n$ (and preserving the basepoint). 
\end{definition}

\begin{example}\label{repsphere}
Let $T$ be a rank one torus and let $\lambda \in \hat{T}$ be an irreducible character of $T$. The \textit{representation sphere associated to $\lambda$} is the one-point compactification $S_\lambda$ of the one dimensional complex representation $\C_\lambda$ associated to $\lambda$. This may be equipped with the structure of a finite $T$-CW complex where
\[
X^0 = T/T \times \{\infty\} \: \amalg \: T/T \times \{0\},
\]
and $X^1 =  T \times \D^1$, with $T$-equivariant attaching map $T \times S^0 \rightarrow X^0$ given by sending one end of $\D^1$ to $\{0\}$ and the other end to $\{\infty\}$. An element $z \in T$ acts by multiplication by $\lambda(z)$ on the left factor of $X^1$ and trivially on the right factor.
\end{example}



\section{Borel-equivariant ordinary cohomology}

In this section, we define Borel-equivariant ordinary cohomology and we state some of its properties. We endeavour to prove those properties which will be the most important for us, especially where the details of the proofs are needed in later chapters. We also introduce the notion of an equivariant cohomology theory taking values in holomorphic sheaves.

\begin{definition}\label{borel}
Let $ET$ be a contractible space with $T$ acting freely on the right. The Borel construction of $X$ is the quotient space $ET \times_T X$ obtained by identifying $(e\cdot g,x)$ with $(e,gx)$ for all $g \in T$, $e \in ET$, and $x \in X$. The Borel construction of $X$ is a fiber bundle over $BT$ with fiber $X$ (see \cite{AB}).
\end{definition}

\begin{definition}
Let $X$ be a finite $T$-CW complex. We write 
\[
H^*_T(X) := H^*(ET \times_T X;\C),
\]
for the Borel-equivariant cohomology of $X$ with coefficients in $\C$, which is a $\Z$-graded commutative ring with multiplication given by the cup product. Sometimes, we will drop the asterisk from the notation and just write $H_T(X)$, and we write
\[
H_T := H^*_T(\pt) = H^*(BT).
\]
for the Borel equivariant cohomology of a point. Since the unique map from $X$ to a point induces a map $H_T \to H_T^*(X)$ of graded rings, $H^*_T(X)$ is naturally equipped with the structure of a $\Z$-graded $H_T$-algebra. Our reference for Borel-equivariant ordinary cohomology is \cite{AB}.
\end{definition}

\begin{remark}
There is a canonical isomorphism of graded rings
\[
S(\t_\C^*) \cong H^*(BT;\C) 
\]
which enables us to identify an element in $H_T$ with a polynomial function on $\t_\C$. The map is produced as follows. Note that since $T$ is a torus, there is an identification 
\[
\hat{T} \otimes \C \cong \Hom(\check{T},\Z) \otimes \C \cong \t_\C^*.
\]
Let $\C_\lambda$ be the representation corresponding to an irreducible character $\lambda \in \hat{T}$. The map  
\[
\lambda \mapsto c_1(ET \times_T \C_\lambda)  
\]
induces an isomorphism $\hat{T} \cong H^2(BT;\Z)$, where $c_1$ denotes the first Chern class. Tensoring this map with $\C$ and extending by the symmetric product yields the isomorphism $S(\t_\C^*) \cong H^*(BT;\C)$. See Proposition 2.6 in \cite{Rosu03} for details.
\end{remark}

\subsection{Change of groups}

\begin{proposition}\label{change1}
There is an isomorphism of graded rings
\[
H_T(T/H) \cong H_H.
\]
\end{proposition}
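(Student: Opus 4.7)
The plan is to exhibit a natural homeomorphism $ET\times_T T/H \cong ET/H$ and then recognise the right-hand side as a model for $BH$, after which the definitions give the claim immediately.

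First, I would define the map
\[
\phi: ET \times T/H \longrightarrow ET/H, \qquad (e, tH) \longmapsto (e\cdot t)H,
\]
where $ET/H$ denotes the quotient of $ET$ by the restricted right action of $H\subset T$. The map respects the Borel equivalence relation, since
\[
\phi(e\cdot g, tH) = (egt)H = \phi(e, gtH),
\]
and so descends to a continuous map $\bar\phi: ET\times_T T/H \to ET/H$. I would then verify that $\bar\phi$ is a bijection: surjectivity is clear since $[e]_H = \bar\phi([e, e_T H])$, while injectivity follows because any equality $(et)H = (e't')H$ in $ET/H$ provides $h\in H$ with $e' = eth(t')^{-1}$, which in turn forces $[e',t'H] = [e, tH]$ in the Borel quotient. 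Continuity of the inverse follows from standard properties of the quotient topology together with the free $T$-action on $ET$, so $\bar\phi$ is a homeomorphism.

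Next, since $ET$ is contractible and $H\subset T$ acts freely on $ET$ (being a closed subgroup of $T$, which acts freely on $ET$ by definition), the space $ET$ is a model for $EH$ and hence $ET/H$ is a model for $BH$. Consequently
\[
H^*_T(T/H) = H^*(ET\times_T T/H;\C) \cong H^*(ET/H;\C) = H^*(BH;\C) = H_H,
\]
and the isomorphism is one of graded rings because it is induced by a continuous map and cup product is natural.

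The argument is essentially formal once the key homeomorphism is in hand, so the only point that really requires any care is writing down $\phi$ with the correct conventions for left versus right actions (as emphasised in Conventions \ref{convent}) and checking it respects the Borel identification; after that, every subsequent step is immediate.
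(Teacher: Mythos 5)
Your proof is correct and follows the same route as the paper: both identify $ET\times_T T/H$ with $ET/H$ and then observe that $ET$ is a model for $EH$ because $H\subset T$ acts freely, so $ET/H$ is a model for $BH$. The only difference is that the paper asserts the homeomorphism $ET\times_T T/H\cong ET/H$ without comment, whereas you write out the explicit map and verify bijectivity, which is a welcome amount of extra detail.
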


\begin{proof}
Since $H$ acts freely on $ET$, the space $ET$ is a model for $EH$. Therefore, 
\[
ET \times_T T/H \cong ET/H
\]
is a model for $BH$. 
\end{proof}

\begin{remark}
The following two results are given as Proposition 2.3.4 in Chen's thesis \cite{Chen}. Let 
\[
1 \to H \to T \xrightarrow{f} K \to 1
\]
be a short exact sequence of compact abelian groups, where $T$ is a torus. Let $X$ be a finite $T$-CW complex such that $H$ acts trivially, and consider the diagram
\begin{equation}\label{pullback}
\begin{tikzcd}
ET \times_T X \ar[d,"h"] \ar[r,"j"] & BT \ar[d,"p"] \\
EK \times_K X \ar[r] & BK.
\end{tikzcd}
\end{equation}
where the left vertical map is induced by $f$.
\end{remark}

\begin{lemma}[Prop. 2.3.4., \cite{Chen}]\label{darryl}
Diagram \eqref{pullback} is a pullback diagram.
\end{lemma}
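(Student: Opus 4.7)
The plan is to verify directly that the square is a pullback of topological spaces. To make the relevant orbit spaces easy to compare, I would choose convenient models. Let $EK$ be a contractible free right $K$-space and let $E'$ be a contractible free right $T$-space; then $ET := EK \times E'$, equipped with the diagonal action $(e,e')\cdot t = (e\cdot f(t),\, e'\cdot t)$, is contractible and $T$ acts freely. With these choices we have
\[
BT = (EK \times E')/T, \quad BK = EK/K, \quad ET \times_T X = (EK \times E' \times X)/T, \quad EK\times_K X = (EK\times X)/K,
\]
and the maps in \eqref{pullback} take the evident form, e.g.\ $h[e,e',x] = [e,x]$ and $j[e,e',x] = [e,e']$.

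Next I would introduce the canonical comparison map
\[
\Phi : ET \times_T X \longrightarrow BT \times_{BK} (EK \times_K X), \qquad [e,e',x] \longmapsto \bigl([e,e'],[e,x]\bigr),
\]
check well-definedness, and prove it is bijective. For injectivity, suppose $\Phi[e_1,e_1',x_1] = \Phi[e_2,e_2',x_2]$. Equality in $BT$ gives $t \in T$ with $e_1\cdot f(t) = e_2$ and $e_1'\cdot t = e_2'$, while equality in $EK\times_K X$ gives $k \in K$ with $e_1\cdot k = e_2$ and $k^{-1}\cdot x_1 = x_2$. Freeness of the $K$-action on $EK$ forces $f(t) = k$, and since $H$ acts trivially on $X$ the $T$-action on $X$ factors through $f$, so $k^{-1}\cdot x_1 = t^{-1}\cdot x_1$; thus the two triples lie in the same $T$-orbit. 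For surjectivity, given $\bigl([e,e'],[e_0,x_0]\bigr)$ in the pullback, choose $k\in K$ with $e_0 = e\cdot k^{-1}$; then $[e_0,x_0] = [e, k^{-1}\cdot x_0]$ and the element is $\Phi[e,e',k^{-1}\cdot x_0]$.

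Finally I would show that the continuous bijection $\Phi$ is a homeomorphism. This is the step where I expect the main subtlety to lie, since a continuous bijection of topological spaces need not be open. The cleanest way I see is to observe that both $ET \times_T X \to EK \times_K X$ and the projection of the pullback onto $EK \times_K X$ are locally trivial fiber bundles with fiber $BH = E'/H$ (the former by the argument that $ET \to EK$ is a principal $H$-bundle, the latter by base change of $BT \to BK$), and that $\Phi$ is a map over $EK\times_K X$ which restricts to the identity on each fiber under the canonical identifications. A local trivialisation argument then promotes $\Phi$ to a homeomorphism. Alternatively, one may work throughout in the category of compactly generated Hausdorff spaces and use that all spaces in sight have CW structures, in which case a continuous $T$-equivariant bijection between such spaces is automatically a homeomorphism.
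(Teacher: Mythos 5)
The paper gives no proof of this lemma---it simply cites Chen's thesis---so there is no internal argument to compare yours against. On its own terms, your choice of model $ET = EK\times E'$, the construction of the comparison map $\Phi$, and the verifications of well-definedness, injectivity, and surjectivity are all correct; they use the hypothesis that $H$ acts trivially on $X$ exactly where it is needed, and you have correctly identified the passage from continuous bijection to homeomorphism as the one delicate step.

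That step, however, is not closed by either of your two suggestions as stated. For the bundle route: a continuous bijection over $EK\times_K X$ that is a homeomorphism on each fibre does not by itself give a homeomorphism of total spaces, since writing $\Phi$ locally as $(u,x)\mapsto(u,g(u,x))$, continuity of $(u,y)\mapsto g(u,\cdot)^{-1}(y)$ is exactly what remains to be checked. What rescues the argument is that one can arrange $\Phi$ to be the \emph{identity} in compatible local trivialisations: over a trivialising open $U\subset BK$ for the principal $K$-bundle $EK\to BK$ with section $s$, normalising the $EK$-coordinate to $s(u)$ yields trivialisations $ET\times_T X|_U\cong U\times BH\times X$ and $BT\times_{BK}(EK\times_K X)|_U\cong U\times BH\times X$ (using $t^{-1}\cdot x=f(t)^{-1}\cdot x$ on $X$ and $\ker f = H$), and a short computation shows $\Phi$ is literally the identity in these coordinates; hence $\Phi$ is a local homeomorphism, and being bijective it is a homeomorphism. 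Incidentally, in your model $ET\to EK$ has fibre $E'$, not $H$; the principal $H$-bundle is $ET\to ET/H$. Your alternative---that a continuous $T$-equivariant bijection between $T$-CW complexes is automatically a homeomorphism---is false: with trivial $T$-action, the identity from $\R$ with the discrete topology (a $0$-dimensional CW complex) to $\R$ with its usual topology is a continuous bijection of $T$-CW complexes that is not a homeomorphism, so that route should be dropped.
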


\begin{proposition}[Prop. 2.3.4., \cite{Chen}]\label{changeh}
Let $X$ be a finite $T$-CW complex such that $H \subset T$ acts trivially. There is an isomorphism of graded $H_T$-algebras
\[
H^*_{K}(X) \otimes_{H_{K}} H_T \cong H^*_T(X)
\]
natural in $X$, and induced by $h^* \cup j^*$. 
\end{proposition}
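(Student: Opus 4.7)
The plan is to deduce the result from the pullback square of Lemma \ref{darryl} by means of a Leray--Hirsch argument for the fibration $h$, exploiting that $H_T$ is free as an $H_K$-module.

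First I would record the key algebraic input. The map $p: BT \to BK$ is a fibre bundle whose fibre is $H = \ker f$; more precisely the fibre of $p$ is $BH$. Because $U(1)$ is divisible, applying $\Hom(-,U(1))$ to the given short exact sequence of compact abelian groups yields the short exact sequence of character groups
\[
0 \to \hat{K} \to \hat{T} \to \hat{H} \to 0.
\]
Tensoring with $\C$ splits this as a sequence of vector spaces, and passing to symmetric algebras identifies
\[
H_T \cong H_K \otimes_\C H^*(BH;\C)
\]
both as rings and as $H_K$-modules. Under this identification, the restriction to the fibre $H^*(BT;\C) \to H^*(BH;\C)$ is projection onto the second tensor factor, and a choice of homogeneous $\C$-basis $\{\bar{c}_i\}$ of $H^*(BH;\C)$ lifts to classes $c_i \in H_T$ forming a free $H_K$-module basis that restricts to $\{\bar{c}_i\}$ on the fibre.

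Next, by Lemma \ref{darryl} the map $h$ is the pullback of $p$, so $h$ is also a $BH$-bundle and the restriction of $j^* : H_T \to H^*_T(X)$ to any fibre of $h$ coincides with the restriction of $p^*$ to the corresponding fibre of $p$. In particular, the pulled-back classes $j^*(c_i) \in H^*_T(X)$ restrict on each fibre to the chosen basis $\{\bar{c}_i\}$ of $H^*(BH;\C)$. The Leray--Hirsch theorem applied to $h$ then produces an isomorphism of $H^*_K(X)$-modules
\[
H^*_K(X) \otimes_\C H^*(BH;\C) \xrightarrow{\;\sim\;} H^*_T(X), \qquad \alpha \otimes \bar{c}_i \longmapsto h^*(\alpha) \cup j^*(c_i).
\]

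Finally I would repackage the answer into the desired form. The identification $H_T \cong H_K \otimes_\C H^*(BH;\C)$ induces a natural isomorphism
\[
H^*_K(X) \otimes_{H_K} H_T \cong H^*_K(X) \otimes_\C H^*(BH;\C),
\]
under which the Leray--Hirsch isomorphism becomes the map $\alpha \otimes \beta \mapsto h^*(\alpha) \cup j^*(\beta)$. Multiplicativity, hence the $H_T$-algebra structure, follows from naturality of the cup product applied to the two maps $h$ and $j$; naturality in $X$ follows from the naturality of the Borel construction and of Leray--Hirsch. The only nontrivial point is the existence of Leray--Hirsch classes in the image of $j^*$ whose fibrewise restrictions give a basis of $H^*(BH;\C)$, and this is exactly what Lemma \ref{darryl} delivers, since a pullback identifies fibres in a way compatible with cohomology restriction.
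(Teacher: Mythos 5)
Your proof is correct but follows a genuinely different route from the paper's. The paper applies the Eilenberg--Moore spectral sequence to the pullback square of Lemma \ref{darryl}, and then shows that the $\Tor$ terms in $E_2$ vanish in positive filtration by proving that $H^*(BT)$ is a free $H^*(BK)$-module --- which it does by running the Serre spectral sequence of $BH \to BT \to BK$ and observing it collapses for degree reasons. You instead bypass both spectral sequences: you establish the freeness of $H_T$ over $H_K$ \emph{directly and explicitly}, by applying $\Hom(-,U(1))$ to the short exact sequence of groups, tensoring with $\C$, choosing a splitting, and passing to symmetric algebras to obtain $H_T \cong H_K \otimes_\C H^*(BH;\C)$ with an explicit basis $\{c_i\}$ restricting to a basis of $H^*(BH)$; then you feed the classes $j^*(c_i)$ into Leray--Hirsch for the fibration $h$ (the version requiring only that $H^n(F)$ be finitely generated free in each degree, which $H^*(BH;\C)$ satisfies). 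Both arguments hinge on the same two facts --- Lemma \ref{darryl} and the freeness of $H_T$ over $H_K$ --- but yours is more elementary and more concrete, producing explicit Leray--Hirsch classes rather than inferring the answer from a vanishing $\Tor$; the paper's version is the more standard toolkit of equivariant cohomology and would transfer more directly to settings where an explicit splitting of the lattice sequence is unavailable. One small point worth stating explicitly in a polished write-up: the multiplicativity of $\alpha \otimes \beta \mapsto h^*(\alpha)\cup j^*(\beta)$ uses that $h^*$ and $j^*$ are ring maps together with graded-commutativity, and the $H_K$-balancedness of the tensor product over $H_K$ (rather than $\C$) is exactly absorbed by the algebra structure map $H_K \to H^*_K(X)$ composed with $h^*$ versus $H_K \to H_T$ composed with $j^*$, both of which agree after pulling back to $ET\times_T X$ by commutativity of the square.
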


\begin{proof}
The diagram \eqref{pullback} is a pullback diagram by Lemma \ref{darryl}. We also have $\pi_1(BK) = \pi_0(K) = 0$, because $K$ is connected, since $T$ is a torus. We also know that $p$ is a fibration, since a short exact sequence
\[
1 \to H \to T \xrightarrow{f} K \to 1
\]
of compact abelian groups induces a fibration sequence $BH \to BT \xrightarrow{p} BK$. Therefore, we may apply the Eilenberg-Moore spectral sequence, which is a second quadrant spectral sequence converging to $H(ET \times_T X)$. The $E_2$-term is given by
\[
\begin{array}{rcl}
E_2^{p,q} &=& \Tor_{H^*(BK)}^{p,q} (H^*(EK\times_K X),H^*(BT)) \\
&=& \text{a subquotient of} \quad H^*(EK \times_K X) \otimes_{H^*(BK)} P^p
\end{array} 
\]
where $P$ is a projective resolution of $H^*(BT)$ in the category of differential-graded $H^*(BK)$-modules. Furthermore, by definition of the differential-graded version of $\Tor$, we have that
\[
\bigoplus_i \Tor_{H^*(BK)}^{0,i} = H^*(EK \times_K X) \otimes_{H^*(BK)} H^*(BT).
\]
Thus, if we can show that $H^*(BT)$ is a free $H^*(BK)$-module, it will follow that $E_2^{p,q}$ is trivial whenever $p \neq 0$, since in that case $P$ will be concentrated in degree $0$. Hence, all differentials on $E_r$ for $r \geq 2$ will be trivial, and the Eilenberg-Moore spectral sequence will collapse at $E_2$, yielding the result. It follows from the construction of the spectral sequence that the isomorphism thus produced is induced by $h^* \cup j^*$, and is therefore natural in $X$.

We will show that $H^*(BT)$ is a free $H^*(BK)$-module using the Serre spectral sequence for the fibration
\[
BH \to BT \xrightarrow{p} BK.
\]
Since we are working with complex coefficients, the universal coefficient theorem implies that the $E_2$-term is
\[
E_2^{p,q} = H^p(BK) \otimes_\C H^q(BH),
\]
and it converges to $H^*(BT)$. Since $K$ and $H$ are compact Lie groups, $H^*(BK)$ and $H^*(BH)$ are concentrated in even degrees, all of the differentials of the Serre spectral sequence are trivial and it collapses at $E_2$. Hence,
\[
H^{p+q}(BT) \cong H^p(BK) \otimes_\C H^q(BH),
\]
as $H^*(BK)$-modules, which means that $H^*(BT)$ is a free $H^*(BK)$-module.
\end{proof}

\begin{remark}
Let $T \twoheadrightarrow K \twoheadrightarrow G$ be a composition of surjective maps of compact abelian groups, where $T$ is a torus. If $X$ is a finite $G$-CW complex, then there is a commutative diagram 
\begin{equation}
\begin{tikzcd}
ET \times_T X \ar[d,"h"] \ar[r,"j"] & BT \ar[d,"p"] \\
EK \times_K X \ar[r,"g"] \ar[d,"f"] & BK \ar[d] \\
EG \times_G X \ar[r] & BG 
\end{tikzcd}
\end{equation}
where both squares are pullback diagrams. By Proposition \ref{changeh}, we have an induced diagram of isomorphisms of graded $H_T$-algebras  
\begin{equation}\label{pullback2}
\begin{tikzcd}
H_G(X) \otimes_{H_G} H_K \otimes_{H_K} H_T \ar[d] \ar[r,"{f^* \cup g^* \otimes \id}"] & H_K(X) \otimes_{H_K} H_T \ar[d,"{h^* \cup j^*}"] \\
H_G(X) \otimes_{H_G} H_T \ar[r,"{(f\circ h)^* \cup j^*}" ] & H_T(X)
\end{tikzcd}
\end{equation}
where the left vertical map is the canonical map induced by $p^*: H_K \to H_T$.
\end{remark}

\begin{lemma}\label{kurt}
The diagram \eqref{pullback2} commutes.
\end{lemma}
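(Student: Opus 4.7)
The plan is to verify commutativity by tracing a homogeneous tensor around both paths of the square and invoking nothing more than (i) functoriality of pullback, (ii) the compatibility of pullback with cup product, and (iii) the commutativity of the top square in the pullback diagram, which gives the identity $g \circ h = p \circ j$ of maps $ET \times_T X \to BK$.

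Concretely, take an element $\alpha \otimes \beta \otimes \gamma \in H_G(X) \otimes_{H_G} H_K \otimes_{H_K} H_T$. Going right then down, the top horizontal arrow sends it to $(f^*\alpha \cup g^*\beta) \otimes \gamma$, and then the right vertical arrow $h^* \cup j^*$ sends this to
\[
h^*(f^*\alpha \cup g^*\beta) \cup j^*\gamma = (f\circ h)^*\alpha \cup (g \circ h)^*\beta \cup j^*\gamma.
\]
Going down then right, the left vertical arrow (induced by $p^*:H_K \to H_T$) sends $\alpha \otimes \beta \otimes \gamma$ to $\alpha \otimes (p^*\beta \cdot \gamma)$, and the bottom horizontal arrow $(f \circ h)^* \cup j^*$ then sends it to
\[
(f\circ h)^*\alpha \cup j^*(p^*\beta \cdot \gamma) = (f\circ h)^*\alpha \cup (p\circ j)^*\beta \cup j^*\gamma.
\]
The two expressions agree because $g \circ h = p \circ j$ forces $(g \circ h)^*\beta = (p \circ j)^*\beta$.

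Since both paths are $H_T$-algebra homomorphisms out of a tensor product (one can verify this directly from the formulas in Proposition \ref{changeh}), it suffices to check agreement on decomposable tensors, which is exactly what the chase above does. There is no real obstacle: the content is entirely the commutativity of the top square of maps of spaces, i.e.\ the naturality built into the Borel construction. One mild thing worth spelling out is that the left vertical map is well-defined as a map out of the triple tensor product $H_G(X) \otimes_{H_G} H_K \otimes_{H_K} H_T$; this holds because $p^*: H_K \to H_T$ is a ring map through which the $H_K$-module structure on $H_T$ factors, and the $H_G$-module structure on $H_T$ is the one obtained by composing with $H_G \to H_K \xrightarrow{p^*} H_T$, which matches the map induced by $f \circ h$ at the level of spaces.
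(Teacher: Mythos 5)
Your proof is correct and is essentially the same as the paper's: both chase a decomposable tensor $a \otimes b \otimes c$ through the two paths and reduce the comparison to the identity $g \circ h = p \circ j$ coming from the top square of spaces, together with functoriality and multiplicativity of pullback. Your added remark on well-definedness of the left vertical map is a reasonable bit of bookkeeping the paper leaves implicit.
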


\begin{proof}
It suffices to show that diagram \eqref{pullback2} commutes for an element of the form $a \otimes b \otimes c$. We have
\[
\begin{tikzcd}
a \otimes b \otimes c \ar[r,mapsto] \ar[d,mapsto]& (f^*a \cup g^*b) \otimes c \ar[d,mapsto] \\
a \otimes (p^*b \cup c) \ar[r,mapsto] & (f\circ h)^*a \cup j^*(p^*b \cup c) = h^*(f^*a \cup g^*b) \cup j^*c,
\end{tikzcd}
\]
where equality holds since 
\[
\begin{array}{rcl}
(f\circ h)^*a \cup j^*(p^*b \cup c) &=& (f\circ h)^*a \cup (p\circ j)^*b \cup j^*c \\
&=& (f\circ h)^*a \cup (g\circ h)^*b \cup j^*c \\
&=& h^*f^*a \cup h^*g^*b \cup j^*c \\
&=& h^*(f^*a \cup g^*b) \cup j^*c.
\end{array}
\]
\end{proof}

\begin{remark}
It will be sometimes convenient for us to restrict our attention to finite $T$-CW complexes $X$ which satisfy the following equivalent conditions:
\begin{itemize}
\item the natural map $H_T(X) \cong H(X) \otimes_\C H_T$ induced by the inclusion $X \hookrightarrow ET \times_T X$ is an isomorphism of $H_T$-modules;
\item the Serre spectral sequence of the fibration $X \hookrightarrow ET \times_T X \twoheadrightarrow BT$ collapses on the $E_2$ page. 
\end{itemize}
The second condition indicates how this property might be checked. Note that, while we may have that $H_T(X) \cong H(X) \otimes_\C H_T$ as $H_T$-modules, information about the $T$-action on $X$ is still carried by the $H_T$-algebra structure on $H_T(X)$.
\end{remark}

\begin{definition}\label{eqfor}
A $T$-space $X$ is said to be \textit{equivariantly formal} if the Serre spectral sequence of the fibration $X \hookrightarrow ET \times_T X \twoheadrightarrow BT$ collapes on the $E_2$ page. 
\end{definition}

\begin{example}
There are many examples of interesting $T$-spaces which are equivariantly formal:
\begin{itemize}
\item Any $T$-CW complex $X$ with cohomology concentrated in even degrees is equivariantly formal, because then the differentials in the Serre spectral sequence of the assocaited Borel construction are all trivial. This includes the representation sphere $S_\lambda$ of Example \ref{repsphere};
\item Any symplectic manifold with a Hamiltonian $T$-action;
\item Any subvariety of a complex projective space with linear $T$-action.
\end{itemize} 
\end{example}

\subsection{Localisation}\label{loco}

\begin{proposition}\label{flat}
Let $T$ be a torus and let $U$ be an open set in the complex Lie algebra $\t_\C = \check{T} \otimes \C$. The inclusion of rings
\[
H_T \hookrightarrow \O_{\t_\C}(U)
\]
given by regarding a polynomial in $H_T$ as a holomorphic function on $U$ is faithfully flat.
\end{proposition}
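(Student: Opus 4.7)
The plan is to reduce flatness to a pointwise statement at each maximal ideal of $H_T$, using the classical fact that the ring of germs of holomorphic functions at a point is faithfully flat over the algebraic local ring there, and then to verify the faithfulness condition by inspection at points of $U$.

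I would begin by identifying $H_T \cong S(\t_\C^*) \cong \C[x_1,\dots,x_n]$ (where $n = \dim_\C T$) as polynomial functions on $\t_\C$, with the inclusion into $\O_{\t_\C}(U)$ being restriction of a polynomial to $U$ as a holomorphic function. By the Nullstellensatz, maximal ideals $\mathfrak{m}_p \subset H_T$ correspond to points $p \in \t_\C$, and $\mathfrak{m}_p$ is generated by $(x_1 - p_1,\dots,x_n - p_n)$.

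For flatness I would argue pointwise: for each $p \in U$, the ring of germs $\O_{\t_\C,p}$ is faithfully flat over $(H_T)_{\mathfrak{m}_p}$. This is classical---both are Noetherian local $\C$-algebras with common $\mathfrak{m}$-adic completion $\C[[x_1 - p_1,\dots,x_n - p_n]]$, and standard descent from the faithfully flat completion yields faithful flatness of the inclusion. To pass from stalks to global sections on $U$, given a finitely generated $H_T$-module $M$, I would take a finite free resolution $F_\bullet \to M$ (which exists since $H_T$ is regular of finite global dimension $n$), tensor with the analytic structure sheaf $\O^{an}_{\t_\C}|_U$, and note that the resulting complex of sheaves is acyclic at every stalk by the pointwise flatness. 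Cartan's Theorem B on Stein $U$ then yields $\Tor^{H_T}_i(M, \O_{\t_\C}(U)) = 0$ for all $i \geq 1$, proving flatness. (Alternatively, one can check directly that for a finitely generated $H_T$-submodule inclusion $M \hookrightarrow N$, injectivity of $M \otimes_{H_T} \O_{\t_\C}(U) \to N \otimes_{H_T} \O_{\t_\C}(U)$ may be checked stalkwise on $U$ and then imported back to global sections, invoking Theorem B once.)

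For the faithful part, for each $p \in U$ the ideal $\mathfrak{m}_p \O_{\t_\C}(U)$ is contained in the proper ideal of holomorphic functions on $U$ vanishing at $p$, hence $\mathfrak{m}_p \O_{\t_\C}(U) \neq \O_{\t_\C}(U)$. The main obstacle I anticipate is the clean globalization from the pointwise/germ-level statement to the ring $\O_{\t_\C}(U)$ of global holomorphic sections---this seems to require $U$ to be Stein (so that Cartan's Theorem B is available), and the proposition is plausibly intended with this assumption implicit, or else with a softer reading of faithful flatness which only controls the maximal ideals corresponding to points actually lying in $U$ (for $p \notin U$, a function in $\mathfrak{m}_p$ can become invertible on $U$, and strict faithful flatness can fail).
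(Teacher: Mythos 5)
The paper offers no argument here---it simply cites Proposition~2.8 of Rosu--Knutson---so there is nothing to compare with line by line. The algebraic heart of that reference is exactly the stalk-level fact you invoke: for each $p$, the ring of convergent power series $\O_{\t_\C,p}$ is faithfully flat over the algebraic local ring $(H_T)_{\mathfrak{m}_p}$, because both are Noetherian local with common $\mathfrak{m}$-adic completion $\C[[x_1-p_1,\dots,x_n-p_n]]$. Your globalisation of flatness---take a finite free $H_T$-resolution of a finitely generated module, tensor with $\O^{an}_{\t_\C}$ to get a complex of coherent sheaves that is exact on stalks by the pointwise flatness, then pass to global sections over Stein $U$ via Cartan's Theorem~B---is a correct and standard route, and the Stein hypothesis is harmless for the paper's purposes since the sheaf-theoretic statements it feeds into are checked on a basis of polydiscs.

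The caveat you raise at the end is the key point, and it is right: the proposition is literally false for any proper open $U\subsetneq\t_\C$, not merely non-Stein ones. If $p\notin U$ and $U$ is Stein (e.g.\ a ball), the generators $x_1-p_1,\dots,x_n-p_n$ of $\mathfrak{m}_p$ have no common zero on $U$, so by Theorem~B they generate the unit ideal in $\O_{\t_\C}(U)$; hence $\mathfrak{m}_p\O_{\t_\C}(U)=\O_{\t_\C}(U)$ and the map on prime spectra fails to be surjective. (For $n=1$ this is even more immediate: $x-p$ is invertible on $U$.) So at the level of global sections only flatness can hold; ``faithful'' is correct only over the local rings at points of $U$, which is precisely what the downstream arguments---the sheaf property of $\H_T^*(X)$ via Proposition~2.10 of Rosu--Knutson, and the localisation theorem---actually use. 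Your proof is therefore sound as a proof of the statement the paper needs; the discrepancy you noticed is a genuine overstatement in the proposition as worded, not a gap in your argument.
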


\begin{proof}
See the proof of Proposition 2.8 in \cite{Rosu03}.
\end{proof}

\begin{definition}
Let $T$ be a torus and let $X$ be a finite $T$-CW complex. We have reserved the notation $\H^*_T(X)$ for the holomorphic sheaf associated to $H^*_T(X)$, whose value on an analytic open set $U \subset \t_\C$ is
\[
H^*_T(X)_U := H^*_T(X) \otimes_{H_T} \O_{\t_\C}(U)
\]
with restriction maps induced by those of $\O_{\t_\C}$. It follows from Proposition \ref{flat} that this is a sheaf, rather than just a presheaf (for the proof see Proposition 2.10 in \cite{Rosu03}). We write $\H^*_T(X)_V$ for the restriction of $\H^*_T(X)$ along the inclusion of any subset $V \subset \t_\C$. 
\end{definition}

\begin{remark}\label{eqii}
We have a decomposition of $\O_{\t_\C}$-modules
\[
\H^*_T(X) = \H_T^{even}(X) \oplus \H^{odd}_T(X)
\]
corresponding to even and odd cohomology
\[
H_T^{even}(X) := \bigoplus_{n\in \Z} H^{2n}_T(X) \quad \text{and} \quad H^{odd}_T(X) := \bigoplus_{n\in \Z} H^{2n+1}_T(X),
\]
since $\O_{\t_\C}$ is concentrated in even degrees. Therefore, $\H^*_T(X)$ is equipped with the structure of a $\Z/2\Z$-graded $\O_{\t_\C}$-algebra. 
\end{remark}

\begin{proposition}\label{Segall}
Let 
\[
p : T \longrightarrow T/H
\]
be a surjective map of compact abelian groups, with kernel $H$. The natural map
\[
p^*\, \H^*_{T/H}(X) \longrightarrow  \H^*_T(X)
\]
is an isomorphism of $\O_{\t_\C}$-algebras.
\end{proposition}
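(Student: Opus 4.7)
The plan is to check the claimed isomorphism stalkwise on $\t_\C$, thereby reducing it to the algebraic change-of-groups formula of Proposition \ref{changeh}. Implicitly $X$ is a $T$-space on which $H$ acts trivially---otherwise the natural map $p^*\H^*_{T/H}(X) \to \H^*_T(X)$ is undefined---so Proposition \ref{changeh} applies and yields an isomorphism of graded $H_T$-algebras
\[
H^*_{T/H}(X) \otimes_{H_{T/H}} H_T \;\xrightarrow{\cong}\; H^*_T(X).
\]
The surjection $p : T \twoheadrightarrow T/H$ induces a surjection of complex Lie algebras $p_* : \t_\C \twoheadrightarrow (\t/\mathfrak{h})_\C$, and the ring inclusion $H_{T/H} \hookrightarrow H_T$ is precisely the pullback of polynomial functions along $p_*$.

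I would then fix a point $x \in \t_\C$ and compute both stalks. Unwinding the definition of pullback for sheaves of $\O$-modules and of the sheaf $\H^*_{T/H}(X)$, and using that tensor products commute with filtered colimits,
\[
\bigl(p^*\H^*_{T/H}(X)\bigr)_x \;=\; \H^*_{T/H}(X)_{p_*(x)} \otimes_{\O_{(\t/\mathfrak{h})_\C,\, p_*(x)}} \O_{\t_\C, x} \;=\; H^*_{T/H}(X) \otimes_{H_{T/H}} \O_{\t_\C, x}.
\]
On the other hand,
\[
\H^*_T(X)_x \;=\; H^*_T(X) \otimes_{H_T} \O_{\t_\C, x} \;\cong\; \bigl(H^*_{T/H}(X) \otimes_{H_{T/H}} H_T\bigr) \otimes_{H_T} \O_{\t_\C, x} \;=\; H^*_{T/H}(X) \otimes_{H_{T/H}} \O_{\t_\C, x},
\]
the middle step being Proposition \ref{changeh}. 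Thus the two stalks coincide, and the identification is tautologically compatible with the $\O_{\t_\C, x}$-algebra structures on both sides.

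It remains to verify that this stalkwise identification is the morphism induced by the natural map $p^*\H^*_{T/H}(X) \to \H^*_T(X)$. That natural map is obtained by tensoring the change-of-groups comparison $H^*_{T/H}(X) \otimes_{H_{T/H}} H_T \to H^*_T(X)$ with $\O_{\t_\C}$, so at each stalk it is precisely the isomorphism of Proposition \ref{changeh} tensored with $\O_{\t_\C, x}$. Since isomorphism of sheaves can be tested stalkwise, this concludes the argument. The main point requiring care is just the identification of the pullback sheaf $p^*\H^*_{T/H}(X)$ at stalks; beyond that bookkeeping I do not expect any substantive obstacle.
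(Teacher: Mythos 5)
Your proof is correct and takes essentially the same approach as the paper: both reduce to the algebraic change-of-groups isomorphism of Proposition \ref{changeh} and then pass to the holomorphic sheaves, with your version simply spelling out the stalkwise bookkeeping that the paper summarizes as ``extending to holomorphic sheaves.''
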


\begin{proof}
This follows immediately from Theorem \ref{changeh} by extending to holomorphic sheaves.
\end{proof}

\begin{definition}
Let $x \in \t_\C$. Recall that the inclusion of a closed subgroup $H \subset T$ induces an inclusion of complex Lie algebras 
\[
\mathrm{Lie}(H)_\C := \Hom(\hat{H},\C) \hookrightarrow \Hom(\hat{T},\C) =: \t_\C.
\]
Define the intersection
\[
T(x) = \bigcap_{x \in \mathrm{Lie}(H)_\C} H
\]
of closed subgroups $H \subset T$. For a finite $T$-CW complex $X$, denote by $X^{x}$ the subspace of points fixed by $T(x)$.
\end{definition}

The following theorem is called the Localisation Theorem for Borel-equivariant cohomology.

\begin{theorem}\label{localisationn}
Let $x \in \t_\C$ and $X$ be a finite $T$-CW complex. The restriction map
\[
\H^*_T(X)_x \longrightarrow \H^*_T(X^x)_x
\]
is an isomorphism of $\O_{\t_\C,x}$-algebras.
\end{theorem}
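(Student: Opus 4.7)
The plan is to induct on the number of equivariant cells of $X$, reducing to the single-orbit case $X = T/H$. The algebraic input I will need is that, using Proposition \ref{change1} together with the identifications $H_T \cong S(\t_\C^*)$ from the preceding remarks, the $H_T$-module $H^*_T(T/H) \cong H_H$ is set-theoretically supported on the linear subspace $\Lie(H)_\C \subset \t_\C$, since the map $H_T \to H_H$ factors through restriction of polynomials from $\t_\C$ to $\Lie(H)_\C$.

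For the base case $X = T/H$, observe that by definition of $T(x)$ the condition $T(x) \subseteq H$ is equivalent to $x \in \Lie(H)_\C$, and because $T$ is abelian a coset $gH \in T/H$ is fixed by $T(x)$ iff $g^{-1} T(x) g = T(x) \subseteq H$. Hence $(T/H)^x$ equals all of $T/H$ when $x \in \Lie(H)_\C$ and is empty otherwise. In the first subcase the restriction map is tautologically an isomorphism. In the second subcase the target vanishes, so I must show $\H^*_T(T/H)_x = 0$: since $x \notin \Lie(H)_\C$, I pick a linear functional $f \in \t_\C^* \subset H_T$ with $f|_{\Lie(H)_\C} \equiv 0$ and $f(x) \neq 0$. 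Then $f$ maps to zero in $H_H$ but to a unit in $\O_{\t_\C,x}$, so every pure tensor satisfies $h \otimes g = (hf) \otimes (g/f) = 0$ in $H_H \otimes_{H_T} \O_{\t_\C,x}$, forcing the stalk to vanish.

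For the inductive step, write $X = X' \cup_\phi (T/H \times D^{n+1})$ with $X'$ a subcomplex of one fewer $T$-cell, and assume the theorem for $X'$. The cofibre sequence $X'_+ \hookrightarrow X_+ \twoheadrightarrow \Sigma^{n+1}(T/H)_+$, together with the analogous sequence for the pair $(X^x, (X')^x)$ --- in which the attached cell contributes either a fully fixed copy of $T/H \times D^{n+1}$ or nothing at all to $X^x$, according as $T(x) \subseteq H$ or not --- induces a ladder of long exact sequences in $\H^*_T(-)_x$ of $\O_{\t_\C,x}$-modules. Combining the inductive hypothesis on $X'$ with the base case and the suspension isomorphism to dispatch the cofibre term, the five lemma yields the isomorphism for $X$; naturality of cup product upgrades the conclusion to the level of $\O_{\t_\C,x}$-algebras.

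The main obstacle is really the base-case tensor-product computation --- specifically, pinning down the $H_T$-module structure on $H_H$ carefully enough to deploy the separating linear functional in both the connected and disconnected $H$ settings --- followed by verifying at the inductive step that the long exact sequence ladder is genuinely a morphism of $\O_{\t_\C,x}$-algebra sequences, so that the five lemma delivers an algebra isomorphism and not merely an underlying module isomorphism.
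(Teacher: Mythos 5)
Your proof is correct and follows essentially the same strategy as the paper's: first verify the isomorphism on orbits $T/H$ by an explicit computation of $H_H \otimes_{H_T} \O_{\t_\C,x}$ using a separating element of $H_T$, then extend to all finite $T$-CW complexes by an exactness argument. The paper inducts over skeleta and invokes Mayer--Vietoris where you attach one cell at a time and use the cofibre long exact sequence plus the five lemma, but these are cosmetic variations of the same proof.
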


\begin{proof}
We show this by induction. For the base case, consider the $0$-skeleton $X^0$ of $X$, which is a coproduct
\[
X^0 = \coprod_\alpha T/H_\alpha
\]
indexed over finitely many $\alpha$. Then
\[
H^*_T(X^0) = \prod_\alpha H^*_T(T/H_\alpha) \cong \prod_\alpha H_{H_\alpha},
\]
where the last isomorphism follows from Proposition \ref{change1}. The stalk of $\H_T(X^0)$ at $x \in \t_\C$ is therefore 
\[
\H^*_T(X^0)_x = \prod_\alpha H_{H_\alpha} \otimes_{H_T} \O_{\t_\C,x}
\]
where the tensor products are defined via the respective restriction maps $H_T \to H_{H_\alpha}$. If $x \notin \mathrm{Lie}(H_{\alpha})_\C$, then there exists an element $p \in H_T$ which is nonzero at $x$ and whose image in $H_{H_\alpha}$ is zero. It follows that
\[
H_{H_\alpha} \otimes_{H_T} \O_{\t_\C,x} = 0
\]
whenever $x \notin \mathrm{Lie}(H_{\alpha})_\C$.

Note that $(X^0)^x$ is the disjoint union of all orbits $T/H_\alpha$ in $X^0$ such that $x \in \mathrm{Lie}(H_{\alpha})_\C$. The restriction map 
\[
\H^*_T(X^0)_x \longrightarrow \H^*_T((X^0)^x)_x
\]
is therefore an isomorphism, which proves the base case.

Let 
\[
Y^n = \coprod_\alpha T/H_\alpha \times \D_\alpha^n
\]
for $n > 0$. Since this is $T$-homotopy equivalent to
\[
\coprod_\alpha T/H_\alpha \times \D_\alpha^0,
\]
the same argument shows that 
\[
\H^*_T(Y^n)_x \longrightarrow \H^*_T((Y^n)^x)_x
\]
is an isomorphism. A standard Mayer-Vietoris argument then yields the induction step. Therefore, the isomorphism of the theorem holds for any finite $T$-CW complex $X$.
\end{proof}

\section{Equivariant K-theory}

In this section we define $T$-equivariant K-theory and set out the properties which will be important for us in the sequel. 

\begin{definition}
Let $T$ be a compact abelian group and $X$ be a finite $T$-CW complex. The set of isomorphism classes of $T$-vector bundles on $X$ forms a commutative monoid under direct sum, and the $T$-equivariant K-theory $K_T(X) = K^0_T(X)$ of $X$ is defined as the associated Grothendieck group. This means that an element of $K_T(X)$ is a formal difference $[V] - [W]$ of isomorphism classes of $T$-equivariant vector bundles on $X$, also called a \textit{virtual $T$-bundle}. The abelian group $K_T(X)$ has a ring structure which is induced by the tensor product of underlying vector bundles. Equivariant K-theory defines a contravariant functor on the category of finite $T$-CW complexes by assigning to a morphism $f:X \to Y$ the ring homomorphism $K_T(Y) \to K_T(X)$ given by pullback along $f$, and satisfies the axioms for a generalised cohomology theory. Our references for equivariant K-theory are \cite{Segal} and Chapter XIV of \cite{Alaska}.
\end{definition}

\begin{remark}
It is immediate from the definition that the equivariant K-theory $K_T := K_T(\pt)$ of a point is identical to the representation ring $R(T)$ of $T$. The canonical map $K_T \to K_T(X)$ makes $K_T(X)$ an algebra over $K_T$.
\end{remark}

\begin{definition}\label{susp}
For a pointed $T$-CW complex $X$ with basepoint $*$, the \textit{reduced equivariant K-theory} $\tilde{K}_T(X)$ of $X$ is defined to be the kernel of the map $K_T(X) \to K_T(\pt)$ induced by the inclusion of the basepoint. Thus, $\tilde{K}_T(X)$ is an ideal of the ring $K_T(X)$, and is therefore a module over $K_T$. For an (unpointed) $T$-CW complex $X$, one verifies that 
\[
K^0_T(X) = \tilde{K}_T(X_+),
\]
since we clearly have $K_T(X_+) = K_T(X) \oplus K_T(\pt)$. We can extend equivariant K-theory to a $\Z/2\Z$-graded theory as follows. Define the $K_T$-module
\[
K^{-n}_T(X) := \tilde{K}_T(S^n\wedge X_+)
\]
for all positive integers $n$. We write $K^0_T(X) = K_T(X)$ and $\tilde{K}^0_T(X) = \tilde{K}_T(X)$. We also write $\tilde{K}^n_T(X)$ for the kernel of the map $K^n_T(X) \to K^n_T(\pt)$ induced by the inclusion of the basepoint. 
\end{definition}

\begin{remark}
In the course of this paper, we will be applying the conventions and notation of Definition \ref{susp} to other cohomology theories also.
\end{remark}

\begin{remark}
We briefly mention the important equivariant Bott periodicity theorem (see Theorem 3.2 in Chapter XIV of \cite{Alaska}) in order to define the $\Z/2\Z$-graded theory. The theorem states, among other things, that there is an isomorphism
\[
K^0_T(X) \cong K^{-2}_T(X).
\]
Therefore, we can extend equivariant K-theory to a $\Z$-graded, 2-periodic functor by setting
\[
K^{n}_T(X) := K^{n-2}_T( X)
\]
for all positive integers $n$. Thus, $K^*_T(X)$ is a $\Z/2\Z$-graded $K_T$-module isomorphic to $K^0_T(X) \oplus K^1_T(X)$.
\end{remark}

\begin{remark}
There is a multiplicative product on $K^*_T(X)$ which makes it a $\Z/2\Z$-graded algebra over the (ungraded) ring $K_T$. The product is defined as follows. Let $V$ be a $T$-vector bundle on $S^i\wedge X_+$ and $W$ a $T$-vector bundle on $S^j\wedge X_+$, each of which restricts to the zero-dimensional bundle on its basepoint. Let $p_1$ and $p_2$ denote the projection of
\[
(S^i\wedge X_+) \wedge (S^j \wedge X_+)
\]
to the first and second factors, respectively. Then the tensor product $E = p_1^*V \otimes p_2^*W$ is a $T$-vector bundle on $S^{i+j}\wedge X_+ \wedge X_+$. The pullback of $E$ along the map
\[
S^{i+j}\wedge X_+ \xrightarrow{\id \wedge \Delta} S^{i+j}\wedge X_+ \wedge X_+, 
\]
where $\Delta$ is the diagonal map, restricts to zero on the basepoint, since $V$ and $W$ do. The product of $V$ and $W$ is defined to be the corresponding element of $K^{-i-j}_T(X)$. The multiplicative structure is therefore the composite map
\begin{equation}\label{multi}
\tilde{K}_T(S^1 \wedge X_+) \otimes \tilde{K}_T(S^j\wedge X_+) \xrightarrow{p_1^*\otimes p_2^*} \tilde{K}_T(S^{i+j} \wedge X_+ \wedge X_+) \xrightarrow{(\id \wedge \Delta)^*} \tilde{K}_T(S^{i+j}\wedge X_+).
\end{equation}
\end{remark}

\begin{remark}\label{harold}
Let $X \mapsto J^*_T(X)$ be a contravariant functor from finite $T$-CW complexes into $\Z/2\Z$-graded rings. Suppose that 
\[
J^1_T(X) = \ker(J^0_T(S^1 \wedge X_+) \to J^0_T(\pt)),
\]
and the multiplicative structure on $J^*_T(X)$ is defined as in \eqref{multi}, using the product on the degree zero ring $J^0_T(X)$. Then, a natural isomorphism $K^0_T(X) \cong J^0_T(X)$ of rings induces a natural isomorphism $K^*_T(X) \cong J^*_T(X)$ of $\Z/2\Z$-graded algebras over $K_T(\pt) \cong J_T(\pt)$. Indeed, the isomorphism in degree one is induced by naturality of the isomorphism in degree zero, since
\[
\begin{array}{rcl}
K^1_T(X) &:=& \tilde{K}^0_T(S^1 \wedge X_+) \\
&:=& \ker(K_T^0(S^1 \wedge X_+) \to K_T^0(\pt)) \\
&\cong& \ker(J_T^0(S^1 \wedge X_+) \to J_T^0(\pt)) \\
&=:& \tilde{J}^0_T(S^1 \wedge X_+) \\
&=& J^1_T(X).
\end{array}
\]
Since the multiplicative structures of $K^*_T(X)$ and $J^*_T(X)$ both defined as in \eqref{multi}, and the reduced functors are naturally isomorphic as rings, the isomorphism preserves the graded algebra structure.
\end{remark}

\subsection{Change of groups}

\begin{remark}
For any compact $T$-space $X$, a group homomorphism $f: H \to T$ induces a change of groups map
\[
f^*: K^*_T(X) \to K^*_H(X),
\]
given by pulling back the $T$-action along $f$, and which is natural in $X$.
\end{remark}

\begin{proposition}\label{change11}
Let $f: H \hookrightarrow T$ be a closed subgroup and let $X$ be a finite $H$-CW complex. Let $i: X \hookrightarrow T \times_H X$ be the $H$-equivariant inclusion map. There is a natural isomorphism of $\Z/2\Z$-graded rings given by the composite
\[
K^*_T(T \times_H X) \xrightarrow{f^*} K^*_H(T \times_H X) \xrightarrow{i^*} K^*_H(X).
\]
\end{proposition}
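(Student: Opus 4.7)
The plan is to exhibit an explicit inverse to the composite $i^* \circ f^*$, namely the induction construction on equivariant vector bundles, and to deduce the $\Z/2\Z$-graded statement from the degree zero case via Remark \ref{harold}.

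First I would work in degree zero. Given an $H$-equivariant vector bundle $V \to X$, form the induced $T$-equivariant vector bundle $T \times_H V \to T \times_H X$, where $H$ acts on $T \times V$ diagonally by $h \cdot (t,v) = (th^{-1}, h\cdot v)$ and $T$ acts by left multiplication on the first factor. Sending $[V] \mapsto [T \times_H V]$ is additive and respects tensor products (since $T \times_H (V \otimes W) \cong (T \times_H V) \otimes_{T \times_H X} (T \times_H W)$ as $T$-bundles), hence defines a ring homomorphism
\[
\Phi: K^0_H(X) \longrightarrow K^0_T(T \times_H X).
\]

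Next I would verify that $\Phi$ is a two-sided inverse to $i^* \circ f^*$ at the level of vector bundles. In one direction, for an $H$-bundle $V \to X$, restricting $T \times_H V$ along $i: X \hookrightarrow T \times_H X$, $x \mapsto [1,x]$, and pulling back the $T$-action along $f$ recovers $V$ with its original $H$-action via the map $(1,v) \mapsto v$. In the other direction, for a $T$-bundle $W \to T \times_H X$, the map
\[
T \times_H (i^* f^* W) \longrightarrow W, \qquad [t, w] \longmapsto t \cdot w,
\]
is a well-defined $T$-equivariant bundle isomorphism, because the $T$-action on the base $T \times_H X$ is free on the first factor and $w$ sits over $[1,x]$. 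Both identifications are natural in bundle maps, so they pass to the Grothendieck groups and give mutually inverse ring homomorphisms. Naturality in $X$ follows immediately, since both $\Phi$ and $i^* \circ f^*$ are defined by universal constructions which commute with $H$-equivariant maps $X \to X'$ and the induced $T$-maps $T \times_H X \to T \times_H X'$.

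Finally, to upgrade from rings to $\Z/2\Z$-graded rings, I would apply Remark \ref{harold} with $J^0_T(X) := K^0_T(T \times_H X)$, viewed as a contravariant functor from finite $H$-CW complexes. The degree one part is defined by suspension in exactly the same way as for equivariant K-theory, and the product on $J^*_T$ is formed from the degree zero product via the same composite \eqref{multi}, so the natural ring isomorphism $K^0_H(X) \cong J^0_T(X)$ produced above extends canonically to a natural isomorphism of $\Z/2\Z$-graded algebras over $K_H(\pt)$. The main subtlety is purely bookkeeping: checking that $\Phi$ is a well-defined bundle construction (continuity of the $H$-quotient and local triviality of $T \times_H V$, which follow from local triviality of $T \to T/H$) and that it commutes with the reduced/suspension formalism so that Remark \ref{harold} genuinely applies.
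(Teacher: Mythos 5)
Your proposal is correct and follows essentially the same route as the paper: the degree-zero isomorphism is inverted by the induction functor $V \mapsto T \times_H V$, and the $\Z/2\Z$-graded statement is obtained by appealing to Remark \ref{harold}, with the compatibility $S^1 \wedge (T \times_H X_+) = T \times_H (S^1 \wedge X_+)$ (which you flag as the remaining bookkeeping) being exactly what the paper verifies to make that remark apply. You supply more detail than the paper on why the induction map is a ring homomorphism and a two-sided inverse, but the strategy is identical.
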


\begin{proof}
We clearly have an isomorphism in degree zero, with inverse given by $V \mapsto T \times_H V$. The isomorphism is natural in $X$, since a map $X \to Y$ of finite $H$-CW complexes induces a map $T \times_H X \to T \times_H Y$. By Lemma \ref{harold}, the isomorphism extends to an isomorphism of $\Z/2\Z$-graded rings since
\[
S^1 \wedge (T \times_H X_+) = T \times_H (S^1 \wedge X_+),
\]
since $S^1$ has trivial group action. 
\end{proof}

\begin{notation}
For a finite $T$-CW complex $X$, we denote by $\C_\lambda \to X$ the pullback along $X \to \pt$ of the representation corresponding to $\lambda \in \hat{T}$, which we also denote $\C_\lambda$ by abuse of notation. The dual line bundle $\C_\lambda^\vee$ is canonically isomorphic to $\C_{-\lambda}$. 
\end{notation}

\begin{remark}
Let $X$ be a compact $T/H$-space and let $E$ be a $T$-equivariant vector bundle over $X$. It is shown in the proof of Proposition 2.2 in \cite{Segal} that $E$ has a fiberwise decomposition
\[
E \cong \bigoplus_{\mu \in \hat{H}} E_\mu
\]
where $H$ acts on the fibers of $E_\mu$ via the character $\mu$. 
\end{remark}

\begin{remark}
Notice that the inclusion $H \subset T$ of a closed subgroup induces a surjective map of character groups
\[
\hat{T} \longrightarrow \hat{H},
\]
and therefore it is possible to extend any character of ${H}$ to a character of $T$. 
\end{remark}

\begin{proposition}\label{Segal1}
Let $H \subset T$ be a closed subgroup, and let $X$ be a compact $T$-space such that $H$ acts trivially on $X = X^H$. For $\mu \in \hat{H}$, choose an extension $\bar{\mu}$ of $\mu$ to $T$. Then 
\[
\begin{array}{rcl}
K_T(X) &\longrightarrow & K_{T/H}(X) \otimes_{K_{T/H}} K_T \\
{[E_\mu]} &\longmapsto & [E_\mu \otimes \C_{-\bar{\mu}}] \otimes [\C_{\bar{\mu}}]
\end{array}
\]
is a well defined isomorphism of $K_T$-algebras, natural in $X$.
\end{proposition}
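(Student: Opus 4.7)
The plan is to exploit the isotypic decomposition of a $T$-equivariant vector bundle over $X$, recalled in the remark preceding the statement. Since $H$ acts trivially on $X$, any $T$-bundle $E \to X$ decomposes as $E \cong \bigoplus_{\mu \in \hat{H}} E_\mu$ where $H$ acts on the fibers of $E_\mu$ via $\mu$. The fiberwise $H$-action on $E_\mu \otimes \C_{-\bar{\mu}}$ is given by $\mu + (-\bar{\mu})|_H = \mu - \mu = 0$, so $E_\mu \otimes \C_{-\bar{\mu}}$ factors through a $T/H$-equivariant bundle. This is precisely what allows the assignment $[E_\mu] \mapsto [E_\mu \otimes \C_{-\bar{\mu}}] \otimes [\C_{\bar{\mu}}]$ to land in the target.

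First I would check that the map is well-defined, namely that it is independent of the choice of extension $\bar{\mu}$. If $\bar{\mu}'$ is another extension, then $\bar{\mu}' - \bar{\mu}$ restricts trivially to $H$, so $\C_{\bar{\mu}' - \bar{\mu}}$ is a genuine $T/H$-representation. The required equality
\[
[E_\mu \otimes \C_{-\bar{\mu}'}] \otimes [\C_{\bar{\mu}'}] = [E_\mu \otimes \C_{-\bar{\mu}}] \otimes [\C_{\bar{\mu}}]
\]
then follows by using the $K_{T/H}$-bilinearity of $\otimes_{K_{T/H}}$ to move the class $[\C_{\bar{\mu}' - \bar{\mu}}]$ across the tensor symbol and cancel it with $[\C_{\bar{\mu} - \bar{\mu}'}]$.

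Next I would construct the inverse. Given a $T/H$-bundle $F$ on $X$ and a character $\lambda \in \hat{T}$, send $[F] \otimes [\C_\lambda] \mapsto [p^*F \otimes \C_\lambda]$, where $p : T \to T/H$ is the quotient map. This is well-defined on the tensor product because $p^*(F \otimes \C_{\bar{\nu}}) \otimes \C_\lambda \cong p^*F \otimes \C_{\lambda + \bar{\nu}}$ for $\bar{\nu}$ a character of $T/H$, matching the balanced tensor relation in the source. Composing one way, the isotypic decomposition gives $[E] = \sum_\mu [E_\mu] \mapsto \sum_\mu [E_\mu \otimes \C_{-\bar{\mu}}] \otimes [\C_{\bar{\mu}}] \mapsto \sum_\mu [E_\mu] = [E]$. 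Composing the other way, a bundle $F$ with trivial $H$-action has only one isotypic component (the trivial one), so choosing the trivial extension of the trivial character recovers $[F] \otimes [\C_\lambda]$.

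Finally I would verify the ring structure and naturality. The multiplicativity follows from the fact that $(E \otimes E')_\nu = \bigoplus_{\mu + \mu' = \nu} E_\mu \otimes E'_{\mu'}$ and the compatibility of the chosen extensions (one may take $\overline{\mu + \mu'} = \bar{\mu} + \bar{\mu}'$). Naturality in $X$ is immediate because a $T$-map $f : X \to Y$ preserves isotypic decompositions, i.e.\ $f^*(E_\mu) = (f^*E)_\mu$, so the assignment commutes with pullback. The main obstacle, such as it is, is bookkeeping around the well-definedness on tensor products and the independence of the extension; once those are nailed down the rest is formal.
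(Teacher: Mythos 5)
Your proposal is correct and follows essentially the same approach as the paper: both use the $\hat H$-isotypic decomposition, the same argument for independence of the chosen extension $\bar\mu$, the same inverse (pull back the $T/H$-action along $p$ and tensor with $K_T$), and both read off naturality from functoriality of the decomposition. Your version is slightly more explicit about verifying the ring (as opposed to merely $K_T$-module) structure and about checking both composites of the inverse, but the content and strategy coincide with the paper's proof.
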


\begin{proof}
The map is well defined since, for a different choice of extension $\bar{\mu}' \in \hat{T}$, the difference $[\C_{\bar{\mu} -\bar{\mu}'}]$ lies in $K_{T/H}$, and cancels out. The map preserves the $K_T$-algebra structure since
\[
[E_\mu \otimes \C_\lambda] \mapsto [E_\mu \otimes \C_\lambda \otimes \C_{-\bar{\mu}-\lambda}] \otimes [\C_{\bar{\mu}+\lambda}] = [E_\mu \otimes \C_{-\bar{\mu}}] \otimes [\C_{\bar{\mu}+\lambda}].
\]
Finally, it has an obvious inverse map given by pulling back the $T/H$-action on vector bundles to a $T$-action, and tensoring with an element of $K_T$. Naturality is clear.
\end{proof}

\subsection{Localisation}\label{loca}

\begin{proposition}[Prop. 2.22, \cite{Rosu03}]\label{flatt}
By regarding an element $f$ of $K_T$ as a character of $T$, we may view $f$ as a holomorphic function on the complexification $T_\C = \Hom(\hat{T},\C^\times)$ of $T$. The associated ring homomorphism
\[
K_T \longrightarrow \O_{T_\C}(U)
\]
is faithfully flat for any open $U \subset T_\C$.
\end{proposition}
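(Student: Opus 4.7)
The plan is to parallel the proof of the additive analog (Proposition \ref{flat}), following Rosu's argument in Proposition 2.22 of \cite{Rosu03}. The essential idea is to realize $K_T$ as the ring of regular algebraic functions on the smooth affine complex variety $T_\C$, and to reduce faithful flatness of $K_T \hookrightarrow \O_{T_\C}(U)$ to the standard comparison between algebraic and analytic structure sheaves at each smooth point.

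First, I would make the map explicit. A splitting $T \cong U(1)^n$ yields $\hat{T} \cong \Z^n$ and hence $K_T = R(T) \cong \Z[\hat{T}] \cong \Z[x_1^{\pm 1}, \ldots, x_n^{\pm 1}]$, which, after extending scalars, is precisely the coordinate ring of $T_\C \cong (\C^\times)^n$. Under this identification the homomorphism of the proposition is just the restriction map from algebraic functions on $T_\C$ to holomorphic functions on $U$. Next, I would check flatness at the level of stalks: for each $z \in T_\C$, the stalk $\O_{T_\C,z}$ of germs of holomorphic functions is faithfully flat over the localization $(K_T)_{\mathfrak{m}_z}$ at the maximal ideal corresponding to $z$. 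This is the standard comparison between algebraic and analytic local rings at a smooth point, and is the multiplicative counterpart of the fact invoked in the proof of Proposition \ref{flat}. Since flatness is a local property, this promotes to flatness of $\O_{T_\C}(U)$ as a $K_T$-module.

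Finally, for the faithful part, I would verify that every maximal ideal $\mathfrak{m} \subset K_T$ extends to a proper ideal of $\O_{T_\C}(U)$. When $\mathfrak{m}$ corresponds to a point $z \in U$, this is immediate since $\mathfrak{m} \cdot \O_{T_\C}(U)$ is contained in the ideal of holomorphic functions vanishing at $z$. The main obstacle I anticipate is controlling maximal ideals not corresponding to points of $U$ and carefully globalizing the stalk-level argument to the (possibly large and rather analytic) spectrum of $\O_{T_\C}(U)$; for this I would rely on the detailed descent argument in \cite{Rosu03}, which handles the analogous subtlety in the additive setting.
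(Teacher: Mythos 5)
The paper does not give a proof of this proposition: it is stated with a citation to Proposition~2.22 of Rosu--Knutson, and no argument is reproduced (the additive analogue, Proposition~\ref{flat}, is likewise disposed of by a one-line reference). So there is no in-paper argument to compare yours against; what you have written is a reconstruction of Rosu--Knutson's proof, which is the right thing to attempt, and the overall route you describe --- complexify to identify $K_T$ with the coordinate ring of $T_\C$, compare algebraic and analytic local rings at each point, then globalize --- is the correct shape.

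The step ``since flatness is a local property, this promotes to flatness of $\O_{T_\C}(U)$'' is exactly where the work lies, and that phrase alone does not carry it. Flatness of $A\to B$ is local with respect to the primes of $B$, but the localizations of the global-section ring $\O_{T_\C}(U)$ at its (vast, non-geometric) spectrum are not the analytic stalks $\O_{T_\C,z}$; stalkwise flatness therefore does not formally yield flatness of $\O_{T_\C}(U)$. One has to argue through the sheaf: for a finitely generated ideal $I=(f_1,\dots,f_k)\subset K_T$ the kernel of $\O_U^{\,k}\to\O_U$ is a coherent sheaf on $U$ (Oka), its stalks are generated by the algebraic syzygies by the pointwise comparison you invoke, and then left exactness of $\Gamma(U,-)$ identifies $\ker(\O(U)^{k}\to\O(U))$ with the global sections of that coherent kernel. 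You correctly flag globalization and the maximal ideals away from $U$ as the delicate points; note also that over $R(T)=\Z[\hat T]$ literal faithful flatness cannot hold, since any rational prime $p$ (like a generator of $\mathfrak{m}_z$ for $z$ well separated from $U$) becomes a unit in $\O_{T_\C}(U)$ and so $\mathfrak m\cdot\O_{T_\C}(U)$ is the unit ideal. What the thesis actually uses downstream is flatness together with injectivity, and ``faithfully'' should be read in that light.
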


\begin{definition}
Let $T$ be a torus and let $X$ be a finite $T$-CW complex. Denote by $\K^*_T(X)$ the holomorphic sheaf whose value on an analytic open set $U \subset T_\C$ is
\[
K^*_T(X)_U := K^*_T(X) \otimes_{K_T} \O_{T_\C}(U)
\]
with restriction maps induced by those of $\O_{T_\C}$. It follows from Proposition \ref{flatt} that this is a sheaf (as in the proof of Prop. 2.10 in \cite{Rosu03}. We write $\K^*_{T}(X)_{V}$ for the restriction of the sheaf to any subset $V \subset T_\C$. 
\end{definition}

\begin{proposition}\label{Segal}
Let 
\[
p : T \longrightarrow T/H
\]
be a surjective map of compact abelian groups, with kernel $H$. The natural map
\[
p_{\C}^*\, \K_{T/H}(X) \longrightarrow  \K_T(X)
\]
is an isomorphism of $\O_{T_\C}$-algebras.
\end{proposition}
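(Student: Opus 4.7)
The plan closely mirrors the proof of Proposition \ref{Segall} for Borel-equivariant cohomology: the strategy is to reduce to Proposition \ref{Segal1} (the K-theoretic change-of-groups statement for a trivially-acting subgroup) by extending scalars along the faithfully flat inclusions $K_T \hookrightarrow \O_{T_\C}(U)$ and $K_{T/H} \hookrightarrow \O_{(T/H)_\C}(V)$ supplied by Proposition \ref{flatt}. Since $\K_{T/H}(X)$ only makes sense when $X$ carries a $T/H$-action, the pulled-back $T$-action on $X$ has $H$ in its kernel, i.e. $X = X^H$, so the hypothesis of Proposition \ref{Segal1} is automatically in force.

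First, I would spell out the natural map. The surjection $p:T\to T/H$ induces a holomorphic map $p_\C: T_\C \to (T/H)_\C$ of complexifications together with a change-of-groups ring homomorphism $K_{T/H}(X) \to K_T(X)$. For an analytic open $U \subset (T/H)_\C$, combining the change-of-groups map with the pullback of holomorphic functions $\O_{(T/H)_\C}(U) \to \O_{T_\C}(p_\C^{-1}(U))$ yields a map
\[
K_{T/H}(X)\otimes_{K_{T/H}} \O_{(T/H)_\C}(U) \longrightarrow K_T(X)\otimes_{K_T} \O_{T_\C}(p_\C^{-1}(U)),
\]
which is precisely the natural morphism $p_\C^*\K_{T/H}(X) \to \K_T(X)$ of sheaves of $\O_{T_\C}$-algebras.

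Second, I would verify the map is a stalkwise isomorphism. The stalk of $p_\C^*\K_{T/H}(X)$ at $x\in T_\C$ is $K_{T/H}(X) \otimes_{K_{T/H}} \O_{T_\C,x}$ (the extension of scalars along $\O_{(T/H)_\C, p_\C(x)} \to \O_{T_\C,x}$ collapses to a single tensor over $K_{T/H}$), and the stalk of $\K_T(X)$ at $x$ is $K_T(X)\otimes_{K_T} \O_{T_\C,x}$. By Proposition \ref{Segal1} there is a natural $K_T$-algebra isomorphism $K_T(X) \cong K_{T/H}(X) \otimes_{K_{T/H}} K_T$, and tensoring with $\O_{T_\C,x}$ over $K_T$ produces the required stalk isomorphism. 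A small diagram chase using the explicit formula $[E_\mu] \mapsto [E_\mu \otimes \C_{-\bar\mu}]\otimes [\C_{\bar\mu}]$ of Proposition \ref{Segal1} confirms this agrees with the natural map above.

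Finally, the $\Z/2\Z$-graded algebra structure is then automatic: Lemma \ref{harold}, combined with the naturality of the Proposition \ref{Segal1} isomorphism in $X$, lifts the degree-zero ring isomorphism to an isomorphism of $\Z/2\Z$-graded $K_T$-algebras, and extension of scalars to $\O_{T_\C}$ preserves this grading and multiplicative structure. I expect the only delicate point to be the bookkeeping required to show that the map induced by Proposition \ref{Segal1} (which depends on a choice of extension $\bar\mu \in \hat T$ of each $\mu\in \hat H$) is genuinely identified with the canonical natural map appearing in the statement; the independence-of-choice argument is already embedded in the proof of Proposition \ref{Segal1}, so this is really a matter of careful identification rather than new content.
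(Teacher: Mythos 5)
Your argument is correct and is an expanded version of what the paper does, which is to note (in one line) that the result follows from Proposition \ref{Segal1} by extending scalars to holomorphic functions. The stalkwise reduction and the identification of the natural map with the one built from Proposition \ref{Segal1} are exactly the content that the paper leaves implicit.
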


\begin{proof}
This follows immediately from Theorem \ref{Segal1} by extending to holomorphic sheaves.
\end{proof}

\begin{definition}
Let $u \in T_\C$. Recall that the inclusion of a closed subgroup $H \subset T$ induces an inclusion of complex Lie groups
\[
H_\C := \Hom(\hat{H},\C^\times) \hookrightarrow \Hom(\hat{T},\C^\times) =: T_\C.
\]
Define the intersection 
\[
T(u) = \bigcap_{u \in H_\C} H
\]
of closed subgroups $H \subset T$. For a finite $T$-CW complex $X$, denote by $X^{u}$ the subspace of points fixed by $T(u)$.
\end{definition}

The following theorem is called the Localisation Theorem for equivariant K-theory.

\begin{theorem}\label{localisation}
Let $u \in T_\C$. The restriction map
\[
\K^*_T(X)_u \longrightarrow \K^*_T(X^u)_u
\]
is an isomorphism of $\O_{T_\C,u}$-algebras.
\end{theorem}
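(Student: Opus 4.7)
The plan is to mirror the proof of Theorem \ref{localisationn} (the localisation theorem for Borel-equivariant cohomology), carrying out an induction on the $T$-CW structure of $X$ and using Mayer-Vietoris for the inductive step. Throughout, exactness of the localisation functor $-\otimes_{K_T}\O_{T_\C,u}$ is guaranteed by Proposition \ref{flatt} (faithful flatness of $K_T \hookrightarrow \O_{T_\C}(U)$), which plays the role that Proposition \ref{flat} played in the cohomology case.

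For the base case, I would take the $0$-skeleton $X^0 = \coprod_\alpha T/H_\alpha$. By Proposition \ref{change11},
\[
K^*_T(X^0) = \prod_\alpha K^*_T(T/H_\alpha) \cong \prod_\alpha K^*_{H_\alpha},
\]
so the stalk is
\[
\K^*_T(X^0)_u = \prod_\alpha K^*_{H_\alpha} \otimes_{K_T} \O_{T_\C,u},
\]
with the tensor products formed via the restriction maps $K_T \twoheadrightarrow K_{H_\alpha}$. The crucial observation is that, under the identification of $K_T$ with characters on $T_\C$, the map $K_T \to K_{H_\alpha}$ corresponds to restriction of holomorphic functions along $(H_\alpha)_\C \hookrightarrow T_\C$. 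Hence if $u \notin (H_\alpha)_\C$, there exists a character $\lambda \in \hat{T}$ that is trivial on $H_\alpha$ but satisfies $\lambda(u) \neq 1$; then $[\C_\lambda] - 1 \in K_T$ maps to $0$ in $K_{H_\alpha}$ while its image in $\O_{T_\C,u}$ is a unit, forcing the factor to vanish. Since $(X^0)^u$ is precisely the union of those orbits $T/H_\alpha$ with $u \in (H_\alpha)_\C$, the restriction map
\[
\K^*_T(X^0)_u \longrightarrow \K^*_T((X^0)^u)_u
\]
is an isomorphism.

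For the inductive step, I would first observe that $Y^n := \coprod_\alpha T/H_\alpha \times \D^n_\alpha$ is $T$-homotopy equivalent to $\coprod_\alpha T/H_\alpha$, so $T$-homotopy invariance of equivariant K-theory reduces to the base case. Now assume the result for the $n$-skeleton $X^n$, and realise $X^{n+1}$ as the pushout of $X^n$ with $Y^{n+1}$ along $\coprod_\alpha T/H_\alpha \times S^n$. The associated Mayer-Vietoris long exact sequence in $\K^*_T(-)$ remains exact after taking stalks at $u$ by flatness, and the same is true for the Mayer-Vietoris sequence computing $\K^*_T((-)^u)_u$ applied to the corresponding decomposition of $X^{n+1}$ after taking $T(u)$-fixed points (which commutes with the pushout because the attaching maps are $T$-equivariant). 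The restriction map is a natural transformation between the two sequences, and the inductive hypothesis together with the base case handles four out of every five terms, so the five lemma concludes the step.

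The main obstacle is really the base-case vanishing: one must identify $K_{H_\alpha} \otimes_{K_T} \O_{T_\C,u}$ with sections of the structure sheaf of $(H_\alpha)_\C$ at $u$ (which is $0$ when $u \notin (H_\alpha)_\C$), and produce an explicit element of the ideal $\ker(K_T \to K_{H_\alpha})$ that is a unit at $u$. Once this geometric translation is in place, the rest of the argument is the standard skeletal Mayer-Vietoris induction already used for ordinary cohomology, carried over verbatim after replacing Proposition \ref{change1} with Proposition \ref{change11} and $\t_\C$ with $T_\C$.
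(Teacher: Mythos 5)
Your proof is correct and takes the same route the paper gestures at: the paper's own argument for Theorem \ref{localisation} is the single sentence ``exactly analogous to Theorem \ref{localisationn},'' and your write-up is exactly what that analogy unpacks to. In particular, your explicit choice of $[\C_\lambda] - 1$ with $\lambda$ trivial on $H_\alpha$ but $\lambda(u)\neq 1$ is the correct K-theoretic replacement for the polynomial $p\in H_T$ vanishing on $\mathrm{Lie}(H_\alpha)_\C$ but nonzero at $x$, and the rest (flatness from Proposition \ref{flatt}, additivity, skeletal Mayer--Vietoris with the five lemma) matches the paper's intent.
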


\begin{proof}
The proof is exactly analogous to the proof of Theorem \ref{localisationn}, using the additivity axiom, the change of groups isomorphism $K^*_T(T/H) \cong K_H$, and the Mayer-Vietoris sequence.
\end{proof}


\section{The equivariant Chern character}

Let $X$ be finite $T$-CW complex. The Chern character is a map of cohomology theories which is induced by the exponential map. Namely, it is a homomorphism of rings
\[
ch: K(X) \to \prod_{2n\in \Z} H^{2n}(X;\Q), 
\]
which is natural in $X$ and which commutes with coboundary maps. It is a classical result that $ch$ is an isomorphism after tensoring with $\Q$. In \cite{Rosu03}, Rosu defined an equivariant version of the Chern character, and it is the goal of this section to describe this map. 

\begin{definition}
By the splitting principle, it suffices to define the Chern character on a line bundle $L \in K(X)$. Thus
\[
ch(L) := e^{c_1(L)} = 1 + c_1(L) + c_1(L)^2/2! + ... \in \prod_{n\in \Z} H^{2n}(X;\Q).
\]
\end{definition}

\begin{remark}
The first step in formulating Rosu's result is to notice that, by applying the Chern character isomorphism to the $k$-skeleton $ET^{(k)} \times_T X$ of the Borel construction of $X$, we obtain an isomorphism 
\begin{equation}\label{lurb}
\varprojlim_k K(ET^{(k)} \times_T X) \cong \varprojlim_k \prod_{2n\in \Z} H^{2n}(ET^{(k)} \times_T X) \cong \prod_{n\in \Z} H^{2n}(ET \times_T X),
\end{equation}
where we are again using complex coefficients.
\end{remark}

\begin{notation}
Let $H_T(X)^\wedge_0$ denote the completion of the $H_T$-module $H_T(X)$ at $0 \in \t_\C$, and let $K_T(X)^\wedge_1$ denote the completion of the $K_T$-module $K_T(X)$ at $1 \in T_\C$. 
\end{notation}

\begin{theorem}[Completion theorem 1]
If $X$ is a finite $T$-CW complex, then there is a natural isomorphism
\[
H^{even}_T(X)^\wedge_0 \cong \prod_{n\in \Z} H^{2n}(ET \times_T X). 
\]
\end{theorem}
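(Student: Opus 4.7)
The plan is to interpret $H^{even}_T(X)^\wedge_0$ as the $I$-adic completion, where $I$ is the augmentation ideal of $H_T$, and then identify this with $\prod_n H^{2n}(ET\times_T X)$ via a comparison of two filtrations on $H^{even}_T(X)$: the $I$-adic filtration and the filtration by cohomological degree. Two ingredients are essential: the finite generation of $H^*_T(X)$ over $H_T$ for finite $T$-CW complexes, and the fact that $H_T = \C[x_1,\ldots,x_r]$ is concentrated in even degrees with generators in degree $2$.

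First I would identify the completion. The completion of the local ring $\O_{\t_\C,0}$ at the origin is the formal power series ring $\C[[x_1,\ldots,x_r]]$, which also equals the $I$-adic completion of $H_T$. Hence for any finitely generated $H_T$-module $M$,
\[
M^\wedge_0 = M \otimes_{H_T} \widehat{\O_{\t_\C,0}} = \varprojlim_k M/I^k M.
\]

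Next I would prove that $H^*_T(X)$ is a finitely generated $H_T$-module. For a single orbit $X = T/H$, Proposition \ref{change1} gives $H^*_T(T/H) \cong H^*(BH)$, which is finitely generated (in fact cyclic) over $H^*(BT) = H_T$: the identity component $H^0 \subseteq H$ is a subtorus of $T$ inducing a surjection $H^*(BT) \twoheadrightarrow H^*(BH^0)$, and $H^*(BH;\C) \cong H^*(BH^0;\C)$ since $H/H^0$ is finite and the coefficients are complex. The general case follows by induction on cells, using the long exact sequences of pairs together with Noetherianity of $H_T$. The direct summand $H^{even}_T(X) \subseteq H^*_T(X)$ is then also finitely generated.

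The crux is the filtration comparison. Let $G_k := \bigoplus_{n \geq k} H^{2n}_T(X)$. Since $I$ is generated in cohomological degree $2$, the inclusion $I^k H^{even}_T(X) \subseteq G_k$ is immediate. Conversely, choose a finite set of homogeneous $H_T$-generators $\{g_j\}$ for $H^{even}_T(X)$ with maximum cohomological degree $2D$. Any $\alpha \in G_{k+D}$ may be written as $\sum a_j g_j$ with $a_j \in H_T$; matching cohomological degrees forces $\deg a_j \geq 2k$, hence $a_j \in I^k$. So $G_{k+D} \subseteq I^k H^{even}_T(X)$, and the two filtrations are cofinal. Since $H^{even}_T(X)/G_k$ equals the finite direct sum $\bigoplus_{n < k} H^{2n}_T(X)$, passing to the inverse limit yields
\[
H^{even}_T(X)^\wedge_0 \cong \varprojlim_k H^{even}_T(X)/G_k = \prod_n H^{2n}(ET \times_T X).
\]
Naturality in $X$ is inherited from the functoriality of each step.

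The main obstacle is the filtration comparison---specifically, establishing the converse inclusion $G_{k+D} \subseteq I^k H^{even}_T(X)$, which relies crucially on finite generation to bound how far the $I$-adic filtration can lag behind the degree filtration.
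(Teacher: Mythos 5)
Your proof is correct and follows essentially the same route as the paper: the paper's proof simply notes that $H_T(X)$ is finitely generated over the Noetherian ring $H_T$ and invokes Theorem 55 of Matsumura, which is precisely the completion-theoretic content you supply explicitly via the cofinality of the $I$-adic and degree filtrations. Your filtration comparison and the finite-generation argument are exactly what the paper's citation packages away.
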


\begin{proof}
If $X$ is a finite $T$-CW complex, then $H_T(X)$ is finitely generated over $H_T$. The result now follows from Theorem 55 in \cite{Mats} since $H_T$ is Noetherian.
\end{proof}

\begin{theorem}[Completion theorem 2]
If $X$ is a finite $T$-CW complex, then there is a natural isomorphism
\[
K_T(X)^\wedge_1 \cong \varprojlim_k K(ET^{(k)} \times_T X) .
\]
\end{theorem}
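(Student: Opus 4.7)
The plan is to reduce this statement to the classical Atiyah--Segal completion theorem by matching up the two notions of completion in play. The first step is to identify the maximal ideal $\mathfrak{m}_1 \subset K_T$ corresponding to the point $1 \in T_\C$ with the augmentation ideal $I := \ker(R(T) \to \Z)$ of the representation ring, where the augmentation sends a virtual representation to its dimension. Under the identification $K_T = R(T)$ and the embedding of a character $\chi$ as a holomorphic function on $T_\C$, an element of $K_T$ lies in $\mathfrak{m}_1$ precisely when its value at $1 \in T$ vanishes, which is exactly the augmentation condition. Hence $\mathfrak{m}_1 = I$, and the completion $K_T(X)^\wedge_1$ in the sense used here coincides with the $I$-adic completion $K_T(X)^\wedge_I$.

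The second step is to verify that $K_T(X)$ is finitely generated over the Noetherian ring $K_T$, since this ensures that the $I$-adic completion is well-behaved and equal to $K_T(X) \otimes_{K_T} (K_T)^\wedge_I$. I would proceed by induction on the $T$-cells of $X$: for a single orbit $T/H$, Proposition \ref{change11} gives $K_T(T/H) \cong R(H)$, which is finitely generated over $R(T)$ because the restriction map $R(T) \to R(H)$ is finite. The inductive step is then a standard Mayer--Vietoris argument on a cell attachment, using the fact that finitely generated modules form an abelian subcategory of modules over a Noetherian ring.

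Finally, I would invoke the Atiyah--Segal completion theorem, which asserts that for any compact Lie group $T$ and any finite $T$-CW complex $X$ there is a natural isomorphism
\[
K_T(X)^\wedge_I \cong \varprojlim_k K(ET^{(k)} \times_T X),
\]
together with the vanishing $\varprojlim_k{}^{\!1} K(ET^{(k)} \times_T X) = 0$ so that the inverse limit really does compute $K^*(ET \times_T X)$. Combining this with the identifications in the first two steps gives the desired natural isomorphism.

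The main obstacle is of course the Atiyah--Segal input theorem itself, whose proof proceeds by induction on the $T$-cells of $X$. The essential base case is $X = T/H$, where one must show that the $I$-adic filtration on $R(H)$ matches, up to bounded shift in each finite degree, the skeletal filtration on $K(BH)$ induced by $BH \simeq ET/H$. The inductive step again uses Mayer--Vietoris, and the $\varprojlim^1$-vanishing follows from the Mittag--Leffler condition, which in turn holds because on each finite skeleton $ET^{(k)} \times_T X$ only finitely many powers of $I$ are relevant modulo the kernel of the restriction map. This last point is what ties the algebraic completion to the topological inverse limit and is the genuine content of the theorem.
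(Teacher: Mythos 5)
Your proposal is correct and takes essentially the same approach as the paper: the paper's proof is simply a direct citation of the Atiyah--Segal completion theorem, and your argument is a more detailed unpacking of why that citation applies, correctly identifying the maximal ideal at $1 \in T_\C$ with the augmentation ideal $I = \ker(R(T) \to \Z)$ and noting the finite-generation hypothesis needed to relate the $I$-adic completion of $K_T(X)$ to the inverse limit over skeleta.
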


\begin{proof}
This is the Atiyah-Segal completion theorem, see \cite{AS}.
\end{proof}

\begin{remark}
Via the completion theorems, we can interpret \eqref{lurb} as an isomorphism
\begin{equation}\label{lur}
K_T(X)^\wedge_1 \xrightarrow{\cong} H^{even}_T(X)^\wedge_0.
\end{equation}
To relate this to holomorphic sheaves, we need $X$ to be equivariantly formal, which means that $H_T(X) \cong H(X) \otimes_\C H_T$ as $H_T$-modules. Thus,
\[
H^{even}_T(X)^\wedge_0 \cong H^{even}(X) \otimes_{\C} H_T(\pt)^\wedge_0
\]
are free as $H_T(\pt)^\wedge_0$-modules, and so we have inclusions 
\[
\begin{tikzcd}
\H^{even}_T(X)_0 \ar[d,hook] \ar[r,"{\cong}"] &  H^{even}(X) \otimes_\C \O_{\t_\C,0} \ar[d,hook] \\
H^{even}_T(X)^\wedge_0 \ar[r,"{\cong}"] & H^{even}(X) \otimes_\C H^\wedge_{T,0}.
\end{tikzcd}
\]
\end{remark}

The following formulation of the equivariant Chern character is based on Ganter's formulation in \cite{Nora} (see Theorem 3.2). The isomorphism is first determined in degree zero in the manner outlined above, then extended formally to degree one by Lemma \ref{harold}.

\begin{theorem}[Theorem 3.6, \cite{Rosu03}]\label{chern}
Let $X$ be an equivariantly formal, finite $T$-CW complex, and let $U$ be a neighbourhood of zero in $\t_\C$ such that $\exp|_U$ is a bijection. There is an isomorphism 
\[
ch_T: \K^*_T(X)_{\exp(U)} \to (\exp|_U)_* (\H^*_T(X)_U)
\]
of sheaves of $\Z/2\Z$-graded $\O_{\exp(V)}$-algebras, natural in $X$, determined by the commutative diagram
\[
\begin{tikzcd}
\K_T(X)_1 \ar[r,"{ch_{T,1}}"] \ar[d] & \H^{even}_T(X)_0 \ar[d,hook] \\  
K_T(X)^\wedge_1 \ar[r,"{ch}"] & H^{even}_T(X)^\wedge_0
\end{tikzcd}
\]
\end{theorem}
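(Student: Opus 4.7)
The plan is to construct $ch_T$ directly as a morphism of sheaves on $\exp(U)$ by giving a formula on sections, verify that this morphism is an isomorphism by reducing to the stalk at $1 \in T_\C$ via the Completion Theorems, and finally promote the degree-zero ring iso to the claimed $\Z/2\Z$-graded algebra iso via Lemma \ref{harold}.

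For the construction, I would define $ch_T$ on an open $V \subset \exp(U)$ by the formula
\[
[E] \otimes f \;\longmapsto\; ch^T([E]) \cdot \exp^*(f),
\]
where $ch^T$ is the classical equivariant Chern character, given on a line bundle by $ch^T(L) = e^{c_1^T(L)}$ and extended by the splitting principle. Well-definedness on the tensor product over $K_T$ follows from multiplicativity of $ch^T$ together with the identity $ch^T([\C_\chi]) = e^{\chi} = \exp^*(\chi)$ for $\chi \in \hat{T} \subset K_T$, where on the right-hand side $\chi$ is interpreted as the character function $T_\C \to \C^\times$ pulled back to $\t_\C$ via $\exp$.

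The main obstacle is to verify that $ch^T([E])$ actually defines a section of $\H^{even}_T(X)_U$, and not merely of its completion at $0 \in \t_\C$, since the series $e^{c_1^T(L)}$ is a priori only a formal power series. Equivariant formality is used crucially here: choosing a splitting $H^*_T(X) \cong H^*(X) \otimes H_T$ as $H_T$-modules and decomposing $c_1^T(L) = a_L \otimes 1 + 1 \otimes \chi_L$ with $a_L \in H^2(X)$ and $\chi_L \in H^2_T$, the fact that $a_L$ and $\chi_L$ commute gives $e^{c_1^T(L)} = e^{a_L} \cdot e^{\chi_L}$. The first factor is a polynomial in cohomology classes by nilpotence of $a_L$ in the finite-dimensional ring $H^*(X)$, while the second is the globally convergent holomorphic function $e^{\chi_L}$ on $\t_\C$, so the product lies in $H^{even}(X) \otimes \O_{\t_\C}(U) \cong \H^{even}_T(X)_U$.

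To verify that $ch_T$ is an isomorphism, I would first check at the stalk at $1 \in T_\C$: the Completion Theorems identify the completion of this stalk map with the classical Chern character iso $K_T(X)^\wedge_1 \cong H^{even}_T(X)^\wedge_0$ of the commutative diagram in the statement. Equivariant formality makes $\H^{even}_T(X)_0$ a free module of finite rank $r = \dim H^{even}(X)$ over the local ring $\O_{\t_\C,0}$, and through the iso on completions this rank is inherited by $\K_T(X)_1$, which, being a finitely generated module over the local ring $\O_{T_\C,1}$ whose faithfully flat completion is free of rank $r$, is itself free of rank $r$. The map $ch_{T,1}$ between these two free modules of equal rank is iso after faithfully flat completion, hence itself iso. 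Since source and target of $ch_T$ are coherent sheaves, iso at the stalk at $1$ extends automatically to iso on a neighborhood of $1$; shrinking $U$ if necessary, this neighborhood contains $\exp(U)$. Naturality in $X$ is inherited from that of the classical Chern character, and Lemma \ref{harold} extends the degree-zero iso to the claimed $\Z/2\Z$-graded algebra iso.
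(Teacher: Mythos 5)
Your overall plan mirrors the paper's outline (via the Completion Theorems and Lemma~\ref{harold}) and Rosu's approach, and the construction of $ch_T$, the well-definedness over the $K_T$-tensor product, and the stalk-at-$1$ argument via faithfully flat descent are sound. However, the step showing $ch^T([E])$ gives a genuinely holomorphic section, rather than a formal one, has a gap. You decompose $c_1^T(L)$ as $a_L \otimes 1 + 1 \otimes \chi_L$ under a module splitting $H^*_T(X) \cong H^*(X) \otimes H_T$ and then compute $e^{c_1^T(L)} = e^{a_L}\, e^{\chi_L}$, appealing to nilpotence of $a_L$ in $H^*(X)$. But the exponential defining the Chern character must be taken in the ring $H^*_T(X)$, and the equivariant-formality splitting is an isomorphism of $H_T$-\emph{modules} only, not of rings; the lift of $a_L$ to $H^*_T(X)$ is typically not nilpotent. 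Take $X = S_\lambda$: one has $H^*_T(X) = H_T\{1,v\}$ with $v^2 = v\cdot e(\C_\lambda) \neq 0$, so $v^n = v\cdot e(\C_\lambda)^{n-1} \neq 0$ for all $n$, and every lift $v+\alpha$ (with $\alpha\in H^2_T$) of the generator of $H^2(S_\lambda)$ satisfies $(v+\alpha)^2 = v(e(\C_\lambda)+2\alpha)+\alpha^2 \neq 0$. Indeed the image of the $K$-theoretic Thom class is $ch^T(u) = v\cdot\bigl(1-e^{-e(\C_\lambda)}\bigr)/e(\C_\lambda)$, whose correction factor is transcendental in $e(\C_\lambda)$, not the finite polynomial your decomposition would produce. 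The conclusion you need is nevertheless true, but for a different reason: equivariant formality makes $H^*_T(X)$ a finite free $H_T$-module, so multiplication by $c_1^T(L)$ is represented by a matrix $A$ with entries in the polynomial ring $H_T$, and the matrix exponential $e^A$ has entries converging to entire functions (by the standard bound on $\|A^n\|/n!$, uniform on compacta); then $ch^T([L]) = e^A\cdot 1 \in \H^{even}_T(X)_{\t_\C}$, and one passes to general $[E]$ by multiplicativity and the splitting principle.

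A secondary issue is that your coherence argument only yields the isomorphism on \emph{some} neighbourhood of $1$; "shrinking $U$ if necessary" converts the theorem into an existence statement. This is not vacuous: for $X = S_3$ the determinant of $ch_T$ with respect to Thom bases is essentially $\bigl(1-e^{-e(\C_3)}\bigr)/e(\C_3)$, which vanishes at $x = 1/3$, yet the ball of radius $0.4$ around $0$ in $\t_\C$ has $\exp$ injective and contains $1/3$, so the theorem as phrased is only valid for $U$ suitably small. Rosu controls this by checking the isomorphism at every stalk of $\exp(U)$ via the localization theorem; the paper's own applications (e.g. in the proof of Theorem~\ref{character}) invoke the result only on small neighbourhoods, so your reading is consistent with the intended use, but the distinction is worth flagging.
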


\section{Grojnowski's equivariant elliptic cohomology}

There are already many thorough accounts (e.g. \cite{Rosu}, \cite{Ando}, \cite{Chen}, \cite{GKV}) of Grojnowski's equivariant elliptic cohomology in the literature. We give an account here because the reader should understand the construction first, before reading any further, in order to appreciate our main results. In this section, we fix an elliptic curve $E_t = \C/\Lambda_t$ corresponding to $t \in \X^+$ and we recall the complex analytic map $\zeta_{T,t}: \t_\C \to E_{T,t}$ induced by the quotient map $\C \twoheadrightarrow \C/\Lambda_t$.

\begin{definition}\label{teed}
Let $a \in E_{T,t}$. Define the intersection
\[
T(a) = \bigcap_{a \in E_{H,t}} H
\]
of closed subgroups $H \subset T$. For a $T$-CW complex $X$, denote by $X^a$ the subspace of points fixed by $T(a)$.
\end{definition}

\begin{remark}
If $\S$ is a finite set of closed subgroups of $T$, we can define an ordering on the points of $E_{T,t}$ by saying that $a \leq_\S b$ if $b \in E_{H,t}$ implies $a \in E_{H,t}$, for any $H \in \S$. If $\S$ is understood, then we just write $\leq$ for $\leq_\S$. 
\end{remark}

\begin{notation}
If $X$ is a finite $T$-CW complex, let $\S(X)$ be the finite set of isotropy subgroups of $X$. If $f: X \to Y$ is a map of finite $T$-CW complexes, let $\S(f)$ be the finite set of isotropy subgroups which occur in either $X$ or $Y$. An open set $U$ in $E_{T,t}$ is \textit{small} if $\zeta_{T,t}^{-1}(U)$ is a disjoint union of connected components $V$ such that $V \cong U$ via $\zeta_{T,t}$.
\end{notation}

\begin{definition}\label{ross}
Let $\S$ be a finite set of closed subgroups of $T$. An open cover $\U = \{U_a\}$ of $E_{T,t}$ indexed by the points of $E_{T,t}$ is said to be \textit{adapted to $\S$} if it has the following properties:
\begin{enumerate}
\item $a \in U_{a}$, and $U_{a}$ is small.
\item If $U_{a} \cap U_{b} \neq \emptyset$, then either $a \leq_\S b$ or $b \leq_\S a$.
\item If $a \leq_\S b$, and for some $H \in \S$, we have $a \in E_{H,t}$ but $b \notin E_{H,t}$, then $U_b \cap E_{H,t} = \emptyset$.
\item Let $a$ and $b$ lie in $E_{H,t}$ for some $H \in \S$. If $U_a \cap U_b \neq \emptyset$, then $a$ and $b$ belong to the same connected component of $E_{H,t}$.
\end{enumerate}
\end{definition}

\begin{lemma}[Theorem 2.2.8, \cite{Chen}]\label{constru}
For any finite set $\S$ of subgroups of $T$, there exists an open cover $\U$ of $E_{T,t}$ adapted to $\S$. Any refinement of $\U$ is also adapted to $\S$.
\end{lemma}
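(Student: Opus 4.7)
The plan is to construct $\U$ as a cover by metric balls of point-dependent radii, choosing the radii to enforce conditions (1)--(4) simultaneously. Equip $E_{T,t}$ with a translation-invariant Riemannian metric induced via $\zeta_{T,t}$ from a Hermitian metric on $\t_\C$, and write $\dist$ for the associated distance function. For each $a \in E_{T,t}$ introduce the finite set $\S_a := \{H \in \S : a \in E_{H,t}\}$ and, for $H \in \S_a$, the connected component $K_a^H$ of $E_{H,t}$ containing $a$. The key observation is that $E_{T,t}$ is compact, each $E_{H,t}$ is closed, and there are only finitely many $H \in \S$ and components of each $E_{H,t}$, so all positivity needed below is automatic.

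Concretely, I would fix once and for all some $\epsilon_0 > 0$ beneath which every ball of that radius is small in the sense of the definition (possible because $\zeta_{T,t}$ is a covering map), and for each $a$ set
\[
\epsilon(a) := \tfrac{1}{4}\min\Bigl(\epsilon_0,\ \min_{H \in \S \setminus \S_a}\dist(a, E_{H,t}),\ \min_{H \in \S_a}\min_{K \neq K_a^H}\dist(a, K)\Bigr),
\]
where the inner minimum ranges over connected components $K$ of $E_{H,t}$. Then take $U_a := B_{\epsilon(a)}(a)$. Conditions (1), (3), and (4) drop out of the separate terms of the minimum: (1) from $\epsilon(a) \leq \epsilon_0/4$; (3) from the fact that $\epsilon(b) < \dist(b, E_{H,t})$ whenever $b \notin E_{H,t}$, forcing $U_b \cap E_{H,t} = \emptyset$; and (4) from a one-line triangle inequality, since $a, b$ lying in different components of $E_{H,t}$ yields $\dist(a,b) > 4\max(\epsilon(a), \epsilon(b))$.

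The crux of the argument, and the step dictating the shape of the radius, is condition (2). Given $\S_a, \S_b$ incomparable, pick $H_1 \in \S_a \setminus \S_b$ and $H_2 \in \S_b \setminus \S_a$. Because $H_1 \notin \S_b$ and $a \in E_{H_1,t}$, the control of $\epsilon(b)$ by $\dist(b, E_{H_1,t})$ forces $\dist(a,b) > 4\epsilon(b)$, and symmetrically $\dist(a,b) > 4\epsilon(a)$; the triangle inequality then gives $U_a \cap U_b = \emptyset$. This is the step I expect to be the main obstacle: the entire construction is engineered precisely so that incomparability of the types $\S_a$ and $\S_b$ guarantees the existence of a subgroup whose distance pushes at least one radius below $\dist(a,b)$.

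For the refinement claim, everything is formal. Any refinement consists of open sets $V_a \subseteq U_a$ containing $a$; smallness is inherited by open subsets (the sheets of $\zeta_{T,t}^{-1}(U_a)$ restrict to sheets of $\zeta_{T,t}^{-1}(V_a)$), the disjointness statements in (3) are preserved under passing to subsets, and the comparability conclusions in (2) and (4) follow from $V_a \cap V_b \subseteq U_a \cap U_b$ together with the same property for $\U$. Hence every refinement of an adapted cover is itself adapted.
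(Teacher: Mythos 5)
Your proof is correct and takes essentially the same approach as the paper's proof of the analogous Lemma \ref{construu} (which in turn follows Rosu's Proposition 2.5): metric balls with point-dependent radii bounded by the distances from the center to the components of the various $E_{H,t}$ not containing it, with the four conditions verified by triangle-inequality arguments. The only cosmetic differences are your abstract Lebesgue-number cap $\epsilon_0$ in place of the paper's hand-built metric with cap $\tfrac{1}{2}$, and your factor $\tfrac{1}{4}$ where $\tfrac{1}{2}$ already suffices.
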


\begin{notation}
For $a \in E_{T,t}$ let 
\[
t_a: E_{T,t} \longrightarrow E_{T,t}
\]
denote translation by $a$. 
\end{notation}

\begin{remark}\label{canon}
Let $X$ be a finite $T$-CW complex and let $\U$ be a cover of $E_{T,t}$ which is adapted to $\S(X)$. Let $x \in \zeta_{T,t}^{-1}(a)$, and let $V_{x}$ be the component of $\zeta_{T,t}^{-1}(U_a)$ containing $x$. Let $V \subset V_x$ and $U \subset U_a$ be open subsets such that $V \cong U$ via $\zeta_{T,t}$. Since $U_a \in \U$ is small by the first property of an adapted cover, the map $\zeta_{T,t}$ induces an isomorphism of complex analytic spaces $V - x \cong U-a$. We may therefore consider the composite ring map
\begin{equation}\label{ring}
H_T \hookrightarrow \O_{\t_\C}(V-x) \cong \O_{E_{T,t}}(U - a).
\end{equation}
\end{remark}

\begin{definition}
Let $X$ be a finite $T$-CW complex. For each $U_a \in \U(X)$, define a sheaf $\G^*_{T,t}(X)_{U_a}$ of $\Z/2\Z$-graded $O_{U_a}$-algebras which takes the value
\[
H^*_T(X^a) \otimes_{H_T} \O_{E_{T,t}}(U-a),
\]
on $U \subset U_a$ open, with restriction maps given by restriction of holomorphic functions. The tensor product is defined over \eqref{ring}, and the $\O_{U_a}$-algebra structure is given by multiplication by $t_a^* f$ for $f \in \O_{U_a}(U)$. The grading is induced by the odd and even grading on the cohomology ring (see Remark \ref{eqii}).
\end{definition}

\begin{remark}
For a finite $T$-CW complex $X$, we have defined a sheaf on each patch $U_a$ of a cover $\U$ adapted to $\S(X)$. The next task is to glue the local sheaves together on nonempty intersections $U_a \cap U_b$ in a compatible way. To do this, we need to define gluing maps 
\[
\phi_{b,a}: \G_{T,t}(X)_{U_a}|_{U_a \cap U_b} \cong  \G_{T,t}(X)_{U_b}|_{U_a \cap U_b}
\]
for each such intersection, such that the cocycle condition $\phi_{c,b} \circ \phi_{b,a} = \phi_{c,a}$ is satisfied. 
\end{remark}

Note that we have either $X^b \subset X^a$ or $X^a \subset X^b$ whenever $U_{a} \cap U_{b} \neq \emptyset$, by the second property of an adapted cover.

\begin{theorem}
Let $X$ be a finite $T$-CW complex, and let $\U$ be a cover adapted to $\S(X)$. Let $a \leq b$ be points in $E_{T,t}$ and assume $U \subset U_{a} \cap U_{b}$ is an open subset. By the second property of an adapted cover, we may assume that $X^b \subset X^a$, with inclusion map $i_{b,a}$. The map 
\[
i_{b,a}^*\otimes \id: H_T(X^a) \otimes_{H_T} \O_{E_{T,t}}(U-a) \to H_T(X^b) \otimes_{H_T} \O_{E_{T,t}}(U-a)
\]
induced by restriction along $i_{b,a}$ is an isomorphism of $O_{E_{T,t}}(U)$-modules.
\end{theorem}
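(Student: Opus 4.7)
The plan is to check that the map is an isomorphism at each stalk on $U$ and invoke the Localisation Theorem for Borel-equivariant cohomology (Theorem \ref{localisationn}) to translate the problem into one about fixed-point subspaces. Given $u \in U$, let $\tilde u \in V_x$ be its preimage under $\zeta_{T,t}$ in the connected component $V_x$ of $\zeta_{T,t}^{-1}(U_a)$ containing the chosen lift $x$ of $a$, and set $y := \tilde u - x$. Then the stalk at $u$ of $H_T(X^a) \otimes_{H_T} \O_{E_{T,t}}(U-a)$ becomes $\H_T(X^a)_y$, and likewise for $X^b$. By Theorem \ref{localisationn}, these are canonically isomorphic to $\H_T((X^a)^y)_y$ and $\H_T((X^b)^y)_y$, and the stalk of $i_{b,a}^*\otimes\id$ is the restriction map induced by the inclusion $(X^b)^y \hookrightarrow (X^a)^y$. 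The problem therefore reduces to proving $(X^a)^y = (X^b)^y$.

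To compare these fixed-point subsets, first pick $x \in \Lie(T(a))_\C$ lifting $a$; this is possible since $a \in E_{T(a),t}$. For $p \in X$ with isotropy $H_p \in \S(X)$, one has the chain of equivalences $p \in X^c \iff T(c) \subset H_p \iff c \in E_{H_p,t}$ for $c \in \{a,b\}$, and $p \in X^y \iff y \in \Lie(H_p)_\C$. Hence the equality $(X^a)^y = (X^b)^y$ reduces to the combinatorial statement that, for every $H_p \in \S(X)$ with $y \in \Lie(H_p)_\C$, one has $a \in E_{H_p,t} \iff b \in E_{H_p,t}$.

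The direction $b \in E_{H_p,t} \Rightarrow a \in E_{H_p,t}$ follows immediately from the definition of $a \leq_{\S(X)} b$. For the reverse, suppose for contradiction that $a \in E_{H_p,t}$ but $b \notin E_{H_p,t}$; property 3 of the adapted cover then gives $U_b \cap E_{H_p,t} = \emptyset$, and hence $u \notin E_{H_p,t}$ (since $u \in U \subset U_b$). But $x \in \Lie(T(a))_\C \subset \Lie(H_p)_\C$ (the inclusion follows from $T(a) \subset H_p$, itself a consequence of $a \in E_{H_p,t}$), so the assumption $y \in \Lie(H_p)_\C$ forces $\tilde u = x + y \in \Lie(H_p)_\C$, and consequently $u \in E_{H_p,t}$ once $U_a$ has been shrunk enough to rule out stray lattice translates. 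This contradiction completes the reduction.

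The principal technical obstacle is the careful bookkeeping required to translate between three formulations of the isotropy condition: membership of the points $a, b, u$ in the subvarieties $E_{H,t} \subset E_{T,t}$; containment of subgroups $T(c) \subset H_p$; and membership of lifts in the complex Lie subalgebras $\Lie(H_p)_\C$. Ensuring that the lift $x$ can be chosen in $\Lie(T(a))_\C$, and that $y$ is small enough for lattice translates to play no role, are the delicate ingredients that tie the combinatorics of the adapted cover to the geometry of the Borel construction.
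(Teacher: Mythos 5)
Your overall strategy --- check the map on stalks, apply the Localisation Theorem (Theorem \ref{localisationn}) to reduce to an equality of fixed-point subspaces $(X^a)^y = (X^b)^y$, then unwind that equality via the combinatorics of the adapted cover --- is sound, and your reduction to the statement that, for every $H_p \in \S(X)$ with $y \in \mathrm{Lie}(H_p)_\C$, one has $a \in E_{H_p,t}$ if and only if $b \in E_{H_p,t}$, is exactly right. However, the step ``pick $x \in \mathrm{Lie}(T(a))_\C$ lifting $a$; this is possible since $a \in E_{T(a),t}$'' is false whenever $T(a)$ is disconnected. For instance, take $T = U(1)$ and $a = [t_1/2] \in E_t$: then $T(a) = \Z/2\Z$ has $\mathrm{Lie}(T(a))_\C = 0$, but $a \neq 0$, so no lift of $a$ lies in $\mathrm{Lie}(T(a))_\C$. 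Consequently the conclusion $\tilde u = x + y \in \mathrm{Lie}(H_p)_\C$ does not follow as you have written it.

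The repair is to work with the full preimage $\zeta_{T,t}^{-1}(E_{H_p,t})$ rather than with the linear subspace $\mathrm{Lie}(H_p)_\C$. Since $\zeta_{T,t}$ is a group homomorphism and $E_{H_p,t}$ is a subgroup of $E_{T,t}$, the preimage $\zeta_{T,t}^{-1}(E_{H_p,t})$ is a subgroup of $\t_\C$, and it contains $\mathrm{Lie}(H_p)_\C$ because $\zeta_{T,t}$ carries $\mathrm{Lie}(H_p)_\C = \Hom(\hat{H_p},\C)$ into $E_{H_p,t} = \Hom(\hat{H_p},E_t)$. Now $a \in E_{H_p,t}$ puts \emph{every} lift $x$ of $a$ into $\zeta_{T,t}^{-1}(E_{H_p,t})$, and $y \in \mathrm{Lie}(H_p)_\C \subset \zeta_{T,t}^{-1}(E_{H_p,t})$, so the subgroup property gives $\tilde u = x + y \in \zeta_{T,t}^{-1}(E_{H_p,t})$, whence $u \in E_{H_p,t}$ and you have your contradiction with property 3. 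This also dissolves the worry about ``stray lattice translates'': no shrinking of $U_a$ is required, because the argument never needs to control which translate of $\mathrm{Lie}(H_p)_\C$ the point $\tilde u$ falls in. (Note that the paper does not prove this statement itself but cites it as Theorem 2.3.3 of \cite{Chen}, so a line-by-line comparison is not available; your localisation-theorem approach is the natural one.)
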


\begin{proof}
This is Theorem 2.3.3 in \cite{Chen}.
\end{proof}

\begin{remark}
Let $H = \langle T(a),T(b) \rangle$ and let $U \subset U_a \cap U_b$ be an open subset. There is a natural isomorphism of $\Z/2\Z$-graded $\O(U)$-algebras given on $U \subset U_a$ by the composite
\begin{equation}\label{perry}
\begin{array}{rcl}
H_T(X^a) \otimes_{H_T} \O_{E_{T,t}}(U-a)& \xrightarrow{i_{b,a}^*\otimes \id}& H_T(X^b) \otimes_{H_T} \O_{E_{T,t}}(U-a) \\
&\longrightarrow& H_{T/H}(X^b) \otimes_{H_{T/H}} \O_{E_{T,t}}(U-a) \\
&\xrightarrow{\id \otimes t_{b-a}^*}& H_{T/H}(X^b) \otimes_{H_{T/H}} \O_{E_{T,t}}(U-b) \\
&\longrightarrow& H_T(X^b) \otimes_{H_T} \O_{E_{T,t}}(U-b),
\end{array}
\end{equation}
where the second and final maps are the isomorphism of Proposition \ref{changeh}. Denote the composite by $\phi_{b,a}$.
\end{remark}

\begin{remark}
In the torus-equivariant version of Grojnowski's construction defined in \cite{Chen}, the gluing maps are defined using the map
\begin{equation}\label{sg}
H_{T/T(b)}(X^b) \otimes_{H_{T/T(b)}} \O(U-a) \xrightarrow{1\otimes t_{b-a}^*} H_{T/T(b)}(X^b) \otimes_{H_{T/T(b)}} \O(U-b).
\end{equation}
However, $t_{b-a}^*$ does not always preserve the $H_{T/T(b)}$-algebra structure, because $b-a$ is not always contained in $E_{T(b),t}$. To see this, take $X$ equal to a point, so that $\S(X) = \{T\}$, and set $a = [t_1/2]$ and $b = [0]$. Thus, $T(a) = \Z/2\Z$ and $T(b) = 1$, and in this case, $b - a$ is equal to $[-t_1/2]$ which is obviously not in $E_{T(b),t} = 0$. \par

This is easily fixed by setting $H$ equal to $\langle T(a),T(b) \rangle$ and using the change of groups map associated to $T \rightarrow T/H$. By definition of $T(a)$ and $T(b)$, we have that $b-a \in E_{H,t}$, and it follows that $t_{b-a}^*: \O(U-a) \to \O(U-b)$ is a map of $H_{T/H}$-algebras. Thus, we replace \eqref{sg} with the map
\[
H_{T/H}(X^b) \otimes_{H_{T/H}} \O(U-a) \xrightarrow{1\otimes t_{b-a}^*} H_{T/H}(X^b) \otimes_{H_{T/H}} \O(U-b)
\]
in our account of Grojnowski's construction. Note that $X^b$ is fixed by $H$.
\end{remark}

\begin{proposition}
The collection of maps $\{\phi_{b,a}\}$ satisfies the cocyle condition. 
\end{proposition}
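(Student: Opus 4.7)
My plan is to verify $\phi_{c,b} \circ \phi_{b,a} = \phi_{c,a}$ on each nonempty triple intersection $U_a \cap U_b \cap U_c$ by unfolding both sides according to the four-step composite \eqref{perry} and chasing a simple tensor through.

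The first step is to arrange $a, b, c$ into a chain. By property (2) of an adapted cover the three points are pairwise comparable under $\leq_{\S(X)}$, and since $\leq_{\S(X)}$ is transitive (whenever $H \in \S(X)$ fixes $c$ it fixes $b$, and whenever it fixes $b$ it fixes $a$), they are in fact totally ordered; without loss of generality $a \leq b \leq c$. This gives nested isotropy $T(a) \subset T(b) \subset T(c)$, nested fixed-point subspaces $X^c \subset X^b \subset X^a$ with $i_{c,a} = i_{b,a} \circ i_{c,b}$, and the identifications $\langle T(a), T(b) \rangle = T(b)$, $\langle T(b), T(c) \rangle = \langle T(a), T(c) \rangle = T(c)$. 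Since $a, b, c$ all lie in $E_{T(c), t}$, the differences $b - a$, $c - b$, and $c - a$ all lie in $E_{T(c), t}$, so each of the relevant translations may be regarded as an $H_{T/T(c)}$-algebra map.

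Next, chase an element $\alpha \otimes f \in H_T(X^a) \otimes_{H_T} \O_{E_{T,t}}(U-a)$ through both composites and match the outputs in $H_T(X^c) \otimes_{H_T} \O_{E_{T,t}}(U-c)$. Three ingredients are needed: (i) functoriality of restriction, $i_{c,a}^* = i_{c,b}^* \circ i_{b,a}^*$, which merges the two restriction steps of the composite; (ii) the identity $t_{c-a}^* = t_{c-b}^* \circ t_{b-a}^*$ on holomorphic functions, obtained from $t_{c-a} = t_{c-b} \circ t_{b-a}$ by reversing under pullback, which merges the two translation steps; and (iii) Lemma \ref{kurt} applied to $T \twoheadrightarrow T/T(b) \twoheadrightarrow T/T(c)$, which says that deflating to $T/T(b)$, reinflating to $T$, and then deflating to $T/T(c)$ is equivalent to a single deflation $T \twoheadrightarrow T/T(c)$.

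The main obstacle is the bookkeeping of change-of-groups across the different quotients, since $\phi_{b,a}$ passes through $H_{T/T(b)}$ while $\phi_{c,a}$ only passes through $H_{T/T(c)}$. The key move is to use Lemma \ref{kurt}, together with the naturality of the change-of-groups isomorphism of Proposition \ref{changeh} with respect to $i_{c,b}^*$, to rewrite the $H_{T/T(b)}$-detour inside $\phi_{b,a}$ as a detour through $H_{T/T(c)}$; this is legitimate because $b - a \in E_{T(c), t}$. Once both $\phi_{b,a}$ and $\phi_{c,b}$ are expressed through $H_{T/T(c)}$, the intermediate reinflation/deflation pair in the middle of the composite cancels, and one is left with a single deflation to $T/T(c)$, a single restriction along $i_{c,a}$, and a single translation by $c - a$, which is precisely the definition of $\phi_{c,a}$.
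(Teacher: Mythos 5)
You claim that $a \leq b \leq c$ yields nested isotropy $T(a) \subset T(b) \subset T(c)$ and hence the identifications $\langle T(a), T(b) \rangle = T(b)$ and $\langle T(b), T(c) \rangle = \langle T(a), T(c) \rangle = T(c)$. That does not follow. The relation $a \leq_\S b$ only says that for every $H \in \S$ with $T(b) \subset H$ one also has $T(a) \subset H$; since $T(b)$ itself need not lie in $\S$, nothing forces $T(a) \subset T(b)$. The paper exhibits a counterexample to exactly this in the remark surrounding \eqref{sg}: for $X = \pt$, $\S(X) = \{T\}$, $a = [t_1/2]$, $b = [0]$ one has $a \leq b$ and $b \leq a$, yet $T(a) = \Z/2\Z$ while $T(b) = 1$. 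That remark is precisely why the paper builds $\phi_{b,a}$ through $H = \langle T(a), T(b) \rangle$ rather than through $T(b)$, and your sentence ``$\phi_{b,a}$ passes through $H_{T/T(b)}$'' repeats the very mistake the paper is there correcting. As a consequence, the assertion that $b-a$, $c-b$, $c-a$ lie in $E_{T(c),t}$ and that each translation is an $H_{T/T(c)}$-algebra map also fails, since $\langle T(a), T(b)\rangle$ and $\langle T(a), T(c)\rangle$ need not be contained in $T(c)$.

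Your overall plan --- slide the restrictions to the front, reroute all change-of-groups through one common quotient, and compose the translations --- is the right one, but it must be run with $K := \langle T(a), T(b), T(c) \rangle$ in place of $T(c)$. Two points then need care that your sketch glosses over. First, only $X^c$ is fixed by $K$ (since $X^c \subset X^b \subset X^a$ forces $T(a), T(b), T(c)$ all to fix $X^c$, but $X^b$ need not be fixed by $T(c)$), so before any rerouting you must use naturality of Proposition \ref{changeh} with respect to $i_{c,b}$ to pull both restrictions $i_{c,b}^*$ and $i_{b,a}^*$ to the front; only once you have landed on $X^c$ is a change of groups to $H_{T/K}(X^c)$ legitimate. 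Second, each pairwise group $\langle T(a),T(b)\rangle$, $\langle T(b),T(c)\rangle$, $\langle T(a),T(c)\rangle$ is contained in $K$, so $H_{T/K}$ is a subring of each $H_{T/\langle T(\cdot),T(\cdot)\rangle}$, and linearity of the translations over the larger rings implies linearity over $H_{T/K}$; the containments $a,b,c \in E_{K,t}$ put the differences in $E_{K,t}$ as needed. With these corrections, Lemma \ref{kurt} together with $i_{c,a}^* = i_{c,b}^* \circ i_{b,a}^*$ and $t_{c-a}^* = t_{c-b}^* \circ t_{b-a}^*$ does complete the argument as you describe.
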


\begin{definition}
We denote by $\G^*_{T,t}(X)$ the sheaf of $\Z/2\Z$-graded $\O_{E_{T,t}}$-algebras which is obtained by gluing together the sheaves $\G^*_{T,t}(X)_{U_a}$ via the maps $\phi_{b,a}$. 
\end{definition}

\begin{remark}
Up to isomorphism, the sheaf $\G^*_{T,t}(X)$ does not depend on the choice of $\U$ since any refinement of $\U$ is also adapted to $\S(X)$. More explicitly, given two covers $\U$ and $\U'$ adapted to $\S(X)$, one may take the common refinement $\U''$ and consider the theory defined using $\U''$. The resulting theory is then naturally isomorphic to those theories corresponding to $\U$ and $\U'$, since the maps induced by the refinement are isomorphisms on stalks.
\end{remark}

The following is Theorem 2.3.8 in \cite{Chen}. We reproduce the proof here as it is important for our main results.

\begin{proposition}\label{kerry}
The functor $X \mapsto \G^*_{T,t}(X)$ is a $T$-equivariant cohomology theory defined on finite $T$-CW complexes with values in coherent sheaves of $\Z/2$-graded $\O_{E_{T,t}}$-algebras.
\end{proposition}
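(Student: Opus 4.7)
The plan is to verify the Eilenberg–Steenrod-type axioms for an equivariant cohomology theory (homotopy invariance, long exact sequence of a pair, excision, Mayer–Vietoris, and additivity/disjoint union), plus coherence of the sheaves, by reducing each axiom to the corresponding property of Borel-equivariant ordinary cohomology $H^*_T(-)$ applied fixed-point-subspace by fixed-point-subspace. Since the construction is local on $E_{T,t}$ and the axioms are local statements about sheaves, it will be enough to verify everything on the patches $\G^*_{T,t}(X)_{U_a} = H^*_T(X^a)\otimes_{H_T}\O_{E_{T,t}}(\,\cdot\, - a)$.

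First I would fix the functoriality. Given $f: X \to Y$, I would choose a single open cover $\U$ of $E_{T,t}$ adapted to $\S(f)$ (possible by Lemma \ref{constru}), so that both $\G^*_{T,t}(X)$ and $\G^*_{T,t}(Y)$ are assembled from the same cover. For each $a$, the inclusions $f(X^a) \subset Y^a$ give maps $f^*\otimes\id: H^*_T(Y^a)\otimes_{H_T}\O(U-a) \to H^*_T(X^a)\otimes_{H_T}\O(U-a)$, and I would check that these commute with the gluing maps \eqref{perry} (the only content is naturality of the change-of-groups isomorphism of Proposition \ref{changeh}, which is built into Lemma \ref{kurt}, together with naturality of $t_{b-a}^*$). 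Independence of refinement is exactly the remark preceding the theorem.

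Next I would verify the axioms. Homotopy invariance follows patchwise from homotopy invariance of $H^*_T$ applied to the fixed-point subspaces, since a $T$-homotopy $X\times I \to Y$ restricts to a $T$-homotopy $X^a\times I \to Y^a$ for every $a\in E_{T,t}$. For the long exact sequence of a pair $(X,A)$, I would take a cover adapted to $\S(X)\cup\S(A)$, observe that $(X^a,A^a)$ is a pair for every $a$, and apply the Borel LES; exactness is preserved after tensoring with $\O_{E_{T,t}}(U-a)$ because $H_T \hookrightarrow \O_{\t_\C}$ and hence $H_T \to \O_{E_{T,t}}(U-a)$ is flat (Proposition \ref{flat}, transferred locally under the covering $\zeta_{T,t}$). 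Excision and the Mayer–Vietoris sequence follow the same pattern, patch by patch. Additivity is immediate because fixed-point subspaces and Borel cohomology both commute with finite disjoint unions.

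Finally, I would address coherence. Over each patch $U_a$ (or stalk at $a$), the sheaf is $H^*_T(X^a)\otimes_{H_T}\O_{E_{T,t}}(U-a)$, and since $X^a$ is a finite $T$-CW complex, $H^*_T(X^a)$ is finitely generated over the Noetherian ring $H_T$. Hence $\G^*_{T,t}(X)_{U_a}$ is locally finitely presented over $\O_{E_{T,t}}|_{U_a}$, and the gluing maps $\phi_{b,a}$ of \eqref{perry} are $\O_{E_{T,t}}$-linear isomorphisms, so coherence is preserved after gluing. The main obstacle will be the bookkeeping for the long exact sequence: one must ensure that the connecting homomorphisms from the Borel LES on the varying pairs $(X^a,A^a)$ patch together compatibly under the gluing maps $\phi_{b,a}$. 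This compatibility ultimately reduces to naturality of the LES of Borel cohomology with respect to the $T$-equivariant inclusions $(X^b,A^b)\hookrightarrow(X^a,A^a)$ whenever $b\leq a$, combined with the fact that translation $t_{b-a}^*$ and the change-of-groups map of Proposition \ref{changeh} are both natural in the $T$-space — exactly the ingredients already assembled in Lemma \ref{kurt}.
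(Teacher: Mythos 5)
Your proposal is correct and its core strategy coincides with the paper's: reduce every axiom to the corresponding statement for Borel cohomology of the fixed-point subspaces $X^a$, using the adapted cover (refined to $\S(f)$ for naturality), flatness of $H_T\to\O$, and naturality of the change-of-groups isomorphism of Proposition~\ref{changeh} to glue. Two minor differences in execution are worth flagging. First, you verify exactness, excision, Mayer--Vietoris, and additivity patch-by-patch on the $U_a$, which forces you to address (as you rightly anticipate) the compatibility of the Borel connecting homomorphisms with the gluing maps $\phi_{b,a}$. The paper sidesteps that bookkeeping by checking exactness and additivity on stalks, $\tilde{\G}_{T,t}(X)_a = \tilde{H}_T(X^a)\otimes_{H_T}\O_{E_{T,t},0}$: once the sheaf has been assembled, a stalk sees only the chart around $a$, so no overlap compatibility argument is needed and exactness reduces directly to flatness of $H_T\to\O_{\t_\C,0}$. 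Second, the paper formulates the cohomology-theory axioms via an explicit suspension isomorphism, built by gluing the Borel suspension isomorphisms $\sigma_a\otimes\id$ (which glue by naturality of $\sigma_a$), whereas you route through the long exact sequence of a pair; these are equivalent packagings of the same data. Your proposal is slightly longer but covers the same ground, and the flatness argument you use for preservation of exactness is precisely the ingredient the paper invokes on stalks.
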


\begin{proof}
That $\G^*_{T,t}(X)$ is a coherent sheaf simply follows from the fact that $X$ is a finite $T$-CW complex, and that $\G^*_{T,t}(X)$ may be computed locally using ordinary cellular cohomology. We show that the construction of $\G_{T,t}(X)$ is functorial in $X$. Let $f: X \to Y$ be a map of finite $T$-CW complexes and let $\U$ be a cover of $E_{T,t}$ which is adapted to $\S(f)$. For $a \in E_{T,t}$, the map $f$ induces a map $f_a: X^a \to Y^a$ by restriction. This induces a map 
\[
f_a^* \otimes \id: H_T(Y^a) \otimes_{H_T} \O(U - a) \to H_T(X^a) \otimes_{H_T} \O(U-a)
\]
for each $U \subset U_a$, which clearly commutes with the restriction maps of the sheaf. It is evident that the collection of such maps for all $a \in E_{T,t}$ glue well, and that identity maps and composition of maps are preserved, by the functoriality of Borel-equivariant cohomology and naturality of the isomorphism of Proposition \ref{changeh}.

Define the reduced theory on pointed finite $T$-CW complexes by setting
\[
\tilde{\G}^*_{T,t}(X,A) := \ker(i^*: \G^*_{T,t}(X/A,*) \to \G^*_{T,t}(\pt)) 
\]
where $i: \pt \hookrightarrow X$ is the inclusion of the basepoint. By naturality of $\G^*_T$, a map $f: X \to Y$ of pointed complexes gives rise to a unique map $f^*: \tilde{\G}^*_{T,t}(X,A) \to \tilde{\G}^*_{T,t}(Y,B)$ on the corresponding kernels. This is functorial for the reasons already set out for single complexes.

Define a suspension isomorphism $\tilde{\G}^{*+1}_{T,t}(S^1 \wedge X) \to \tilde{\G}^{*}_{T,t}(X)$ by gluing the maps 
\[
\sigma_a \otimes \id: \tilde{H}^{*+1}_T(S^1 \wedge X^a) \otimes_{H^*_T} \O(U - a) \to \tilde{H}^{*}_T(X^a) \otimes_{H^*_T} \O(U-a),
\]
where $\sigma_a$ is the suspension isomorphism of Borel-equivariant cohomology. The maps $\sigma_a \otimes \id$ glue well since $\sigma_a$ is natural.

Finally, the properties of exactness and additivity may be checked on stalks
\[
\tilde{\G}_{T,t}(X)_a = \tilde{H}_T(X^a) \otimes_{H_T} \O_{E_{T,t},0}.
\]
This is clear, since Borel-equivariant cohomology satisfies these properties, and tensoring with $\O_{E_{T,t},0} \cong \O_{\t_\C,0}$ is exact.
\end{proof}

\chapter{Elliptic cohomology and double loops}\label{Rezk}

In Section 5 of \cite{Rezk}, Rezk proposed a construction $E_T(X)$ of an equivariant elliptic cohomology theory modeled on the equivariant ordinary cohomology of the double free loop space $L^2X$. It was noted in that paper that a holomorphic version of $E_T(X)$ should serve as a model for Grojnowski's equivariant elliptic cohomology. However, tensoring $E_T(X)$ with holomorphic functions does not behave well, because $E_T(X)$ is often non-Noetherian, even when $X$ is a $T$-orbit. \par

In this chapter, we construct a holomorphic, equivariant sheaf $\Ell_T(X)$ from the double loop space of $X$ by applying an idea of Kitchloo (\cite{Kitch1}) to Rezk's construction. Namely, we replace the equivariant ordinary cohomology of $L^2X$ with the inverse limit over finite subcomplexes of $L^2X$, tensoring with holomorphic functions before applying the limit. If $X$ is a finite $T$-CW complex, then tensoring in this fashion behaves well, because the cohomology ring of a finite CW-complex is finitely generated. In addition, this construction enables us to apply the localisation theorem of equivariant cohomology, which was proved inductively on finite subskeleta in Theorem \ref{localisationn}. In fact, we will establish a certain strengthening of the localisation theorem which allows us to show that $\Ell_T(X)$ is a coherent sheaf. Finally, we give a local description of $\Ell_{T}(X)$ over a chosen elliptic curve $E_t$, which turns out to be exactly Grojnowski's construction. It follows that $\Ell_T(X)$ is a cohomology theory in $X$.

\begin{notation}
We set some notation which will be used in this chapter only. We write $\bT$ for the additive circle $\R/\Z$, so that the canonical isomorphism $\check{T} \otimes \bT  \cong T$ provides us with additive coordinates for $T$. The Lie algebra of $T$ is identified with 
\[
\t := \check{T} \otimes \R,
\]
and the Lie algebra map is identified with
\[
\pi_T: \check{T} \otimes \R \longrightarrow \check{T} \otimes \bT
\] 
induced by the quotient $\pi: \R \twoheadrightarrow \R/\Z$. The kernel of $\pi_T$ is thus $\check{T} \otimes \Z = \check{T}$.
\end{notation}

\section{The $\C^\times \times \SL_2(\Z)$-equivariant complex manifold $E_T$}

Recall the $\C^\times \times \SL_2(\Z)$-equivariant space $\X^+$ that was introduced in Remark \ref{dominic}, and which classifies elliptic curves over $\C$. In this section, we construct a $\C^\times \times \SL_2(\Z)$-equivariant complex manifold $E_T$ as a fiber bundle over $\X^+$, such that the fiber over $t$ is equal to $E_{T,t} = \check{T} \otimes E_t$. The manifold $E_T$ will serve as the underlying object of the $\C^\times \times \SL_2(\Z)$-equivariant sheaf $\Ell_T(X)$ later in the chapter. The idea of the construction is due to Rezk (see Section 2.12 in \cite{Rezk}, also Etingof and Frenkel's paper \cite{EF}). 

\begin{remark}
Consider the semidirect product group 
\[
\SL_2(\Z) \ltimes \bT^2
\]
where $\SL_2(\Z)$ acts on $\bT^2 = \R^2/\Z^2$ by left multiplication. The group operation is given by 
\[
(A',t')(A,t) = (A'A, A^{-1}t' + t),
\]
so that the inverse of $(A,t)$ is $(A^{-1},-At)$. We may think of $\SL_2(\Z) \ltimes \bT^2$ as the group of orientation-preserving diffeomorphisms 
\[
\begin{array}{rcl}
(A,t): \bT^2 &\longrightarrow & \bT^2 \\
s &\longmapsto & As + t.
\end{array}
\]
Let $L^2T$ be the topological group of smooth maps $\bT^2 \to T$, with group multiplication defined pointwise. A diffeomorphism $(A,s)$ acts on a loop $\gamma \in L^2T$ from the left by
\[
(A,t)\cdot \gamma(s) = \gamma(A^{-1}s - At).
\]
\end{remark}

\begin{definition}
Following \cite{Rezk}, define the \textit{extended double loop group of $T$} as the semidirect product 
\[
\G :=  (\SL_2(\Z) \ltimes \bT^2) \ltimes L^2T
\]
with group operation 
\begin{equation}\label{cute}
(A',t',\gamma'(s))(A,t,\gamma(s)) = (A'A, A^{-1}t' + t, \gamma'(As+ t)+ \gamma(s)).
\end{equation}
One may think of an element $(A,t,\gamma) \in \G$ as the automorphism 
\[
\begin{tikzcd}
\bT^2 \times T \ar[r,"{\phi}"] \ar[d,two heads] & \bT^2 \times T \ar[d,two heads] \\
\bT^2 \ar[r,"{(A,t)}"] & \bT^2
\end{tikzcd}
\]
covering the diffeomorphism $(A,t)$ of $\bT^2$, where $\phi(r,s)$ is equal to $\gamma(r) + s$. It is easily verified that the inverse of $(A,t,\gamma(s))$ is equal to $(A^{-1},-At,- \gamma(A^{-1}s-At))$.
\end{definition}

\begin{remark}
For a finite $T$-CW complex $X$, the extended double loop group $\G$ acts on the double loop space 
\[
L^2X := \Map(\bT^2,X)
\] 
by
\begin{equation}
(A,t,\gamma(s))\cdot \gamma'(s) = \gamma(A^{-1}s- At)+ \gamma'(A^{-1}s-At).
\end{equation}
\end{remark}

\begin{definition}[Maximal torus and Weyl group]
Consider the subgroup 
\[
\bT^2 \times T \subset \bT^2 \ltimes L^2T
\]
where the translations $\bT^2$ act trivially on the subgroup of constant loops $T \subset L^2T$. One sees that this is a maximal torus in $\G$ by noting that the intersection of $\SL_2(\Z)$ with $\G^0$ is trivial, and that a nonconstant loop in $L^2T$ does not commute with $\bT^2$. Let $N_\G(\bT^2 \times T)$ be the normaliser of $\bT^2 \times T$ in $\G$. The Weyl group associated to $\bT^2 \times T \subset \G$ is defined to be 
\[
W_\G = W_\G(\bT^2 \times T) := N_\G(\bT^2 \times T)/(\bT^2 \times T).
\]
\end{definition}

\begin{remark}
In the following proposition, we will consider the subgroup
\[
\SL_2(\Z) \ltimes \check{T}^2 \subset \G
\]
where $m \in \check{T}^2$ is identified with the loop $\gamma(s) = ms \in L^2T$ via
\[
\check{T}^2 := \Hom(\bT,T)^2 \cong \Hom(\bT^2,T) \subset L^2T.
\]
The group operation, induced by that of $\G$, is given by
\[
(A',m')(A,m) = (A'A, m'A +m) \in \SL_2(\Z) \ltimes \check{T}^2,
 \]
and the inverse of $(A,m)$ is given by $(A^{-1},-mA^{-1})$. 
\end{remark}

\begin{proposition}
The subgroup $\SL_2(\Z) \ltimes \check{T}^2 \subset \G$ is contained in $N_\G(\bT^2 \times T)$, and the composite map
\[
\SL_2(\Z) \ltimes \check{T}^2 \hookrightarrow N_\G(\bT^2\times T) \twoheadrightarrow W_\G(\bT^2 \times T)
\]
is an isomorphism.
\end{proposition}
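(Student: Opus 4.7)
The plan is to verify both assertions through explicit manipulation of the group law \eqref{cute}. I proceed in three steps: (i) show $\SL_2(\Z) \ltimes \check{T}^2$ normalizes $\bT^2 \times T$; (ii) show that the composite to $W_\G$ is injective, equivalently that the subgroup meets $\bT^2 \times T$ only at the identity; and (iii) show surjectivity. Steps (i) and (ii) are short computations; the substance is in step (iii).

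For step (i), I would realize $(A, m) \in \SL_2(\Z) \ltimes \check{T}^2$ as the element $(A, 0, \gamma_m) \in \G$ with $\gamma_m(s) = ms$, and conjugate a generic torus element $(I, t, c) \in \bT^2 \times T$ using \eqref{cute} together with the inverse formula $(A^{-1}, 0, -\gamma_m(A^{-1}s))$. A direct computation yields
\[
(A, 0, \gamma_m)(I, t, c)(A^{-1}, 0, -\gamma_m(A^{-1}s)) = (I, At, mt + c),
\]
which lies in $\bT^2 \times T$ and also displays the expected Weyl-group action $(t, c) \mapsto (At, mt + c)$. For step (ii), any element $(A, 0, \gamma_m)$ of the subgroup lying in $\bT^2 \times T$ must have $A = I$ (the $\SL_2(\Z)$-component of elements in $\bT^2 \times T$ is trivial) and $\gamma_m$ must be a constant loop; since $\gamma_m$ is a continuous homomorphism, this forces $m = 0$. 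Hence the intersection is trivial and the composite is injective.

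Step (iii) is the main content. Given $(A, t, \gamma) \in N_\G(\bT^2 \times T)$, I would conjugate a generic torus element $(I, t_1, c_0)$ by it and compute the $L^2T$-component of the result. Two applications of \eqref{cute} together with the inverse formula yield the loop
\[
\gamma(A^{-1}s - At + t_1) - \gamma(A^{-1}s - At) + c_0,
\]
which the normalizer condition forces to be constant in $s$ for every choice of $t_1 \in \bT^2$. After the substitution $u = A^{-1}s - At$, this says $\gamma(u + t_1) - \gamma(u)$ is independent of $u$ for every $t_1 \in \bT^2$. Evaluating at $u = 0$ and setting $\bar{\gamma}(u) := \gamma(u) - \gamma(0)$ gives the functional equation $\bar{\gamma}(u + t_1) = \bar{\gamma}(u) + \bar{\gamma}(t_1)$, so $\bar{\gamma}$ is a continuous homomorphism $\bT^2 \to T$, i.e. an element $m \in \check{T}^2$. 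Hence $\gamma(s) = ms + c$ with $c = \gamma(0)$, and a final application of \eqref{cute} gives the factorization $(A, t, ms + c) = (A, 0, \gamma_m)(I, t, c - mt)$, placing the coset of $(A, t, \gamma)$ in the image of $\SL_2(\Z) \ltimes \check{T}^2$. The main obstacle is this extraction of the affine structure on $\gamma$ from the normalizer condition; once the correct change of variable is made, the reduction to a continuous group homomorphism is routine.
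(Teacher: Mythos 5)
Your proof is correct and takes essentially the same approach as the paper: the key step in both is the conjugation computation that forces $\gamma(u+t_1)-\gamma(u)$ to be independent of $u$, from which $\gamma$ is affine and its linear part lies in $\check T^2$. The paper normalizes $g$ to $(A,0,\gamma)$ with $\gamma(0,0)$ trivial and constructs an explicit inverse, whereas you keep the general element $(A,t,\gamma)$ and check injectivity and surjectivity separately, but this is only a difference in bookkeeping, not in substance.
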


\begin{proof}
Let $(A,m) \in \SL_2(\Z) \ltimes \check{T}^2$. A straightforward calculation using \eqref{cute} shows that 
\[
(A,0,m) (1,r,t) (A^{-1},0,-mA^{-1}) = (1,Ar,t + mr) \in \bT^2 \times T,
\]
which proves the first assertion. For the second assertion, it suffices to define an inverse to the composite map of the proposition. Let $g$ be an arbitrary element in $N_\G(\bT^2\times T)$ and let $[g]$ be its image in $W_\G$. By definition of $W_\G$, we may translate $g$ by elements of $\bT^2$, and act on $g$ by constant loops, without changing $[g]$. Therefore, there exists $\gamma \in L^2T$ with $\gamma(0,0) = 1$ such that 
\[
[g] = [(A,0,\gamma)] \in W_\G,
\]
for some $A \in \SL_2(\Z)$. We will now show that $\gamma \in \check{T}^2$, and finally that $[g] \mapsto (A,\gamma)$ is a well defined inverse to the composite map. Using \eqref{cute} again, for any $(r,t) \in \bT^2 \times T$, we have
\begin{equation}\label{afff}
(A,0,\gamma(s)) (r,t)(A,0,- \gamma(s)) = (1,Ar,\gamma(r+A^{-1}s)+ t - \gamma(A^{-1}s))  \in \bT^2 \times T.
\end{equation}
It follows that $\gamma(r+A^{-1}s) - \gamma(A^{-1}s)$ does not depend on $s$. Thus,
\[
\gamma(r+A^{-1}s)- \gamma(A^{-1}s) = \gamma(r)
\]
for all $r,s \in \bT^2$, and setting $s = As'$ shows that $\gamma(r)+ \gamma(s') = \gamma(r+s')$ for all $r,s' \in \bT^2$. Therefore, $\gamma$ is a group homomorphism, which means that it lies in $\check{T}^2$. The map $[g] \mapsto (A,\gamma)$ is well defined, since $g \in \bT^2 \times T$ allows us to choose $A = 1$ and $\gamma = 1$, and is evidently a group homomorphism which is inverse to the composite map of the proposition. The completes the proof.
\end{proof}

\begin{remark}[Weyl action]
It follows directly from equation \eqref{afff} that the action of $\SL_2(\Z) \ltimes \check{T}^2$ on $\bT^2 \times T$ is given by
\[
(A,m) \cdot (r,t) = (Ar,t+mr).
\]
The induced action of $\SL_2(\Z) \ltimes \check{T}^2 \subset \G$ on the complex Lie algebra $\C^2 \times \t_\C$ is given by the same formula, in which case we write it as
\[
(A,m) \cdot (t,x) = (At,x+mt).
\]
\end{remark}

\begin{remark}[The space $E_T$]\label{zeta}
Since $\SL_2(\Z)$ preserves the subspace $\X^+ \subset \C^2$, the action of $\SL_2(\Z) \ltimes \check{T}^2$ on $\C^2 \times \t_\C$ preserves $\X^+ \times \t_\C$. Since the action of $\check{T}^2$ is free and properly discontinuous, the quotient map
\[
\zeta_T: \X^+ \times \t_\C \twoheadrightarrow \check{T}^2 \backslash (\X^+ \times \t_\C) =: E_T
\]
is a complex analytic map and the quotient space is a complex manfiold. The residual action of $\SL_2(\Z)$ descends to $E_T$, since $\check{T}^2$ is a normal subgroup. The projection $p_1: \X^+ \times \t_\C \twoheadrightarrow \X^+$ induces a fiber bundle $E_T \twoheadrightarrow \X^+$, and one sees easily that the fiber of $E_T$ over $t \in \X^+$ is given by 
\[
E_{T,t} = (\check{T}t_1 + \check{T}t_2)\backslash \t_\C = \check{T} \otimes \C/\Lambda_t.
\]
We denote by $\zeta_{T,t}$ the restriction of the map $\zeta_T$ to the fiber over $t \in \X^+$. 
\end{remark}

\begin{remark}
We can view $\X^+$ as a parameter space of complex structures on $\t \times \t$ and $T \times T$. Denote by $\xi_T$ the map
\[
\begin{array}{rcl}
\X^+ \times \t \times \t &\longrightarrow& \X^+ \times \t_\C \\
(t_1,t_2,x_1,x_2) &\mapsto & (t_1,t_2,x_1t_1 + x_2t_2). 
\end{array}
\]
The restriction of $\xi_T$ to the fiber over $t \in \X^+$ is the isomorphism
\[
\xi_{T,t}: \check{T} \otimes \R^2 \cong \check{T} \otimes \C
\]
of the underlying real vector spaces induced by the complex linear structure $ \R t_1 + \R t_2 = \C$ on $\R^2$. Taking the quotient by $\check{T}^2$ induces a commutative diagram
\begin{equation}\label{deer}
\begin{tikzcd}
\X^+ \times \t \times \t \ar[r,"\xi_T"] \ar[d,two heads,"\id \times \pi"]& \X^+ \times \t_\C \ar[d, two heads, "{\zeta_{T}}" ]\\
\X^+ \times T \times T \ar[r,"\chi_T"] & E_{T}
\end{tikzcd}
\end{equation} 
of the underlying real manifolds. The restriction of $\chi_T$ to the fiber over $t \in \X^+$ is the isomorphism
\[
\chi_{T,t}: \check{T} \otimes \bT^2 \cong \check{T} \otimes E_t
\]
of real Lie groups induced by the complex manifold structure
\[
\bT^2 \cong (\R t_1 + \R t_2)/(\Z t_1 + \Z t_2) = \C/\Lambda_t =: E_t.
\]
\end{remark}


\section{An open cover of $E_T$ adapted to $X$}

In this section, we begin by defining, for a finite $T$-CW complex $X$, an open cover of the compact Lie group $T \times T$ which is adapted to $X$. We show that such a cover exists, and that it induces an open cover of the total space $E_T$ via the isomorphism $\X^+ \times T \times T \cong E_T$, such that the restriction of the cover to $E_{T,t}$ is adapted to $X$ in the sense of Definition \ref{ross}. Finally, we establish some properties of the open cover which will be useful in later sections, and we give an example of a cover adapted to the representation sphere $S_\lambda$. 

\begin{remark}[Ordering on $T\times T$]
If $\S$ is a finite set of closed subgroups of $T$, we can define a relation on the points of $T\times T$ by saying that $(a_1,a_2) \leq_\S (b_1,b_2)$ if $(b_1,b_2) \in H \times H$ implies $(a_1,a_2) \in H\times H$, for any $H \in \S$. This relation is obviously reflexive and transitive, but not symmetric.
\end{remark}

\begin{notation}
If $X$ is a finite $T$-CW complex, let $\S(X)$ be the finite set of isotropy subgroups of $X$. If $f: X \to Y$ is a map of finite $T$-CW complexes, let $\S(f)$ be the finite set of isotropy subgroups which occur in either $X$ or $Y$. An open set $U$ in $T \times T$ is \textit{small} if $\pi^{-1}(U)$ is a disjoint union of connected components $V$ such that $\pi|_V: V \to U$ is a bijection for each $V$.
\end{notation}

\begin{definition}[Open cover of $T \times T$]\label{rachel}
An open cover $\U = \{U_{a_1,a_2}\}$ of $T \times T$ indexed by the points of $T\times T$ is said to be \textit{adapted to $\S$} if it has the following properties:
\begin{enumerate}
\item $(a_1,a_2) \in U_{a_1,a_2}$, and $U_{a_1,a_2}$ is small.
\item If $U_{a_1,a_2} \cap U_{b_1,b_2} \neq \emptyset$, then either $(a_1,a_2) \leq_\S (b_1,b_2)$ or $(b_1,b_2) \leq_\S (a_1,a_2)$.
\item If $(a_1,a_2) \leq_\S (b_1,b_2)$, and for some $H \in \S$, we have $(a_1,a_2) \in H \times H$ but $(b_1,b_2) \notin H\times H$, then $U_{b_1,b_2} \cap H \times H = \emptyset$.
\item Let $(a_1,a_2)$ and $(b_1,b_2)$ lie in $H \times H$ for some $H \in \S$. If $U_{a_1,a_2} \cap U_{b_1,b_2} \neq \emptyset$, then $(a_1,a_2)$ and $(b_1,b_2)$ belong to the same connected component of $H\times H$.
\end{enumerate}
If $\S = \S(X)$ and $\U$ is adapted to $\S$ then we say that $\U$ is adapted to $X$. If $\S = \S(f)$ and $\U$ is adapted to $\S$ then we say that $\U$ is adapted to $f$. If $\S$ is understood, then we just write $\leq$ for $\leq_\S$. 
\end{definition}

Our proof of the following result is based on the proof of Proposition 2.5 in \cite{Rosu03}.

\begin{lemma}\label{construu}
For any finite set $\S$ of subgroups of $T$, there exists an open cover $\U$ of $\E_T$ adapted to $\S$. Any refinement of $\U$ is also adapted to $\S$.
\end{lemma}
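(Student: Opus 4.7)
The plan is to construct the cover $\U$ explicitly using small metric balls, where the radius at each point is carefully chosen to encode the combinatorics of $\S$. First, I would fix a translation-invariant metric $d$ on $T\times T$ (coming from an inner product on $\t \times \t$), together with a global constant $r_0 > 0$ with the property that every open ball of radius at most $r_0$ is small in the sense just defined. Such an $r_0$ exists because the covering map $\t\times\t \to T\times T$ has positive injectivity radius on the compact space $T\times T$.

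For each point $(a_1,a_2)\in T\times T$ and each $H \in \S$, let $C_{H,(a_1,a_2)}$ denote the connected component of $H\times H$ containing $(a_1,a_2)$ when $(a_1,a_2) \in H\times H$, and set $C_{H,(a_1,a_2)} := \emptyset$ otherwise. Then define
\[
r_{a_1,a_2} := \min\!\Big(r_0,\ \min_{H \in \S} d\bigl((a_1,a_2),\, (H\times H) \setminus C_{H,(a_1,a_2)}\bigr)\Big),
\]
which is strictly positive since $\S$ is finite and each set $(H\times H) \setminus C_{H,(a_1,a_2)}$ is closed in $T\times T$ and does not contain $(a_1,a_2)$. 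Set $U_{a_1,a_2}$ to be the open ball of radius $r_{a_1,a_2}/2$ centered at $(a_1,a_2)$.

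The four properties of an adapted cover are then verified in turn. Property (1) is immediate, since $r_{a_1,a_2}/2 \leq r_0/2$. Properties (3) and (4) follow directly from the defining bound on $r_{a_1,a_2}$: in each situation the subset or component of $H\times H$ whose avoidance is asserted lies at distance at least $r_{b_1,b_2}$ from $(b_1,b_2)$, hence outside the ball $U_{b_1,b_2}$ of radius $r_{b_1,b_2}/2$. The main obstacle, and the reason behind the factor $1/2$ in the radius, is property (2). Assuming $(a_1,a_2)$ and $(b_1,b_2)$ are $\leq_\S$-incomparable, there exist $H_1, H_2 \in \S$ such that $(a_1,a_2) \in H_1\times H_1$, $(b_1,b_2) \notin H_1\times H_1$, $(b_1,b_2) \in H_2\times H_2$, and $(a_1,a_2) \notin H_2\times H_2$. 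From the definitions both $r_{a_1,a_2}$ and $r_{b_1,b_2}$ are then bounded above by $d((a_1,a_2),(b_1,b_2))$. If $U_{a_1,a_2}\cap U_{b_1,b_2}$ were nonempty, the triangle inequality would yield
\[
d((a_1,a_2),(b_1,b_2)) < r_{a_1,a_2}/2 + r_{b_1,b_2}/2 \leq d((a_1,a_2),(b_1,b_2)),
\]
a contradiction.

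For the refinement statement: any cover $\U' = \{U'_{a_1,a_2}\}$ of $T\times T$ indexed by its points with $U'_{a_1,a_2} \subset U_{a_1,a_2}$ inherits each of the four properties automatically, since smallness and the avoidance conditions (3) and (4) pass to subsets, while nonempty intersections among the $U'_{a_1,a_2}$ produce nonempty intersections among the $U_{a_1,a_2}$ and so yield the comparability and same-component conclusions of (2) and (4).
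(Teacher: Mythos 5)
Your construction is essentially the one the paper uses: open metric balls whose radii are half the distance to the nearest ``forbidden'' component of some $H\times H$, with an upper cap ensuring every ball is small. The only real difference is bookkeeping — you take the inner minimum over $H\in\S$ of $d\bigl((a_1,a_2),(H\times H)\setminus C_{H,(a_1,a_2)}\bigr)$, while the paper takes it over the finite set $\S^0$ of components $D$ of the various $H\times H$ not containing $(a_1,a_2)$ — and these two minima are the same number, so the two covers agree (up to your generic constant $r_0$ versus the paper's explicit $\tfrac12$). The refinement argument is handled the same way in both.

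One small imprecision worth fixing: you group property (4) with property (3) as an ``avoidance'' argument, but they are not of the same logical shape. Property (3) genuinely asserts that $U_{b_1,b_2}$ avoids the fixed closed set $H\times H$, and your radius bound gives that directly. Property (4) instead asserts disjointness of the two balls $U_{a_1,a_2}$ and $U_{b_1,b_2}$, and knowing that $U_{b_1,b_2}$ misses the component $C_{H,(a_1,a_2)}$ does not by itself force $U_{a_1,a_2}\cap U_{b_1,b_2}=\emptyset$, since $U_{a_1,a_2}$ may protrude out of $C_{H,(a_1,a_2)}$. What actually closes the argument is exactly the triangle-inequality step you already carry out for property (2): if $(a_1,a_2)$ and $(b_1,b_2)$ lie in distinct components of $H\times H$, then each of $r_{a_1,a_2}$ and $r_{b_1,b_2}$ is bounded above by $d\bigl((a_1,a_2),(b_1,b_2)\bigr)$, so the two balls of radius $r/2$ cannot meet. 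The paper treats properties (2) and (4) together for this reason; it would be cleaner for you to do the same.
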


\begin{proof}
Since a compact abelian group has finitely many components, the set 
\[
\S^0 = \{D \subset T \times T \, | \, \text{$D$ is a component of $H \times H$ for some $H \in \S$}\}
\]
is finite. Let $d$ be the metric on $T \times T$ defined by
\[
d((a_1,a_2),(b_1,b_2)) := \min \{d_{\t\times \t}((a_1,a_2) + m,(b_1,b_2) + m') \, | \, m,m'\in \check{T} \times \check{T}\}
\]
where $d_{\t\times \t}$ denotes the Euclidean metric on $\t \times \t$. If $(a_1,a_2) \in D$ for all $D \in \S^0$, then define $U_{a_1,a_2}$ to be an open ball centered at $(a_1,a_2)$ with radius $r = \frac{1}{2}$. Otherwise, define $U_{a_1,a_2}$ to be an open ball centered at $(a_1,a_2)$, with radius  
\[
r = \frac{1}{2} \min \{D\, | \, (a_1,a_2) \notin D \},
\]
where 
\[
d((a_1,a_2),D) = \min \{d((a_1,a_2),(b_1,b_2))\, | \, (b_1,b_2)\in D\}.
\]
The open cover $\U$ of $T \times T$ thus constructed clearly satisfies the first condition of an adapted cover.  

Furthermore, if there exist distinct components $D,D' \in \S^0$ such that $(a_1,a_2)$ is in $D$ but not in $D'$, and $(b_1,b_2)$ is in $D'$ but not in $D$, then $U_{a_1,a_2} \cap U_{b_1,b_2}$ is empty by construction. If $D$ and $D'$ correspond to distinct elements of $\S$, then $(a_1,a_2)$ and $(b_1,b_2)$ do not relate under the ordering, and the previous statement implies the contrapositive of the second condition. If $D$ and $D'$ correspond to the same element of $\S$, then it implies the contrapositive of the fourth condition. 

The third condition holds simply because $(b_1,b_2) \notin H\times H$ always implies that $U_{b_1,b_2} \cap H\times H = \emptyset$, by construction. It is clear that any refinement of $\U$ will also satisfy all four conditions.
\end{proof}

\begin{remark}
Let $\U$ be a cover adapted to $\S(X)$. We will sometimes refer to $\U$ as being adapted to $X$, since $\S(X)$ is completely determined by $X$.
\end{remark}

\begin{lemma}\label{hard2}
Let $\U$ be an open cover of $T\times T$ adapted to $\S$. If $(b_1,b_2) \in U_{a_1,a_2}$, then $(a_1,a_2) \leq (b_1,b_2)$.
\end{lemma}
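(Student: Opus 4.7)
The plan is to argue by contradiction, using properties (2) and (3) of the definition of an adapted cover (Definition \ref{rachel}) in sequence. First I would unpack what it means for the desired conclusion to fail: by definition of the ordering, $(a_1,a_2) \not\leq (b_1,b_2)$ means there exists some $H \in \S$ such that $(b_1,b_2) \in H \times H$ but $(a_1,a_2) \notin H\times H$. The goal is to show that the existence of such an $H$ is incompatible with $(b_1,b_2) \in U_{a_1,a_2}$.

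Next, I would observe that the hypothesis $(b_1,b_2) \in U_{a_1,a_2}$, combined with the trivial fact $(b_1,b_2) \in U_{b_1,b_2}$, implies $U_{a_1,a_2} \cap U_{b_1,b_2} \neq \emptyset$. Property (2) of an adapted cover then gives a dichotomy: either $(a_1,a_2) \leq (b_1,b_2)$ or $(b_1,b_2) \leq (a_1,a_2)$. Since we are assuming the first case fails, we must have $(b_1,b_2) \leq (a_1,a_2)$.

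The finishing move is to apply property (3) with the roles of $(a_1,a_2)$ and $(b_1,b_2)$ interchanged: we have $(b_1,b_2) \leq (a_1,a_2)$ together with $(b_1,b_2) \in H\times H$ and $(a_1,a_2) \notin H \times H$, so property (3) yields $U_{a_1,a_2} \cap H \times H = \emptyset$. But $(b_1,b_2)$ lies in both $U_{a_1,a_2}$ and $H \times H$, contradicting the emptiness of the intersection. This contradiction forces $(a_1,a_2) \leq (b_1,b_2)$, completing the proof.

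There is no real obstacle here; the lemma is a purely formal consequence of combining the dichotomy from property (2) with the separation condition of property (3). The only thing to be careful about is that property (3) is stated asymmetrically in terms of $(a_1,a_2) \leq (b_1,b_2)$, so one must verify (which is immediate from the statement) that it applies equally well after swapping the roles, since the condition $(b_1,b_2) \leq (a_1,a_2)$ is what allows the swap.
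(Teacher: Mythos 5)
Your proof is correct and follows essentially the same argument as the paper: argue by contradiction, use property (2) of an adapted cover to conclude $(b_1,b_2) \leq (a_1,a_2)$, unpack the failure of $(a_1,a_2) \leq (b_1,b_2)$ to extract an $H \in \S$ separating the two points, apply property (3), and obtain the contradiction. Your presentation is slightly more explicit about the role-swap in property (3), but the logical structure is identical.
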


\begin{proof}
Suppose that $(b_1,b_2) \in U_{a_1,a_2}$ and $(a_1,a_2) \leq (b_1,b_2)$ does not hold. This implies two things. Firstly, by the second property of an adapted cover, we have $(b_1,b_2) \leq (a_1,a_2)$. Secondly, by definition of the relation, there must exist some $H \in \S$ such that $(b_1,b_2) \in H \times H$ and $(a_1,a_2) \notin H \times H$. Together, the two statements imply that $U_{a_1,a_2} \cap H \times H = \emptyset$, by the third property of an adapted cover. This contradicts the assumption that $(b_1,b_2) \in U_{a_1,a_2}$, since $(b_1,b_2) \in H \times H$.
\end{proof}

\begin{definition}\label{siss}
Let $\U = \{U_{a_1,a_2}\}$ be an open cover adapted to $\S$. We denote by $V_{x_1,x_2} \subset \t \times \t$ the open subset which is the component of $\pi^{-1}(U_{\pi(x_1,x_2)})$ containing $(x_1,x_2)$.
\end{definition}

The following lemma is a strengthening of the fourth property of an adapted cover, which we will need in the next section. 

\begin{lemma}\label{hard3}
Let $\U$ be a cover of $T\times T$ adapted to $\S$. Let $(a_1,a_2),(b_1,b_2) \in T\times T$ with open neighbourhoods $U_{a_1,a_2},U_{b_1,b_2} \in \U$, and let $(x_1,x_2),(y_1,y_2) \in \t \times \t$ such that $\pi(x_i) = a_i$ and $\pi(y_i) = b_i$. Let $H \in \S$ and suppose $(a_1,a_2),(b_1,b_2) \in H\times H$. If
\[
V_{x_1,x_2} \cap V_{y_1,y_2} \neq \emptyset, 
\]
then $(x_1,x_2)$ and $(y_1,y_2)$ lie in the same component of $\pi^{-1}(H\times H)$.
\end{lemma}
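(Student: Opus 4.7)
The plan is to reduce the statement to a lattice-geometric fact in $\t \times \t$, then exploit the metric structure of the balls forming the adapted cover.

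First, because $\pi$ restricts to a homeomorphism on each of $V_{x_1,x_2}$ and $V_{y_1,y_2}$ (by the smallness of $U_{a_1,a_2}$ and $U_{b_1,b_2}$ from property (1)), the hypothesis $V_{x_1,x_2} \cap V_{y_1,y_2} \neq \emptyset$ immediately forces $U_{a_1,a_2} \cap U_{b_1,b_2} \neq \emptyset$. The fourth property of an adapted cover then places $(a_1,a_2)$ and $(b_1,b_2)$ in a single connected component $D$ of $H \times H$, so it suffices to show that $(x_1,x_2)$ and $(y_1,y_2)$ lie in the same component of $\pi^{-1}(D)$. The components of $\pi^{-1}(D)$ are the affine translates of $L := \mathrm{Lie}(H) \times \mathrm{Lie}(H)$ by cosets of $L \cap (\check T \times \check T)$ in $\check T \times \check T$, so the task reduces to verifying $(y_1 - x_1, y_2 - x_2) \in L$.

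Second, writing $Q_0 : \t \to \mathrm{Lie}(H)^\perp$ for orthogonal projection, I note that $b_i - a_i \in H^0 = \pi(\mathrm{Lie}(H))$ gives $y_i - x_i \in \mathrm{Lie}(H) + \check{T}$, so $Q_0(y_i - x_i) \in Q_0(\check{T})$. Because $H^0$ is a closed subtorus and $T/H^0$ is therefore itself a compact torus, $\mathrm{Lie}(H) + \check{T}$ is closed in $\t$ and $Q_0(\check{T}) \cong \check{T}/(\check{T} \cap \mathrm{Lie}(H))$ is a full-rank discrete lattice in $\mathrm{Lie}(H)^\perp$; in particular it has a strictly positive shortest nonzero vector of length $\mu_H > 0$.

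For the metric estimate, pick $(z_1,z_2) \in V_{x_1,x_2} \cap V_{y_1,y_2}$. By the construction of Lemma \ref{construu} (and because $\pi$ is a local isometry on balls below the injectivity radius), $V_{x_1,x_2}$ and $V_{y_1,y_2}$ are open Euclidean balls in $\t \times \t$ of the same radii $r_{a_1,a_2}, r_{b_1,b_2}$ as $U_{a_1,a_2}, U_{b_1,b_2}$. The triangle inequality then yields
\[
|Q_0(y_i - x_i)| \leq |(y_1,y_2) - (x_1,x_2)| \leq |(y_1,y_2) - (z_1,z_2)| + |(z_1,z_2) - (x_1,x_2)| < r_{a_1,a_2} + r_{b_1,b_2},
\]
for each $i = 1,2$. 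Once the right-hand side is at most $\mu_H$, discreteness of $Q_0(\check{T})$ forces $Q_0(y_i - x_i) = 0$, hence $y_i - x_i \in \mathrm{Lie}(H)$ and the two points share a component of $\pi^{-1}(D) \subseteq \pi^{-1}(H \times H)$.

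The main obstacle I anticipate is precisely the final radius estimate: the bare construction of Lemma \ref{construu} only guarantees $r \leq 1/2$, which need not lie below $\min_{H \in \S} \mu_H$ when the projected lattices are fine. I would handle this by refining the cover so that every radius is bounded above by $\tfrac{1}{2}\min_{H \in \S}\mu_H$; this refinement is legitimate because the last clause of Lemma \ref{construu} guarantees that any refinement of an adapted cover remains adapted to $\S$.
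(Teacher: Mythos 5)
Your argument follows the same metric/projection strategy as the paper's proof: from the nonempty intersection $V_{x_1,x_2}\cap V_{y_1,y_2}$ deduce $U_{a_1,a_2}\cap U_{b_1,b_2}\neq\emptyset$, invoke property (4) to place $(a_1,a_2)$ and $(b_1,b_2)$ in a common component $D$ of $H\times H$, and then estimate the distance between the lifted points to force them into a single component of $\pi^{-1}(D)$. What you do differently, and more carefully, is the treatment of the projected lattice. The paper's own proof at this point asserts a lower bound of $1$ on the distance between distinct components of $\pi^{-1}(D)$, justified by the claim that $\check{T}/\check{H}$ \emph{is orthogonal to} $\mathrm{Lie}(H)$. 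That step cannot be taken at face value: one cannot in general choose coset representatives of $\check{T}/(\check{T}\cap\mathrm{Lie}(H))$ inside $\mathrm{Lie}(H)^\perp$, and the genuine lower bound is precisely your $\mu_H$, the minimal nonzero length in $Q_0(\check{T})$, which can fall strictly below $1$. For instance, with $T=\R^2/\Z^2$, $\check{T}=\Z^2$, and $\mathrm{Lie}(H)=\R\cdot(1,2)$, one computes $\mu_H=1/\sqrt{5}<1/2$, so the radius-$\tfrac{1}{2}$ balls supplied by Lemma~\ref{construu} can overlap across distinct components of $\pi^{-1}(H\times H)$. The obstacle you flag is therefore not an artifact of your formulation: it is present in the paper's proof as well.

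Your proposed remedy --- capping the covering radii at $\tfrac{1}{2}\min_{H\in\S}\mu_H$ and appealing to the closing clause of Lemma~\ref{construu} that refinements of adapted covers remain adapted --- is the right one. It does technically restrict the lemma to adapted covers satisfying this additional radius bound rather than arbitrary adapted covers, but that costs nothing downstream: every subsequent appeal to the lemma (in Lemmas~\ref{hard} and~\ref{hay} and onward) only requires the existence of \emph{some} adapted cover to which it applies, and the refined construction of Lemma~\ref{construu} supplies one. In short, your proof is correct, and the "main obstacle" you isolate is a genuine gap that the paper's argument does not close.
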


\begin{proof}
Since $V_{x_1,x_2} \cap V_{y_1,y_2} \neq \emptyset$, we have $U_{a_1,a_2} \cap U_{b_1,b_2} \neq \emptyset$. Therefore, $(a_1,a_2)$ and $(b_1,b_2)$ lie in the same component $D$ of $H\times H$, by the fourth property of an adapted cover, so that $(x_1,x_2),(y_1,y_2) \in \pi^{-1}(D)$. We have
\begin{equation}
\begin{array}{rcl}
\pi^{-1}(D) &=& \{(x_1,x_2)\} + \check{T} \times \check{T} + \mathrm{Lie}(H) \times \mathrm{Lie}(H)\\
&=& \{(x_1,x_2)\} + \check{T}/\check{H} \times \check{T}/\check{H} + \mathrm{Lie}(H) \times \mathrm{Lie}(H). \\
\end{array}
\end{equation}
Suppose that $(x_1,x_2)$ and $(y_1,y_2)$ lie in different components of $\pi^{-1}(D)$. Then $\check{T}/\check{H}$ is nontrivial, and we have
\[
(x_1,x_2) = (x_1 + m_1 + h_1, x_2 + m_2 + h_2) \quad \mathrm{and} \quad (y_1,y_2) = (x_1 + m_1' + h_1', x_2 +m_2' + h_2')
\]
for some $h_1,h_2,h_1',h_2' \in$Lie$(H)$ and distinct $(m_1,m_2),(m_1',m_2') \in \check{T}/\check{H}\times \check{T}/\check{H}$.

Let $d = d_{\t\times \t}$ be the metric induced by the Euclidean inner product on $\t \times \t$. We have
\[
\begin{array}{rcl}
d((x_1,x_2),(y_1,y_2)) &=& |(x_1,x_2)-(y_1,y_2)|  \\
&=& |(m_1-m_1' + h_1 - h_1',m_2-m_2' + h_2 - h_2')|\\
&\geq &|m_1-m_1',m_2-m_2' | \\
\end{array}
\]
where the inequality holds since $\check{T}/\check{H}$ is orthogonal to Lie$(H)$. Since $(m_1,m_2) \neq (m_1',m_2')$, we have
\[
d((x_1,x_2),(y_1,y_2)) \geq 1.
\]

By the first property of an adapted cover, $U_{a_1,a_2}$ is small, which means that $V_{x_1,x_2}$ is contained in the interior of a ball at $(x_1,x_2)$ with radius $\frac{1}{2}$. The same is true for $V_{y_1,y_2}$. But this means that $V_{x_1,x_2}$ and $V_{y_1,y_2}$ cannot intersect, since $d((x_1,x_2),(y_1,y_2)) \geq 1$, so we have a contradiction. Therefore, $(x_1,x_2)$ and $(y_1,y_2)$ must lie in the same connected component of $\pi^{-1}(D)$, and hence the same connected component of $\pi^{-1}(H\times H)$.
\end{proof}

\begin{definition}\label{defo}
Let $\U$ be an open cover of $T \times T$ adapted to $\S$. For each $(t,x)  \in \X^+ \times \t_\C$, writing $x = x_1t_1 + x_2t_2$, we define an open subset
\[
V_{t,x} := \xi_T(\X^+ \times V_{x_1,x_2}) \subset \X^+ \times \t_\C,
\]
so that $(t,x) \in V_{t,x}$. Note that $V_{t,x} = V_{t',x'}$ whenever we have $x = x_1t_1 +x_2t_2$ and $x' = x_1t_1'+x_2t_2'$. The set 
\[
\{V_{t,x}\}_{(t,x) \in \X^+ \times \t_\C}
\]
is an open cover of $\X^+ \times \t_\C$ (with some redundant elements). The set
\[
\{\zeta_T(V_{t,x})\}_{(t,x) \in \X^+ \times \t_\C}
\]
is an open cover of $E_T$.
\end{definition}

\begin{definition}\label{sag}
Let $\U$ be an open cover of $T \times T$ adapted to $\S$. Given $a \in E_{T,t}$, define the open subset
\[
U_a := \zeta_{T}(V_{t,x}) \cap E_{T,t}
\]
where $x \in \t_\C$ is any element such that $a= \zeta_{T,t}(x)$, so that $a \in U_a$. The set $\{U_a\}_{a\in E_{T,t}}$ is an open cover of $E_{T,t}$.
\end{definition}

\begin{lemma}
Let $\U$ be an open cover of $T \times T$ which is adapted to $\S$. The open cover of $E_{T,t}$ in Definition \ref{sag} is adapted to $\S$ in the sense of Definition \ref{ross}.
\end{lemma}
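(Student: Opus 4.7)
The plan is to verify each of the four properties of Definition \ref{ross} by transferring the corresponding property of the adapted cover $\U$ on $T \times T$ through the diagram \eqref{deer}. The key algebraic identification, valid for each $t \in \X^+$, is that the real Lie group isomorphism $\chi_{T,t}\colon \check{T}\otimes \bT^2 \cong E_{T,t}$ carries $H\times H$ onto $E_{H,t}$ for every closed subgroup $H\subset T$, since both are the images of $\mathrm{Lie}(H)\times\mathrm{Lie}(H)$ under $\pi$ and $\zeta_{T,t}$ respectively. Hence the ordering $\leq_\S$ on $T \times T$ corresponds exactly to the ordering $\leq_\S$ on $E_{T,t}$, and the connected components of $E_{H,t}$ correspond to those of $H \times H$.

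First I would dispose of property (1). Given $a \in E_{T,t}$, take any lift $x\in\t_\C$ with $a=\zeta_{T,t}(x)$ and write $x = x_1 t_1 + x_2 t_2$; then $(x_1,x_2)\in V_{x_1,x_2}$, so $(t,x)\in V_{t,x}$ and $a \in U_a$. For smallness, note that $\zeta_{T,t}^{-1}(U_a)$ is the image in $\t_\C$ of the $\check{T}^2$-orbit of $V_{x_1,x_2}\subset \t\times\t$ via $\xi_{T,t}$, and that $V_{x_1,x_2}$ is by construction (Definition \ref{siss}) a single connected component of $\pi^{-1}(U_{a_1,a_2})$ mapping homeomorphically onto $U_{a_1,a_2}$; transporting this through $\xi_{T,t}$ and $\chi_{T,t}$ exhibits $\zeta_{T,t}^{-1}(U_a)$ as a disjoint union of translates, each isomorphic to $U_a$ under $\zeta_{T,t}$.

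Properties (2) and (3) then transfer by chasing definitions. For (2), if $U_a\cap U_b \neq \emptyset$ then choosing appropriate lifts $(x_1,x_2)$ and $(y_1,y_2)$ we get $V_{x_1,x_2}\cap V_{y_1',y_2'}\neq\emptyset$ for some $\check{T}^2$-translate $(y_1',y_2')$ of $(y_1,y_2)$; hence $U_{a_1,a_2}\cap U_{b_1',b_2'}\neq\emptyset$ for the corresponding translate of $(b_1,b_2)$, and since $H\times H$ is $\check{T}^2$-invariant the ordering property on $T\times T$ gives the required ordering on $E_{T,t}$. Property (3) follows even more directly: if $a\leq b$ with $a\in E_{H,t}$ and $b\notin E_{H,t}$, the corresponding statement holds in $T\times T$, so property (3) of $\U$ gives $U_{b_1,b_2}\cap H\times H=\emptyset$, and translating via $\chi_{T,t}$ yields $U_b\cap E_{H,t}=\emptyset$.

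The main obstacle — and the whole reason Lemma \ref{hard3} was set up in advance — is property (4). If $a,b\in E_{H,t}$ with $U_a\cap U_b\neq\emptyset$, then after choosing lifts we obtain $V_{x_1,x_2}\cap V_{y_1,y_2}\neq\emptyset$ with $(a_1,a_2),(b_1,b_2)\in H\times H$. Lemma \ref{hard3} then asserts that $(x_1,x_2)$ and $(y_1,y_2)$ lie in the same connected component of $\pi^{-1}(H\times H)$, which under the real-linear isomorphism $\xi_{T,t}$ lifts $E_{H,t}\hookrightarrow E_{T,t}$; pushing forward by $\zeta_{T,t}$ places $a$ and $b$ in the same component of $E_{H,t}$, as required. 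This step is the crux, because the naive fourth property of an adapted cover on $T\times T$ only controls connected components at the level of $H\times H$, whereas a single component of $E_{H,t}$ lifts to infinitely many components of $\pi^{-1}(H\times H)$ indexed by $\check{T}/\check{H}\times\check{T}/\check{H}$; the $\tfrac{1}{2}$-radius bound built into Lemma \ref{construu} is exactly what is needed to prevent the $V_{x_1,x_2}$ from straddling distinct such lifts.
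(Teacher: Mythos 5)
Your argument reaches the correct conclusion, but it misdiagnoses where the work lies, and your justification of the crucial identification is flawed.

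The paper's proof is essentially a one-liner: from the chain
\[
U_a = \zeta_T(V_{t,x}) \cap E_{T,t} = \chi_T(\X^+ \times U_{a_1,a_2}) \cap E_{T,t} = \chi_{T,t}(U_{a_1,a_2}),
\]
the cover $\{U_a\}$ of $E_{T,t}$ is simply the image of the cover $\U$ of $T\times T$ under the real Lie group isomorphism $\chi_{T,t}\colon T\times T \cong E_{T,t}$, and the same isomorphism carries $H\times H$ onto $E_{H,t}$ for each $H\in\S$. All four properties of Definition \ref{ross} therefore transfer verbatim through this single homeomorphism. In particular, your treatment of property (4) is a needless detour: $U_a\cap U_b\neq\emptyset$ is equivalent to $U_{a_1,a_2}\cap U_{b_1,b_2}\neq\emptyset$, so the fourth property of $\U$ directly places $(a_1,a_2)$ and $(b_1,b_2)$ in the same component of $H\times H$, hence $a,b$ in the same component of $E_{H,t}$ via $\chi_{T,t}$. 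Your worry that ``a single component of $E_{H,t}$ lifts to infinitely many components of $\pi^{-1}(H\times H)$'' is true but irrelevant, because nothing forces you to lift to $\t\times\t$; and indeed this contradicts your own opening observation that $\chi_{T,t}$ identifies the components of $H\times H$ with those of $E_{H,t}$. Lemma \ref{hard3} is thus not needed here --- it is established for later use in Lemma \ref{hard}, which concerns the fixed-point loop spaces $L^2X^{t,x}$, not adapted covers.

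There is also a genuine flaw in your justification that $\chi_{T,t}$ carries $H\times H$ onto $E_{H,t}$. You assert both are ``images of $\mathrm{Lie}(H)\times\mathrm{Lie}(H)$'' under $\pi$ and $\zeta_{T,t}$, but this fails whenever $H$ is not connected: if $H=\Z/n\Z$ then $\mathrm{Lie}(H)=0$ while $H\times H\neq\{0\}$. The paper's identification goes through Pontryagin duality, $H\times H\cong\Hom(\hat H,\bT)^2\cong\Hom(\hat H,\bT^2)\cong\Hom(\hat H,E_t)=E_{H,t}$, and is valid for every closed subgroup $H\subset T$.
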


\begin{proof}
We have
\[
\begin{array}{rcl}
U_a &:=& \zeta_T (V_{t,x}) \cap E_{T,t}  \\
&=& (\zeta_T \circ \xi_T)(\X^+ \times V_{x_1,x_2}) \cap E_{T,t}  \\
&=& (\chi_T \circ (\id_{\X^+} \times \pi_{T\times T}))(\X^+ \times V_{x_1,x_2}) \cap E_{T,t}  \\
&=& \chi_T(\X^+ \times U_{a_1,a_2}) \cap E_{T,t} \\
&=& \chi_{T,t}(U_{a_1,a_2})
\end{array}
\]
where $a_i = \pi_T(x_i)$. Therefore, the open cover $\{U_a\}_{a\in E_{T,t}}$ of $E_{T,t}$ corresponds exactly to the open cover $\U$ of $T \times T$ via the isomorphism
\[
\chi_{T,t}: T \times T \cong E_{T,t}
\]
of real Lie groups. We have identifications
\[
H \times H \cong \Hom(\hat{H},\bT)^2 \cong \Hom(\hat{H},\bT^2) \cong \Hom(\hat{H},E_t) = E_{H,t}
\]
and a commutative diagram
\begin{equation}\label{deery}
\begin{tikzcd}
H \times H \ar[r,"{\cong}"] \ar[d,hook] & E_{H,t} \ar[d,hook] \\
T \times T \ar[r,"\chi_{T,t}"] & E_{T,t}. \\
\end{tikzcd}
\end{equation}
From the diagram, it is clear that the properties of an adapted cover in the sense of Definition \ref{ross} are equivalent to the properties in Definition \ref{rachel}. Since $\U$ is adapted to $\S$ in the sense of Definition \ref{rachel}, the result now follows.
\end{proof}

\begin{example}\label{coverex}
Let $T = \R/\Z$, and for $\lambda \in \hat{T}$ set $X$ equal to the representation sphere $S_\lambda$ associated to $\lambda$ (see Example \ref{repsphere}). Recall that $X$ has a $T$-CW complex structure with a $0$-cell $T/T \times \D_N^0$ at the north pole, a $0$-cell $T/T \times \D_S^0$ at the south pole, and a 1-cell $T \times \D^1$ of free orbits connecting the two poles. Thus,
\[
\S(X) = \{T, 1\}.
\]
We can now describe all of the relations $\leq_{\S(X)}$ between the points of $T \times T$ as follows.
\begin{itemize}
\item $(0,0) \leq (a_1,a_2)$ for all $(a_1,a_2) \in T \times T$
\item $(a_1,a_2) \leq (b_1,b_2)$ for all $(a_1,a_2),(b_1,b_2) \in T \times T - \{(0,0)\}$
\end{itemize}
As in the proof of Lemma \ref{construu}, we can easily construct an open cover of $T \times T$ which is adapted to $\S(X)$. Note that $\S^0 = \S$ in this case. Let $\U$ denote the open cover consisting of open balls $U_{a_1,a_2}$ centered at $(a_1,a_2)$ with radius 
\[
r_{a_1,a_2} = \begin{cases} \frac{1}{2} \sqrt{a_1^2 + a_2^2}  &\mbox{if } a_1 \neq 0 \mbox{ or } a_2 \neq 0 \\ 
\frac{1}{2} & \mbox{if } (a_1,a_2) = (0,0)\end{cases},
\]
where we have identified $a_1,a_2$ with their unique representatives in $[ 0,1)$. It is easily verified that $\U$ is a cover adapted to $\S(X)$. Now, for $a = \chi_{T,t}(a_1,a_2)$ and $x = x_1t_1 +x_2t_2$ such that $\zeta_{T,t}(x) = a$, the open set
\[
V_{x_1,x_2} \subset \R^2
\]
is an open ball of radius $\frac{1}{2}$ if $(x_1,x_2) \in \Z^2$, and an open neighbourhood not intersecting $\Z^2$ otherwise. 
\end{example}

\section{The equivariant cohomology of double loop spaces}\label{low}

The double loop space $L^2X$ of a $T$-CW complex $X$ is equipped, via the action of $\G$, with an action of $\bT^2 \times T$. As we noted at the beginning of the chapter, we would like to consider the $\bT^2 \times T$-equivariant cohomology of $L^2X$, and to somehow construct a holomorphic sheaf from this. 

\begin{remark}
Let $X$ be a finite $T$-CW complex. By Theorem 1.1 in \cite{LMS}, the double loop space $L^2X = \Map(\bT^2,X)$ of $X$ is weakly $\bT^2 \times T$-homotopy equivalent to a $\bT^2 \times T$-CW complex $Z$.\footnote{To apply this theorem to our situation, set $G = \bT^2 \times T$. Now let $G$ act on $\bT^2$ via the projection to the first factor, and on $X$ via the projection to the second factor.} From now on, when we speak of a $\bT^2 \times T$-CW structure on $L^2X$, we mean the replacement complex $Z$. In this situation, we will abuse notation and write $L^2X$ for $Z$.
\end{remark}

\begin{definition}\label{varlimm}
Define the sheaf of $\O_{\X^+ \times \t_\C}$-algebras
\[
\H^*_{L^2T}(L^2X) := \varprojlim_{Y \subset L^2X} \H^*_{\bT^2 \times T}(Y)_{\X^+ \times \t_\C}
\]
where the inverse limit runs over all finite $\bT^2 \times T$-CW subcomplexes $Y$ of $L^2X$. 
\end{definition}

\begin{remark}
The inverse limit in the category of sheaves exists, and is computed in the category of presheaves. Therefore, the value of the inverse limit sheaf $\H^*_{L^2T}(L^2X)$ on an open subset $U \subset \X^+ \times \t_\C$ is 
\[
\varprojlim_{Y \subset L^2X} H^*_{\bT^2 \times T}(Y) \otimes_{H_{\bT^2 \times T}} \O_{\X^+ \times \t_\C}(U).
\]
However, it is not true in general that the stalk of the inverse limit is the inverse limit of the stalks.
\end{remark}

\begin{remark}
We have that $\H^*_{L^2T}(L^2*) = \O_{\X^+ \times \t_\C}$, by construction.
\end{remark}

Our aim now is to obtain localisation-type results for the ring $\H^*_{L^2T}(L^2X)$, in the spirit of Theorem \ref{localisation}. In fact, we apply that theorem directly and show that it holds not only on stalks, but also on the open sets in a cover adapted to $X$. Since $\H^*_{L^2T}(L^2X)$ is defined over $\X^+ \times \t_\C$, we use the open cover $\{V_{t,x}\}$ of that space which was defined in the previous section. Before we can prove the localisation-type results, we will need several technical results about the groups $T(t,x)$ associated to the points $(t,x) \in \X^+ \times \t_\C$, and about the fixed points subspaces $L^2X^{t,x} \subset L^2X$ corresponding to those groups and how they interact with the open sets $V_{t,x}$.

\begin{definition}
Let $(t,x) \in \X^+ \times \t_\C$. Define the intersection
\[
T(t,x) = \bigcap_{(t,x) \in \mathrm{Lie}(H)_\C} H
\]
of closed subgroups $H \subset \bT^2 \times T$. For a $\bT^2 \times T$-space $Y$, denote by $Y^{t,x}$ the subspace of points fixed by $T(t,x)$.
\end{definition}

\begin{remark}[Explicit description of $T(t,x)$]\label{push}
Let $(t,x) \in \X^+ \times \t_\C$ and write $x = x_1t_1 + x_2t_2$. For a closed subgroup $H \subset \bT^2 \times T$, the definition of $\X^+$ implies that
\begin{gather*}
(t,x) = (t_1,t_2,x_1t_1+x_2t_2) = (1,0,x_1) t_1 + (0,1,x_2) t_2 \\ \in \, \mathrm{Lie}(H)\otimes_\R \C = \mathrm{Lie}(H) t_1 + \mathrm{Lie}(H) t_2
\end{gather*}
if and only if 
\[
(1,0,x_1),(0,1,x_2) \in \mathrm{Lie}(H).
\] 
By definition of $T(t,x)$, we therefore have
\[
T(t,x) = \langle \pi((r_1,0,x_1 r_1)),\pi((0,r_2,x_2 r_2)) \rangle_{r_1,r_2 \in \R}.
\]
Since the intersection of $T(t,x)$ with $T \subset \bT^2 \times T$ consists exactly of those elements of $T(t,x)$ for which $r_1,r_2 \in \Z$, we have 
\[
T(t,x) \cap T = \langle \pi(x_1),\pi(x_2) \rangle.
\]
\end{remark}

\begin{remark}[Explicit description of $T(a)$]\label{pusha}
Recall the definition of $T(a)$ in Definition \ref{teed}. Let $a = \zeta_T(t,x)$ and write $x = x_1t_1 + x_2t_2$. By diagrams \eqref{deer} and \eqref{deery}, it is clear that $E_{H,t}$ contains $a$ if and only if 
\[
\pi(x_1),\pi(x_2) \in H.
\]
Therefore, $T(a)$ is the closed subgroup 
\[
\langle \pi(x_1), \pi(x_2) \rangle \subset T,
\] 
and by Remark \ref{push}, we have that $T(a) = T \cap T(t,x)$.
\end{remark}

\begin{lemma}\label{surj}
Let $\zeta_T(t,x) = a$. There is a short exact sequence of compact abelian groups 
\[
0 \to T(a) \to T(t,x) \to \bT^2 \to 0
\]
where $T(t,x) \to \bT^2$ is the map induced by the projection $\bT^2 \times T \twoheadrightarrow \bT^2$.
\end{lemma}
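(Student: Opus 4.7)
The plan is to leverage the explicit descriptions of $T(t,x)$ and $T(a)$ that were set up in Remarks \ref{push} and \ref{pusha}, and simply verify directly that the projection to $\bT^2$ gives a short exact sequence. Since $T(t,x)$ is already presented by generators indexed by $r_1,r_2 \in \R$, and $T(a)$ has been identified with $T(t,x) \cap T$, the three things to check—that the map lands in $\bT^2$, that it is surjective, and that its kernel is $T(a)$—reduce to elementary inspection.

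First, I would define $\phi: T(t,x) \to \bT^2$ as the restriction of the first-factor projection $\bT^2 \times T \twoheadrightarrow \bT^2$. For the kernel, note that $\ker\phi = T(t,x) \cap (\{0\} \times T)$, and by Remark \ref{push} this intersection equals $\langle \pi(x_1),\pi(x_2)\rangle$, which is precisely $T(a)$ by Remark \ref{pusha}. For surjectivity, recall from Remark \ref{push} that $T(t,x)$ is generated by the elements $\pi((r_1,0,x_1 r_1))$ and $\pi((0,r_2,x_2 r_2))$ for $r_1,r_2 \in \R$. Under $\phi$ these map to $(\pi(r_1),0)$ and $(0,\pi(r_2))$ respectively. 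As $r_1$ and $r_2$ range over $\R$, their images under $\pi:\R \twoheadrightarrow \bT$ fill out all of $\bT$, so $\phi(T(t,x))$ contains both $\bT \times \{0\}$ and $\{0\} \times \bT$, and hence all of $\bT^2$.

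Exactness in the middle is immediate from the kernel identification, and exactness on the two ends is surjectivity of $\phi$ and injectivity of the inclusion $T(a) \hookrightarrow T(t,x)$ (the latter being trivial, and well-defined since $T(a) = T(t,x) \cap T \subset T(t,x)$). There is really no hard step here; the entire content of the lemma has been packaged into the explicit formulas of the preceding two remarks, so this proof is essentially a bookkeeping exercise confirming that the generators of $T(t,x)$ split the projection to $\bT^2$ with kernel equal to the subgroup of elements whose $\bT^2$-component vanishes.
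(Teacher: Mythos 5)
Your proof is correct and follows the same route as the paper's: both arguments reduce the claim to the explicit generator description of $T(t,x)$ in Remark \ref{push} and the identification $T(a) = T \cap T(t,x)$ in Remark \ref{pusha}, checking surjectivity onto $\bT^2$ and identifying the kernel. You simply spell out the surjectivity computation on generators, which the paper leaves implicit.
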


\begin{proof}
The projection of $T(t,x) \subset \bT^2 \times T$ onto $\bT^2$ is surjective by the description in Remark \ref{push}, and has kernel $T(t,x) \cap T = T(a)$ by Remark \ref{pusha}.
\end{proof}

\begin{notation}
Write
\[
p_a: T \twoheadrightarrow T/T(a) \qquad \mathrm{and}  \qquad p_{t,x}: \bT^2 \times T \twoheadrightarrow (\bT^2 \times T)/T(t,x)
\]
for the quotient maps, and let 
\[
\iota_{(0,0}: T \hookrightarrow \bT^2 \times T
\]
denote the inclusion of the fiber over $(s_1,s_2) \in \bT^2 = (\R/\Z)^2$. 
\end{notation}

\begin{remark}
It follows from Lemma \ref{surj} that there is a commutative diagram 
\begin{equation}\label{rect}
\begin{tikzcd}
T(a) \ar[r,hook] \ar[d,hook] & T \ar[r, two heads,"p_a"] \ar[d, "\iota_{(0,0)}",hook] & T/T(a) \ar[d, "\nu", dashed] \\
T(t,x) \ar[r,hook] \ar[d,two heads] & \bT^2 \times T \ar[r, two heads,"{p_{\tau,x}}"] \ar[d,two heads] & (\bT^2 \times T)/T(t,x) \ar[d] \\
\bT^2 \ar[r,equal] & \bT^2 \ar[r] & 0
\end{tikzcd}
\end{equation}
where $\nu$ is induced by $\iota_{(0,0)}$.
\end{remark}

\begin{lemma}\label{groupss} 
The map $\nu$ of diagram \eqref{rect} is an isomorphism.
\end{lemma}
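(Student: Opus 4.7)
The plan is to prove $\nu$ is an isomorphism by unpacking diagram \eqref{rect} and combining the equality $T(a) = T \cap T(t,x)$ from Remarks \ref{push} and \ref{pusha} with the short exact sequence supplied by Lemma \ref{surj}.

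First, I would confirm that $\nu$ is well-defined. By construction it is induced from the composite $p_{t,x} \circ \iota_{(0,0)} : T \to (\bT^2 \times T)/T(t,x)$, and factoring through $T/T(a)$ is exactly the assertion that $\iota_{(0,0)}(T(a)) \subset T(t,x)$, which is immediate from $T(a) = T \cap T(t,x)$.

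For injectivity, I would take $gT(a) \in \ker \nu$; then $(0,g) \in T(t,x)$ forces $g \in T \cap T(t,x) = T(a)$, so the class is trivial. For surjectivity, given an arbitrary class $(s,g)T(t,x)$, Lemma \ref{surj} provides $(s,h) \in T(t,x)$ lifting $s \in \bT^2$, after which the factorisation $(s,g) = (0, gh^{-1})(s,h)$ exhibits $(s,g)T(t,x)$ as $\nu(gh^{-1} T(a))$.

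A more conceptual alternative would be to apply the snake lemma to the upper two rows of \eqref{rect}, regarded as the short exact sequences $0 \to T(a) \to T \to T/T(a) \to 0$ and $0 \to T(t,x) \to \bT^2 \times T \to (\bT^2 \times T)/T(t,x) \to 0$. The vertical inclusions are injective with cokernels canonically identified with $\bT^2$ by Lemma \ref{surj}, and the map induced between the cokernels is the identity on $\bT^2$; hence both $\ker \nu$ and $\coker \nu$ vanish. There is no real obstacle here beyond keeping the explicit descriptions of $T(a)$ and $T(t,x)$ straight; nothing in the argument is deeper than Lemma \ref{surj} itself.
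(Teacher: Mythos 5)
Your proof is correct and matches the paper's, which simply invokes that the left and middle columns and all rows of \eqref{rect} are short exact and concludes by ``a standard diagram chase.'' Your element-level argument (well-definedness, injectivity via $T(a) = T \cap T(t,x)$, surjectivity via lifting through Lemma \ref{surj}) is exactly that chase spelled out, and the snake-lemma phrasing is the same argument under a different name.
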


\begin{proof}
By Lemma \ref{surj}, the left hand column is a short exact sequence. It is clear that the middle column is a short exact sequence, as are all rows. It now follows from a standard diagram chase that $\nu$ is an isomorphism.
\end{proof}

\begin{remark}[Explicit description of $L^2X^{t,x}$]\label{pushy}
The description in Remark \ref{push} allows us to explicitly describe $L^2X^{t,x}$, using $x = x_1t_1 + x_2t_2$. By this description, a loop $\gamma \in L^2X$ is fixed by $T(t,x)$ if and only if
\[
\gamma(s_1,s_2) = \pi((r_1,0,x_1 r_1)) \cdot \gamma(s_1,s_2) = \pi( x_1 r_1) \cdot \gamma(s_1 - \pi(r_1),s_2)
\]
and
\[
\gamma(s_1,s_2) = \pi((0,r_2,x_2 r_2)) \cdot \gamma(s_1,s_2) = \pi(x_2 r_2) \cdot \gamma(s_1,s_2- \pi(r_2))
\]
for all $s=(s_1,s_2) \in \bT^2$ and all $r_1,r_2 \in \R$. By setting $s_1 = s_2 = 0$, one sees that this is true if and only if 
\[
\gamma(s_1,s_2) = \pi(x_1 s_1 + x_2s_2) \cdot \gamma(0,0),
\]
where $\gamma(0,0) \in X^a$ for $a = \zeta_T(t,x)$. Therefore, we have
\[
L^2X^{t,x} = \{ \gamma \in L^2X \, | \,  \gamma(s_1,s_2) = \pi(x_1 s_1 +  x_2 s_2)\cdot z, \, z \in X^a\}.
\]
Although $s_1,s_2$ are elements of $\bT = \R/\Z$, the loop $\pi(x_1 s_1 +  x_2 s_2)\cdot z$ is well defined since $\pi$ is a homomorphism and $z$ is fixed by both $\pi(x_1)$ and $\pi(x_2)$. 
\end{remark}

\begin{remark}
The map 
\[
\begin{tikzcd}
ev: L^2X \ar[r]& X
\end{tikzcd}
\]
given by evaluating a loop at $(0,0)$ is evidently $T$-equivariant and continuous.
\end{remark}

\begin{lemma}\label{equal}
The map $ev$ induces a homeomorphism
\[
\begin{tikzcd}
ev_{t,x}: L^2X^{t,x} \ar[r,"\cong"]& X^{a}
\end{tikzcd}
\]
which is natural in $X$, and equivariant with respect to $\nu$. 
\end{lemma}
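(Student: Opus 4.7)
The plan is to unpack the explicit formula for $L^2X^{t,x}$ given in Remark \ref{pushy} and build an inverse to $ev_{t,x}$ out of it. The key formula
\[
L^2X^{t,x} = \{ \gamma \in L^2X \, | \, \gamma(s_1,s_2) = \pi(x_1s_1 + x_2s_2)\cdot z, \; z \in X^a\}
\]
already essentially displays $L^2X^{t,x}$ as parametrised by $X^a$, so the natural candidate inverse is
\[
\phi: X^a \longrightarrow L^2X^{t,x}, \qquad \phi(z)(s_1,s_2) := \pi(x_1 s_1 + x_2 s_2)\cdot z.
\]
First I would check this is well defined, i.e.\ that the right-hand side only depends on $(s_1,s_2)$ modulo $\Z^2$. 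For integer shifts $(n_1,n_2)$, one has $\pi(x_1(s_1+n_1) + x_2(s_2+n_2)) = \pi(x_1 s_1 + x_2 s_2)\cdot \pi(x_1)^{n_1}\pi(x_2)^{n_2}$, and $\pi(x_1),\pi(x_2)$ generate $T(a)$ by Remark \ref{pusha}, so they fix $z$.

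Next I would verify that $\phi$ lands in $L^2X^{t,x}$ and is two-sided inverse to $ev_{t,x}$: the containment is immediate from Remark \ref{pushy}, and $ev_{t,x}\phi(z) = \phi(z)(0,0) = z$, while $\phi\,ev_{t,x}(\gamma)(s_1,s_2) = \pi(x_1s_1+x_2s_2)\cdot\gamma(0,0) = \gamma(s_1,s_2)$ for $\gamma\in L^2X^{t,x}$. Continuity of $ev_{t,x}$ is standard for the compact-open topology, and continuity of $\phi$ follows from the exponential-law adjunction applied to the continuous map $\bT^2 \times X^a \to X$, $(s,z)\mapsto \pi(x_1 s_1 + x_2 s_2)\cdot z$. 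Naturality in $X$ is immediate: a $T$-equivariant $f\colon X\to Y$ restricts to $f\colon X^a\to Y^a$ and satisfies $f\circ ev = ev\circ L^2 f$ by construction.

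The only genuinely delicate point is the equivariance with respect to $\nu$. I would spell out the action of $[g]\in (\bT^2\times T)/T(t,x)$ on $\gamma\in L^2X^{t,x}$: writing $g=(s_1',s_2',u)$, one has $(g\cdot \gamma)(0,0) = u\cdot \gamma(-s_1',-s_2') = u\cdot \pi(-x_1 s_1' - x_2 s_2')\cdot z$. Then I would unwind $\nu^{-1}$ from diagram \eqref{rect}: solving $[(0,0,v)] = [(s_1',s_2',u)]$ in $(\bT^2\times T)/T(t,x)$ via Remark \ref{push} gives $v = u\cdot \pi(-x_1 s_1' - x_2 s_2')$ modulo $T(a)$, so $\nu^{-1}([g])\cdot z$ matches $ev_{t,x}([g]\cdot \gamma)$ precisely. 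I expect this last computation to be the main obstacle, as it requires carefully aligning the short exact sequence of Lemma \ref{surj} with the explicit generators of $T(t,x)$ from Remark \ref{push}; once the bookkeeping is in order, equivariance drops out of the definition of $\pi$ being a homomorphism.
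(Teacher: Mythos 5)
Your proposal is correct and takes essentially the same approach as the paper: the paper's own proof is the single sentence "This is evident by Remark \ref{pushy}," and your argument simply supplies the details that the paper treats as obvious — constructing the explicit inverse $\phi(z)(s_1,s_2) = \pi(x_1s_1+x_2s_2)\cdot z$, checking it is well defined and continuous, and verifying $\nu$-equivariance by unwinding diagram \eqref{rect} and the generators of $T(t,x)$ from Remark \ref{push}. One small notational nit: in this chapter $T$ is written additively ($\bT=\R/\Z$), so "$\pi(x_1)^{n_1}\pi(x_2)^{n_2}$" should read $n_1\pi(x_1) + n_2\pi(x_2)$, but this does not affect the argument.
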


\begin{proof}
This is evident by Remark \ref{pushy}.
\end{proof}

\begin{example}\label{loopex}
We calculate $LX^{t,x}$ in the example of the representation sphere $X = S_\lambda$ associated to $\lambda \in \hat{T} = \Z$. For $a \in E_{T,t}$, let $a_1,a_2$ denote the preimage of $a$ under $\chi_{T,t}$. Then 
\[
T(a) = \langle a_1,a_2\rangle \subset T,
\]
and we have
\[
X^a =  \begin{cases} \{N,S\}  &\mbox{if } (a_1,a_2) \neq (0,0) \\ 
S_\lambda & \mbox{if } (a_1,a_2) = (0,0)\end{cases}. 
\]
Let $x \in \zeta_{T,t}^{-1}(a)$ and write $x = x_1t_1 + x_2t_2$. We have
\[
L^2X^{t,x} = \begin{cases} \{N,S\}  &\mbox{if } (x_1,x_2) \notin \Z^2 \\ 
\{ \pi(x_1s_1+x_2s_2)\cdot z \, | \,z \in S_\lambda \} & \mbox{if } (x_1,x_2) \in \Z^2 \end{cases}
\]
where $N,S$ denote constant loops. A double loop $\pi(x_1s_1+x_2s_2)\cdot_\lambda z$, for $z \neq N,S$, wraps the first loop around the sphere $\lambda x_1$ times, and the second loop around $\lambda x_2$ times, where the loops run parallel to the equator. The direction of the loop corresponds to the sign of the $\lambda x_i$. 
\end{example}

\begin{lemma}\label{hard}
Let $X$ be a finite $T$-CW complex and let $\U$ be an open cover of $T \times T$ adapted to $\S(X)$. Let $(t,x),(t',y) \in \X^+ \times \t_\C$, with $a = \zeta_T(t,x)$ and $b = \zeta_T(t',y)$. We have the following.
\begin{enumerate}
\item If $V_{t,x} \cap V_{t',y} \neq \emptyset$, then either $X^b \subset X^a$ or $X^a \subset X^b$. 
\item If $V_{t,x} \cap V_{t',y} \neq \emptyset$ and $X^b \subset X^a$, then $L^2X^{t',y} \subset L^2X^{t,x}$.
\end{enumerate}
\end{lemma}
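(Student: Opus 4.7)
The plan is to translate both claims into statements about the cover $\U$ of $T\times T$, and then use the explicit descriptions in Remarks \ref{pusha} and \ref{pushy} together with Lemma \ref{hard3}. Writing $x = x_1t_1 + x_2t_2$ and $y = y_1t_1' + y_2t_2'$, the first step is to observe that $V_{t,x}\cap V_{t',y}\neq \emptyset$ forces $V_{x_1,x_2}\cap V_{y_1,y_2}\neq \emptyset$. Indeed, $\xi_T$ restricts to a linear isomorphism on each fiber over $\X^+$, so a common point $(s_1,s_2,z)$ in the two images determines a unique $(u_1,u_2)\in \t\times\t$ with $z = u_1 s_1 + u_2 s_2$ (using the basis property defining $\X^+$), and this pair lies in both $V_{x_1,x_2}$ and $V_{y_1,y_2}$. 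In particular $U_{a_1,a_2}\cap U_{b_1,b_2}\neq \emptyset$, where $a_i = \pi(x_i)$ and $b_i = \pi(y_i)$.

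For part (1), the second property of an adapted cover now gives $(a_1,a_2)\leq (b_1,b_2)$ or the reverse; taking the former without loss of generality, Remark \ref{pusha} identifies $T(a) = \langle a_1,a_2\rangle$ and $T(b) = \langle b_1,b_2\rangle$, so the relation says exactly that every $H\in \S(X)$ containing $T(b)$ also contains $T(a)$. Since $X^b = \{z \in X : T(b)\subset T_z\}$ and any isotropy group $T_z$ lies in $\S(X)$, the containment $X^b\subset X^a$ follows immediately.

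For part (2), assume $X^b\subset X^a$ and pick $\gamma \in L^2X^{t',y}$. By Remark \ref{pushy} we may write $\gamma(s_1,s_2) = \pi(y_1 s_1 + y_2 s_2)\cdot z$ for some $z \in X^b\subset X^a$. To place $\gamma$ in $L^2X^{t,x}$ it suffices to show that the element $\pi((y_1-x_1)s_1 + (y_2-x_2)s_2)$ fixes $z$ for all real $s_1,s_2$, which amounts to showing $y_i - x_i \in \mathrm{Lie}(T_z)$ for $i = 1,2$. The main obstacle is producing this inclusion, and this is precisely what Lemma \ref{hard3} delivers: taking $H = T_z \in \S(X)$, we have $(a_1,a_2),(b_1,b_2)\in H\times H$ because $T(a)\subset T(b)\subset T_z$ by part (1), and $V_{x_1,x_2}\cap V_{y_1,y_2}\neq \emptyset$ by the opening observation, so the lemma places $(x_1,x_2)$ and $(y_1,y_2)$ in the same connected component of $\pi^{-1}(T_z\times T_z)$. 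The description of these components in the proof of Lemma \ref{hard3} then forces $(y_1-x_1,y_2-x_2)\in \mathrm{Lie}(T_z)\times \mathrm{Lie}(T_z)$, and rewriting $\gamma$ accordingly shows it lies in $L^2X^{t,x}$.
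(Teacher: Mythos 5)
Your proof mirrors the paper's argument step for step: the descent from $V_{t,x}\cap V_{t',y}\neq\emptyset$ to $V_{x_1,x_2}\cap V_{y_1,y_2}\neq\emptyset$, the second adapted-cover property for part (1), and Lemma \ref{hard3} followed by the rewriting of $\gamma$ for part (2). One minor slip: you cite part (1) for ``$T(a)\subset T(b)\subset T_z$'', but part (1) only gives $X^b\subset X^a$, which does not imply $T(a)\subset T(b)$; what you actually need --- and have --- is that $z\in X^b\subset X^a$ separately yields $T(b)\subset T_z$ and $T(a)\subset T_z$, which is precisely what the paper uses and what your conclusion $(a_1,a_2),(b_1,b_2)\in H\times H$ rests on.
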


\begin{proof}
Write $x = x_1t_1 + x_2t_2$, $y = y_1t_1' + y_2t_2'$ and $a_i = \pi(x_i)$, $b_i = \pi(y_i)$. Since $V_{t,x} \cap V_{t',y} \neq \emptyset$, we have
\[
V_{x_1,x_2} \cap V_{y_1,y_2} \neq \emptyset
\]
by definition. This implies that 
\[
U_{a_1,a_2} \cap U_{b_1,b_2} \neq \emptyset,
\]
so by the second property of an adapted cover, either $(a_1,a_2) \leq (b_1,b_2)$ or $(b_1,b_2) \leq (a_1,a_2)$. This implies that either $X^b \subset X^a$ or $X^a \subset X^b$, which yields the first part of the result.

For the second part, assume that $X^b \subset X^a$, and let $\gamma \in L^2X^{t',y}$. Let $z = \gamma(0,0)$. By the description in Remark \ref{pushy}, we have
\[
\gamma(s_1,s_2) = \pi(y_1s_1 + y_2 s_2)\cdot z.
\]
Let $H \subset T$ be the isotropy group of $z \in X^b \subset X^a$, so that $H \in \S(X)$ and $a_i,b_i \in H$ for $i = 1,2$. The condition 
\[
V_{x_1,x_2} \cap V_{y_1,y_2} \neq \emptyset
\]
implies by Lemma \ref{hard3} that $(x_1,x_2)$ and $(y_1,y_2)$ lie in the same component of $\pi^{-1}(H\times H)$, since $\U$ is adapted to $\S(X)$. Therefore,
\[
(x_1 - y_1, x_2 - y_2) \in \pi^{-1}(H\times H)^0 = \mathrm{Lie}(H) \times \mathrm{Lie}(H).
\]
which implies that $z$ is fixed by $\pi((x_1-y_1)r_1)$ and $\pi((x_2-y_2)r_2)$ for all $r_1,r_2 \in \R$.

We can now write
\[
\begin{array}{rcl}
\gamma(s_1,s_2) &=& \pi( y_1 s_1 +  y_2 s_2)\cdot z \\
&=& \pi(y_1 s_1+  y_2 s_2) \cdot (\pi((x_1 - y_1)s_1 + (x_2 - y_2)s_2)\cdot z) \\
&=& \pi(x_1 s_1 + x_2 s_2)\cdot z,
\end{array}
\]
which is a loop in $L^2X^{t,x}$ since $z \in X^a$. This yields the second part of the result.
\end{proof}

\begin{lemma}\label{hay}
Let $X$ be a finite $T$-CW complex and let $\U$ be a cover adapted to $\S(X)$. If $(t',y) \in V_{t,x}$, then $L^2X^{t',y} \subset L^2X^{t,x}$.
\end{lemma}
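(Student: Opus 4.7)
The plan is to derive Lemma \ref{hay} directly from Lemma \ref{hard}(2), after pinning down which of the two possible inclusions $X^b \subset X^a$ or $X^a \subset X^b$ actually occurs. The hypothesis $(t',y) \in V_{t,x}$ is strictly stronger than the intersection hypothesis of Lemma \ref{hard}; I will exploit this extra strength to force the correct direction.

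First I would unpack the definitions. Writing $y = y_1 t_1' + y_2 t_2'$ and $x = x_1 t_1 + x_2 t_2$, the hypothesis $(t',y) \in V_{t,x} = \xi_T(\X^+ \times V_{x_1,x_2})$ means precisely that $(y_1,y_2) \in V_{x_1,x_2}$, where $V_{x_1,x_2}$ is the connected component of $\pi^{-1}(U_{a_1,a_2})$ containing $(x_1,x_2)$, with $a_i = \pi(x_i)$. In particular $(y_1,y_2) \in \pi^{-1}(U_{a_1,a_2})$, so $(b_1,b_2) := (\pi(y_1),\pi(y_2)) \in U_{a_1,a_2}$.

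Next I would invoke Lemma \ref{hard2}, which is exactly tailored to this situation: since $(b_1,b_2) \in U_{a_1,a_2}$, it gives $(a_1,a_2) \leq_{\S(X)} (b_1,b_2)$. I would then translate this inequality into the inclusion $X^b \subset X^a$ via the standard argument: for any $z \in X^b$, its isotropy subgroup $H_z$ lies in $\S(X)$ and contains $T(b) = \langle b_1,b_2\rangle$, so $(b_1,b_2) \in H_z \times H_z$; the inequality $(a_1,a_2) \leq_{\S(X)} (b_1,b_2)$ then forces $(a_1,a_2) \in H_z \times H_z$, i.e.\ $T(a) = \langle a_1,a_2\rangle \subset H_z$, showing $z \in X^a$ (here I use the explicit description of $T(a)$ from Remark \ref{pusha}).

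Finally, since $(t',y)$ obviously lies in both $V_{t,x}$ and $V_{t',y}$, the two opens meet. With $V_{t,x} \cap V_{t',y} \neq \emptyset$ and $X^b \subset X^a$ in hand, Lemma \ref{hard}(2) applies verbatim to yield $L^2X^{t',y} \subset L^2X^{t,x}$, completing the proof. The only nonobvious step is step two — identifying which containment between $X^a$ and $X^b$ holds — and this is precisely where Lemma \ref{hard2}, rather than merely the second property of an adapted cover, is needed; the rest is bookkeeping.
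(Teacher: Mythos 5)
Your proposal is correct and follows essentially the same route as the paper's proof: unpack $(t',y)\in V_{t,x}$ to get $(b_1,b_2)\in U_{a_1,a_2}$, invoke Lemma \ref{hard2} to conclude $(a_1,a_2)\leq(b_1,b_2)$ and hence $X^b\subset X^a$, then apply Lemma \ref{hard}(2). The only difference is that you spell out the implication ``$(a_1,a_2)\leq(b_1,b_2)\Rightarrow X^b\subset X^a$'' via isotropy groups, which the paper leaves implicit.
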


\begin{proof}
Write $a = \zeta_T(t,x)$, $b = \zeta_T(t',y)$, $x = x_1t_1 + x_2t_2$, $y = y_1t_1' + y_2t_2'$, $b_i = \pi(y_i)$, and $a_i = \pi(x_i)$. Since $(t',y) \in V_{t,x}$, we have 
\[
(y_1,y_2) \in V_{x_1,x_2} \subset \t \times \t
\] 
and so 
\[
(b_1,b_2) \in U_{a_1,a_2} \subset T\times T.
\]
Therefore $(a_1,a_2) \leq (b_1,b_2)$ by Lemma \ref{hard2}, from which it follows that $X^b \subset X^a$, since $\U$ is adapted to $\S(X)$. Lemma \ref{hard} yields the result.
\end{proof}

\begin{example}\label{incex}
We examine the inclusions of Lemmas \ref{hard} and \ref{hay} in the example of the representation sphere $X = S_\lambda$. Let $(t,x), (t',y) \in \X^+ \times \t_\C$, write $x = x_1t_1+x_2t_2$, $y=y_1t_1' + y_2t_2'$, $a = \zeta_T(t,x)$ and $b = \zeta_T(t',y)$. If $V_{t,x} \cap V_{t,y} = \emptyset$, then by Definition \ref{defo} we have
\[
V_{x_1,x_2} \cap V_{y_1,y_2} \neq \emptyset.
\]
Therefore, by Example \ref{coverex}, at least one of $(x_1,x_2)$ and $(y_1,y_2)$ lies outside the lattice $\Z^2 \subset \R^2$. By Example \ref{loopex}, this means that either $L^2X^{t',y} = X^b = \{N,S\}$ or $L^2X^{t,x} = X^a = \{N,S\}$, and we clearly have either $X^b \subset X^a$ or $X^a \subset X^b$. If we assume that $X^b \subset X^a$, then since at least one of the spaces is equal to $\{N,S\}$, we must have $X^b = \{N,S\}$. Therefore, $L^2X^{t',y} \subset L^2X^{t,x}$. Note that if we had the additional hypothesis that $(t',y) \in V_{t,x}$, then this would imply that $(y_1,y_2) \in V_{x_1,x_2}$, and by the description in Example \ref{coverex} we would have $(y_1,y_2) \notin \Z^2$, which means that $X^b \subset X^a$. So with this hypothesis, no assumption would be necessary.
\end{example}

\begin{proposition}\label{extend}
Let $Y \subset L^2X$ be a finite $\bT^2 \times T$-CW subcomplex. The inclusion $Y^{t,x} \subset Y$ induces an isomorphism of $\O_{V_{t,x}}$-algebras
\[
\H^*_{\bT^2\times T}(Y)_{V_{t,x}} \cong \H^*_{\bT^2\times T}(Y^{t,x})_{V_{t,x}}.
\]
\end{proposition}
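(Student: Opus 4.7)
The plan is to reduce the statement to the single-stalk Localisation Theorem \ref{localisationn} by checking isomorphism stalkwise on $V_{t,x}$. First, the equivariant inclusion $Y^{t,x} \hookrightarrow Y$ induces, by functoriality of $H^*_{\bT^2\times T}$ and the tensoring construction, a morphism
\[
\H^*_{\bT^2\times T}(Y)_{V_{t,x}} \longrightarrow \H^*_{\bT^2\times T}(Y^{t,x})_{V_{t,x}}
\]
of sheaves of $\O_{V_{t,x}}$-algebras. Since a morphism of sheaves is an isomorphism if and only if it is an isomorphism on every stalk, it suffices to prove that for each $(t',y)\in V_{t,x}$ the induced map on stalks is an isomorphism.

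At such a point $(t',y)$, I would apply Theorem \ref{localisationn} (with the torus $\bT^2\times T$, whose complexified Lie algebra is $\C^2\times \t_\C$) to both sides, yielding a commutative square
\[
\begin{tikzcd}
\H^*_{\bT^2\times T}(Y)_{(t',y)} \ar[r] \ar[d,"\cong"'] & \H^*_{\bT^2\times T}(Y^{t,x})_{(t',y)} \ar[d,"\cong"] \\
\H^*_{\bT^2\times T}(Y^{t',y})_{(t',y)} \ar[r] & \H^*_{\bT^2\times T}((Y^{t,x})^{t',y})_{(t',y)}
\end{tikzcd}
\]
in which the vertical arrows are isomorphisms and the bottom arrow is induced by the inclusion $(Y^{t,x})^{t',y}\hookrightarrow Y^{t',y}$.

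The key step is then to identify $(Y^{t,x})^{t',y} = Y^{t',y}$. Because $Y\subset L^2X$ is a $\bT^2\times T$-subcomplex, one has $Y^{t,x}=Y\cap L^2X^{t,x}$ and $Y^{t',y}=Y\cap L^2X^{t',y}$. Since $(t',y)\in V_{t,x}$ and the cover $\U$ is adapted to $\S(X)$, Lemma \ref{hay} gives the inclusion $L^2X^{t',y}\subset L^2X^{t,x}$, hence $Y^{t',y}\subset Y^{t,x}$, and therefore
\[
(Y^{t,x})^{t',y} \;=\; Y^{t,x}\cap Y^{t',y} \;=\; Y^{t',y}.
\]
Thus the bottom arrow of the square is the identity, which forces the top arrow to be an isomorphism, completing the stalkwise check.

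The main obstacle I anticipate is simply ensuring that Lemma \ref{hay} transfers cleanly from the full double loop space $L^2X$ to the finite subcomplex $Y$; this is handled by the observation above that fixed-point sets are compatible with $\bT^2\times T$-equivariant inclusions. A minor caveat is that Theorem \ref{localisationn} is stated for finite $T$-CW complexes with $T$ a torus, so one must apply it with $\bT^2\times T$ replacing $T$ and $Y$ (respectively $Y^{t,x}$) as the finite equivariant complex; the proof of Theorem \ref{localisationn} never used that the complex was $L^2X$ or arose in any particular way, so this is automatic.
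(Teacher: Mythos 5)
Your proof is correct and takes essentially the same approach as the paper: checking the isomorphism stalkwise, applying the Localisation Theorem at each $(t',y)\in V_{t,x}$, and using Lemma \ref{hay} to obtain $Y^{t',y}\subset Y^{t,x}$. The only difference is presentational — the paper draws a triangle of maps to $\H^*_{\bT^2\times T}(Y^{t',y})$ and notes both diagonals are isomorphisms, whereas you draw a square and make the identification $(Y^{t,x})^{t',y}=Y^{t',y}$ explicit, which is a clean way to justify the same commutativity.
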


\begin{proof}
Let $(t',y) \in V_{t,x}$. Then $L^2X^{t',y} \subset L^2X^{t,x}$ by Lemma \ref{hay}, which implies that $Y^{t',y} \subset Y^{t,x}$. Consider the commutative diagram

\begin{equation}
\begin{tikzcd}
\H^*_{\bT^2\times T}(Y)_{V_{t,x}} \ar[dr] \ar[rr] && \H^*_{\bT^2\times T}(Y^{t,x})_{V_{t,x}} \ar[dl] \\
&\H^*_{\bT^2\times T}(Y^{t',y})_{V_{t,x}}, &
\end{tikzcd}
\end{equation}

which is induced by the evident inclusions. Taking stalks at $(t',y)$, Theorem \ref{localisationn} implies that the two diagonal maps are isomorphisms, and so the horizontal map is also an isomorphism. The isomorphism of the proposition follows. 
\end{proof}

\begin{remark} 
The subspace $L^2X^{t,x} \subset L^2X$ is a $\bT^2 \times T$-equivariant CW subcomplex, consisting of those equivariant cells in $L^2X$ whose isotropy group contains $T(t,x)$. In fact, it follows easily from Lemma \ref{equal} that $L^2X^{t,x}$ is a finite $\bT^2 \times T$-CW complex, since $X$ is finite. 
\end{remark}

\begin{corollary}\label{stalkss}
The inclusion $L^2X^{t,x} \subset L^2X$ induces an isomorphism of $\O_{V_{t,x}}$-algebras
\[
\H^*_{L^2T}(L^2X)_{V_{t,x}} \cong \H^*_{\bT^2 \times T}(L^2X^{t,x})_{V_{t,x}},
\]
natural in $X$. In particular, we have an isomorphism of stalks
\[
\H^*_{L^2T}(L^2X)_{(t,x)} \cong \H^*_{\bT^2 \times T}(L^2X^{t,x})_{(t,x)}.
\]
\end{corollary}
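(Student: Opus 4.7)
The plan is to push the localisation isomorphism of Proposition \ref{extend} through the inverse limit defining $\H^*_{L^2T}(L^2X)_{V_{t,x}}$, and then exploit the fact (recorded in the remark immediately preceding the corollary) that $L^2X^{t,x}$ is itself a finite $\bT^2 \times T$-CW subcomplex of $L^2X$ in order to collapse the resulting inverse system.

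First I would unfold the definition. Restricting Definition \ref{varlimm} to $V_{t,x}$ gives
\[
\H^*_{L^2T}(L^2X)_{V_{t,x}} = \varprojlim_{Y \subset L^2X} \H^*_{\bT^2 \times T}(Y)_{V_{t,x}},
\]
where $Y$ ranges over finite $\bT^2 \times T$-CW subcomplexes. The isomorphism of Proposition \ref{extend} is induced by the inclusion $Y^{t,x} \hookrightarrow Y$, and these inclusions are evidently compatible with restriction along any inclusion $Y \subset Y'$ of finite subcomplexes. Applying Proposition \ref{extend} term-by-term thus gives an isomorphism of inverse systems, and hence
\[
\H^*_{L^2T}(L^2X)_{V_{t,x}} \cong \varprojlim_{Y \subset L^2X} \H^*_{\bT^2 \times T}(Y^{t,x})_{V_{t,x}}.
\]

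Next I would invoke the finiteness of $L^2X^{t,x}$. Because $L^2X^{t,x}$ is a finite $\bT^2\times T$-CW subcomplex of $L^2X$, the collection of finite subcomplexes $Y$ which contain $L^2X^{t,x}$ is cofinal in the directed system of all finite subcomplexes of $L^2X$. For any such $Y$ the fixed-point subcomplex $Y^{t,x}$ coincides with $L^2X^{t,x}$, since by definition $L^2X^{t,x}$ consists of exactly those cells whose isotropy contains $T(t,x)$. Hence the inverse system becomes eventually constant with value $\H^*_{\bT^2 \times T}(L^2X^{t,x})_{V_{t,x}}$, and the limit identifies with this value. Naturality in $X$ is inherited from the naturality of Proposition \ref{extend} and the functoriality of $L^2(-)$ and of fixed-point formation.

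The stalk statement then follows by taking the stalk at the point $(t,x) \in V_{t,x}$ of both sides of the sheaf isomorphism obtained above, so that one does \emph{not} need to commute stalks with the original inverse limit (a step which would in general fail, as was flagged in the remark after Definition \ref{varlimm}). The only real obstacle is bookkeeping: checking that the isomorphisms supplied by Proposition \ref{extend} are compatible under inclusions $Y \subset Y'$ so that they assemble into an isomorphism of inverse systems, and verifying the cofinality claim — both of which hinge crucially on the finiteness of $L^2X^{t,x}$ guaranteed by Lemma \ref{equal}. Everything else is a direct application of the machinery already in hand.
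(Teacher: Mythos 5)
Your proposal is correct and takes essentially the same approach as the paper: apply Proposition \ref{extend} term-by-term across the inverse system, then collapse the limit using the fact that $L^2X^{t,x}$ is itself a finite $\bT^2 \times T$-CW subcomplex (your cofinality phrasing and the paper's "each $Y^{t,x}$ is contained in the finite subcomplex $L^2X^{t,x}$" are the same observation). The one small point you gloss over is that, to establish naturality with respect to a map $f\colon X \to Y$, one should first replace $\U$ by a refinement adapted to $\S(f)$ so that the open sets $V_{t,x}$ are simultaneously compatible with source and target, as the paper notes.
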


\begin{proof}
It follows from Definition \ref{varlimm} and Proposition \ref{extend} that 
\[
\begin{array}{rcl}
\H^*_{L^2T}(L^2X)_{V_{t,x}} &=& \varprojlim_{Y \subset L^2X} \H^*_{\bT^2 \times T}(Y)_{V_{t,x}} \\
&\cong& \varprojlim_{Y \subset L^2X} \H^*_{\bT^2 \times T}(Y^{t,x})_{V_{t,x}} \\
&=& \H^*_{\bT^2 \times T}(L^2X^{t,x})_{V_{t,x}}.
\end{array}
\]
The final equality holds by definition of the inverse limit, since each $Y^{t,x}$ is contained in the finite $\bT^2 \times T$-CW subcomplex $L^2X^{t,x} \subset L^2X$. One can easily show naturality with respect to a $T$-equivariant map $f: X \to Y$ by refining the cover $\U$ so that it is adapted to $\S(f)$, and by using the functoriality of the loop space functor and of Borel-equivariant ordinary cohomology. The second statement follows immediately by definition of the stalk.
\end{proof}

\begin{remark}\label{coherent} 
It follows from Corollary \ref{stalkss} that $\H^*_{L^2T}(L^2X)$ is a coherent sheaf, since $L^2X^{t,x}$ is a finite $\bT^2 \times T$-CW complex.
\end{remark}

\section{Construction of the equivariant sheaf $\Ell^*_T(X)$ over $E_T$}
\sectionmark{Construction of the sheaf $\Ell_T(X)$ over $E_T$}

In this section, we begin with a result which says that $\H_{L^2T}(L^2X)$ depends only on loops which are contained in a subspace of the form $L^2X^{t,x}$, even if we are computing global sections. This is an improvement upon the localisation theorems of the previous chapter, which allowed us to restrict to loops in $L^2X^{t,x}$ after localising around the open set $V_{t,x}$, because we can now perform computations while discarding both the extra loops and the open cover. We will see an example of this when we compute $\Ell^*_T(X)$ on an orbit $X = T/H$. \par

We then recall the $\G$-action on $L^2X$ and explain how this induces an action of the Weyl group $\SL_2(\Z) \ltimes \check{T}^2$ on $\H_{L^2T}(L^2X)$, which also carries a $\C^\times$-action. Finally, we define the $\C^\times \times \SL_2(\Z)$-equivariant sheaf $\Ell^*_T(X)$ over $E_T$ as the $\check{T}^2$-invariants of the pushforward of $\H_{L^2T}(L^2X)$ along $\zeta_T$.

\begin{definition}
Let $\mathcal{D}^2(X)$ denote the set of finite $\bT^2 \times T$-CW complexes which is generated by the subspaces
\[
\{L^2X^{t,x}\}_{(t,x) \in \X^+ \times \t_\C}
\]
of $L^2X$ under finite unions and intersections. The set $\mathcal{D}^2(X)$ is partially ordered by inclusion.
\end{definition}

\begin{theorem}\label{yess}
There is an isomorphism of $\O_{\X^+ \times \t_\C}$-algebras
\[
\H^*_{L^2T}(L^2X) \cong \varprojlim_{Y \in \mathcal{D}^2(X)} \H^*_{\bT^2 \times T}(Y)_{\X^+ \times \t_\C}
\]
natural in $X$.
\end{theorem}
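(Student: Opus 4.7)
The plan is to introduce the canonical restriction map from left to right and verify that it is an isomorphism of sheaves by checking on the open cover $\{V_{t,x}\}$ of $\X^+ \times \t_\C$ adapted to $X$ from Definition \ref{defo}.

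First I would observe that every element of $\mathcal{D}^2(X)$ is a finite $\bT^2 \times T$-CW subcomplex of $L^2X$: by the remark preceding Corollary \ref{stalkss} each generator $L^2X^{s,w}$ is such a subcomplex, and the class of finite subcomplexes is closed under finite unions and intersections. Hence the inverse system defining the right-hand side is a subsystem of the one defining $\H^*_{L^2T}(L^2X)$, and restricting the index set yields a natural map
\[
\Phi: \H^*_{L^2T}(L^2X) \longrightarrow \varprojlim_{Y \in \mathcal{D}^2(X)} \H^*_{\bT^2 \times T}(Y)_{\X^+ \times \t_\C}
\]
of sheaves of $\O_{\X^+ \times \t_\C}$-algebras.

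To show $\Phi$ is an isomorphism it suffices to check on each open set $V_{t,x}$ of the cover. Write $Y_0 := L^2X^{t,x}$, which lies in $\mathcal{D}^2(X)$ and is a finite $\bT^2 \times T$-CW complex. Denote by $\pi^L_{Y_0}$ and $\pi^R_{Y_0}$ the structure maps from the left- and right-hand inverse limits to the $Y_0$ component, so that $\pi^L_{Y_0} = \pi^R_{Y_0} \circ \Phi$. By Corollary \ref{stalkss}, $\pi^L_{Y_0}|_{V_{t,x}}$ is an isomorphism of $\O_{V_{t,x}}$-algebras. The analogous statement for $\pi^R_{Y_0}|_{V_{t,x}}$ is proved by essentially the same argument: Proposition \ref{extend} gives, for each $Y \in \mathcal{D}^2(X)$, a natural isomorphism $\H^*_{\bT^2 \times T}(Y)_{V_{t,x}} \cong \H^*_{\bT^2 \times T}(Y^{t,x})_{V_{t,x}}$; the subset of $Y \in \mathcal{D}^2(X)$ containing $Y_0$ is cofinal (for any $Y \in \mathcal{D}^2(X)$ the union $Y \cup Y_0$ again lies in $\mathcal{D}^2(X)$ by closure under finite unions, and contains $Y_0$), and on this cofinal subset $Y^{t,x} = Y_0$. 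Hence the inverse limit stabilises at $\H^*_{\bT^2 \times T}(Y_0)_{V_{t,x}}$.

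It follows that $\Phi|_{V_{t,x}} = (\pi^R_{Y_0}|_{V_{t,x}})^{-1} \circ \pi^L_{Y_0}|_{V_{t,x}}$ is an isomorphism for every $(t,x) \in \X^+ \times \t_\C$, and since the $V_{t,x}$ cover $\X^+ \times \t_\C$, $\Phi$ is an isomorphism of sheaves. Naturality in $X$ then reduces to the naturality of Borel-equivariant cohomology, Proposition \ref{extend} and Corollary \ref{stalkss}: for a $T$-equivariant map $f: X \to X'$, one takes a common refinement of the covers adapted to $\S(X)$ and $\S(X')$ and uses that $L^2f$ sends $L^2X^{t,x}$ into $L^2(X')^{t,x}$, so the induced pullbacks intertwine $\Phi$. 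The main subtlety is the cofinality step — one must verify that $\mathcal{D}^2(X)$ is large enough to contain $Y \cup Y_0$ for all of its elements — and the decision to work locally on $V_{t,x}$ rather than on stalks, which sidesteps the well-known failure (flagged in the remark following Definition \ref{varlimm}) of stalks to commute with inverse limits.
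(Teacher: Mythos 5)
Your proof is correct, and it takes a genuinely different route from the paper's. The paper introduces the auxiliary $\bT^2 \times T$-CW subcomplex $S := \bigcup_{(t,x)} L^2X^{t,x}$, defines a map $\varprojlim_{Y} \H^*(Y) \to \varprojlim_{Y} \H^*(Y\cap S)$, shows this is an isomorphism on stalks via Corollary~\ref{stalkss}, and then reindexes the target in two steps (first as the limit over all finite subcomplexes of $S$, then, by a cofinality argument, as the limit over $\mathcal{D}^2(X)$). You instead write down the restriction-of-index-set map $\Phi$ directly, avoid $S$ altogether, and verify $\Phi$ is an isomorphism on each $V_{t,x}$ by showing both projections $\pi^L_{Y_0}$ and $\pi^R_{Y_0}$ to $\H^*_{\bT^2\times T}(Y_0)_{V_{t,x}}$ are isomorphisms and using $\pi^L_{Y_0} = \pi^R_{Y_0}\circ\Phi$. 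The price you pay is that you need a separate cofinality argument (via $\{Y \in \mathcal D^2(X): Y_0 \subset Y\}$) to handle $\pi^R_{Y_0}$, whereas the paper gets $\pi^L_{Y_0}$ essentially for free from Corollary~\ref{stalkss} and hides the corresponding work inside the reindexing at the end. What you gain is a cleaner overall structure: the map is the obvious one, the target is never rewritten, and the verification is local on an explicit cover rather than on stalks — which, as you note, neatly parallels the strategy of Proposition~\ref{extend} and Corollary~\ref{stalkss} themselves. Both proofs ultimately rest on the same localization input (Proposition~\ref{extend}) and the same cofinality principle; yours just packages them in one pass.
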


\begin{proof}
Consider the union
\[
S := \bigcup_{(t,x) \in \X^+ \times \t_\C} L^2X^{t,x}.
\]
Notice that $S$ is a $\bT^2 \times T$-CW subcomplex of $L^2X$. For each finite $\bT^2 \times T$-CW subcomplex $Y \subset L^2X$, we have an inclusion $Y \cap S \hookrightarrow Y$. The induced map of $\O_{\X^+ \times \t_\C}$-algebras
\begin{equation}\label{maps1}
\varprojlim_{Y \subset L^2X} \H^*_{\bT^2 \times T}(Y)_{\X^+ \times \t_\C} \to \varprojlim_{Y \subset L^2X} \H^*_{\bT^2 \times T}(Y \cap S)_{\X^+ \times \t_\C}
\end{equation}
is natural in $X$, by the functoriality of Borel-equivariant ordinary cohomology. Let $(t,x)$ be an arbitrary point. By Corollary \ref{stalkss}, the map \eqref{maps1} induces an isomorphism of stalks at $(t,x)$ because $S$ contains $L^2X^{t,x}$. Therefore, the map \eqref{maps1} is an isomorphism of sheaves.

It is clear that the set $\{S \cap Y \, | \, Y \subset L^2X \, \text{finite}\}$ is equal to the set of all finite equivariant subcomplexes of $S$. Therefore, the target of map \eqref{maps1} is equal to the inverse limit
\begin{equation}\label{mapsss1}
\varprojlim_{Y \subset S} \H^*_{\bT^2 \times T}(Y)_{\X^+ \times \t_\C}
\end{equation}
over all finite equivariant subcomplexes $Y \subset S$. Any such $Y$, since it is finite, is contained in the union of finitely many spaces of the form $L^2X^{t,x}$, which is also a finite $\bT^2 \times T$-CW complex. Therefore, by definition of the inverse limit, \eqref{mapsss1} is equal to 
\[
\varprojlim_{Y \in \mathcal{D}^2(X)} \H^*_{\bT^2 \times T}(Y)_{\X^+ \times \t_\C},
\]
which completes the proof.
\end{proof}

\begin{remark}
Theorem \ref{yess} allows us to define $\H^*_{L^2T}(L^2X)$ without a $\bT^2 \times T$-CW complex structure for $L^2X$.
\end{remark}

\begin{remark}\label{earl}
Let $X$ be a finite $T$-CW complex. Recall that the extended double loop group $\G$ acts on $L^2X$ via
\[
(A,t,\gamma(s)) \cdot \gamma'(s) = \gamma(A^{-1}s-At)\cdot \gamma'(A^{-1}s-At).
\]
By applying the Milnor construction to the topological group $\G$, we obtain a contractible space $E\G$ equipped with a free right action of $\G$. Define a left action of $\G$ on $E\G \times L^2X$ by setting 
\[
g\cdot (e,\gamma) = (e\cdot g^{-1},g\cdot \gamma)
\]
for $g \in \G$. The topological quotient of $E\G \times L^2X$ by $\bT^2 \times T \subset \G$ is a model for the Borel construction
\[
E(\bT^2 \times T) \times_{\bT^2 \times T} L^2X,
\]
since $\bT^2 \times T$ acts freely on $E\G$ via the action of $\G$. Recall that $\SL_2(\Z) \ltimes \check{T}^2$ is the Weyl group associated to $\bT^2 \times T \subset \G$. The action of $\G$ on $E\G \times L^2X$ induces an action
\[
\SL_2(\Z) \ltimes \check{T}^2  \acts E(\bT^2 \times T) \times_{\bT^2 \times T} L^2X.
\]
The action of $(A,m) \in \SL_2(\Z) \ltimes \check{T}^2$ induces a homeomorphism  
\[
E(\bT^2 \times T) \times_{\bT^2 \times T} L^2X^{t,x} \longrightarrow E(\bT^2 \times T) \times_{\bT^2 \times T} L^2X^{At,x+mt},
\]
for each $(t,x) \in \X^+ \times \t_\C$. Writing $x = x_1t_1 + x_2t_2$, the homeomorphism sends
\begin{equation}\label{earl1}
(e,\pi(x_1s_1 + x_2s_2)\cdot z) \mapsto (e\cdot (A,m)^{-1},\pi((x_1+m_1,x_2+m_2)A^{-1}(s_1,s_2))\cdot z).
\end{equation}
Note that  
\[
\pi((x_1+m_1,x_2+m_2)A^{-1}(s_1,s_2))\cdot z 
\]
does in fact lie in $L^2X^{At,x+mt}$, since
\[
x + mt = ((x_1+m_1)t_1, (x_2+m_2)t_2) = (x_1+m_1,x_2+m_2)A^{-1} A(t_1,t_2).
\]
There is an induced isomorphism on cohomology rings
\[
(A,m)^*: H^*_{\bT^2 \times T}(L^2X^{At,x+mt}) \longrightarrow H^*_{\bT^2 \times T}(L^2X^{t,x}).
\]
Let $\omega_{A,m}: \X^+ \times \t_\C \rightarrow \X^+ \times \t_\C$ be the action map of $(A,m)$. Then $(A,m)^*$ induces an isomorphism of sheaves
\[
\omega_{A,m}^* (\H^*_{\bT^2 \times T}(L^2X^{At,x+mt})_{\X^+ \times \t_\C}) \longrightarrow \H^*_{\bT^2\times T}(L^2X^{t,x})_{\X^+ \times \t_\C} 
\]
In fact, we can get such a map for each $Y \in \bD^2(X)$ by replacing $L^2X^{q,u}$ with $Y$ and $L^2X^{At,x+mt}$ with $(A,m)\cdot Y$. It is easily verified that $(A,m)\cdot Y \in \bD^2(X)$ whenever $Y \in \bD^2(X)$. As they are induced by the action of $\G$ on $L^2X$, the family of isomorphisms
\[
\{ \omega_{A,m}^* (\H^*_{\bT^2 \times T}((A,m)\cdot Y)_{\X^+ \times \t_\C}) \longrightarrow \H^*_{\bT^2\times T}(Y)_{\X^+ \times \t_\C} \}_{Y \in \bD^2(X)}
\]
is compatible with inclusions in $\bD^2(X)$. We therefore have an induced isomorphism
\[
\omega_{A,m}^*\H^*_{L^2T}(L^2X) \longrightarrow \H^*_{L^2T}(L^2X)
\]
of inverse limit sheaves, which induces an isomorphism 
\begin{equation}
(A,m)^*: \H^*_{\bT^2 \times T}(L^2X^{At,x+mt})_{(At,x+mt)} \longrightarrow \H^*_{\bT^2 \times T}(L^2X^{t,x})_{(t,x)} \\
\end{equation}
of stalks, also denoted $(A,m)^*$. Since it is induced by a group action, the collection of isomorphisms
\[
\{\omega_{A,m}^*\H^*_{L^2T}(L^2X) \longrightarrow \H^*_{L^2T}(L^2X)\}_{(A,m) \in \SL_2(\Z) \ltimes \check{T}^2}
\] 
defines an action of $\SL_2(\Z) \ltimes \check{T}^2$ on $\H^*_{L^2T}(L^2X)$. 
\end{remark}

\begin{remark}\label{jeeves}
We describe a $\C^\times$-action on $\H^*_{L^2T}(L^2X)$ which clearly commutes with the action of $\SL_2(\Z) \ltimes \check{T}^2$. Let $\C^\times$ act on $\X^+ \times \t_\C$ by\footnote{The action on the left factor $\X^+$ should be compared to the scalar action in Remark \ref{dominic}. We are forced to introduce the square because the holomorphic coordinate functions have degree 2 when regarded as classes in equivariant cohomology.}
\[
\lambda \cdot (t,x) = (\lambda^2 t,\lambda^2 x).
\]
The action on the sheaf is given for each $\lambda \in \C^\times$ by isomorphisms
\[
\begin{array}{rcl}
\omega_{\lambda}^* (\H^*_{\bT^2 \times T}(L^2X^{\lambda^2t,\lambda^2x})_{\X^+ \times \t_\C}) &\longrightarrow& \H^*_{\bT^2\times T}(L^2X^{t,x})_{\X^+ \times \t_\C}  \\
\alpha \otimes f &\longmapsto & \lambda^n \alpha \otimes f
\end{array}
\]
where $\alpha$ is a degree $n$ cohomology class and $\omega_{\lambda}$ denotes the action of $\lambda$ on $\X^+ \times \t_\C$. On an open subset $U \subset \X^+ \times \t_\C$, the isomorphisms are 
\[
\begin{array}{rcl}
H^*_{\bT^2 \times T}(L^2X^{\lambda^2t,\lambda^2x})_{\lambda^2 U} &\longrightarrow& H^*_{\bT^2\times T}(L^2X^{t,x})_{U}  \\
\alpha \otimes f &\longmapsto & \lambda \cdot \alpha \otimes f \circ \omega_\lambda.
\end{array}
\]
For example, if $\alpha$ is a class of degree $i$ and $f$ satisfies $f(\lambda^2t,\lambda^2x) = \lambda^j f(t,x)$, then $\alpha \otimes f$ is sent to $\lambda^{i+j} (\alpha \otimes f)$.
\end{remark}

\begin{theorem}\label{actyon1}
The sheaf $\H^*_{L^2T}(L^2X)$ is a $\C^\times \times (\SL_2(\Z) \ltimes \check{T}^2)$-equivariant sheaf of $\O_{\X^+ \times \t_\C}$-algebras.
\end{theorem}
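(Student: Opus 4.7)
The plan is to assemble the pieces already built in Remarks~\ref{earl} and~\ref{jeeves} and verify the three things that still need checking: that each construction is a genuine group action, that the two actions commute, and that each action respects the $\O_{\X^+\times\t_\C}$-algebra structure in the appropriate equivariant sense (i.e.\ each morphism $\omega_g^*\H^*_{L^2T}(L^2X)\to\H^*_{L^2T}(L^2X)$ is an isomorphism of sheaves of algebras covering the action $\omega_g$ on the base).

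First, I would verify that the assignment $(A,m)\mapsto\omega_{A,m}^*\H^*_{L^2T}(L^2X)\to\H^*_{L^2T}(L^2X)$ of Remark~\ref{earl} is a homomorphism from $\SL_2(\Z)\ltimes\check{T}^2$. Since the map \eqref{earl1} is itself induced from the $\G$-action on $E\G\times L^2X$, and since functoriality of Borel-equivariant cohomology converts the relation $(A',m')(A,m)=(A'A,m'A+m)$ on $\SL_2(\Z)\ltimes\check{T}^2$ into the corresponding composite of the associated cohomology maps, this reduces to pointwise compatibility on each finite subcomplex $Y\in\bD^2(X)$, which is already noted to be preserved under the group action. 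The cocycle condition then passes to the inverse limit sheaf by construction. In the same style I would verify that the formula in Remark~\ref{jeeves} satisfies $\omega_\lambda^*\circ\omega_{\lambda'}^*=\omega_{\lambda\lambda'}^*$; the key point is that degree-$n$ classes pick up a factor of $\lambda^n$ and holomorphic functions pull back by $f\circ\omega_\lambda$, so the two scalings combine correctly.

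Next, I would check that the two actions commute. The $\C^\times$-action is purely scalar on $\X^+\times\t_\C$ and preserves each subspace $L^2X^{t,x}$ up to identification via rescaling in the parameter $(t,x)$, while the $\SL_2(\Z)\ltimes\check{T}^2$-action shuffles the subspaces via \eqref{earl1}. Since $(A,m)\cdot(\lambda^2 t,\lambda^2 x)=(\lambda^2 At,\lambda^2(x+mt))=\lambda\cdot((A,m)\cdot(t,x))$ on the base, and since the two maps on cocycles are defined by independent data (one acts on the loop variable via $(A,m)$ and the other rescales cohomological degree), the diagram of sheaf maps commutes cell-by-cell on each $Y\in\bD^2(X)$ and hence on the inverse limit.

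Finally, for the equivariant sheaf structure I would check that each of the isomorphisms is $\O_{\X^+\times\t_\C}$-semilinear over the corresponding automorphism $\omega_g$ of the base and multiplicative for the cup-product algebra structure on each fiber. Both assertions are immediate from the definitions: the map on sections sends $\alpha\otimes f\mapsto g^*\alpha\otimes(f\circ\omega_g)$ (with the extra $\lambda^n$ twist in the $\C^\times$ case), which is $\O$-semilinear by construction and multiplicative because cohomology pullback is a ring map and the tensor products are over the ring maps $H_{\bT^2\times T}\to\O_{\X^+\times\t_\C}$, which are themselves equivariant for the analogous actions. The main technical obstacle is bookkeeping: namely, ensuring that all of these pointwise-defined isomorphisms on finite subcomplexes assemble compatibly into a morphism of the inverse-limit sheaf (this is what the inclusion-compatibility statement in Remark~\ref{earl} is doing), and that the $\SL_2(\Z)$-twist of the complex structure on $\X^+$ is correctly tracked through the identification $\xi_T$ used to define the cover $\{V_{t,x}\}$. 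With those compatibilities in hand, the theorem follows.
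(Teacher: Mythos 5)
Your proposal is correct and takes essentially the same route as the paper, which simply cites Remarks \ref{earl} and \ref{jeeves}; you have unpacked the verifications (homomorphism property, commutativity of the two actions, semilinearity and multiplicativity, inverse-limit compatibility) that those remarks construct and the paper leaves implicit.
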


\begin{proof}
This holds by Remarks \ref{earl} and \ref{jeeves}.
\end{proof}

We denote by $\iota_{t}$ the inclusion of the fiber $\t_\C \hookrightarrow \X^+ \times \t_\C$ over $t \in \X^+$. We have a commutative diagram of complex manifolds
\[
\begin{tikzcd}
\t_\C \ar[r,hook,"{\iota_t}"] \ar[d,two heads, "{\zeta_{T,t}}"] & \X^+ \times \t_\C \ar[d,two heads, "{\zeta_T}"] \\
E_{T,t} \ar[r,hook] & E_T. 
\end{tikzcd}
\]
The $\C^\times \times \SL_2(\Z)$-action on $E_T$ does not preserve the fiber over $t$. With Remark \ref{coherent} in mind, we make the following definition.

\begin{definition}
Define the $\C^\times \times \SL_2(\Z)$-equivariant, coherent sheaf of $\O_{E_T}$-algebras 
\[
\Ell^*_T(X) := ((\zeta_{T})_*\, \H^*_{L^2T}(L^2X))^{\check{T}^2},
\]
which has a $\Z$-grading corresponding to the action of $\C^\times$. Define the coherent sheaf of $\O_{E_{T,t}}$-algebras 
\[
\Ell^*_{T,t}(X) :=  ((\zeta_{T,t})_*\, \iota_t^*\, \H^*_{L^2T}(L^2X))^{\check{T}^2},
\]
which has a $\Z/2\Z$-grading given by even and odd cohomology, as in Remark \ref{eqii}.
\end{definition}

\begin{remark}
By construction, $\Ell_{T}^*(\pt)$ is equal to the $\C^\times \times \SL_2(\Z)$-equivariant structure sheaf
\[
\O_{E_T} = ((\zeta_T)_*\O_{\X^+ \times \t_\C})^{\check{T}^2} 
\]
of $E_T$. Similarly, $\Ell_{T,t}^*(\pt)$ is equal to the structure sheaf
\[
\O_{E_{T,t}} = ((\zeta_{T,t})_* \O_{\t_\C})^{\check{T}^2}
\]
of the fiber of $E_T$ over $t \in \X^+$.
\end{remark}

\begin{example}
It is straightforward to compute $\Ell_T^*(X)$ when $T = 1$, for this implies that $\bD^2(X) = \{X\}$. We obtain the $\C^\times \times \SL_2(\Z)$-equivariant sheaf whose value on an open subset $U \subset \X^+$ is 
\[
\Ell_1^*(X)(U) = H^*(X) \otimes_\C \O_{\X^+}(U).
\]
The value of $\Ell_1^n(X)$ on $U$ is the vector space generated by elements $\alpha \otimes f$ where $\alpha \in H^i(X)$ and $f$ is a holomorphic on $U$ such that $f(\lambda^2 t) = \lambda^j f(t)$, for all $(i,j) \in \Z^2$ satisfying $i + j = n$.
\end{example}

\begin{example}
In the case that $T = 1$ and $X = \pt$, we obtain the $\C^\times \times \SL_2(\Z)$-equivariant holomorphic structure sheaf of $\X^+$. The value of $\Ell_1^n(\pt)$ on $U$ is the vector space generated by holomorphic functions $f: U \to \C$ satisfying $f(\lambda^2 t) = \lambda^n f(t)$, whenever this is defined. The $\SL_2(\Z)$-invariant global sections of $\Ell_1^n(\pt)$ are exactly the weak modular forms of weight $-n/2$. In particular, the $\SL_2(\Z)$-invariant global sections of $\Ell_1^*(\pt)$ are concentrated in negative even degrees.
\end{example}

\section{The suspension isomorphism}

Our aim now is to show that the reduced theory $\widetilde{\Ell}^*_T(X)$ has a suspension isomorphism which is inherited from ordinary cohomology. We first prove a lemma to show that the suspension functor commutes with the double loop space functor, but only once we have taken fixed points with respect to some $T(t,x)$. Thus, we will implicitly be using the localisation theorem to establish the suspension isomorphism.

\begin{remark}
Let $X$ be a finite $T$-CW complex. We regard the loop space $L^2(S^1 \wedge X_+)$ as a pointed $\bT^2 \times T$-CW complex with basepoint given by the loop $\gamma_*: \bT^2 \to \pt \hookrightarrow S^1 \wedge X_+$. Since the basepoint of $S^1 \wedge X_+$ is fixed by $T$, the loop $\gamma_*$ is fixed by $\bT^2 \times T$, and so $\gamma_*$ is contained in $L^2(S^1 \wedge X_+)^{t,x}$ for all $(t,x)$. Therefore, each $Y \in \mathcal{D}^2(S^1 \wedge X_+)$ is a based subcomplex of $L^2(S^1 \wedge X_+)$.
\end{remark}

\begin{lemma}\label{wedger}
For each $(t,x) \in \X^+ \times \t_\C$, we have an equality
\[
L^2(S^1\wedge X_+)^{t,x} = S^1 \wedge L^2X^{t,x}_+ 
\]
as subsets of $L^2(S^1 \wedge X_+)$.
\end{lemma}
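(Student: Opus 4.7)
The plan is to unpack both sides using the explicit parametrisation of fixed loops from Remark \ref{pushy} and observe that they coincide term-by-term. Setting $x = x_1 t_1 + x_2 t_2$ and $a = \zeta_T(t,x)$, Remark \ref{pushy} applied to the $T$-space $S^1 \wedge X_+$ gives
\[
L^2(S^1 \wedge X_+)^{t,x} = \{\gamma \mid \gamma(s_1,s_2) = \pi(x_1 s_1 + x_2 s_2)\cdot \gamma(0,0),\ \gamma(0,0) \in (S^1 \wedge X_+)^{a}\}.
\]
Since $T$ acts trivially on $S^1$ and fixes the added disjoint basepoint of $X_+$, one has the natural identification $(S^1 \wedge X_+)^{a} = S^1 \wedge X^a_+$, so $\gamma(0,0)$ is either the basepoint of $S^1 \wedge X_+$ or of the form $[s,z]$ with $s \in S^1 \setminus \{s_0\}$ and $z \in X^a$.

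I would then argue by cases on $\gamma(0,0)$. When $\gamma(0,0)$ is the basepoint, the fixed-point formula forces $\gamma$ to be the constant loop at the basepoint, which is exactly the basepoint of $S^1 \wedge L^2X^{t,x}_+$. When $\gamma(0,0) = [s,z]$, the triviality of the $T$-action on the $S^1$-coordinate yields
\[
\gamma(s_1,s_2) = [\,s,\ \pi(x_1 s_1 + x_2 s_2)\cdot z\,],
\]
and the loop $\gamma'_z \in L^2X$ defined by $\gamma'_z(s_1,s_2) := \pi(x_1 s_1 + x_2 s_2)\cdot z$ lies in $L^2X^{t,x}$ by a second application of Remark \ref{pushy}, since $z \in X^a$. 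Thus $\gamma$ is the image of $[s,\gamma'_z] \in S^1 \wedge L^2X^{t,x}_+$ under the canonical embedding $S^1 \wedge L^2X^{t,x}_+ \hookrightarrow L^2(S^1 \wedge X_+)$ sending $[s,\gamma'] \mapsto \bigl((s_1,s_2) \mapsto [s,\gamma'(s_1,s_2)]\bigr)$.

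For the reverse inclusion, every non-basepoint element of $S^1 \wedge L^2X^{t,x}_+$ maps under this same embedding to a loop of exactly the form displayed above, which manifestly satisfies the fixed-point description, while the basepoint maps to the constant basepoint loop. This yields the equality of the two subsets of $L^2(S^1 \wedge X_+)$. The proof is essentially bookkeeping with the smash-product construction; no serious obstacle arises, the only mild subtlety being to handle the basepoint separately so that the identification is genuinely an equality as subsets and not merely as $\bT^2 \times T$-spaces up to homeomorphism.
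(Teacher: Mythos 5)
Your proof is correct, and it takes a genuinely different route from the paper's. The paper first works with the \emph{product} $S^1 \times X_+$: it shows by direct manipulation of the $T(t,x)$-fixed-point condition (setting $r=s$ in $\gamma(s) = \pi(x_1r_1+x_2r_2)\cdot \gamma(s-r)$) that the $S^1$-coordinate of a fixed loop is constant and the $X_+$-coordinate lies in $L^2X^{t,x}_+$, so $L^2(S^1\times X_+)^{t,x} = S^1 \times L^2X^{t,x}_+$; it then passes to the smash quotient by characterising those fixed loops whose image lies entirely in $\{s_0\} \times X_+ \cup S^1 \times \pt$. You instead apply Remark \ref{pushy} directly to the finite $T$-CW complex $S^1\wedge X_+$, invoke the identification $(S^1\wedge X_+)^a = S^1\wedge X^a_+$ (justified because $T$ acts trivially on the $S^1$-coordinate and fixes the added basepoint), and do casework on $\gamma(0,0)$. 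The mathematical content is the same, since the "set $r=s$" step in the paper's proof is exactly what underlies the parametrisation recorded in Remark \ref{pushy}; what your route buys is that it never leaves the smash-product picture, so the statement that the two sides agree \emph{as subsets} of $L^2(S^1 \wedge X_+)$ is read off immediately from the canonical embedding $[s,\gamma'] \mapsto \bigl((s_1,s_2) \mapsto [s,\gamma'(s_1,s_2)]\bigr)$, whereas the paper has to argue separately about when a fixed loop in the product descends to (respectively lifts from) the quotient.
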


\begin{proof}
Suppose that $\gamma$ is a loop in $L^2(S^1\times X_+)^{t,x}$ sending $s \mapsto (\gamma_1(s),\gamma_2(s))$. Write $\gamma(0) = (z_1,z_2)$ and $x = x_1t_1 + x_2t_2$. The loop $\gamma$ is fixed by $T(t,x)$ if and only if 
\[
(\gamma_1(s),\gamma_2(s)) = \pi(x_1r_1+x_2r_2)\cdot (\gamma_1(s-r),\gamma_2(s-r)) =  (\gamma_1(s-r),\pi(x_1r_1+x_2r_2)\cdot \gamma_2(s-r))
\]
for all $r,s \in \bT^2$, where $S^1$ is fixed by $T$. Setting $r = s$, one sees that this is true if and only if
\[
(\gamma_1(r),\gamma_2(r)) =  (\gamma_1(0),\pi(x_1r_1+x_2r_2)\cdot \gamma_2(0)), 
\]
which holds if and only $\gamma_1$ is constant in $S^1$ and $\gamma_2$ is in $L^2X^{t,x}_+$. Therefore, we have an equality
\[
L^2(S^1 \times X_+)^{t,x} = S^1 \times L^2X^{t,x}_+.
\]
To prove the equality of the lemma, consider that the image of $\gamma$ is contained in $0 \times X_+ \cup S^1 \times \pt$ if and only if we have either $\gamma_1(s) = 0$ or $\gamma_2(s) = \pt$ for each $s$. But this means that either $\gamma_1(s) = 0$ for all $s$, or $\gamma_2(s) = \pt$ for all $s$, which means that $\gamma \in 0 \times L^2X^{t,x} \cup S^1 \times \pt$. The equality of the lemma follows.
\end{proof}

\begin{definition}
For a pointed finite $T$-CW complex $X$, define the reduced theory
\[
\widetilde{\Ell}_T^*(X) := \ker(\Ell_T^*(X) \to \Ell_T^*(\pt)).
\]
\end{definition}

\begin{proposition}\label{jezza}
Let $X$ be a finite $T$-CW complex. There is an $\SL_2(\Z)$-equivariant isomorphism of sheaves of $\O_{E_T}$-modules
\[
\Ell^{*-1}_T(X) \cong \widetilde{\Ell}^*_T(S^1 \wedge X_+)
\]
natural in $X$.
\end{proposition}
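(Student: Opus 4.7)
The plan is to lift the suspension isomorphism from Borel-equivariant ordinary cohomology up through the inverse limit defining $\H^*_{L^2T}$, push it forward along $\zeta_T$, and take $\check{T}^2$-invariants. The key geometric input is Lemma \ref{wedger}, which identifies $L^2(S^1 \wedge X_+)^{t,x}$ with $S^1 \wedge L^2X^{t,x}_+$; this makes the smash product with $S^1$ interact compatibly with the collection $\mathcal{D}^2(X)$.

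First, I would use Lemma \ref{wedger} to establish an equivalence of posets $\mathcal{D}^2(S^1 \wedge X_+) \simeq \{S^1 \wedge Y_+ \,|\, Y \in \mathcal{D}^2(X)\}$, compatibly with inclusions and with the $(A,m)$-action from Remark \ref{earl}. Then, for each $Y \in \mathcal{D}^2(X)$, the classical $\bT^2 \times T$-equivariant suspension isomorphism gives a natural isomorphism $\tilde{H}^{*}_{\bT^2 \times T}(S^1 \wedge Y_+) \cong H^{*-1}_{\bT^2 \times T}(Y)$. Tensoring with $\O_{\X^+ \times \t_\C}$ and passing to sheaves yields, on each patch $V_{t,x}$, an isomorphism
\[
\tilde{\H}^{*}_{\bT^2 \times T}(S^1 \wedge Y_+)_{V_{t,x}} \cong \H^{*-1}_{\bT^2 \times T}(Y)_{V_{t,x}},
\]
natural both in $Y$ and in inclusions of open sets. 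Taking the inverse limit over $Y \in \mathcal{D}^2(X)$ and invoking Theorem \ref{yess}, together with the fact that reduced cohomology is the kernel of pullback along the basepoint, produces a natural isomorphism
\[
\widetilde{\H}^*_{L^2T}(L^2(S^1 \wedge X_+)) \cong \H^{*-1}_{L^2T}(L^2 X)
\]
of sheaves of $\O_{\X^+ \times \t_\C}$-modules, where the reduced sheaf on the left is defined as the kernel of the map induced by the basepoint $\gamma_*$.

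Next, I would push forward along $\zeta_T$ and take $\check{T}^2$-invariants. Since pushforward is a right adjoint it preserves kernels, and taking invariants under the discrete group $\check{T}^2$ is also left exact, so the reduced (kernel) construction commutes with both operations. Hence
\[
\widetilde{\Ell}^*_T(S^1 \wedge X_+) = \big( (\zeta_T)_* \widetilde{\H}^*_{L^2T}(L^2(S^1 \wedge X_+)) \big)^{\check{T}^2} \cong \big( (\zeta_T)_* \H^{*-1}_{L^2T}(L^2X) \big)^{\check{T}^2} = \Ell^{*-1}_T(X).
\]
Naturality in $X$ follows from the naturality of each step (the ordinary suspension isomorphism, the identification of Lemma \ref{wedger}, the inverse limit, the pushforward, and the invariants).

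The main obstacle will be the $\SL_2(\Z)$-equivariance. I would argue this by showing that the identification $\mathcal{D}^2(S^1 \wedge X_+) \simeq \{S^1 \wedge Y_+\}$ intertwines the $\SL_2(\Z) \ltimes \check{T}^2$-action, since $S^1$ is acted on trivially and the $\G$-action on $L^2(S^1 \wedge X_+)$ preserves the smash decomposition in \eqref{earl1}. Consequently, the maps $(A,m)^*$ of Remark \ref{earl} commute with the ordinary suspension isomorphism on each $Y$, so they commute with the inverse limit map constructed above. Taking $\check{T}^2$-invariants then leaves a clean $\SL_2(\Z)$-action, and the displayed isomorphism is $\SL_2(\Z)$-equivariant. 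A minor secondary point to check is that the basepoint section $\gamma_*$ is fixed by the whole $\G$-action, which is immediate since it lies over the $T$-fixed basepoint of $S^1 \wedge X_+$.
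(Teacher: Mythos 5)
Your proposal follows essentially the same route as the paper's proof: use Lemma \ref{wedger} to identify the poset $\mathcal{D}^2(S^1 \wedge X_+)$ with $\{S^1 \wedge Y_+\}_{Y \in \mathcal{D}^2(X)}$, apply the classical equivariant suspension isomorphism term-by-term in the inverse limit, and then observe that the kernel defining the reduced theory commutes with the inverse limit, pushforward, and invariants. The $\SL_2(\Z)$-equivariance argument via naturality of the suspension isomorphism also matches the paper's, so the proof is correct and not materially different.
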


\begin{proof}
We have
\[
\begin{array}{rcl}
\Ell^{*-1}_T(X) &:=& ((\zeta_T)_* \varprojlim_{Y \in \bD^2(X)} \H^{*-1}_{\bT^2 \times T}(Y)_{\X^+ \times \t_\C})^{\check{T}^2}\\
&\cong& ((\zeta_T)_* \varprojlim_{Y \in \bD^2(X)} \widetilde{\H}^*_{\bT^2 \times T}(S^1 \wedge Y_+)_{\X^+ \times \t_\C})^{\check{T}^2 }\\
&=& ((\zeta_T)_* \varprojlim_{Y \in \bD^2(S^1 \wedge X_+)} \widetilde{\H}^*_{\bT^2 \times T}(Y)_{\X^+ \times \t_\C})^{\check{T}^2} \\
&=:& ((\zeta_T)_* \varprojlim_{Y \in \bD^2(S^1 \wedge X_+)} \ker(\H^*_{\bT^2 \times T}(Y) \to \H^*_{\bT^2 \times T}(\pt))_{\X^+ \times \t_\C})^{\check{T}^2} \\
&=&  \ker(\Ell^*_T(S^1\wedge X_+) \to \Ell^*_T(\pt)) \\
&=:&  \widetilde{\Ell}^*_T(S^1 \wedge X_+).
\end{array}
\]
Indeed, the second line is induced by the natural suspension isomorphism of ordinary cohomology. The third holds because the equality $L^2(S^1\wedge X_+)^{t,x} = S^1 \wedge L^2X^{t,x}_+$ of Lemma \ref{wedger} is an equality of subsets of $L^2(S^1 \wedge X_+)$ for each $(t,x)$ which are compatible with inclusion, so that they extend to an equality of partially ordered sets
\[
\bD^2(S^1 \wedge X_+) = \{S^1 \wedge Y_+\}_{Y\in \mathcal{D}^2(X)}.
\]
The fifth holds since the inverse limit is a right adjoint functor, and therefore respects all limits, including kernels. The remaining equalities hold by definition. The isomorphism of the second line is $\SL_2(Z)$-equivariant by the functoriality of Borel-equivariant cohomology, and each equality is evidently $\SL_2(\Z)$-equivariant. Therefore, the composite is $\SL_2(\Z)$-equivariant.
\end{proof}

\begin{remark}\label{hewson}
Once it is equipped with the suspension isomorphisms of Proposition \ref{jezza}, the map sending $X \mapsto \widetilde{\Ell}_T^*(X)$ defines a reduced cohomology theory on finite $T$-CW complexes. It is clearly functorial and homotopy invariant, since both the double loop space functor and Borel-equivariant ordinary cohomology are functorial and homotopy invariant. It is also exact and additive, since these properties may be checked on stalks, and Corollary \ref{stalkss} implies that these properties are inherited from Borel-equivariant ordinary cohomology (since $L^2X^{t,x} \cong X^a$ by Lemma \ref{equal}; compare the proof of Proposition \ref{kerry}).
\end{remark}

\section{The value of $\Ell_T^*(X)$ on an orbit}

The description of $\H^*_{L^2T}(L^2X)$ given in Theorem \ref{yess} allows us to compute $\Ell^*_T(X)$ on an orbit $X = T/H$, where the $T$ action is induced by group multiplication. Having made this computation, one can then compute $\Ell^*_T(X)$ on any other finite $T$-CW complex $X$ using the Mayer-Vietoris sequence. \par

We will have shown by the end of this section that
\[
\Ell^*_T(T/H) = \O_{E_H}.
\]
where $E_H$ is the image of $\X^+ \times H \times H$ under the isomorphism $\chi_T: \X^+ \times T \times T \longrightarrow E_T$. \par

We begin by calculating $\bD^2(T/H)$. Let $a \in E_{T,t}$ and write $(a_1,a_2)$ for the preimage of $a$ under $\chi_{T,t}$. We have
\[
(T/H)^a = (T/H)^{\langle a_1,a_2 \rangle} = \begin{cases} T/H &\mbox{if } (a_1,a_2) \in H \times H \\ \emptyset & \mbox{otherwise.} \end{cases}.
\]
Let $x \in \zeta_{T,t}^{-1}(a)$ and write $x = x_1t_1 + x_2t_2$. We have that
\begin{equation}\label{henry}
L^2X^{t,x} = \{\gamma(s_1,s_2) = \pi(x_1s_1 + x_2s_2) \cdot z \, | \, z\in T/H\}
\end{equation}
if $(x_1,x_2) \in \pi^{-1}(H \times H)$, and $L^2X^{t,x}$ is empty otherwise. To calculate $\bD^2(T/H)$, we need to calculate the intersections of these subspaces. Suppose 
\[
\gamma(s_1,s_2) \in L^2X^{t,x}  \cap L^2X^{t',x'} 
\]
with $x = x_1t_1 + x_2t_2$ and $x' = x_1't_1'+ x_2't_2'$. Then
\[
\gamma(s_1,s_2) = \pi(x_1s_1+x_2s_2)\cdot z = \pi(x_1's_1+x_2's_2)\cdot z'
\]
for $z,z' \in T/H$. A straightforward calculation shows that this holds if and only if
\[
z - z' = \pi((x_1-x_1')s_1 + (x_2-x_2')s_2) \text{ mod } H
\]
for all $s_1,s_2$, which holds if and only if
\[
z = z' \quad \text{and} \quad (x_1-x_1',x_2-x_2') \in \mathrm{Lie}(H) \times \mathrm{Lie}(H).
\]
Thus, for all $(t,x),(t',x') \in \X^+ \times \t_\C$, the intersection
\[
L^2X^{t,x} \cap L^2X^{t',x'} = L^2X^{t,x} = L^2X^{t',x'}, 
\]
if and only if $(x_1,x_2),(x_1',x_2')$ lie in the same component of $\pi^{-1}(H \times H)$, and is empty otherwise. In particular, the spaces $L^2X^{t,x}$ are indexed by the components of $\pi^{-1}(H \times H)$, where the component containing $(x_1,x_2)$ has the form
\[
\{(x_1,x_2)\} + \mathrm{Lie}(H) \times \mathrm{Lie}(H) \subset \t \times \t.
\]
We will call $(x_1,x_2)$ a \textit{representative} of this component. Moreover, we have seen that there are no nonempty intersections between two spaces of the form $L^2X^{t,x}$, unless they are equal. Therefore $\bD^2(X)$ is the set of finite disjoint unions of such spaces. \par

The next step is to calculate the cohomology ring $H_{\bT^2 \times T}(L^2X^{t,x})$, but first we need to know a bit more about the space $L^2X^{t,x}$. Since the $\bT^2 \times T$-action on $L^2X^{t,x}$ is clearly transitive, we can apply the change of groups property of Proposition \ref{change1} if we know the $\bT^2 \times T$-isotropy. An element $(r,u) \in \bT^2 \times T$ fixes a nonempty subset $L^2X^{t,x}$ if and only if
\[
(r,u) \cdot (\pi(x_1s_1 + x_2s_2)\cdot z) = (\pi(x_1(s_1-r_1) + x_2(s_2-r_2)) + u ) \cdot z
= \pi(x_1s_1 + x_2s_2)\cdot z,
\]
which holds if and only if $\pi(-x_1r_1 - x_2r_2) + u$ fixes $z$. Therefore, we must have
\[
u - \pi(x_1r_1 + x_2r_2) \in H,
\]
which means that the isotropy group of the $\bT^2 \times T$-orbit $L^2X^{t,x}$ is equal to 
\begin{equation}\label{ernie}
\{(r,u) \in \bT^2 \times T \, | \, u - \pi(x_1r_1 + x_2r_2) \in H \} = \langle T(t,x),H \rangle.
\end{equation}
Furthermore, since two spaces of the form $L^2X^{t,x}$ are equal if they correspond to the same component of $\pi^{-1}(H \times H)$, we must have that 
\[
\langle T(t,x),H \rangle = \langle T(t',x'),H \rangle
\]
whenever $(x_1,x_2),(x_1',x_2')$ lie in the same component $\pi^{-1}(H \times H)$. Using Proposition \ref{change1}, we calculate 
\[
H_{\bT^2 \times T}(L^2X^{t,x}) \cong H_{\bT^2 \times T}((\bT^2 \times T)/\langle T(t,x),H \rangle) \cong H_{\langle T(t,x),H \rangle}.
\]
The value of $\H^*_{L^2T}(L^2X)$ on an open subset $U \subset \X^+ \times \t_\C$ is therefore
\begin{gather*}
\varprojlim_{Y \in \mathcal{D}^2(X)} H^*_{\bT^2 \times T}(Y) \otimes_{H_{\bT^2 \times T}} \O_{\X^+ \times \t_\C}(U) \\ = \prod_{(x_1,x_2) \in J(H)} H_{\langle T(x_1,x_2),H \rangle}  \otimes_{H_{\bT^2 \times T}} \O_{\X^+ \times \t_\C}(U)
\end{gather*}
where the product is indexed over a set $J(H) = \{(x_1,x_2)\}$ of representatives of the components of $\pi^{-1}(H \times H)$, and $T(x_1,x_2) := T(t,x)$ for any $(t,x)$ such that $x = x_1t_1+x_2t_2$. \par

From the description in \eqref{ernie}, we see that
\begin{equation}\label{berrol}
\mathrm{Lie}(\langle T(x_1,x_2),H \rangle)_\C = \{ (t,y) \in \X^+ \times \t_\C \, | \, y \in (x_1 + \mathrm{Lie}(H))t_1 + (x_2 + \mathrm{Lie}(H))t_2 \}.
\end{equation}
Let $I(x_1,x_2,H) \subset \C[t_1,t_2,y]$ be the ideal associated to $\mathrm{Lie}(\langle T(x_1,x_2),H \rangle)_\C \subset \C^2 \times \t_\C$, so that
\[
H_{\langle T(x_1,x_2),H \rangle} = \C[t,y]/I(x_1,x_2,H).
\]
By Proposition \ref{flat}, tensoring over $H_{\bT^2 \times T}$ with the ring of holomorphic functions is an exact functor. Therefore, if $\I(x_1,x_2,H) \subset \O_{\X^+ \times \t_\C}$ denotes the analytic ideal associated to $I(x_1,x_2,H)$, we can write
\begin{equation}\label{beer}
\H^*_{L^2T}(L^2(T/H)) = \prod_{(x_1,x_2)\in J(H)}  \O_{\X^+ \times \t_\C}/\I(x_1,x_2,H).
\end{equation}
For example, if $T$ has rank one and $H = \Z/n\Z$, then $\mathrm{Lie}(H)$ is trivial and we have
\[
\H^*_{L^2T}(L^2(T/H))(U) = \prod_{(x_1,x_2)} \C[t_1,t_2,y]/(y - x_1t_1 - x_2t_2) \otimes_{\C[t,y]} \O_{\X^+ \times \t_\C}(U)
\]
where $(x_1,x_2)$ ranges over $J(\Z/n\Z) = \{(a_1/n, a_2/n) \, | \, a_1,a_2 \in \Z\}$. This is a holomorphic version of the calculation made by Rezk in Example 5.2 of \cite{Rezk}.\par

The $\check{T}^2$-action induces isomorphisms 
\[
\O(\X^+ \times \t_\C)/\I(x_1+m_1,x_2+m_2,H) \longrightarrow \O(\X^+ \times \t_\C)/\I(x_1,x_2,H),
\]
for each $m \in \check{T}^2$ and each $(x_1,x_2) \in J(H)$, given by pullback along $(t,y) \mapsto (t,y+mt)$. We calculate the invariants by first noting that
\[
\pi^{-1}(H \times H) = \coprod_{(x_1,x_2)\in J(H)} \{(x_1,x_2)\} + \mathrm{Lie}(H) \times \mathrm{Lie}(H).
\]
Thus, the diagram
\[
\begin{tikzcd}
\X^+ \times \pi^{-1}(H \times H) \ar[d] \ar[r,"{\cong}"] & \coprod_{(x_1,x_2)\in J(H)} \mathrm{Lie}(\langle T(x_1,x_2),H \rangle)_\C \ar[d] \\
\X^+ \times H \times H \ar[r,"{\cong}"]& E_H
\end{tikzcd}
\]
commutes, where the arrows are the evident restrictions of the maps appearing in the diagram \eqref{deer}, and $E_H \subset E_T$ is defined to be the image of $\X^+ \times H \times H$ under $\chi_T$. Since the right vertical map is exactly the preimage under $\zeta_T$ of $E_H$, we see that $E_H$ is the complex analytic quotient of the free $\check{T}^2$ action on the complex vector space \eqref{berrol}, which has holomorphic structure sheaf \eqref{beer}. Therefore, the $\check{T}^2$-invariants of \eqref{beer} is equal to the sheaf of holomorphic functions on $E_H$, and so we have 
\[
\Ell^*_T(T/H) = \O_{E_H}.
\]

\section{A local description}

In this section, given a finite $T$-CW complex $X$, we give a local description of $\Ell_{T,t}(X)$ over a cover of $E_{T,t}$ adapted to $X$, which turns out to be identical to Grojnowski's construction of $T$-equivariant elliptic cohomology corresponding to the elliptic curve $E_t$. We show that this is actually an isomorphism of $T$-equivariant cohomology theories on finite complexes. \par

\begin{notation}
Let $K(t,x) := (\bT^2 \times T)/T(t,x)$. Let $t_{t,x}$ denote translation by $(t,x)$ in $\C^2 \times \t_\C$, and $t_x$ translation by $x$ in $\t_\C$. If $f$ is a map of compact Lie groups, we abuse notation and also write $f$ for the induced map of complex Lie algebras. 
\end{notation}

\begin{remark}
It will be essential to the proof of the following Lemma to show that the diagram
\begin{equation}\label{dg}
\begin{tikzcd}
\t_\C \ar[r,"p_{a} \circ t_{-x}",two heads] \ar[d,hook,"\iota_{t}"] & \mathrm{Lie}(T/T(a))_\C \ar[d,"\nu","\cong"'] \\
\C^2 \times \t_\C \ar[r,"p_{t,x}", two heads] & \mathrm{Lie}(K(t,x))_\C
\end{tikzcd}
\end{equation}
commutes. 
\end{remark}

\begin{lemma}\label{equal1}
There is an isomorphism of sheaves of $\O_{\t_\C}$-algebras
\[
t_{-x}^* \, p_{a}^*\, \H^*_{T/T(a)}(X^{a}) \cong \iota_{t}^* \, p_{t,x}^*\, \H^*_{K(t,x)}(L^2X^{t,x}) 
\]
natural in $X$.
\end{lemma}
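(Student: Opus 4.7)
The plan is to deduce the desired isomorphism from three ingredients: the commutativity of diagram \eqref{dg}, the $\nu$-equivariant homeomorphism $ev_{t,x}: L^2X^{t,x} \xrightarrow{\cong} X^{a}$ of Lemma \ref{equal}, and the functoriality of the sheafification $(-) \mapsto \H^*_G(-)$ under pullback along maps of complex Lie algebras.

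The first step will be to verify that \eqref{dg} commutes. This is the only calculation in the proof, and it is immediate from the explicit description of $\mathrm{Lie}(T(t,x))_\C$ given in Remark \ref{push}: writing $x = x_1t_1 + x_2t_2$, we have
\[
(t_1, t_2, y) \;=\; t_1(1,0,x_1) + t_2(0,1,x_2) + (0,0,y-x),
\]
and the first two summands lie in $\mathrm{Lie}(T(t,x))_\C$. Hence the class of $(t_1,t_2,y)$ in $\mathrm{Lie}(K(t,x))_\C$ agrees with $\nu(p_a(y-x))$, establishing $p_{t,x}\circ \iota_t = \nu \circ p_a \circ t_{-x}$. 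Consequently, for any sheaf $\mathcal{F}$ on $\mathrm{Lie}(K(t,x))_\C$ one obtains a canonical identification
\[
t_{-x}^* \, p_a^* \, \nu^* \mathcal{F} \;\cong\; \iota_t^* \, p_{t,x}^* \mathcal{F}.
\]

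The second step will produce an isomorphism
\[
\H^*_{T/T(a)}(X^{a}) \;\cong\; \nu^* \H^*_{K(t,x)}(L^2X^{t,x})
\]
of sheaves of $\O_{\mathrm{Lie}(T/T(a))_\C}$-algebras by combining Lemma \ref{equal} with Lemma \ref{groupss}. Namely, the $\nu$-equivariant homeomorphism $ev_{t,x}$ passes to Borel constructions to yield a ring isomorphism $H^*_{T/T(a)}(X^{a}) \cong H^*_{K(t,x)}(L^2X^{t,x})$ intertwining the module structures along the ring isomorphism $H_{K(t,x)} \cong H_{T/T(a)}$ induced by $\nu$; tensoring with holomorphic functions and sheafifying then yields the displayed identification.

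Finally, applying $t_{-x}^* \, p_a^*$ to both sides of this last isomorphism and invoking the identification from the first step with $\mathcal{F} = \H^*_{K(t,x)}(L^2X^{t,x})$ produces the isomorphism stated in the lemma. Naturality in $X$ follows because a $T$-equivariant map $X \to Y$ restricts to maps of fixed loci $X^a \to Y^a$ and $L^2X^{t,x} \to L^2Y^{t,x}$ compatible with $ev_{t,x}$, so every intermediate identification above is natural. I anticipate no serious obstacle beyond carefully tracking the abuse of notation between compact abelian groups, their Lie algebras and the complexifications thereof; the verification of \eqref{dg} is the only place where a genuine calculation occurs.
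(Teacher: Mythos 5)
Your proposal is correct and follows essentially the same route as the paper: establish the $\nu$-equivariant isomorphism on Borel constructions via $ev_{t,x}$, apply $t_{-x}^*p_a^*$, and finish by verifying that diagram \eqref{dg} commutes. Your explicit coordinate verification of \eqref{dg} — decomposing $(t_1,t_2,y)$ as $t_1(1,0,x_1)+t_2(0,1,x_2)+(0,0,y-x)$ and noting the first two summands lie in $\mathrm{Lie}(T(t,x))_\C$ — is just an unpacked form of the paper's observation that $(t,x)\in\ker p_{t,x}$ so $p_{t,x}=p_{t,x}\circ t_{t,x}$, combined with $\nu\circ p_a = p_{t,x}\circ\iota_{(0,0)}$ from diagram \eqref{rect}; you do implicitly invoke this last identity when writing the class as $\nu(p_a(y-x))$, so it would be cleaner to cite it, but the content is identical.
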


\begin{proof}
Recall the isomorphism $\nu: K(t,x) \cong T/T(a)$ of diagram \eqref{rect}. By Lemma \ref{equal}, the evaluation map
\[
ev_{t,x}: L^2X^{t,x} \cong X^a
\]
is natural in $X$ and equivariant with respect to $\nu$. The induced homeomorphism
\[
E(K(t,x)) \times_{K(t,x)} L^2X^{t,x} \cong E(T/T(a)) \times_{T/T(a)} X^a
\]
induces, in turn, an isomorphism of $H_{T/T(a)}$-algebras
\[
H^*_{T/T(a)}(X^{a}) \cong H^*_{K(t,x)}(L^2X^{t,x}),
\]
natural in $X$. Here the ring $H_{T/T(a)}$ acts on the target via the isomorphism 
\[
H_{K(t,x)} \cong H_{T/T(a)}
\]
induced by $\nu$. Thus, we have an isomorphism of $\O_{\mathrm{Lie}(T/T(a))_\C}$-algebras   
\[
\H^*_{T/T(a)}(X^{a}) \cong \nu^* \, \H^*_{K(t,x)}(L^2X^{t,x}),
\]
and hence an isomorphism of $\O_{\t_\C}$-algebras,
\[
t_{-x}^*\, p_a^*\, \H^*_{T/T(a)}(X^{a}) \cong t^*_{-x}\, p_{a}^*\,\nu^* \, \H^*_{K(t,x)}(L^2X^{t,x})
\]
natural in $X$.

We now show that diagram \eqref{dg} commutes, from which the result follows immediately. The commutative diagram \eqref{rect} induces a commutative diagram of complex Lie algebras, so that
\[
\nu \circ p_a = p_{t,x} \circ \iota_{(0,0)}
\]
on complex Lie algebras. Consider the commutative diagram
\[
\begin{tikzcd}
T(t,x) \ar[r] \ar[d] & \bT^2 \times T \ar[d] \\
0 \ar[r] & (\bT^2 \times T)/T(t,x) 
\end{tikzcd}
\]
of compact abelian groups. By applying the Lie algebra functor and then tensoring with $\C$, we see that $(t,x)$ lies in the kernel of 
\[
p_{t,x}: \C^2 \times \t_\C \twoheadrightarrow \mathrm{Lie}((\bT^2 \times T)/T(t,x))_\C.
\]
Therefore,
\[
p_{t,x} = p_{t,x} \circ t_{t,x}
\]
and so
\[
\begin{array}{rcl}
\nu \circ p_a &=& p_{t,x} \circ t_{t,x} \circ \iota_{(0,0)} \\
&=& p_{t,x} \circ \iota_t \circ t_{x}. \\
\end{array}
\]
Composing on the right by $t_{-x}$ now yields the commutativity of diagram \eqref{dg}. 
\end{proof}

\begin{theorem}\label{iso}
Let $X$ be a finite $T$-CW complex and let $\U$ be a cover adapted to $\S(X)$. Let $a \in E_{T,t}$, and let $U_a$ be the corresponding open neighbourhood of $a$ in $E_{T,t}$. There is an isomorphism of sheaves of $\Z/2\Z$-graded $\O_{U_a}$-algebras
\[
\Ell^*_{T,t}(X)_{U_a} \cong \G^*_{T,t}(X)_{U_a},
\]
natural in $X$.
\end{theorem}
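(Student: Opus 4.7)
The plan is to produce the isomorphism locally, using a chain of four identifications on small opens $U \subset U_a$, and then appeal to sheaf theory. Fix a lift $x \in \zeta_{T,t}^{-1}(a)$, so that $x \in V_{t,x}$ and, by Definition \ref{sag}, $U_a = \zeta_T(V_{t,x}) \cap E_{T,t}$. Since $U_a$ is small, for any open $U \subset U_a$ the preimage $\zeta_{T,t}^{-1}(U)$ decomposes as a disjoint union $\bigsqcup_{m \in \check{T}^2}(V + mt)$, where $V$ is the component inside $V_{t,x} \cap \iota_t(\t_\C)$ and $\zeta_{T,t}: V \to U$ is a biholomorphism. Since $\check{T}^2$ acts on $\H^*_{L^2T}(L^2X)$ (Theorem \ref{actyon1}) by permuting these components, the $\check{T}^2$-invariants of the pushforward reduce to evaluation on $V$:
\[
\Ell^*_{T,t}(X)(U) \;=\; \bigl(\iota_t^* \H^*_{L^2T}(L^2X)\bigr)(V).
\]

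Next I apply four moves in sequence. First, Corollary \ref{stalkss} identifies $\H^*_{L^2T}(L^2X)$ with $\H^*_{\bT^2 \times T}(L^2X^{t,x})$ on $V_{t,x}$, hence on $V$. Second, since $T(t,x)$ acts trivially on $L^2X^{t,x}$, Proposition \ref{Segall} applied to $p_{t,x}: \bT^2 \times T \twoheadrightarrow K(t,x)$ rewrites this as $p_{t,x}^*\, \H^*_{K(t,x)}(L^2X^{t,x})$. Third, pulling back by $\iota_t$ and invoking Lemma \ref{equal1} transports the sheaf to $t_{-x}^*\, p_a^*\, \H^*_{T/T(a)}(X^a)$. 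Fourth, since $T(a)$ acts trivially on $X^a$, another application of Proposition \ref{Segall} (for $p_a : T \twoheadrightarrow T/T(a)$) simplifies this to $t_{-x}^*\, \H^*_T(X^a)$. Evaluating on $V$:
\[
t_{-x}^*\, \H^*_T(X^a)(V) \;=\; H^*_T(X^a) \otimes_{H_T} \O_{\t_\C}(V - x).
\]
Because $\zeta_{T,t}$ is a group homomorphism with $\zeta_{T,t}(x) = a$, it restricts to a biholomorphism $V - x \cong U - a$, inducing $\O_{\t_\C}(V - x) \cong \O_{E_{T,t}}(U - a)$. The output is $H^*_T(X^a) \otimes_{H_T} \O_{E_{T,t}}(U - a)$, precisely the value of $\G^*_{T,t}(X)_{U_a}(U)$, with the ring map $H_T \to \O_{E_{T,t}}(U-a)$ matching the composite \eqref{ring}.

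The main obstacle is tracking the $\O_{U_a}$-algebra structure through the translations. In $\G^*_{T,t}(X)_{U_a}$, a local function $f \in \O_{U_a}(U)$ acts by multiplication by $t_a^* f \in \O_{E_{T,t}}(U - a)$. On the $\Ell$-side, the $\O_{U_a}$-action arises from pushforward along $\zeta_{T,t}$: $f$ acts as $f \circ \zeta_{T,t} \in \O_{\t_\C}(V)$, which under the subsequent translation $t_{-x}$ becomes the function $y \mapsto f(\zeta_{T,t}(y + x))$ on $V - x$. Under the identification $V - x \cong U - a$, this is exactly $t_a^* f$, so the algebra structures agree. The $\Z/2\Z$-grading is preserved since every ingredient is defined via the even/odd decomposition of ordinary cohomology (Remark \ref{eqii}).

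Finally, independence of the choice of $x$ is built in: replacing $x$ by $x + mt$ permutes the components of $\zeta_{T,t}^{-1}(U)$ via the $\check{T}^2$-action that is already being quotiented out, producing the same local isomorphism. Naturality in $X$ is inherited from the naturality of Corollary \ref{stalkss}, Proposition \ref{Segall}, and Lemma \ref{equal1}. The local isomorphisms on small opens $U \subset U_a$ are compatible with restriction (each step is sheaf-theoretic) and therefore assemble into a single isomorphism of sheaves of $\Z/2\Z$-graded $\O_{U_a}$-algebras on $U_a$, as claimed.
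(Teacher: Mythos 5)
Your proof follows the same chain of four identifications that the paper uses (Corollary \ref{stalkss}, Proposition \ref{Segall}/\ref{changeh}, Lemma \ref{equal1}, and a second application of Proposition \ref{Segall}/\ref{changeh}), but you reorganize the argument: you take $\check{T}^2$-invariants at the outset, reduce to a single fundamental-domain component $V$, and only then apply the chain. The paper instead applies the chain to the \emph{full product} $\prod_{x \in \zeta_{T,t}^{-1}(a)} (\cdots)_{V_x}$ and takes invariants at the end. The paper's order of operations forces it to verify that the chain carries the $\check{T}^2$-equivariant structure of Theorem \ref{actyon1} to the tautological permutation action on the product, which is the content of the commutative diagrams \eqref{pal} and \eqref{pall}; your version legitimately avoids that verification when constructing \emph{some} isomorphism, since the invariants-of-a-pushforward $\cong$ evaluation-on-one-component step is a formal consequence of $\check{T}^2$ acting freely and properly discontinuously on the fiber $\{t\} \times \t_\C$. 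Your handling of the $\O_{U_a}$-algebra structure and the $\Z/2\Z$-grading is also correct (and, like the paper, you would want to flag that only the even/odd grading survives the change-of-groups step).

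Where your write-up falls short of the paper is your closing claim that ``independence of the choice of $x$ is built in.'' That claim is exactly what the paper's diagrams \eqref{pal} and \eqref{pall} establish: one must check that the $\check{T}^2$-action on $\H^*_{L^2T}(L^2X)$ --- defined via pullback along the action of $\mathcal{G}$ on the Borel construction of $L^2X$ --- is intertwined by the evaluation map $ev_{t,x}$ and the change-of-groups isomorphisms with the \emph{identity} map $H^*_{T/T(a)}(X^a)_{V_{x+mt}-x-mt} = H^*_{T/T(a)}(X^a)_{V_x-x}$. This is a genuine computation, not a formality, and asserting it without proof is a gap. For the bare existence statement of Theorem \ref{iso} this gap is tolerable (a fixed lift $x$ suffices), but the independence must be established before Theorem \ref{duhh} (compatibility with gluing maps on $U_a \cap U_b$) can go through, since that argument compares local isomorphisms constructed at different centres, hence with different lifts.
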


\begin{proof}
For $x \in \zeta_{T,t}^{-1}(a)$, write $V_x$ for the component of $\zeta_{T,t}^{-1}(U_a)$ containing $x$. We have a sequence of isomorphisms
\begin{equation}\label{compos}
\begin{array}{rcl}
((\zeta_{T,t})_* \,  \iota_{t}^* \,\H^*_{L^2T}(L^2X))_{U_a} &\cong& \prod_{x \, \in \, \zeta_{T,t}^{-1}(a)} \:  (\iota_{t}^* \, \H^*_{\bT^2\times T}(L^2X^{t,x}))_{V_{x}} \\
&\cong& \prod_{x \, \in \, \zeta_{T,t}^{-1}(a)} \: (\iota_{t}^* \, p_{t,x}^*\, \H^*_{K(t,x)}(L^2X^{t,x}))_{V_x} \\
&\cong& \prod_{x \, \in \, \zeta_{T,t}^{-1}(a)} \: (t_{-x}^*\, p_a^*\, \H^*_{T/T(a)}(X^{a}))_{V_x} \\
&\cong& \prod_{x \, \in \, \zeta_{T,t}^{-1}(a)} \: (t_{-x}^*\, \H^*_{T}(X^{a}))_{V_x} \\
\end{array}
\end{equation}
The first map is the isomorphism of Corollary \ref{extend}. The second map and fourth maps are the isomorphism of Proposition \ref{changeh}. The third map is the isomorphism of Lemma \ref{equal1}. The first to the fourth maps, in each factor, have been shown to preserve the $\O_{V_{x}}$-algebra structure. The entire composite is therefore an isomorphism of $\O_{U_a}$-algebras, where $f\in \O_{U_a}(U)$ acts on the right hand side via multiplication by $\zeta_{T,t}^*f$. Each map has been shown to be natural in $X$, replacing, if necessary, the covering $\U$ with a refinement $\U(f)$ adapted to a $T$-equivariant map $f:X \to Y$. All maps preserve the $\Z$-grading on the cohomology, except the map of Proposition \ref{changeh}. The latter does, however, preserve the $\Z/2\Z$-grading by odd and even elements, since it is defined by taking the cup product with elements of $H_T$, which have even degree.

It remains to find the image of the $\check{T}^2$-invariants under the composite map above. Let $U \subset U_a$ be an open subset and let $V \subset V_x$ be an open set such that $V \cong U$ via $\zeta_{T,t}$. Over $U$, the composite is equal to  
\[
\begin{array}{rcl}
\prod \: H_{L^2T}(L^2X)_{\{t\} \times V} &\cong& \prod \: H_{\bT^2\times T}(L^2X^{t,x})_{\{t\} \times V}  \\
&\cong& \prod \: H_{K(t,x)}(L^2X^{t,x})_{\{t\} \times V}  \\
&\cong& \prod \: H_{T/T(a)}(X^{a})_{V - x} \\
&\cong& \prod \: H_{T}(X^{a})_{V-x} \\
\end{array}
\]
where each product runs over all $x \in \zeta_{T,t}^{-1}(a)$. Note that 
\[
V_x - x = V_{x+mt} - x-mt
\]
for all $m$. We will show  
\[
\left\{ \prod_{x \, \in \, \zeta_{T,t}^{-1}(a)} H_T(X^a)_{V - x} \right\}^{\check{T}^2} =  H_T(X^a)_{V - x}
\]
by showing that $\check{T}^2$ merely permutes the indexing set of the product. This yields our result via the isomorphism 
\[
\G^*_{T,t}(X)_{U_a}(U) = H^*_T(X^a) \otimes_{H_T} \O_{E_{T,t}}(U - a) \cong H^*_T(X^a) \otimes_{H_T} \O_{\t_\C}(V - x) =: H_{T}(X^{a})_{V - x}
\]
induced by the canonical isomorphism $V - x \cong U - a$, as in Remark \ref{canon}.\par

In other words, we must show that the action of $m$ induces the identity map 
\[
H_{T/T(a)}(X^{a})_{V_{x+mt} - x-mt}  = H_{T/T(a)}(X^{a})_{V_x - x}.
\]
To do this, it suffices to check the commutativity of two diagrams. The first diagram is
\begin{equation}\label{pal}
\begin{tikzcd}
H_{\bT^2 \times T}(L^2X^{t,x+mt})_{\{t\} \times V_{x+mt}} \ar[r] \ar[d,"{m^*}"] & H_{K(t,x+mt)}(L^2X^{t,x+mt})_{\{t\} \times V_{x+mt}} \ar[d,"{m^*}"] \\
H_{\bT^2 \times T}(L^2X^{t,x})_{\{t\} \times V_x}  \ar[r] & H_{K(t,x)}(L^2X^{t,x})_{\{t\} \times V_x}  \\
\end{tikzcd}
\end{equation}
where the vertical arrows are induced by the action of $m$ on the spaces
\[
E(\bT^2 \times T) \times_{\bT^2 \times T} L^2X^{t,x} \longrightarrow E(\bT^2 \times T) \times_{\bT^2 \times T} L^2X^{t,x+mt}
\]
and
\[
E(K(t,x)) \times_{K(t,x)} L^2X^{t,x}  \longrightarrow  E(K(t,x+mt)) \times_{K(t,x+mt)} L^2X^{t,x+mt}.
\]
The horizontal maps are defined, as in Proposition \ref{changeh}, using the Eilenberg-Moore spectral sequences associated to the pullback diagrams
\begin{equation}\label{gary}
\begin{tikzcd}
E(\bT^2 \times T) \times_{\bT^2\times T} L^2X^{t,x} \ar[d] \ar[r] & B(\bT^2 \times T) \ar[d] \\
E(K(t,x)) \times_{K(t,x)} L^2X^{t,x} \ar[r] &  B(K(t,x)) \\
\end{tikzcd}
\end{equation}
and
\begin{equation}\label{steve}
\begin{tikzcd}
E(\bT^2 \times T) \times_{\bT^2\times T} L^2X^{t,x+mt} \ar[r] \ar[d] & B(\bT^2 \times T) \ar[d] \\ 
E(K(t,x+mt)) \times_{K(t,x+mt)} L^2X^{t,x+mt} \ar[r] & B(K(t,x+mt)). \\
\end{tikzcd}
\end{equation}
It is easily verified that the action of $m$ induces an isomorphism from diagram \eqref{gary} to diagram \eqref{steve}, from which it follows that \eqref{pal} commutes. \par

The second diagram to check is
\begin{equation}\label{pall}
\begin{tikzcd}
H_{K(t,x+mt)}(L^2X^{t,x+mt})_{\{t\} \times V_{x+mt}} \ar[d,"{m^*}"] \ar[r] &H_{T/T(a)}(X^{a})_{V_{x+mt} - x-mt} \ar[d,equal] \\
H_{K(t,x)}(L^2X^{t,x})_{\{t\} \times V_x} \ar[r] & H_{T/T(a)}(X^{a})_{V_x - x} \\
\end{tikzcd}
\end{equation}
with vertical maps induced by the action of $m$, ane horizontal maps as in the proof of Lemma \ref{equal1}. By the proof of Lemma \ref{equal1}, diagram \eqref{pall} commutes if 
\begin{equation}\label{pal1}
\begin{tikzcd}[row sep=large, column sep=3cm]
E(K(t,x)) \times_{K(t,x)} L^2X^{t,x} \ar[d,"{m}"] \ar[r,"{E\nu^{-1} \times ev_{t,x}}"] \ar[d] & E(T/T(a)) \times_{T/T(a)} X^a \ar[d,equal] \\
E(K(t,x+mt)) \times_{K(t,x+mt)} L^2X^{t,x+mt} \ar[r,"{E\nu^{-1} \times ev_{t,x+mt}}"] & E(T/T(a)) \times_{T/T(a)} X^{a}\\
\end{tikzcd}
\end{equation}
commutes, where $ev_{t,x}: L^2X^{t,x}\cong X^a$ is equivariant with respect to $\nu^{-1}: K(t,x) \cong T/T(a)$. To see that this commutes, note firstly that $\nu$ is induced by the inclusion $T \hookrightarrow \bT^2 \times T$ of the fixed points of the Weyl action $(r,t) \mapsto (r,t+mr)$, which implies that $\nu^{-1} \circ m = \nu^{-1}$. Secondly, note that the action of $m$ on a loop $\gamma$ fixes $\gamma(0,0)$, which implies that $ev_{t,x+mt}\circ m = ev_{t,x}$. These two observations imply the commutativity of diagram \eqref{pal1}, which completes the proof.
\end{proof}

\begin{remark}\label{gluh}
Our aim is now to determine the gluing maps associated to the local description in Theorem \ref{iso}. Let $X$ be a finite $T$-CW complex, let $\U$ be a cover adapted to $\S(X)$, and let $a,b \in E_{T,t}$ be such that $U_a \cap U_b \neq \emptyset$. Choose $x\in \zeta_{T,t}^{-1}(a)$ and $y \in \zeta_{T,t}^{-1}(b)$ such that $V_x \cap V_y \neq \emptyset$, which implies that $V_{t,x} \cap V_{t,y} \neq \emptyset$. It follows from Lemma \ref{hard} that either $X^b \subset X^a$ or $X^a \subset X^b$. We may assume that $X^b \subset X^a$. Let $U$ be an open subset in $U_a \cap U_b$ and let $V \subset V_{x} \cap V_{y}$ be such that $V \cong U$ via $\zeta_{T,t}$. Let $H$ be the subgroup of $T$ generated by $T(a)$ and $T(b)$. Consider the composite of isomorphisms
\[
\begin{array}{rcl}
H_T(X^a) \otimes_{H_T} \O_{\t_\C}(V - x) &\cong & H_T(X^b) \otimes_{H_T}  \O_{\t_\C}(V - x) \\
&\cong& H_{T/H}(X^b) \otimes_{H_T}  \O_{\t_\C}(V - x) \\ 
&\cong& H_{T/H}(X^b) \otimes_{H_T}  \O_{\t_\C}(V - y) \\
&\cong& H_T(X^b) \otimes_{H_T}  \O_{\t_\C}(V - y)
\end{array}
\]
The first map is induced by the inclusion $X^b \subset X^a$, the second map and fourth maps are induced by the change of groups map of Proposition \ref{changeh}, and the third map is  
\[
\id \otimes t^*_{y-x}: H_{T/H}(X^b) \otimes_{H_{T/H}} \O_{\t_\C}(V - x) \longrightarrow H_{T/H}(X^b) \otimes_{H_{T/H}} \O_{\t_\C}(V - y),
\]
which is a well defined map of $\O_{\t_\C}$-algebras, by Lemma \ref{gart}. All maps thus preserve the $\Z/2\Z$-graded $\O_{\t_\C}$-algebra structure. We have a commutative diagram
\[
\begin{tikzcd}
V - x \ar[r,"{t_{y-x}}"] \ar[d,"{\zeta_{T,t}}"] & V - y \ar[d,"{\zeta_{T,t}}"] \\
U - a \ar[r,"{t_{b-a}}"] & U - b 
\end{tikzcd}
\]
of complex analytic isomorphisms. Via this diagram, the composite above is canonically identified with 
\[
\begin{array}{rcl}
H_T(X^a) \otimes_{H_T} \O_{E_{T,t}}(V - x) &\cong & H_T(X^b) \otimes_{H_T}  \O_{E_{T,t}}(U - a) \\
&\cong& H_{T/H}(X^b) \otimes_{H_T}  \O_{E_{T,t}}(U - a) \\ 
&\cong& H_{T/H}(X^b) \otimes_{H_T} \O_{E_{T,t}}(U - b) \\
&\cong& H_T(X^b) \otimes_{H_T}  \O_{E_{T,t}}(U - b),
\end{array}
\]
where the fourth map is $\id \otimes t^*_{b-a}$. This is the gluing map $\phi_{b,a}$ of Grojnowski's construction (see Remark \ref{perry}), and we will show in Theorem \ref{duhh} that this is the gluing map associated to Theorem \ref{iso}.
\end{remark}

\begin{lemma}\label{gart}
With the hypotheses of Remark \ref{gluh}, the translation map
\[
t^*_{y -x}: \O_{\t_\C}(V - x) \longrightarrow \O_{\t_\C}(V - y)
\]
is $H_{T/H}$-linear.
\end{lemma}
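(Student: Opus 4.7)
The plan is to reduce $H_{T/H}$-linearity to the single containment $y - x \in \mathrm{Lie}(H)_\C$, and then extract that containment directly from the adapted-cover hypotheses via Lemma \ref{hard3}.

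First I would unpack what needs to be checked. The $H_{T/H}$-module structure on both $\O_{\t_\C}(V-x)$ and $\O_{\t_\C}(V-y)$ is induced from the composite ring map
\[
H_{T/H} \xrightarrow{p_a^*} H_T \hookrightarrow \O_{\t_\C},
\]
where an element of $H_{T/H}$ is identified with a polynomial function on $\t_\C$ that is constant along fibers of the projection $p_a: \t_\C \twoheadrightarrow \mathrm{Lie}(T/H)_\C$, i.e.\ constant along translates of $\mathrm{Lie}(H)_\C$. Since $t_{y-x}^*$ is automatically an $\O_{\t_\C}$-algebra map, to show it is $H_{T/H}$-linear it suffices to verify that it fixes each $f \in H_{T/H}$ viewed inside $\O_{\t_\C}$, i.e.\ that $f(v + (y-x)) = f(v)$ for all $v$. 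By the description above, this holds for every $f \in H_{T/H}$ if and only if $y - x \in \mathrm{Lie}(H)_\C$.

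Next I would verify the containment $y - x \in \mathrm{Lie}(H)_\C$. Write $x = x_1 t_1 + x_2 t_2$ and $y = y_1 t_1 + y_2 t_2$ with $(x_1,x_2), (y_1,y_2) \in \t \times \t$. Because $V \subset V_x \cap V_y$ is nonempty, Definition \ref{defo} gives $V_{x_1,x_2} \cap V_{y_1,y_2} \neq \emptyset$. By the definition of $H = \langle T(a), T(b)\rangle$ together with Remark \ref{pusha}, we have $a, b \in E_{H,t}$, and hence $(\pi(x_1), \pi(x_2))$ and $(\pi(y_1), \pi(y_2))$ both lie in $H \times H$. Lemma \ref{hard3} then applies and places $(x_1,x_2)$ and $(y_1,y_2)$ in the same connected component of $\pi^{-1}(H \times H)$, which means $y_i - x_i \in \mathrm{Lie}(H)$ for $i=1,2$. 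Therefore
\[
y - x = (y_1 - x_1)\, t_1 + (y_2 - x_2)\, t_2 \in \mathrm{Lie}(H)\, t_1 + \mathrm{Lie}(H)\, t_2 = \mathrm{Lie}(H) \otimes_\R \C = \mathrm{Lie}(H)_\C,
\]
where the middle equality uses $\R t_1 + \R t_2 = \C$ from the definition of $\X^+$. Combined with the reduction in the previous paragraph, this concludes the proof.

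The statement has no deep obstacle; it is a bookkeeping check that the adapted-cover axioms are precisely strong enough to ensure $y - x$ lies in $\mathrm{Lie}(H)_\C$. The only mild subtlety is the identification of $H_{T/H}$-linearity with translation-invariance along $\mathrm{Lie}(H)_\C$, which is guaranteed because $H_{T/H}$ sits inside $\O_{\t_\C}$ as polynomial functions pulled back from $\mathrm{Lie}(T/H)_\C = \t_\C / \mathrm{Lie}(H)_\C$.
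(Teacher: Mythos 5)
Your proof follows essentially the same route as the paper's: both reduce $H_{T/H}$-linearity to the containment $y - x \in \mathrm{Lie}(H)_\C$, and both extract that containment from the adapted-cover hypotheses via Lemma \ref{hard3}. You make two implicit steps explicit---the reduction to a translation-invariance statement (which the paper compresses into ``This implies the result'') and the direct invocation of Lemma \ref{hard3} (which the paper phrases as ``by the same argument as in the proof of Lemma \ref{hard}'').

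One caveat applies equally to both write-ups and is worth being aware of. Lemma \ref{hard3} is stated only for $H \in \S$, and here $H = \langle T(a),T(b)\rangle$ need not be an isotropy subgroup of $X$, so the hypothesis of that lemma is not obviously satisfied. In the proof of Lemma \ref{hard}, to which the paper appeals, the subgroup to which Lemma \ref{hard3} is actually applied is the isotropy group $H'$ of a point of $X^b$: that $H'$ does lie in $\S(X)$ and does contain $\langle T(a),T(b)\rangle$, but possibly strictly, which gives only $y_i - x_i \in \mathrm{Lie}(H')$, a priori weaker than $y_i - x_i \in \mathrm{Lie}(H)$. Since the paper's own proof has the same elision, this is not a defect specific to your argument; but a fully airtight version would need either to verify that Lemma \ref{hard3} applies to this particular $H$ or to supply a separate justification of $y - x \in \mathrm{Lie}(H)_\C$.
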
 

\begin{proof}
Let $x = x_1t_1 + x_2t_2$ and $y = y_1t_1 + y_2t_2$. By the same argument as in the proof of Lemma \ref{hard}, we have that 
\[
(y_1-x_1, y_2-x_2) \in \mathrm{Lie}(H) \times \mathrm{Lie}(H),
\]
since $T(a),T(b) \subset H$. Therefore
\[
y - x = (y_1-x_1)t_1 + (y_2-x_2)t_2 \in \mathrm{Lie}(H) \otimes_\R (\R t_1 + \R t_2) = \mathrm{Lie}(H) \otimes_\R \C.
\]
This implies the result.
\end{proof}

\begin{theorem}\label{duhh}
With the hypotheses of Remark \ref{gluh}, the gluing map associated to the local description in Theorem \ref{iso} on an open subset $U \subset U_{a} \cap U_{b}$ is equal to the composite map in Remark \eqref{gluh}.
\end{theorem}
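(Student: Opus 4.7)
The plan is to trace a section of $H_T(X^a)\otimes_{H_T}\O_{E_{T,t}}(U-a)$ up into the global sheaf $\H^*_{L^2T}(L^2X)$ using the composite of isomorphisms from the proof of Theorem \ref{iso} at $U_a$, and then back down via the inverse of the composite at $U_b$. Since $\Ell^*_{T,t}(X)$ is obtained by pushforward and invariants of a globally defined sheaf, its gluing is by construction the identity on $\H^*_{L^2T}(L^2X)$, so this round trip computes the transition function associated to the local description; the task is therefore to identify it with the composite in Remark \ref{gluh}.

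The first ingredient is the inclusion $L^2X^{t,y}\subset L^2X^{t,x}$ supplied by Lemma \ref{hard} (which applies because $V_{t,x}\cap V_{t,y}\neq\emptyset$ and $X^b\subset X^a$). Under the evaluation maps of Lemma \ref{equal}, this inclusion corresponds to $i_{b,a}:X^b\hookrightarrow X^a$, equivariantly with respect to the isomorphisms $\nu$ of Lemma \ref{equal1}. Applying Borel-equivariant cohomology then converts the restriction map $\H^*_{L^2T}(L^2X)\to \H^*_{\bT^2\times T}(L^2X^{t,y})$ (factoring through $L^2X^{t,x}$) into $i_{b,a}^*$ on the Grojnowski side, modulo the change-of-groups identifications.

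The second ingredient is the change of basepoint. The $U_a$-trivialization tensors with $\O_{\t_\C}(V_x-x)$, while the $U_b$-trivialization uses $\O_{\t_\C}(V_y-y)$, and these differ by pullback along $t_{y-x}$. Under $\zeta_{T,t}$ this becomes $t^*_{b-a}:\O_{E_{T,t}}(U-a)\to\O_{E_{T,t}}(U-b)$. By Lemma \ref{gart}, $t^*_{y-x}$ is only $H_{T/H}$-linear (not $H_{T/T(a)}$- or $H_{T/T(b)}$-linear), with $H:=\langle T(a),T(b)\rangle$, which forces the composite to descend through the change of groups $T/T(a)\to T/H$ supplied by Proposition \ref{changeh} and then to reascend through $T/H\to T/T(b)$. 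Combining, the resulting map applies $i_{b,a}^*$, descends from $H_T$ to $H_{T/H}$, pulls back by $t_{b-a}$, and returns to $H_T$, which is precisely $\phi_{b,a}$.

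The main obstacle will be the coherent bookkeeping of the iterated change-of-groups maps. In particular, the $U_a$-trivialization naturally produces $H_{T/T(a)}$-linearity, the $U_b$-trivialization produces $H_{T/T(b)}$-linearity, and the base-change translation only becomes linear after further descent to $H_{T/H}$. Verifying that Lemma \ref{kurt} (compatibility of iterated change-of-groups), the naturality of Proposition \ref{changeh}, and the $\nu$-equivariance of Lemma \ref{equal1} combine to give a single commutative diagram joining the two local reconstructions through the intermediate group $T/H$ is the delicate step, but it is essentially mechanical once the two ingredients above are in place.
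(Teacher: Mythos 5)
Your overall strategy is the same as the paper's: compute the transition function as a round trip from $H_T(X^a)_{V-x}$ up through the composite trivialization of Theorem \ref{iso} at $U_a$, across the restriction $L^2X^{t,y}\subset L^2X^{t,x}$ (which is the identity on the globally defined sheaf), and back down through the inverse trivialization at $U_b$, then verify this matches the four-step composite of $\phi_{b,a}$. You correctly isolate the two ingredients (the inclusion $L^2X^{t,y}\subset L^2X^{t,x}$ corresponding to $i_{b,a}$ under evaluation, and the change of basepoint $t^*_{y-x}$) and correctly identify Lemma \ref{kurt}, naturality of Proposition \ref{changeh}, and equivariance of the evaluation maps as the needed tools.

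The gap is in the claim that the bookkeeping is ``essentially mechanical'' once you have these tools. The descent to $T/H$ on the $X$-side must be matched by a descent on the loop-space side, and the corresponding subgroup there is not $H$ but $K := \langle T(t,x),\,T(t,y)\rangle \subset \bT^2\times T$. The paper's proof introduces $K$, verifies $H\subset T\cap K$, and uses the isomorphism $T/(T\cap K)\cong(\bT^2\times T)/K$ (another diagram chase of the form \eqref{rect}) to identify $X^b\cong L^2X^{t,y}$ as a $T/(T\cap K)$-space versus a $(\bT^2\times T)/K$-space. Only then can one form the commuting diagram \eqref{dirt}, with a middle column running through $H_{T/(T\cap K)}(X^b)$ and $H_{(\bT^2\times T)/K}(L^2X^{t,y})$, and region (5) requiring the stronger linearity of $t^*_{y-x}$ over $H_{T/(T\cap K)}$ (not just over $H_{T/H}$, which is what Lemma \ref{gart} directly gives). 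Without $K$ and the comparison $T/(T\cap K)\cong(\bT^2\times T)/K$, the two local reconstructions have no common intermediate group through which to pass, and the diagram you envision does not close. That organizational step is the real content of the proof, not a mechanical verification.
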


\begin{proof}
We must first make a few observations before we can make sense of the diagram which we will use to prove the theorem. Let $K$ be the subgroup of $\bT^2 \times T$ generated by $T(t,x)$ and $T(t,y)$. We have $H \subset T \cap K$, since
\begin{gather*}
H = \langle T(a),T(b) \rangle = \langle T \cap T(t,x), T \cap T(t,y) \rangle \\ \subset T \cap \langle T(t,x),T(t,y) \rangle = T \cap K.
\end{gather*}
Furthermore, the inclusion $T \subset \bT^2 \times T$ induces an isomorphism $T/(T\cap K) \cong (\bT^2 \times T)/K$, as may be verified by chasing a diagram analogous to \eqref{rect}. We therefore have identifications 
\begin{gather*}
X^{T \cap K} = \Map_T(T/(T\cap K),X) \cong \Map_T((\bT^2 \times T)/K,X) = (L^2X)^K = L^2X^{t,y} \cong X^b, 
\end{gather*}
where the mapping spaces $\Map_T(-,-)$ of $T$-equivariant maps are identified with fixed-point spaces by evaluating at $0 \in T$. The composite is $T$-equivariant if we let $T$ act on mapping spaces via the target space. Thus, $X^b$ is fixed by $T\cap K$, and the homeomorphism $X^b \cong LX^{t,y}$ is equivariant with respect to $T/(T\cap K) \cong (\bT^2 \times T)/K$. Finally, note that Lemma \ref{hard} implies that 
\begin{equation}\label{dia2}
\begin{tikzcd}
L^2X^{t,y} \ar[r,hook,"{i_{y,x}}"] \ar[d,"{\cong}"',"{ev_{t,y}}"] & L^2X^{t,x} \ar[d,"{\cong}"',"{ev_{t,x}}"]  \\
X^b \ar[r,hook,"{i_{b,a}}"] & X^a
\end{tikzcd}
\end{equation}
commutes. Now, the diagram is as follows.

\begin{equation}\label{dirt}
\begin{tikzcd}[row sep=large, column sep=0.5cm]
H_T(X^a)_{V-x} \ar[r] \ar[d] \drar[phantom, "(1)"] & H_{T/T(a)}(X^a)_{V-x} \ar[r] \ar[d] \drar[phantom, "(2)"] & H_{K(t,x)}(L^2X^{t,x})_{\{t\} \times V} \ar[r] \ar[d] \drar[phantom, "(1)"] & H_{\bT^2\times T}(L^2X^{t,x})_{\{t\} \times V} \ar[d] \\
H_T(X^b)_{V-x} \ar[r] \ar[d]\drar[phantom, "(3)"] & H_{T/T(a)}(X^b)_{V-x} \ar[r] \ar[d] \drar[phantom, "(4)"]& H_{K(t,x)}(L^2X^{t,y})_{\{t\} \times V} \ar[r] \ar[d]  \drar[phantom, "(3)"]& H_{\bT^2\times T}(L^2X^{t,y})_{\{t\} \times V} \ar[d] \\
H_{T/H}(X^b)_{V-x} \ar[r] \ar[d] \drar[phantom, "(5)"]& H_{T/(T\cap K)}(X^b)_{V-x} \ar[r] \ar[d] \drar[phantom, "(6)"]& H_{(\bT^2 \times T)/K}(L^2X^{t,y})_{\{t\} \times V} \ar[r] \ar[dl, bend left=15] \dar[phantom, "(4)"] & H_{K(t,y)}(L^2X^{t,y})_{\{t\} \times V} \ar[dl] \\
H_{T/H}(X^b)_{V-y} \ar[r] \ar[d] & H_{T/(T\cap K)}(X^b)_{V-y} \dlar[phantom, "(3)"'] & H_{T/T(b)}(X^b)_{V-y} \ar[dll] \ar[l] \\
H_T(X^b)_{V-y} \\
\end{tikzcd}
\end{equation}

Each map is an isomorphism of $\O_{\t_\C}(V)$-algebras, and is exactly one of the following four types:
\begin{itemize}
\item the change of groups map of Proposition \ref{changeh} (if the target and source only differ by equivariance group);
\item the map induced by an inclusion of spaces (if the target and source only differ by the topological space); 
\item the translation map $\id \otimes t^*_{y-x}$ of Remark \ref{gluh} (these are the vertical maps of region (5) - note that translation by $y-x$ is $H_{T/(T\cap K)}$-linear, since $H \subset T \cap K$); or
\item the map of Lemma \ref{equal1}, or a map obtained in a way exactly analogous to the proof of Lemma \ref{equal1} (see below).
\end{itemize}

We will show that diagram \eqref{dirt} commutes, which implies that the two outermost paths from $H_T(X^a)_{V-x}$ to $H_T(X^b)_{V-y}$ are equal. This gives us the statement of the theorem.

Each of the regions numbered in the diagram above commutes for the corresponding reason stated below.
\begin{enumerate}
\item By naturality of the isomorphism of Proposition \ref{changeh}.
\item By naturality of the isomorphism of Lemma \ref{equal1}, along with diagram \eqref{dia2}.
\item By Lemma \ref{kurt}.
\item This holds essentially because the homeomorphism $X^b \cong LX^{t,y}$ is equivariant with respect to the inclusion $T \subset \bT^2 \times T$. (See below for a more detailed proof.)
\item This is obvious.
\item Note that both $(t,x)$ and $(t,y)$ are contained in the complexified Lie algebra of $K$, and are therefore also in the kernel of 
\[
\C^2 \times \t_\C \twoheadrightarrow \mathrm{Lie}((\bT^2 \times T)/K)_\C.
\]
Thus, in the proof of Lemma \ref{equal1}, we may translate by either $(t,x)$ or $(t,y)$, leading to the horizontal and diagonal maps, respectively. These two maps evidently commute with the vertical map, which is $\id \otimes t^*_{y-x}$.
\end{enumerate}

In more detail, the claim of item 4 holds by a proof similar to that of Lemma \ref{equal1}. Indeed, the identification $X^b \cong LX^{t,y}$ is equivariant with respect to the isomorphisms $T/(T\cap K) \cong (\bT^2 \times T)/K$, $T/T(a) \cong K(t,x)$ and $T/T(b) \cong K(t,y)$, which are all induced by the inclusion $T \subset \bT^2 \times T$. This implies that, in the case of the middle square, we have an isomorphism of the diagram 
\begin{equation}
\begin{tikzcd}
E(T/T(a))  \times_{T/T(a)} X^{b} \ar[d] \ar[r] & B(T/T(a)) \ar[d] \\
E(T/(T\cap K)) \times_{T/(T\cap K)} X^b \ar[r] &  B(T/(T\cap K)), \\
\end{tikzcd}
\end{equation}
which induces the left vertical map, and the diagram  
\begin{equation}
\begin{tikzcd}
E(K(t,x)) \times_{K(t,x)} L^2X^{t,y} \ar[d] \ar[r] & B(K(t,x)) \ar[d] \\
E((\bT^2 \times T)/K) \times_{(\bT^2 \times T)/K} L^2X^{t,y} \ar[r] &  B((\bT^2 \times T)/K), \\
\end{tikzcd}
\end{equation}
which induces the right vertical map. We therefore have a commutative square 
\[
\begin{tikzcd}
H_{T/T(a)}(X^b) \ar[r] \ar[d] & H_{K(t,x)}(L^2X^{t,y}) \ar[d] \\
H_{T/(T\cap K)}(X^b) \otimes_{H_{T/(T\cap K)}} H_{T/T(a)} \ar[r] & H_{(\bT^2 \times T)/K}(L^2X^{t,y}) \otimes_{H_{(\bT^2 \times T)/K}} H_{K(t,x)}
\end{tikzcd}
\]
of isomorphisms of $H_{T/T(a)} \cong H_{K(t,x)}$-algebras. Then, to get isomorphisms of $\O_{\t_\C}(V)$-algebras, we tensor the left hand side over $p_a \circ t_{-x}$ and the right hand side over $p_{t,x}\circ \iota_t$, as in diagram \eqref{dg}. One shows the commutativity of the other square labelled (4) in exactly the same way, replacing $T(a)$ with $T(b)$ and $K(t,x)$ with $K(t,y)$.
\end{proof}

\begin{corollary}\label{glutes}
There is an isomorphism of cohomology theories 
\[
\Ell^*_{T,t} \cong \G^*_{T,t},
\]
defined on the category of finite $T$-CW complexes and taking values in the category of sheaves of $\Z/2\Z$-graded $\O_{E_{T,t}}$-algebras.
\end{corollary}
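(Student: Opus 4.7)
The plan is to observe that essentially all the work has already been done in Theorems \ref{iso} and \ref{duhh}, and that this corollary is really a statement about assembling local data into a natural global isomorphism and checking it commutes with the cohomology-theory structure. First I would fix a finite $T$-CW complex $X$ and choose an open cover $\U$ of $E_{T,t}$ adapted to $\S(X)$, which exists by Lemma \ref{constru} (or the analogous result on $T \times T$ via the isomorphism $\chi_{T,t}$). Theorem \ref{iso} then gives, for every $a \in E_{T,t}$, an isomorphism
\[
\Phi_{a}: \Ell^*_{T,t}(X)_{U_a} \xrightarrow{\ \cong\ } \G^*_{T,t}(X)_{U_a}
\]
of $\Z/2\Z$-graded $\O_{U_a}$-algebras, natural in $X$.

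Next I would verify that the family $\{\Phi_a\}$ is compatible with the gluing data on both sides, so that it descends to a global isomorphism of sheaves $\Phi_X: \Ell^*_{T,t}(X) \cong \G^*_{T,t}(X)$. On a nonempty intersection $U_a \cap U_b$, the sheaf $\Ell^*_{T,t}(X)$ is glued from its local descriptions via the restriction maps coming from its construction as a pushforward along $\zeta_{T,t}$ of an inverse limit, while $\G^*_{T,t}(X)$ is glued via the map $\phi_{b,a}$ of Grojnowski's construction (Remark \ref{perry}). The content of Theorem \ref{duhh} is precisely that the gluing map on the $\Ell^*_{T,t}$-side, transported via $\Phi_a$ and $\Phi_b$, coincides with $\phi_{b,a}$. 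Consequently the local isomorphisms glue, so that $\Phi_X$ is a well-defined isomorphism of sheaves of $\Z/2\Z$-graded $\O_{E_{T,t}}$-algebras, independent (up to canonical isomorphism) of the chosen adapted cover by the standard refinement argument in Grojnowski's construction.

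To upgrade this to an isomorphism of cohomology theories I would verify that $\Phi$ is natural in $X$ and commutes with the suspension isomorphisms. Naturality reduces, for a $T$-equivariant map $f: X \to Y$, to refining $\U$ so that it is simultaneously adapted to both $\S(X)$ and $\S(Y)$ (i.e.\ to $\S(f)$); then the naturality clause of Theorem \ref{iso} gives a patch-wise compatibility of $\Phi_X$ and $\Phi_Y$ with $f^*$, and this compatibility automatically extends to the glued sheaves. For the suspension isomorphism, I would use that on the Grojnowski side it comes from the ordinary suspension isomorphism on each stalk $H_T^*(X^a)$ (proof of Proposition \ref{kerry}), while on the $\Ell_{T,t}^*$ side Proposition \ref{jezza} identifies the suspension with the same ordinary suspension after applying the fixed-point identifications $L^2(S^1 \wedge X_+)^{t,x} = S^1 \wedge L^2X^{t,x}_+$ and $L^2X^{t,x} \cong X^a$ (Lemmas \ref{wedger} and \ref{equal}). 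Both suspensions are thus induced by the classical suspension isomorphism in Borel-equivariant ordinary cohomology on $X^a$, so they agree under $\Phi$.

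The most delicate step will be keeping track of the $\O_{U_a}$-algebra and $\Z/2\Z$-grading structures through the gluing and checking that the refinement-independence on the $\Ell_{T,t}$-side really matches the refinement-independence of Grojnowski's construction; but both are direct consequences of the fact that restriction to stalks gives an isomorphism and of the standard fact that a natural transformation of sheaves is determined by its stalks. Since $\Ell_{T,t}^*$ satisfies exactness and additivity by Remark \ref{hewson} and $\G_{T,t}^*$ does so by Proposition \ref{kerry}, the natural isomorphism $\Phi$ therefore constitutes an isomorphism of cohomology theories on finite $T$-CW complexes, valued in sheaves of $\Z/2\Z$-graded $\O_{E_{T,t}}$-algebras, as claimed.
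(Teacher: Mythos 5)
Your proposal is correct and follows essentially the same route as the paper's own proof: invoke Theorems \ref{iso} and \ref{duhh} to obtain the natural, glued isomorphism of sheaves, and then observe that the suspension isomorphisms on both sides are induced by the classical suspension isomorphism in Borel-equivariant ordinary cohomology (via Propositions \ref{kerry} and \ref{jezza}). The only difference is that you spell out the gluing argument, the refinement-independence, and the naturality check, which the paper leaves implicit in a two-sentence proof.
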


\begin{proof}
The local description of $\Ell_{T,t}(X)$ given in Theorems \ref{iso} and \ref{duhh} amounts to the natural isomorphism that we require. It is clear that this is compatible with suspension isomorphisms since, in each theory, these are induced by the suspension isomorphism in ordinary cohomology, by the proofs of Propositions \ref{kerry} and \ref{jezza}.
\end{proof}

\chapter{Elliptic cohomology and single loops}\label{Kitch}

In the 2014 paper \cite{Kitch1}, Kitchloo constructed, for a simple, simply-connected compact Lie group $G$, a $G$-equivariant elliptic cohomology theory modeled on the loop group-equivariant K-theory of the free loop space $LX$. Kitchloo noted in that paper that he expects his theory to be closely related to Grojnowski's theory, and we aim to show in the next chapter that they are indeed isomorphic, for certain spaces $X$ and up to a twist by a line bundle.   \par

In this chapter, we construct a cohomology theory that is in a sense intermediate between Kitchloo's theory and Grojnowski's theory. We first construct a holomorphic sheaf $\K_{LT}(LX)$ as an inverse limit of rings $K_{\bT \times T}(Y)$ over the finite subcomplexes $Y$ of $LX$, where the torus $\bT \times T$ arises as the maximal torus of an "extended loop group" $\bT \ltimes LT$. The inverse limit $\K_{LT}(LX)$ is equipped with a natural action of the associated Weyl group $\check{T}$, as is the underlying space of $\K_{LT}(LX)$, which is the complexification $\C^\times \times T_\C$ of the maximal torus. One obtains an elliptic cohomology theory from this by pushing forward over the quotient by the action of the Weyl group, and then taking invariants to produce a coherent sheaf $\F_T(X)$. It turns out that the underlying elliptic curves of this construction are the multiplicative curves $C_q$ indexed over $q \in \D^\times$, whereas those of the previous chapter were additive. Finally, we use the equivariant Chern character to show that the theory $\F_T(X)$, over an elliptic curve $C_q$, is isomorphic to Grojnowski's theory over $E_\tau$, where $q = e^{2\pi i\tau}$. It follows that $\F_T(X)$ is a cohomology theory in $X$. \par

We have tried to emphasise the parallelism between this construction and that of the previous chapter. Indeed, the constructions are almost identical, after substituting equivariant K-theory for ordinary equivariant cohomology, and single loops for double loops. There are however some important differences between the constructions, such as the fact that one is a Borel-equivariant theory and the other is genuine equivariant theory, and this has made it necessary to treat them separately. Unfortunately, this approach has resulted in many proofs being formally similar to those of the previous chapter, but we have included them nevertheless, to convince the reader of all the details. 

\begin{notation}
We set some notation for use in this chapter and the next. We now write $\bT$ for the multiplicative circle $S^1 \subset \C$. The canonical isomorphism $\check{T} \otimes S^1 \cong T$ provides us with multiplicative coordinates for $T$. Furthermore, we have a canonical inclusion of $T$ into $T_\C$ which is given by the map
\[
\check{T} \otimes \bT \hookrightarrow \check{T} \otimes \C^\times, 
\]
induced by the inclusion $S^1 \hookrightarrow \C^\times$.
\end{notation}

\section{The complex manifold $C_T$}

In this section we construct the complex manifold $C_T$, which plays a role analogous to that of $E_T$ in the previous chapter. 

\begin{definition}
Let $LT$ be the topological group of smooth maps $\bT \to T$, with group multiplication defined pointwise. The group of rotations of $\bT$ is also denoted by $\bT$, which acts on itself by multiplication. We let a rotation $r \in \bT$ act on a loop $\gamma \in LT$ from the left by
\[
r\cdot \gamma(s) = \gamma(s r^{-1}).
\]
The \textit{extended loop group} of $T$ is the semidirect product 
\[
\bT \ltimes LT,
\]
with group operation given by
\[
(r,\gamma(s))(r',\gamma'(s)) = (rr', \gamma(sr') \gamma'(s)).
\]
It is easily verified that the inverse of $(r,\gamma(s))$ is $(r^{-1},\gamma(sr^{-1}))^{-1}$.
\end{definition}

\begin{remark}
The extended loop group acts on the single free loop space
\[
LX = \Map(\bT,X)
\]
by
\[
(r,\gamma(s)) \cdot \gamma'(s) = \gamma(r^{-1}s)\gamma'(r^{-1}s).
\]
\end{remark}

\begin{remark}[Maximal torus and normaliser]
The subgroup 
\[
\bT \times T \subset \bT \ltimes LT
\]
which is a product of the rotation group and the subgroup of constant loops $T \subset LT$ is a torus. In fact, $\bT \times T$ is a maximal torus in $\bT \ltimes LT$, because a nonconstant loop $\gamma \in LT$ does not commute with $\bT$. Let $N(\bT \times T)$ be the normaliser of $\bT \times T$ in $\bT \ltimes LT$. In the following proposition, we will consider the subgroup
\[
\check{T} \subset LT
\]
where we regard an element $m$ in $\check{T} = \Hom(\bT,T) \subset LT$ as a loop $s^m$ in $T$.
\end{remark}

\begin{proposition}
The subgroup $\check{T} \subset \bT \ltimes LT$ is contained in $N_{\bT\ltimes LT}(\bT \times T)$, and the composite map
\[
\check{T} \hookrightarrow N_{\bT\ltimes LT}(\bT \times T) \twoheadrightarrow W_{\bT\ltimes LT}(\bT \times T)
\]
is an isomorphism.
\end{proposition}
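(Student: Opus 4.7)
The plan is to mirror the strategy used in the proof of the analogous proposition for the double loop group in the previous chapter, simplified by the absence of an $\SL_2(\Z)$-factor. For the first assertion, I would argue by direct conjugation using the group law
\[
(r,\gamma(s))(r',\gamma'(s)) = (rr',\,\gamma(sr')\,\gamma'(s)).
\]
Identifying $m\in\check T=\Hom(\bT,T)$ with the loop $s\mapsto s^m$ in $LT$, one first verifies from this law that $(1,s^m)^{-1}=(1,s^{-m})$, and then computes
\[
(1,s^m)(r,t)(1,s^{-m}) = (r,\,r^m t) \in \bT\times T
\]
for every $(r,t)\in\bT\times T$. This simultaneously proves containment in the normaliser and records the Weyl action of $m$ on the maximal torus as $(r,t)\mapsto(r,r^m t)$, consistent with the formula found in the double-loop case when specialised to $A=1$.

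For the isomorphism assertion, I would construct an explicit inverse $W_{\bT\ltimes LT}(\bT\times T)\to\check T$. Starting from an arbitrary $g=(r,\gamma)\in N_{\bT\ltimes LT}(\bT\times T)$, left multiplication by $(r^{-1},1)\in\bT\times T$ kills the rotation component, and a further left multiplication by the constant loop $\gamma(1)^{-1}\in T\subset LT$ normalises $\gamma(1)=1$. Any nontrivial further left multiplication by an element of $\bT\times T$ would violate one of these two normalisations, so each class in $W_{\bT\ltimes LT}(\bT\times T)$ has a unique representative of the form $(1,\gamma)$ with $\gamma(1)=1$.

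The main step is to show that such a representative automatically lies in $\check T$. Using $(1,\gamma)^{-1}=(1,\gamma(s)^{-1})$ and the group law, conjugating $(r,t)\in\bT\times T$ yields
\[
(1,\gamma)(r,t)(1,\gamma^{-1}) = (r,\,t\,\gamma(sr)\gamma(s)^{-1}).
\]
Membership in $\bT\times T$ forces $s\mapsto\gamma(sr)\gamma(s)^{-1}$ to be independent of $s$; evaluating at $s=1$ and using $\gamma(1)=1$ gives $\gamma(sr)=\gamma(s)\gamma(r)$, so $\gamma$ is a continuous homomorphism $\bT\to T$, i.e. $\gamma\in\check T$. The assignment $[g]\mapsto\gamma$ is well-defined by uniqueness of the normalised representative, and is a group homomorphism since $(1,s^m)(1,s^{m'})=(1,s^{m+m'})$; it is manifestly inverse to the composite in the statement. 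The only mildly delicate point throughout is keeping track of the twisted multiplication in the conjugation computations, which is where any ordering slip would occur.
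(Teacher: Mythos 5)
Your proof is correct and follows essentially the same route as the paper's: the same direct conjugation computation for normality, and the same normalization of a representative of $[g]$ to the form $(1,\gamma)$ with $\gamma(1)=1$, followed by the observation that the conjugation constraint forces $\gamma$ to be a homomorphism, hence an element of $\check{T}$. The one small point where you are cleaner than the paper is well-definedness of $[g]\mapsto\gamma$: you observe that the normalised representative $(1,\gamma)$ with $\gamma(1)=1$ is unique in its left $\bT\times T$-coset (a two-line check with the group law), whereas the paper only verifies that the identity coset maps to the identity, which strictly speaking leaves the general well-definedness implicit.
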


\begin{proof}
The first statement is clear, since for $m \in \check{T}$, we have 
\[
(1,s^m) (r,t) (1,s^{-m}) = (r,s^mr^mt)(1,s^{-m}) = (r,r^mt) \in \bT \times T.
\]
For the second statement, we will define an inverse to the composite map in the proposition. Let $g \in N_{\bT \ltimes LT}(\bT \times T)$, and let $[g]$ be its image in $W_{\bT\ltimes LT}(\bT \times T)$. Since we may translate and multiply $g$ by constant loops without changing $[g]$, we can find a loop $\gamma \in LT$ with $\gamma(1) = 1$ such that $[g] = [(1,\gamma)]$. For $(r,t) \in \bT \times T$, we have
\begin{equation}\label{aff}
(1,\gamma(s)) (r,t)(1,\gamma(s)^{-1}) = (1,r,\gamma(rs)t \gamma(s)^{-1}) 
\end{equation}
which must lie in $\bT \times T$. It follows that
\[
\gamma(rs) \gamma(s)^{-1} = \gamma(r)
\]
for all $r,s \in \bT$, and so $\gamma(r)\gamma(s) = \gamma(rs)$ for all $r,s\in \bT$. Therefore, $\gamma$ is an element of $\check{T} = \Hom(\bT,T)$. The map $[g] \mapsto \gamma$ is well defined, since if $g \in \bT \times T$ then we could choose $\gamma = 1$. It is clearly a homomorphism, and inverse to the composite map of the proposition. This completes the proof.
\end{proof}

\begin{remark}[Weyl action]\label{weyl}
Let $\gamma(s) = s^m$ be the loop corresponding to $m \in \check{T}$. It follows directly from equation \eqref{aff} that the action of $\check{T}$ on $\bT \times T$ is given by
\[
m \cdot (r,t) = (r,r^m t).
\]
The induced action on the complexification 
\[
\C^\times \times T_\C := \C^\times \times \check{T} \otimes \C^\times 
\]
of $\bT \times T$ is given by the same formula, in which case we write it as
\[
m \cdot (q,u) = (q,q^mu).
\]
\end{remark}

\begin{remark}
Note that $\D^\times \times T_\C$ is preserved by the Weyl action of $\check{T}$ on $\C^\times \times T_\C$. Since the action of the first factor of $\check{T}^2$ on $\D^\times \times T_\C$ is free and properly discontinuous, the quotient
\[
\psi_T: \D^\times \times T_\C \longrightarrow  \check{T} \backslash (\D^\times \times T_\C) =: C_T
\]
is a complex manifold, and $\psi_T$ is a map of complex manifolds over $\D^\times$. On the fiber over $q$, it restricts to the quotient map
\[
\psi_{T,q}: T_\C \longrightarrow q^{\check{T}} \backslash T_\C= C_{T,q}.
\]
Consider the complex analytic diagram 
\[
\begin{tikzcd}
\X^+ \times \t_\C \ar[r] \ar[d,two heads,"{\zeta_T}"] & \bH \times \t_\C \ar[r,"{\exp}"] \ar[d,two heads,"{\C^\times \backslash \zeta_T}"] & \D^\times \times T_\C \ar[d,two heads,"{\psi_T}"] \\
E_T \ar[r] & \C^\times \backslash E_T \ar[r] & C_T.
\end{tikzcd}
\]
We label the composite of the two upper arrows by $\phi_T$ and the two lower arrows by $\sigma_T$. The two upper horizontal maps are given by
\[
\begin{array}{rcccl}
(t_1,t_2,x)&\longmapsto & (t_1/t_2,x/t_2) &\longmapsto & (e^{2\pi i t_1/t_2},\exp(2\pi i (x/t_2))).
\end{array}
\]
It is easily verified that these are $\check{T}^2$-equivariant maps, where $\check{T}^2$ acts on $\bH \times \t_\C$ by $m\cdot (\tau,x) = (\tau,x +m_1\tau + m_2)$, and the second factor of $\check{T}^2$ acts trivially on $\D^\times \times T_\C$. The middle vertical arrow labeled $\C^\times \backslash \zeta_T$ is the map induced by $\zeta_T$ on $\C^\times$-orbits, and the two lower horizontal arrows are the maps induced on $\check{T}^2$-orbits by the two upper horizontal arrows. The restriction 
\[
\sigma_{T,t}: E_{T,t} \cong C_{T,q}
\]
of $\sigma_T$ to the fiber over $t \in \X^+$ is the isomorphism given by tensoring $\check{T}$ with the composite $E_{t} \cong E_{t_1/t_2} \cong C_{q}$ given by $z \mapsto z/t_2 \mapsto \exp(2\pi i z/t_2)$.
\end{remark}

\begin{definition}\label{saag}
Let $\U$ be an open cover of $T \times T$ adapted to $\S$. For $(q,u) \in \D^\times \times T_\C$, we may write $u = q^{x_1}\exp(2\pi ix_2)$ for $x_1$ unique in $\t$ and $x_2 \in \t$ unique modulo $\check{T}$. In other words, $(q,u)$ is the image of $(t,x_1,x_2)$ under the composite map
\[
\X^+ \times \t \times \t \xrightarrow{\xi_T} \X^+ \times \t_\C \xrightarrow{\phi_T} \D^\times \times T_\C.
\]
Consider the open subset
\[
V_{q,u} := (\phi_T \circ \xi_T) (\X^+ \times V_{x_1,x_2}) \subset \D^\times \times T_\C.
\]
We have $(q,u) \in V_{q,u}$, since $(x_1,x_2) \in V_{x_1,x_2}$. The set 
\[
\{ V_{q,u} \}_{(q,u) \in \D^\times \times T_\C}
\]
is an open cover of $\D^\times \times T_\C$ (with some redundant elements). The set
\[
\{\psi_T(V_{q,u})\}_{(q,u) \in \D^\times \times T_\C}
\]
is an open cover of $C_T$.
\end{definition}

\begin{definition}\label{sarg}
Let $\U$ be an open cover of $T \times T$ adapted to $\S$. Given $a  \in C_{T,q}$, define the open subset
\[
U_a := \psi_{T}(V_{q,u}) \cap C_{T,q}
\]
where $u \in T_\C$ is any element satisfying $a= \psi{T,q}(u)$, so that $a \in U_a$. The set of open subsets $\{U_a\}_{a\in C_{T,q}}$ is an open cover of $C_{T,q}$. Furthermore, we have
\[
\begin{array}{rcl}
U_{a} &=& (\psi_T \circ \phi_T \circ \xi_T) (\X^+ \times V_{x_1,x_2})  \cap C_{T,q} \\
&=& (\sigma_T \circ \zeta_T \circ \xi_T) (\X^+ \times V_{x_1,x_2})  \cap C_{T,q} \\
&=& \sigma_T ((\zeta_T \circ \xi_T) (\X^+ \times V_{x_1,x_2})  \cap E_{T,t}) \\
&=& \sigma_{T,t}(U_a)
\end{array}
\]
where, in the last lines, $t$ is any element in $\X^+$ satisfying $q = e^{2\pi i t_1/t_2 }$, and $U_a$ is an element of the open cover of $E_{T,t}$ defined in Definition \ref{sag}.
\end{definition}

\section{The equivariant K-theory of single loop spaces}

The single loop space $LX$ carries an action of $\bT \times T$, which is induced by the action of $\bT \ltimes LT$. We begin this section by defining the holomorphic sheaf $\K_{LT}(LX)$ as an inverse limit over all $\bT \times T$-equivariant K-theory rings of finite subcomplexes of $LX$, as in the definition of $\H_{L^2T}(L^2X)$. We then prove several technical lemmas, some of which rely on the corresponding results in Section \ref{low}, concerning the relationship between the fixed point subspaces $LX^{q,u} \subset LX$ and the open cover $V_{q,u}$ defined in the previous chapter. Finally, we use these technical lemmas to prove localisation results which are analogous to those obtained in Section \ref{low}.

\begin{remark}
Let $X$ be a finite $T$-CW complex. Then $LX$ is weakly $\bT \times T$-homotopy equivalent to a $\bT \times T$-CW complex (by Theorem 1.1 in \cite{LMS}). 
\end{remark}

\begin{definition}\label{varlim}
Define the sheaf of $\O_{\D^\times \times T_\C}$-algebras
\[
\K^*_{LT}(LX) := \varprojlim_{Y \subset LX \: \text{finite}} \K^*_{\bT \times T}(Y)_{\D^\times \times T_\C}
\]
where the inverse limit runs over all finite $\bT \times T$-CW subcomplexes $Y$ of $LX$.
\end{definition}

\begin{remark}
The inverse limit sheaf $\K^*_{LT}(LX)$ takes the value 
\[
\varprojlim_{Y \subset LX} K^*_{\bT \times T}(Y)_U := \varprojlim_{Y \subset LX \: \text{finite}} K^*_{\bT \times T}(Y) \otimes_{K_{\bT \times T}} \O_{\D^\times \times T_\C}(U)
\]
on an open set $U\subset \D^\times \times T_\C$. We have that $\K^*_{LT}(L*) = \O_{\D^\times \times T_\C}$, by construction.
\end{remark}

\begin{remark}
We have that $\K^*_{LT}(L*) = \O_{\D^\times \times T_\C}$, by construction.
\end{remark}

\begin{definition}
Let $(q,u) \in \D^\times \times T_\C$. Define the intersection
\[
T(q,u) = \bigcap_{(q,u) \in H_\C} H
\]
of closed subgroups $H \subset \bT \times T$. For a $\bT \times T$-space $Y$, denote by $Y^{q,u}$ the subspace of points fixed by $T(q,u)$.
\end{definition}


\begin{lemma}[Kitchloo, \cite{Kitch1}, Theorem 3.3]\label{pushh}
Let $(q,u) \in \D^\times \times T_\C$ and let $\tau \in \bH$ such that $q = e^{2\pi i \tau}$. Write $u = q^{x_1}\exp(2\pi ix_2)$ where $x_1 \in \t$ is unique and $x_2 \in \t$ is unique modulo $\check{T}$. We have
\[
T(q,u) = \langle \exp(2\pi i (r,x_1 r)),\exp(2\pi i x_2) \rangle_{r \in \R} \subset \bT \times T
\]
and
\[
T(q,u) \cap T = T(a).
\]
\end{lemma}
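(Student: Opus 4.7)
My plan is to determine $T(q,u)$ via Pontryagin duality, which plays here the role that the Lie-algebra decomposition played in Remarks \ref{push} and \ref{pusha}. A closed subgroup $H \subseteq \bT \times T$ is determined by its annihilator $\mathrm{Ann}(H) \subseteq \widehat{\bT \times T} = \Z \times \hat{T}$, with inclusions reversing under annihilation. Since $(q,u) \in H_\C = \Hom(\widehat{\bT\times T}/\mathrm{Ann}(H), \C^\times)$ iff every character in $\mathrm{Ann}(H)$ kills $(q,u)$, the group $T(q,u)$ is the closed subgroup of $\bT \times T$ whose annihilator is the largest subgroup of $\Z \times \hat{T}$ on which $(q,u)$ vanishes.

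The central calculation is thus to evaluate a character at $(q, u)$. Writing $(a, \mu) \in \Z \times \hat{T}$ and using $u = \exp(2\pi i(\tau x_1 + x_2))$, we have
\begin{equation*}
(a,\mu)(q,u) = e^{2\pi i(a\tau + \mu(\tau x_1 + x_2))} = e^{2\pi i((a + \mu(x_1))\tau + \mu(x_2))}.
\end{equation*}
This equals $1$ iff $(a + \mu(x_1))\tau + \mu(x_2) \in \Z$; since $\tau \in \bH$ has strictly positive imaginary part, the $\tau$-coefficient must vanish, forcing $a + \mu(x_1) = 0$ and $\mu(x_2) \in \Z$. Hence $\mathrm{Ann}(T(q,u)) = \{(a, \mu) : a + \mu(x_1) = 0,\ \mu(x_2) \in \Z\}$. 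Let $K$ denote the closed subgroup of $\bT \times T$ generated by the one-parameter family $\{\exp(2\pi i(r, rx_1))\}_{r\in\R}$ and $\exp(2\pi i x_2)$. A character $(a, \mu)$ vanishes on the one-parameter subgroup iff $e^{2\pi i r (a+\mu(x_1))} = 1$ for all $r \in \R$, iff $a + \mu(x_1) = 0$, and vanishes on $\exp(2\pi i x_2)$ iff $\mu(x_2) \in \Z$. The annihilator of $K$ therefore coincides with that of $T(q,u)$, so by Pontryagin duality $T(q,u) = K$, which is the first formula.

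For the second claim, the generators $\exp(2\pi i(r, rx_1))$ of $K$ lie in $\{1\} \times T$ exactly when $r \in \Z$, so $T(q,u) \cap T$ is the closed subgroup of $T$ generated by $\exp(2\pi i x_1)$ and $\exp(2\pi i x_2)$. The identification with $T(a)$ then follows from Remark \ref{pusha}: with $\tau = t_1/t_2$, the image $a = \psi_{T,q}(u) \in C_{T,q}$ corresponds under $\sigma_{T,t}^{-1}$ to $[t_1 x_1 + t_2 x_2] \in E_{T,t}$, whose isotropy subgroup is exactly $\langle \exp(2\pi i x_1), \exp(2\pi i x_2)\rangle$. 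The only delicate step in the whole argument is the exploitation of $\mathrm{Im}(\tau) > 0$ to separate the integrality condition into vanishing of the $\tau$-coefficient; after that, everything is a straightforward translation of the Lie-algebraic picture of the previous chapter into the multiplicative setting of the present one.
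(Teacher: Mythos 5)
Your proof is correct and takes essentially the same approach as the paper: both reduce the identification of $T(q,u)$ to a character computation via Pontryagin duality, evaluate $(a,\mu)(q,u) = e^{2\pi i((a+\mu(x_1))\tau + \mu(x_2))}$, and use $\operatorname{Im}(\tau)>0$ to split the integrality condition; your explicit annihilator bookkeeping is just a tidier phrasing of the paper's diagram chase with $\rho$ and $\phi$.
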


\begin{proof}
Recall that 
\[
\C^\times \times T_\C = \Hom(\hat{\bT} \times \hat{T},\C^\times).
\]
By definition of $T(q,u)$, the map $(q,u) : \hat{\bT} \times \hat{T} \to \C^\times $, which is given by evaluation at $(q,u)$, factors uniquely   
\begin{equation}\label{dia}
\begin{tikzcd}
\hat{\bT} \times \hat{T} \ar[r,"{(q,u)}"] \ar[d,"\rho"] & \C^\times \\
\hat{T}(q,u) \ar[ur,"{\phi}"]
\end{tikzcd}
\end{equation}
where $\rho$ is given by restriction of characters. Since $T(q,u)$ is the smallest subgroup to have this property, the map $\phi$ is an isomorphism of $\hat{T}(q,u)$ onto the image of $(q,u)$. Therefore, the kernel of $(q,u)$ is equal to the kernel of $\rho$, which says that an irreducible character of $\bT \times T$ is trivial on $(q,u)$ if and only if it is trivial on $T(q,u)$. Furthermore, $T(q,u)$ is completely determined by the irreducible characters of $\bT \times T$ which restrict trivially to it, by the Pontryagin duality theorem.

An irreducible character $\mu \in \hat{\bT} \times \hat{T}$ is trivial on $(q,u)$ if and only if $\mu(\tau, x_1\tau + x_2) = \tau \mu(1,x_1) + \mu(0,x_2) \in \Z$, which holds if and only if $\mu(1,x_1) = 0$ and $\mu(0,x_2) \in \Z$. This means that $\mu$ is trivial on the subgroup
\[
\langle \exp(2\pi i (r,x_1 r)),\exp(2\pi i x_2) \rangle_{r \in \R} \subset \bT \times T
\]
which is equal to $T(q,u)$, by the reasoning above. To prove the second statement, notice that we have 
\[
T(q,u) \cap T = \langle \exp(2\pi i x_1), \exp(2\pi i x_2) \rangle.
\]
since the intersection of $T(q,u)$ with $T$ is the subset of points where $r$ is an integer. We identify the additive circle $\R/\Z$ with the multiplicative circle $S^1 \subset \C^\times$ via the map induced by $\exp: 2\pi i \R \to S^1$. Therefore, $\exp(2\pi i x_1) = \pi(x_1)$ and $\exp(2\pi i x_2) = \pi(x_2)$, so that
\[
T(q,u) \cap T = T(a).
\]
\end{proof}

\begin{lemma}\label{surjj}
Let $\psi_T(q,u) = a$. There is a short exact sequence of compact abelian groups 
\[
1 \to T(a) \hookrightarrow T(q,u) \twoheadrightarrow \bT \to 1
\]
where $T(q,u) \twoheadrightarrow \bT$ is the restriction of the projection $\bT \times T \twoheadrightarrow \bT$.
\end{lemma}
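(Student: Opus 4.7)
The plan is to mirror the proof of Lemma \ref{surj} (the additive/double-loop analogue), exploiting the explicit description of $T(q,u)$ provided by Lemma \ref{pushh}. Concretely, writing $u = q^{x_1}\exp(2\pi i x_2)$ with $x_1,x_2 \in \t$, Lemma \ref{pushh} identifies
\[
T(q,u) = \langle \exp(2\pi i(r, x_1 r)),\, \exp(2\pi i x_2)\rangle_{r \in \R} \subset \bT \times T,
\]
so exactness of the claimed sequence is essentially a direct verification.

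First I would check surjectivity of the restricted projection $T(q,u) \twoheadrightarrow \bT$. Under $\bT \times T \twoheadrightarrow \bT$, the generator $\exp(2\pi i (r, x_1 r))$ maps to $\exp(2\pi i r)$, and as $r$ ranges over $\R$ this hits every element of $\bT = S^1$; the remaining generator $\exp(2\pi i x_2)$ lies in $T$ and maps to $1$. Hence the image is all of $\bT$.

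Next I would identify the kernel as $T(a)$. By definition, the kernel is $T(q,u) \cap T$, which is precisely the subgroup of elements in the generating description for which $r \in \Z$; this reduces to $\langle \exp(2\pi i x_1), \exp(2\pi i x_2)\rangle$, which is exactly the content of the second statement of Lemma \ref{pushh}, namely $T(q,u) \cap T = T(a)$. Since $T(a) \subset T$ clearly maps to $1 \in \bT$, the inclusion $T(a) \hookrightarrow T(q,u)$ together with the projection $T(q,u) \twoheadrightarrow \bT$ yields the desired short exact sequence.

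There is no real obstacle here: once Lemma \ref{pushh} is in hand, the statement is a formal consequence, completely parallel to Lemma \ref{surj}. The only subtlety worth being careful about is the compatibility of the additive versus multiplicative conventions for $\bT$ (in this chapter $\bT = S^1 \subset \C^\times$, identified with $\R/\Z$ via $r \mapsto \exp(2\pi i r)$), which is precisely the identification used in the last paragraph of the proof of Lemma \ref{pushh} and so requires no further comment.
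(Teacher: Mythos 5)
Your proof is correct and follows exactly the paper's approach: the paper's proof of Lemma \ref{surjj} is the same one-line appeal to Lemma \ref{pushh}, that the projection to $\bT$ is surjective by the explicit generator description and the kernel is $T(q,u)\cap T = T(a)$. You have simply spelled out the surjectivity and kernel computations that the paper leaves implicit.
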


\begin{proof}
By the description in Lemma \ref{pushh}, the projection of $T(q,u) \subset \bT \times T$ onto $\bT$ is surjective, and has kernel $T(q,u) \cap T = T(a)$.
\end{proof}

\begin{notation}
Write
\[
p_a: T \twoheadrightarrow T/T(a) \qquad \mathrm{and}  \qquad p_{q,u}: \bT \times T \twoheadrightarrow (\bT \times T)/T(q,u)
\]
for the quotient maps, and let 
\[
\iota_1: T \hookrightarrow \bT \times T
\]
denote the inclusion of groups. 
\end{notation}

\begin{remark}
It follows from Lemma \ref{surjj} that there is a commutative diagram 
\begin{equation}\label{rectt}
\begin{tikzcd}
T(a) \ar[r,hook] \ar[d,hook] & T \ar[r, two heads,"p_a"] \ar[d, "\iota_1",hook] & T/T(a) \ar[d, "\eta", dashed] \\
T(q,u) \ar[r,hook] \ar[d,two heads] & \bT \times T \ar[r, two heads,"{p_{q,u}}"] \ar[d,two heads] & (\bT \times T)/T(q,u) \ar[d] \\
\bT \ar[r,equal] & \bT \ar[r] & 1
\end{tikzcd}
\end{equation}
where $\eta$ is induced by $\iota_1$.
\end{remark}

\begin{lemma}\label{groups} 
The map $\eta$ of diagram \eqref{rectt} is an isomorphism.
\end{lemma}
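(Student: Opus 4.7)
The plan is to prove that $\eta$ is an isomorphism by a direct argument using the two previously established facts: that Lemma \ref{surjj} makes the left column a short exact sequence, and that the middle column $T \hookrightarrow \bT \times T \twoheadrightarrow \bT$ is a short exact sequence by construction. Since the rows are also short exact sequences (they are each of the form $N \hookrightarrow G \twoheadrightarrow G/N$), the diagram satisfies all the hypotheses needed for a snake-lemma style argument, and the bottom row is $\bT \xrightarrow{=} \bT \to 1$, forcing the rightmost column to be exact at the top and middle positions.

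First I would establish injectivity of $\eta$. An element of $\ker(\eta)$ is the class $[t] \in T/T(a)$ of some $t \in T$ such that $\iota_1(t) = (1,t)$ lies in $T(q,u)$. This forces $t \in T \cap T(q,u)$, which by Lemma \ref{pushh} is precisely $T(a)$; hence $[t] = 1$ in $T/T(a)$.

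Next I would show surjectivity. Given an arbitrary class $[(r,s)] \in (\bT \times T)/T(q,u)$, the short exact sequence of Lemma \ref{surjj} gives an element $(r,s') \in T(q,u)$ projecting to $r \in \bT$. Then $(r,s) = (r,s')\cdot(1,s'^{-1}s)$, so modulo $T(q,u)$ we have
\[
[(r,s)] = [(1,s'^{-1}s)] = [\iota_1(s'^{-1}s)] = \eta([s'^{-1}s]),
\]
as required.

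The argument is essentially the same as in Lemma \ref{groupss}, just transported from the additive double-loop setting to the multiplicative single-loop setting, so I do not anticipate any genuine obstacle; the only point that requires the preceding machinery (rather than just group theory) is the identification $T \cap T(q,u) = T(a)$ used in the injectivity step, which is exactly the second assertion of Lemma \ref{pushh}. Everything else is formal from the commutativity and exactness of the diagram.
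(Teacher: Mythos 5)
Your proposal is correct and follows exactly the route the paper intends: the paper simply observes that both columns and all three rows of diagram \eqref{rectt} are short exact sequences and then invokes "a straightforward diagram chase," which is precisely the injectivity/surjectivity argument you spell out (with the key input $T \cap T(q,u) = T(a)$ coming from Lemma \ref{pushh} / Lemma \ref{surjj}). Nothing differs in substance; you have just made the chase explicit.
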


\begin{proof}
By Lemma \ref{surjj}, the left hand column is a short exact sequence. It is obvious that the middle column is a short exact sequence, as are all rows. It follows by a straightforward diagram chase that $\eta$ is an isomorphism.
\end{proof}

\begin{remark}
For a finite $T$-CW complex $X$, there is an action of $\bT \times T$ on $LX$ which is induced by the action of $\bT \ltimes LT$ on $LX$. Let $LX^{q,u}$ denote the subspace of $LX$ fixed by $T(q,u)$. Let 
\[
\begin{tikzcd}
ev: LX \ar[r]& X
\end{tikzcd}
\]
denote the $T$-equivariant map given by evaluation at $1$.
\end{remark}

\begin{lemma}[Kitchloo, \cite{Kitch1}, Corollary 3.4]\label{pushyy}
Let $(q,u) \in \D^\times \times T_\C$ and write $u = q^{x_1}\exp(2\pi ix_2)$. We have
\[
LX^{q,u} = \{ \gamma \in LX \, | \,  \gamma(s) = s^{x_1}\cdot z, \, z \in X^a\}.
\]
\end{lemma}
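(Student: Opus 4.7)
The plan is to proceed in close analogy with Remark \ref{pushy} in the double loop case, using the explicit description of $T(q,u)$ provided by Lemma \ref{pushh} to extract the pointwise constraints on a fixed loop and then identifying the resulting loop with its value at $1 \in \bT$.

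First I would recall that the action of $(r,t) \in \bT \times T$ on $\gamma \in LX$ is $(r,t)\cdot \gamma(s) = t\,\gamma(r^{-1}s)$ (as inherited from the extended loop group action on $LX$ restricted to constant loops), and that by Lemma \ref{pushh} the group $T(q,u)$ is topologically generated by the one-parameter subgroup $\{\exp(2\pi i(r,x_1 r))\}_{r\in\R}$ together with the constant element $\exp(2\pi i x_2)$. Thus $\gamma \in LX^{q,u}$ if and only if
\[
\gamma(s) = \exp(2\pi i x_1 r)\cdot \gamma(\exp(-2\pi i r)s) \quad \text{for all } r\in\R,\ s\in\bT,
\]
and
\[
\gamma(s) = \exp(2\pi i x_2)\cdot \gamma(s) \quad \text{for all } s\in \bT.
\]

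Next, setting $s=1$ in the first equation and writing $s' = \exp(-2\pi i r)$ yields $\gamma(s') = (s')^{x_1}\cdot \gamma(1)$. Conversely, any loop of the form $\gamma(s) = s^{x_1}\cdot z$ satisfies the first equation by a direct substitution and the homomorphism property of $s\mapsto s^{x_1}$. The second equation, together with continuity, simply forces $\gamma(1) = z$ to be fixed by $\exp(2\pi i x_2) = \pi(x_2)$. Combining these, $\gamma \in LX^{q,u}$ iff $\gamma(s) = s^{x_1}\cdot z$ with $z$ fixed by $\pi(x_2)$.

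Finally I would observe that the prescription $s\mapsto s^{x_1}\cdot z$ only yields a well-defined loop on $\bT = \R/\Z$ when $z$ is fixed by $\pi(x_1)$, since replacing $r$ by $r+1$ must return the same value. Hence $z$ must lie in $X^{\langle \pi(x_1),\pi(x_2)\rangle}$, which by the second part of Lemma \ref{pushh} (together with Remark \ref{pusha} identifying $T(a) = T(q,u)\cap T = \langle \pi(x_1),\pi(x_2)\rangle$) is exactly $X^{a}$. The only subtlety is the well-definedness check at the end: the map $\R \to T_\C$ given by $r \mapsto \exp(2\pi i x_1 r)$ need not descend to $\bT$, so one must verify separately that its action on $z$ does, which is precisely the condition $z \in X^{T(a)}$.
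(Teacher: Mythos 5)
Your proof is correct and takes essentially the same approach as the paper: unpack the two fixed-point conditions supplied by Lemma \ref{pushh} and evaluate at $s=1$ (the paper instead sets $s=r$, which is equivalent). You additionally make explicit the periodicity argument showing that $\gamma(1)$ must already be fixed by $\pi(x_1)$, a point the paper leaves implicit when asserting $\gamma(1)\in X^a$.
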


\begin{proof}
By Lemma \ref{pushh}, a loop $\gamma \in LX$ is fixed by $T(q,u)$ if and only if
\[
\gamma(s) = (r,r^{x_1}) \cdot \gamma(s) = r^{x_1} \cdot \gamma(sr^{-1})
\]
and
\[
\gamma(s) = \exp(2\pi i x_2) \cdot \gamma(s) 
\]
for all $r,s \in \bT$. By setting $s = r$, one sees that this is true if and only if 
\[
\gamma(r) = r^{x_1} \cdot \gamma(1)
\]
and $\gamma(1) \in X^a$ for $a = \psi_T(q,u)$. Therefore, 
\[
LX^{q,u} = \{ \gamma \in LX \, | \,  \gamma(s) = s^{x_1}\cdot z, \, z \in X^a\}.
\]
\end{proof}





\begin{lemma}\label{equall}
The map $ev$ induces a $T/T(a)$-equivariant homeomorphism
\[
\begin{tikzcd}
ev_{q,u}: LX^{q,u} \ar[r,"\cong"]& X^{a}.
\end{tikzcd}
\]
natural in $X$, and equivariant with respect to $\eta$.
\end{lemma}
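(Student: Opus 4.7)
The plan is to read this directly off the explicit description of $LX^{q,u}$ given in Lemma \ref{pushyy}, in close parallel with the proof of Lemma \ref{equal} in the double-loop setting. First I would use Lemma \ref{pushyy} to exhibit a set-theoretic bijection between $LX^{q,u}$ and $X^a$: every fixed loop has the form $s \mapsto s^{x_1}\cdot z$ for a unique $z = \gamma(1) \in X^a$, and conversely any $z \in X^a$ yields such a loop because $z$ is fixed by both $\exp(2\pi i x_1)$ and $\exp(2\pi i x_2)$. Under this bijection, $ev_{q,u}$ is just $\gamma \mapsto \gamma(1)$, with inverse $z \mapsto (s \mapsto s^{x_1}\cdot z)$; continuity in both directions follows from the compact-open topology on $LX$ together with joint continuity of the map $(s,z) \mapsto s^{x_1}\cdot z$ on $\bT \times X^a$.

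Naturality with respect to a $T$-equivariant map $f: X \to Y$ is then immediate, since post-composition with $f$ sends the loop $s \mapsto s^{x_1}\cdot z$ to $s \mapsto s^{x_1}\cdot f(z)$ and so commutes with evaluation at $1$.

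The only step requiring a genuine, though brief, calculation is the $\eta$-equivariance. Tracing through diagram \eqref{rectt}, the isomorphism $\eta: T/T(a) \to (\bT \times T)/T(q,u)$ sends $[t]$ to $[(1,t)]$, and this class acts on $\gamma(s) = s^{x_1}\cdot z$ via
\[
(1,t)\cdot \gamma(s) = t \cdot \gamma(s) = s^{x_1}\cdot (t\cdot z),
\]
using that $T$ is abelian so that $t$ commutes with $s^{x_1} \in T$. Evaluating at $s = 1$ then yields $ev_{q,u}((1,t)\cdot \gamma) = t\cdot z = [t]\cdot ev_{q,u}(\gamma)$, which is the claim. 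I do not anticipate any real obstacle; the subtleties are purely notational, amounting to keeping straight which quotient groups act on which spaces via which identifications in diagram \eqref{rectt}.
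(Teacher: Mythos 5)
Your proposal is correct and takes exactly the approach the paper intends: the paper's own proof is the single sentence ``This is evident by Lemma~\ref{pushyy},'' so you are simply unpacking the routine verification (set-theoretic bijection from the explicit description, continuity via the exponential law for the compact-open topology, naturality, and the $\eta$-equivariance computation using that every class in $(\bT\times T)/T(q,u)$ is represented by $(1,t)$ since $\eta$ is onto). The $\eta$-equivariance calculation is carried out correctly and matches the conventions of diagram~\eqref{rectt}.
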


\begin{proof}
This is evident by Lemma \ref{pushyy}. 
\end{proof}

\begin{remark} 
The subspace $LX^{q,u} \subset LX$ is a $\bT \times T$-equivariant CW subcomplex, consisting of those equivariant cells in $LX$ whose isotropy group contains $T(q,u)$. In fact, it follows easily from Proposition \ref{equall} that $LX^{q,u}$ is a finite $\bT \times T$-CW complex, since $X$ is finite.
\end{remark}

\begin{lemma}\label{hardd}
Let $X$ be a finite $T$-CW complex and let $\U$ be an open cover of $T \times T$ adapted to $\S(X)$. Let $(q,u),(q',v) \in \D^\times \times T_\C$, with $a = \psi_T(q,u)$ and $b =\psi_T(q',v)$. We have the following.
\begin{enumerate}
\item If $V_{q,u} \cap V_{q',v} \neq \emptyset$, then either $X^b \subset X^a$ or $X^a \subset X^b$. 
\item If $V_{q,u} \cap V_{q',v} \neq \emptyset$ and $X^b \subset X^a$, then $LX^{q',v} \subset LX^{q,u}$.
\end{enumerate}
\end{lemma}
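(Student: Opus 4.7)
The plan is to follow the pattern of Lemma \ref{hard}, substituting Lemma \ref{pushyy}'s description of $LX^{q,u}$ for the double-loop formula and transferring the open-set analysis from $\X^+ \times \t_\C$ to $\D^\times \times T_\C$ through $\rho := \phi_T\circ \xi_T$, which sends $(\tau, z_1, z_2)$ to $(e^{2\pi i\tau}, \exp(2\pi i(z_1\tau + z_2)))$. The genuinely new feature is the integer branch ambiguity in the presentation $u = q^{x_1}\exp(2\pi i x_2)$: two choices of $\tau \in \bH$ with $q = e^{2\pi i\tau}$ differ by some $k \in \Z$, and the accompanying substitution $(x_1, x_2) \mapsto (x_1, x_2 - k x_1)$ preserves both $\pi(x_1)$ and $T(a) = \langle \pi(x_1), \pi(x_2)\rangle$, hence preserves $X^a$ and the isotropy-based ordering on $X$-fixed subsets.

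For part 1, I will lift a point of $V_{q,u} \cap V_{q',v}$ through $\rho$ to obtain $(\tau_1, z_1, z_2) \in \bH \times V_{x_1, x_2}$ and $(\tau_2, w_1, w_2) \in \bH \times V_{y_1, y_2}$ with matching $\rho$-image. Equating the first coordinates gives $\tau_2 = \tau_1 - n$ for some $n \in \Z$; comparing the second coordinates and taking the imaginary part (using $\mathrm{Im}\,\tau_1 > 0$) forces $w_1 = z_1$ and $w_2 = z_2 + n z_1 - m$ for some $m \in \check{T}$. Reindexing $(q',v)$ via the branch $\tau_1$, which replaces $(y_1, y_2)$ by $(y_1, y_2 - ny_1)$ modulo $\check{T}$, the $\check{T}$-translate $V_{y_1, y_2 - ny_1 + m}$ of the new cover component is itself a component of $\pi^{-1}(U_{(b_1, b_2 - nb_1)})$ and contains $(z_1, z_2) \in V_{x_1, x_2}$. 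Hence $U_{(a_1, a_2)} \cap U_{(b_1, b_2 - nb_1)} \neq \emptyset$, so the second property of the adapted cover yields a comparison between $(a_1, a_2)$ and $(b_1, b_2 - nb_1)$. Since $(b_1, b_2 - nb_1) \in H \times H$ exactly when $(b_1, b_2) \in H \times H$ for every subgroup $H$, this comparison translates to one between $X^a$ and $X^b$, giving the required inclusion.

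For part 2, assume $X^b \subset X^a$ and take $\gamma \in LX^{q',v}$. By Lemma \ref{pushyy}, $\gamma(s) = s^{y_1}\cdot z$ for some $z \in X^b \subset X^a$; let $H \in \S(X)$ be its isotropy, so that both $(a_1, a_2)$ and $(b_1, b_2 - nb_1)$ lie in $H \times H$. The overlap from part 1, together with Lemma \ref{hard3} applied to $(x_1, x_2)$ and $(y_1, y_2 - ny_1 + m)$, places these points in the same component of $\pi^{-1}(H \times H)$; in particular $y_1 - x_1 \in \mathrm{Lie}(H)$. The one-parameter subgroup $\{\exp(2\pi i\theta(y_1 - x_1))\}_{\theta \in \R}$ is therefore contained in $H$ and fixes $z$, giving $s^{y_1}\cdot z = s^{x_1}(s^{y_1-x_1}\cdot z) = s^{x_1}\cdot z$ for every $s \in \bT$, so $\gamma \in LX^{q,u}$. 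The main technical obstacle is carefully justifying the branch realignment and keeping track of how it transforms the cover element indices; once this is done, the remainder of the argument is a direct multiplicative transcription of the additive proof in Lemma \ref{hard}.
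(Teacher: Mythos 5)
You correctly flag the branch ambiguity in the decomposition $v = (q')^{y_1}\exp(2\pi i y_2)$, something the paper's own proof does not acknowledge: it asserts that $V_{q,u}\cap V_{q',v}\neq\emptyset$ yields $V_{x_1,x_2}\cap V_{y_1,y_2}\neq\emptyset$ ``by Definition \ref{saag}'', and this implication is automatic in Lemma \ref{hard} only because $\xi_T$ is a homeomorphism, whereas here $\phi_T\circ\xi_T$ is not injective. However, your repair has a gap at the step claiming that ``the $\check{T}$-translate $V_{y_1,y_2-ny_1+m}$ of the new cover component \dots contains $(z_1,z_2)$''. The deck transformation on $\t\times\t$ that identifies your two lifts is the shear $(w_1,w_2)\mapsto(w_1,w_2-nw_1+m)$, and your claim would require this shear to carry the ball $V_{y_1,y_2}$ into the ball $V_{y_1,y_2-ny_1+m}$. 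But the cover components from Lemma \ref{construu} are round Euclidean balls, and for $n\neq 0$ a shear distorts distances by a factor up to $\sqrt{1+n^2}$, so $(z_1,z_2)=(w_1,w_2-nw_1+m)$ can lie well outside $V_{y_1,y_2-ny_1+m}$. Consequently $U_{a_1,a_2}\cap U_{b_1,b_2-nb_1}$ need not be nonempty, and the second property of an adapted cover cannot be invoked.

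The problem is not cosmetic: with $T=S^1$, $\S(X)=\{T,\Z/2,\Z/3,1\}$, $(x_1,x_2)=(0,1/2)$, $(y_1,y_2)=(0,1/3)$, the cover of Lemma \ref{construu} gives $r_a=r_b=1/12$, so $(1/24,1/2)\in V_{0,1/2}$ and $(1/24,1/3)\in V_{0,1/3}$; and $(\tau,1/24,1/2)$ and $(\tau+4,1/24,1/3)$ have the same image in $\D^\times\times T_\C$ for every $\tau\in\bH$, so $V_{q,u}\cap V_{q',v}\neq\emptyset$. Yet $T(a)=\Z/2$ and $T(b)=\Z/3$, so $X^a$ and $X^b$ are incomparable for any $T$-CW complex realising all four isotropy types. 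Thus part~1 of the lemma itself fails for this cover, and the gap in your argument is real. The paper's proof silently makes the same unjustified transfer to $T\times T$ and shares the gap; repairing the lemma would seem to require either restricting the $\tau$-parameter to a bounded strip when defining $V_{q,u}$, or replacing the round balls of Lemma \ref{construu} by a shape stable under the deck action.
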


\begin{proof}
Since $V_{q,u} \cap V_{q',v} \neq \emptyset$, by Definition \ref{saag} there exist $x_1,y_1$ unique in $\t$, and $x_2,y_2\in \t$ unique modulo $\check{T}$, such that
\[
V_{x_1,x_2} \cap V_{y_1,y_2} \neq \emptyset,
\]
where $u = q^{x_1}\exp(2\pi i x_2)$ and $v = (q')^{y_1}\exp(2\pi i y_2)$. This implies that 
\[
U_{a_1,a_2} \cap U_{b_1,b_2} \neq \emptyset,
\]
for $a_i = \pi_T(x_i)$ and $b_i = \pi_T(y_i)$. Note that $a_2,b_2$ only depend on $x_2,y_2$ modulo $\check{T} = \ker (\pi_T)$. So, by the second property of an adapted cover, we have either $(a_1,a_2) \leq (b_1,b_2)$ or $(b_1,b_2) \leq (a_1,a_2)$. This implies that either $X^b \subset X^a$ or $X^a \subset X^b$, which yields the first part of the result.

For the second part, assume that $X^b \subset X^a$, and let $\gamma \in LX^{q',v}$. By the description in Lemma \ref{pushyy}, we have
\[
\gamma(s) = s^{y_1}\cdot z
\]
for some $z \in X^b \subset X^a$. Let $H \in \S(X)$ be the isotropy group of $z$, so that $a_i,b_i \in H$ for $i = 1,2$. The condition 
\[
V_{x_1,x_2} \cap V_{y_1,y_2} \neq \emptyset
\]
implies by Lemma \ref{hard3} that 
\[
(x_1,x_2)- (y_1,y_2)\in \pi^{-1}(H\times H)^0 = \mathrm{Lie}(H) \times \mathrm{Lie}(H).
\]
In particular, 
\[
x_1 - y_1 \in \mathrm{Lie}(H).
\]
We can now write
\[
\begin{array}{rcl}
\gamma(r) &=& s^{y_1}\cdot z \\
&=& s^{y_1} \cdot (s^{x_1 - y_1}\cdot z) \\
&=& s^{x_1}\cdot z,
\end{array}
\]
which is a loop in $LX^{q,u}$ since $z \in X^a$. This yields the second part of the result.
\end{proof}

\begin{lemma}\label{hayy}
Let $X$ be a finite $T$-CW complex and let $\U$ be a cover adapted to $\S(X)$. If $(q',v) \in V_{q,u}$, then $LX^{q',v} \subset LX^{q,u}$.
\end{lemma}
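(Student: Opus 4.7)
The plan is to mirror exactly the proof of Lemma \ref{hay} from the previous chapter, substituting the multiplicative setting for the additive one and invoking Lemma \ref{hardd} in place of Lemma \ref{hard}. The hypothesis $(q',v) \in V_{q,u}$ immediately gives $V_{q,u} \cap V_{q',v} \neq \emptyset$, so the second clause of Lemma \ref{hardd} will deliver the desired inclusion of loop-fixed subspaces once we establish that $X^b \subset X^a$, where $a = \psi_T(q,u)$ and $b = \psi_T(q',v)$.

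To obtain $X^b \subset X^a$, write $u = q^{x_1}\exp(2\pi i x_2)$ and $v = (q')^{y_1}\exp(2\pi i y_2)$, with $x_1 \in \t$ unique and $x_2 \in \t$ unique modulo $\check{T}$ (similarly for $y_1,y_2$), and set $a_i := \pi_T(x_i)$, $b_i := \pi_T(y_i)$. By construction of $V_{q,u}$ in Definition \ref{saag}, the condition $(q',v) \in V_{q,u}$ translates to $(y_1,y_2) \in V_{x_1,x_2}$ in $\t \times \t$, which under the quotient $\pi \times \pi$ yields $(b_1,b_2) \in U_{a_1,a_2}$ in $T \times T$. Lemma \ref{hard2} then forces $(a_1,a_2) \leq_{\S(X)} (b_1,b_2)$, and since $\U$ is adapted to $\S(X)$, this ordering translates (via the identifications $T(a) = \langle a_1,a_2\rangle$ and $T(b) = \langle b_1,b_2\rangle$ from Remark \ref{pusha}) into the inclusion of isotropy groups $T(a) \subset T(b)$, hence $X^b = X^{T(b)} \subset X^{T(a)} = X^a$.

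With $X^b \subset X^a$ and $V_{q,u} \cap V_{q',v} \neq \emptyset$ in hand, the second part of Lemma \ref{hardd} gives $LX^{q',v} \subset LX^{q,u}$, completing the argument. I do not anticipate any serious obstacle here: the entire content of the lemma is already packaged into Lemmas \ref{hard2} and \ref{hardd}, and the only point requiring care is making sure the unique/modulo-$\check{T}$ decomposition $u = q^{x_1}\exp(2\pi i x_2)$ is handled consistently so that membership in $V_{q,u}$ really does reduce to membership of $(y_1,y_2)$ in $V_{x_1,x_2}$. This is immediate from Definition \ref{saag}.
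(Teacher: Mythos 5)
Your overall approach matches the paper's proof of Lemma \ref{hayy} step for step: extract the coordinates $(x_1,x_2)$ and $(y_1,y_2)$ from $u$ and $v$ via Definition \ref{saag}, deduce $(y_1,y_2) \in V_{x_1,x_2}$, project to $(b_1,b_2) \in U_{a_1,a_2}$, invoke Lemma \ref{hard2} for the ordering, conclude $X^b \subset X^a$, then close with the second part of Lemma \ref{hardd}. However, there is one genuinely incorrect step in your chain. You assert that the ordering $(a_1,a_2) \leq_{\S(X)} (b_1,b_2)$ ``translates into'' the inclusion $T(a) \subset T(b)$. That implication is false. The relation $\leq_{\S(X)}$ only records containment as tested against the finitely many subgroups in $\S(X)$; it says nothing about the subgroups $T(a) = \langle a_1,a_2\rangle$ and $T(b) = \langle b_1,b_2\rangle$ themselves, which will typically not lie in $\S(X)$. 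Concretely, take $T = \bT$, $\S(X) = \{T,1\}$, $(a_1,a_2) = (1/2,0)$ and $(b_1,b_2) = (1/3,0)$. Then $(a_1,a_2) \leq_{\S(X)} (b_1,b_2)$ holds (the only $H \in \S(X)$ with $(b_1,b_2) \in H \times H$ is $H = T$, and trivially $(a_1,a_2) \in T \times T$), yet $T(a) = \Z/2\Z \not\subset \Z/3\Z = T(b)$.

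The correct deduction of $X^b \subset X^a$ bypasses that false claim and uses the ordering directly, exactly as the paper does. If $z \in X^b$, its isotropy group $T_z$ lies in $\S(X)$ and contains $T(b)$, so $(b_1,b_2) \in T_z \times T_z$; the ordering then forces $(a_1,a_2) \in T_z \times T_z$, hence $T(a) = \langle a_1,a_2\rangle \subset T_z$ and $z \in X^a$. This is precisely what is meant in the paper by ``$X^b \subset X^a$, since $\U$ is adapted to $\S(X)$'' (compare the proof of Lemma \ref{hard}). With that one correction, your argument is the paper's proof.
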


\begin{proof}
Since $(q',v) \in V_{q,u}$, by Definition \ref{saag} there exist $x_1,y_1$ unique in $\t$, and $x_2,y_2$ unique in $\t$ modulo $\check{T}$, such that
\[
(y_1,y_2) \in V_{x_1,x_2} \subset \t \times \t
\] 
and $u = q^{x_1}\exp(2\pi i x_2)$ and $v = (q')^{y_1}\exp(2\pi i y_2)$. Therefore, 
\[
(b_1,b_2) \in U_{a_1,a_2} \subset T\times T
\]
where $a_i = \pi(x_i)$ and $b_i = \pi(y_i)$. Note that $a_2$ and $b_2$ only depend on $x_2$ and $y_2$ modulo $\check{T}$. It follows that $(a_1,a_2) \leq (b_1,b_2)$ by Lemma \ref{hard2}, so that $X^b \subset X^a$, since $\U$ is adapted to $\S(X)$. Lemma \ref{hardd} yields the result.
\end{proof}

\begin{proposition}\label{extendd}
Let $Y \subset LX$ be a finite $\bT \times T$-CW subcomplex. The inclusion $Y^{q,u} \subset Y$ induces an isomorphism of $\Z/2\Z$-graded $\O_{V_{q,u}}$-algebras
\[
\K^*_{\bT\times T}(Y)_{V_{q,u}} \longrightarrow  \K^*_{\bT\times T}(Y^{q,u})_{V_{q,u}}.
\]
\end{proposition}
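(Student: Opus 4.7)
The plan is to mimic the proof of Proposition \ref{extend} from the previous chapter, simply substituting the localisation theorem for equivariant K-theory (Theorem \ref{localisation}) for the Borel-equivariant localisation theorem, and Lemma \ref{hayy} for Lemma \ref{hay}.

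First, I would observe that for any $(q',v) \in V_{q,u}$, Lemma \ref{hayy} gives $LX^{q',v} \subset LX^{q,u}$, and hence $Y^{q',v} \subset Y^{q,u} \subset Y$ as finite $\bT \times T$-CW subcomplexes of $LX$. These inclusions are all natural, so they induce a commutative diagram of $\O_{V_{q,u}}$-algebras
\[
\begin{tikzcd}
\K^*_{\bT\times T}(Y)_{V_{q,u}} \ar[dr] \ar[rr] && \K^*_{\bT\times T}(Y^{q,u})_{V_{q,u}} \ar[dl] \\
&\K^*_{\bT\times T}(Y^{q',v})_{V_{q,u}}, &
\end{tikzcd}
\]
where the two diagonal maps are induced by the inclusions $Y^{q',v} \hookrightarrow Y$ and $Y^{q',v} \hookrightarrow Y^{q,u}$.

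Next, I would take stalks at the arbitrary point $(q',v)\in V_{q,u}$. By the Localisation Theorem for equivariant K-theory (Theorem \ref{localisation}), the stalk of the map from $\K^*_{\bT\times T}(Z)$ to $\K^*_{\bT\times T}(Z^{q',v})$ at $(q',v)$ is an isomorphism for any finite $\bT\times T$-CW complex $Z$. Applying this to both $Z = Y$ and $Z = Y^{q,u}$, and noting that in both cases the fixed-point subspace with respect to $T(q',v)$ coincides with $Y^{q',v}$ (since $Y^{q',v} \subset Y^{q,u} \subset Y$ and the fixed-point functor is idempotent), the two diagonal maps become isomorphisms on the stalk at $(q',v)$. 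By the two-out-of-three property, the horizontal map is also an isomorphism on this stalk.

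Since $(q',v) \in V_{q,u}$ was arbitrary, the horizontal map of sheaves restricted to $V_{q,u}$ is an isomorphism on every stalk, hence is an isomorphism of sheaves of $\O_{V_{q,u}}$-algebras. The $\Z/2\Z$-grading is preserved throughout because pullback along an inclusion of $\bT \times T$-spaces respects the grading of equivariant K-theory. I do not anticipate any genuine obstacles here; the content of the argument is essentially packaged inside the K-theoretic localisation theorem, and the proof is a straightforward adaptation of the Borel-equivariant case.
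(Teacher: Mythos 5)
Your proof is correct and matches the paper's argument: both prove $Y^{q',v}\subset Y^{q,u}$ via Lemma \ref{hayy}, form the same commutative triangle of restriction maps, and use Theorem \ref{localisation} on stalks together with the two-out-of-three property to conclude. Your explicit note that $(Y^{q,u})^{T(q',v)} = Y^{q',v}$ (since $Y^{q',v}\subset Y^{q,u}\subset Y$) is a small clarification that the paper leaves implicit, but the argument is the same.
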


\begin{proof}
Let $(q',v) \in V_{q,u}$. Then $LX^{q',v} \subset LX^{q,u}$ by Lemma \ref{hayy}, which implies that $Y^{q',v} \subset Y^{q,u}$. Consider the commutative diagram
\begin{equation}
\begin{tikzcd}
\K^*_{\bT\times T}(Y)_{V_{q,u}} \ar[dr] \ar[rr] && \K^*_{\bT\times T}(Y^{q,u})_{V_{q,u}} \ar[dl] \\
&\K^*_{\bT\times T}(Y^{q',v})_{V_{q,u}}, &
\end{tikzcd}
\end{equation}
induced by the evident inclusions. Taking stalks at $(q',v)$, Theorem \ref{localisation} implies that the two diagonal maps are isomorphisms, and so the horizontal map is also an isomorphism. Therefore, the horizontal map is an isomorphism of $\O_{V_{q,u}}$-algebras.
\end{proof}

\begin{corollary}\label{stalks}
The inclusion $LX^{q,u} \subset LX$ induces an isomorphism of $\O_{V_{q,u}}$-algebras
\[
\K^*_{LT}(LX)_{V_{q,u}} \cong \K^*_{\bT \times T}(LX^{q,u})_{V_{q,u}},
\]
natural in $X$. In particular, we have an isomorphism of stalks
\[
\K^*_{LT}(LX)_{q,u} \cong \K^*_{\bT \times T}(LX^{q,u})_{q,u}.
\]
\end{corollary}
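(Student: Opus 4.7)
The plan is to mirror the proof of Corollary \ref{stalkss} from the previous chapter, which is made possible because all the analogous technical ingredients for the single loop setting have already been established: Definition \ref{varlim} gives the analogue of $\H^*_{L^2T}$, Proposition \ref{extendd} is the K-theoretic localisation result parallel to Proposition \ref{extend}, Lemma \ref{hayy} controls how fixed point subspaces interact with the cover elements $V_{q,u}$, and Lemma \ref{equall} provides the identification $LX^{q,u} \cong X^a$ which we will use to conclude that $LX^{q,u}$ is a finite $\bT \times T$-CW complex.

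First I would unpack the definition of $\K^*_{LT}(LX)$ as the inverse limit over finite $\bT\times T$-CW subcomplexes $Y \subset LX$ and restrict to the open set $V_{q,u}$, noting that inverse limits in the category of sheaves are computed sectionwise. For each such $Y$, Proposition \ref{extendd} applied to the inclusion $Y^{q,u} \hookrightarrow Y$ yields a natural isomorphism
\[
\K^*_{\bT \times T}(Y)_{V_{q,u}} \cong \K^*_{\bT \times T}(Y^{q,u})_{V_{q,u}}
\]
of $\O_{V_{q,u}}$-algebras. Since these isomorphisms are induced by inclusions of fixed-point subcomplexes, they are compatible with the transition maps in the inverse system indexed by $Y \subset LX$, so passing to the limit gives
\[
\K^*_{LT}(LX)_{V_{q,u}} \cong \varprojlim_{Y \subset LX} \K^*_{\bT \times T}(Y^{q,u})_{V_{q,u}}.
\]

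Next I would argue that the inverse limit on the right stabilises at $\K^*_{\bT \times T}(LX^{q,u})_{V_{q,u}}$. By Lemma \ref{equall}, evaluation at $1$ gives a homeomorphism $LX^{q,u} \cong X^a$ equivariant with respect to the isomorphism $\eta: (\bT \times T)/T(q,u) \cong T/T(a)$ of Lemma \ref{groups}; since $X$ is a finite $T$-CW complex, $X^a$ is a finite $T/T(a)$-CW complex, and hence $LX^{q,u}$ inherits the structure of a finite $\bT \times T$-CW subcomplex of $LX$. Because every $Y^{q,u}$ appearing in the limit is contained in this fixed (finite) subcomplex $LX^{q,u}$, the inverse limit definition of $\K^*_{LT}(LX^{q,u})$ (for the constant system whose eventual term is $LX^{q,u}$ itself) forces
\[
\varprojlim_{Y \subset LX} \K^*_{\bT \times T}(Y^{q,u})_{V_{q,u}} = \K^*_{\bT \times T}(LX^{q,u})_{V_{q,u}},
\]
yielding the first isomorphism.

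For naturality in $X$, I would verify, as in the proof of Corollary \ref{stalkss}, that the construction is functorial: a $T$-equivariant map $f: X \to Y$ induces a $\bT \times T$-equivariant map $Lf: LX \to LY$, which restricts to $LX^{q,u} \to LY^{q,u}$ by definition of the fixed-point subspaces, and the required square of sheaf maps commutes by functoriality of $\bT \times T$-equivariant K-theory (refining the cover if needed so that it is adapted to $\S(f)$). The second statement of the corollary, the isomorphism on stalks at $(q,u)$, is then immediate from the sheaf isomorphism by taking the colimit of sections over open neighbourhoods of $(q,u)$ contained in $V_{q,u}$.

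The main conceptual obstacle, should one arise, is ensuring that finiteness of $LX^{q,u}$ genuinely transfers from finiteness of $X^a$ through the equivariance in Lemma \ref{equall}; once this is noted, the argument is formally identical to the double-loop version and the rest is routine bookkeeping with inverse limits and Proposition \ref{extendd}.
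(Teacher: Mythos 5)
Your proposal is correct and follows essentially the same route as the paper: unpack $\K^*_{LT}(LX)_{V_{q,u}}$ as the inverse limit, apply Proposition \ref{extendd} to each finite subcomplex $Y$, observe via Lemma \ref{equall} that $LX^{q,u}$ is a finite $\bT\times T$-CW subcomplex so the limit stabilises there, and obtain naturality by refining the cover to be adapted to $\S(f)$. The stalk statement then follows formally, exactly as you say.
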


\begin{proof}
It follows from Definition \ref{varlim} and Proposition \ref{extendd} that 
\[
\begin{array}{rcl}
\K^*_{LT}(LX)_{V_{q,u}} &=& \varprojlim_{Y \subset LX} \K^*_{\bT \times T}(Y)_{V_{q,u}} \\
&\cong& \varprojlim_{Y \subset LX} \K^*_{\bT \times T}(Y^{q,u})_{V_{q,u}} \\
&=& \K^*_{\bT \times T}(LX^{q,u})_{V_{q,u}}.
\end{array}
\]
The final equality holds by definition of the inverse limit, since each $Y^{q,u}$ is contained in the finite $\bT \times T$-CW subcomplex $LX^{q,u} \subset LX$. We can show naturality with respect to a $T$-equivariant map $f: X \to Y$ by refining the cover $\U$ so that it is adapted to $\S(f)$. The isomorphism is then natural in $X$ by the functoriality of K-theory.
\end{proof}

\begin{remark}\label{coherence} 
It follows from Corollary \ref{stalks} that $\K^*_{LT}(LX)$ is a coherent sheaf, since $LX^{q,u}$ is a finite $\bT \times T$-CW complex.
\end{remark}

\section{Construction of the sheaf $\F^*_T(X)$ over $C_T$}

We begin this section by showing that $\K^*_{LT}(LX)$ has a more computable description. We then use this description to study the action of the Weyl group $\check{T}$ on the sheaf $\K_{LT}(LX)$, which is induced both by the $\bT \ltimes LT$-action on $LX$ and the Weyl action on $\bT \times T$. Finally, we define the sheaf $\F^*_T(X)$ as the $\check{T}$-invariants of the pushforward of $\K_{LT}(LX)$ along $\psi_T: \D^\times \times T_\C \twoheadrightarrow C_T$.

\begin{definition}
Let $\mathcal{D}(X)$ denote the set of finite $\bT \times T$-CW complexes generated by the set 
\[
\{ LX^{q,u} \}_{(q,u) \in \D^\times \times T_\C}
\]
under finite unions and finite intersections. The set $\mathcal{D}(X)$ is partially ordered by inclusion. 
\end{definition}

\begin{lemma}\label{yes}
There is an isomorphism of $\Z/2\Z$-graded $\O_{\D^\times \times T_\C}$-algebras
\[
\K^*_{LT}(LX) \cong \varprojlim_{Y \in \mathcal{D}(X)} \K^*_{\bT \times T}(Y)_{\D^\times \times T_\C}
\]
natural in $X$. 
\end{lemma}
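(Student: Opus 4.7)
The plan is to adapt the proof of Theorem \ref{yess} essentially verbatim, replacing double loops with single loops, Borel-equivariant cohomology with equivariant K-theory, and using Corollary \ref{stalks} in place of Corollary \ref{stalkss}. First I would define the $\bT\times T$-CW subcomplex
\[
S := \bigcup_{(q,u)\in \D^\times\times T_\C} LX^{q,u} \;\subset\; LX,
\]
and for each finite $\bT\times T$-CW subcomplex $Y \subset LX$ consider the inclusion $Y \cap S \hookrightarrow Y$. By functoriality of $\bT\times T$-equivariant K-theory and exactness of tensoring with holomorphic functions (Proposition \ref{flatt}), these inclusions induce a natural morphism of sheaves of $\O_{\D^\times \times T_\C}$-algebras
\[
\varprojlim_{Y \subset LX \text{ finite}} \K^*_{\bT\times T}(Y)_{\D^\times \times T_\C} \;\longrightarrow\; \varprojlim_{Y \subset LX \text{ finite}} \K^*_{\bT\times T}(Y \cap S)_{\D^\times \times T_\C}.
\]

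Next I would show this morphism is an isomorphism on stalks at every point $(q,u) \in \D^\times \times T_\C$. Since $LX^{q,u} \subset S$ by construction, Corollary \ref{stalks} applied to both sides implies that each side has stalk at $(q,u)$ naturally isomorphic to $\K^*_{\bT\times T}(LX^{q,u})_{q,u}$, and the map between them is the identity under these identifications. Hence the morphism is an isomorphism of sheaves.

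The right-hand side above, however, is just the inverse limit of $\K^*_{\bT\times T}(Z)_{\D^\times \times T_\C}$ as $Z$ ranges over finite $\bT\times T$-CW subcomplexes of $S$, since the subcomplexes of the form $Y \cap S$ with $Y \subset LX$ finite exhaust all such $Z$. Any finite $\bT\times T$-CW subcomplex $Z \subset S$ is, by compactness of the equivariant cells, contained in a finite union of fixed-point subspaces $LX^{q_1,u_1} \cup \cdots \cup LX^{q_n,u_n}$, which is itself an element of $\mathcal{D}(X)$. By a standard cofinality argument for inverse limits, we therefore obtain
\[
\varprojlim_{Z \subset S \text{ finite}} \K^*_{\bT\times T}(Z)_{\D^\times \times T_\C} \;\cong\; \varprojlim_{Y \in \mathcal{D}(X)} \K^*_{\bT\times T}(Y)_{\D^\times \times T_\C}.
\]
Naturality in $X$ is automatic since every step is functorial (refining the adapted cover to handle a map $f\colon X\to X'$, exactly as in Corollary \ref{stalks}), and the $\Z/2\Z$-graded algebra structure is preserved at each stage. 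The main obstacle I anticipate is the cofinality step: one needs that the finite subcomplexes of $S$ of the form $Y\cap S$ are cofinal among all finite subcomplexes of $S$, and that $\mathcal{D}(X)$ is in turn cofinal in the latter. Both follow from the observation that any finite equivariant subcomplex touches only finitely many equivariant cells, each of which lies in some $LX^{q,u}$, but this uses the fact that $S$ is the union (not merely set-theoretic) of the $LX^{q,u}$ as $\bT\times T$-subcomplexes.
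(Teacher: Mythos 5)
Your proposal follows the paper's own proof essentially verbatim: define $S$ as the union of the $LX^{q,u}$, use Corollary \ref{stalks} to show the restriction map $\varprojlim_Y \K^*_{\bT\times T}(Y)\to\varprojlim_Y \K^*_{\bT\times T}(Y\cap S)$ is an isomorphism on stalks, and then observe that $\mathcal{D}(X)$ is cofinal among finite equivariant subcomplexes of $S$ since any finite subcomplex is contained in a finite union of the $LX^{q,u}$. The cofinality concern you raise at the end is resolved exactly as you suspect, because each $LX^{q,u}$ is a $\bT\times T$-CW subcomplex of $LX$ (consisting of cells whose isotropy contains $T(q,u)$), so $S$ is the subcomplex-theoretic union.
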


\begin{proof}
Consider the union
\[
S := \bigcup_{(q,u) \in \D^\times \times T_\C} LX^{q,u} \subset LX.
\]
Notice that $S$ is a $\bT \times T$-CW subcomplex of $LX$, and that $S$ containes

For each finite $\bT \times T$-CW subcomplex $Y \subset LX$, we have an inclusion $Y \cap S \hookrightarrow Y$. The induced map of $\Z/2\Z$-graded $\O_{\D^\times \times T_\C}$-algebras
\begin{equation}\label{maps}
\varprojlim_{Y \subset LX \: \text{finite}} \K^*_{\bT \times T}(Y)_{\D^\times \times T_\C} \to \varprojlim_{Y \subset LX \: \text{finite}} \K^*_{\bT \times T}(Y \cap S)_{\D^\times \times T_\C}
\end{equation}
is natural in $X$, by the functoriality of K-theory. Let $(q,u)$ be an arbitrary point. By Corollary \ref{stalks}, the map \eqref{maps} induces an isomorphism of stalks at $(q,u)$ because $S$ contains $LX^{q,u}$. Therefore, the map \eqref{maps} is an isomorphism of sheaves.

It is clear that the set $\{S \cap Y \, | \, Y \subset LX \, \text{finite}\}$ is equal to the set of all finite equivariant subcomplexes of $S$. Therefore, the target of map \eqref{maps} is equal to the inverse limit
\begin{equation}\label{mapsss}
\varprojlim_{Y \subset S \: \text{finite}} \K^*_{\bT \times T}(Y)_{\D^\times \times T_\C}
\end{equation}
over all finite $\bT \times T$-equivariant subcomplexes $Y \subset S$. Any such $Y$, since it is finite, is contained in the union of finitely many $LX^{q,u}$, which is also a finite $\bT \times T$-CW complex. Therefore, by definition of the inverse limit, \eqref{mapsss} is equal to 
\[
\varprojlim_{Y \in \mathcal{D}(X)} \K^*_{\bT \times T}(Y)_{\D^\times \times T_\C},
\]
which yields the first isomorphism. 
\end{proof}

\begin{remark}
Lemma \ref{yes} means that we can define $\K^*_{LT}(LX) $ without putting a $\bT \times T$-CW complex structure on $LX$.
\end{remark}

\begin{remark}\label{melbourne}
Recall that $\gamma \in LT$ acts on $\gamma' \in LX$ by the pointwise action $\gamma(s) \cdot \gamma'(s)$ induced by the action of $T$ on $X$. Therefore, the group $\check{T} = \Hom(\bT,T)$ acts on $LX$ via its inclusion into $LT$. Let $(q,u) \in \D^\times \times T_\C$ and write $u = q^{x_1}\exp(2\pi ix_2)$. If $m \in \check{T}$, then $q^mu =  q^{x_1+m}\exp(2\pi ix_2)$. The action of $m$ on $LX$ induces a homeomorphism 
\[
\begin{array}{rcl}
\kappa_m: LX^{q,u} &\cong& m\cdot LX^{q,u} = LX^{q,q^mu}  \\
s^{x_1}\cdot z & \mapsto &s^{x_1+m}\cdot z
\end{array}
\]
which is equivariant with respect to the Weyl action $w_m: \bT \times T \cong \bT \times T$ of $m$ (see Remark \ref{weyl}). Applying the change of groups map corresponding to $w_m$ followed by pullback along $\kappa_m$ yields an isomorphism of graded rings
\begin{equation}\label{melb}
m^*: K^*_{\bT\times T}(LX^{q,q^mu}) \xrightarrow{w_m^*} K^*_{\bT\times T}(LX^{q,q^mu}) \xrightarrow{\kappa_m^*} K^*_{\bT \times T}(LX^{q,u}).
\end{equation}
This sends, for example, the trivial bundle $\C_\mu \to LX^{q,q^mu}$ corresponding to $\mu \in \hat{T}(q,q^mu)$ to the trivial bundle $\C_{m^* \mu} \to LX^{q,u}$, where $m^* \mu$ denotes the pullback of $\mu$ along $w_m: T(q,u) \cong T(q,q^mu)$. Thus,
\[
m^*\C_\mu = \C_{m^*\mu}.
\]
Let $\omega_m: \D^\times \times T_\C \rightarrow \D^\times \times T_\C$ be the action map $(q,u) \mapsto (q,q^mu)$ of $m$. The map $m^*$ induces an isomorphism of sheaves
\[
\omega_m^* \K^*_{\bT \times T}(LX^{q,q^mu})_{\D^\times \times T_\C} \xrightarrow{\cong} \K^*_{\bT \times T}(LX^{q,u})_{\D^\times \times T_\C}.
\]
We obtain a similar map for any $Y \in \mathcal{D}(X)$ by replacing $LX^{q,u}$ by $Y$ and $LX^{q,q^mu}$ by $m\cdot Y$, which is also in $\mathcal{D}(X)$. Since they are induced by the action of $m$ on $LX$, the family of maps thus produced is compatible with inclusions in $\mathcal{D}(X)$. By Lemma \ref{yes}, we therefore have an isomorphism of inverse limits
\[
\omega_m^*\K^*_{LT}(LX) \to \K^*_{LT}(LX).
\]
For each $(q,u)$, this induces an isomorphism, also denoted $m^*$, 
\begin{equation}\label{hogart}
\begin{array}{rcl}
m^*: \K^*_{\bT \times T}(LX^{q,q^mu})_{(q,q^mu)} &\longrightarrow& \K^*_{\bT \times T}(LX^{q,u})_{(q,u)} \\
{[V]} \otimes g &\longmapsto & m^*[V] \otimes m^*g
\end{array}
\end{equation}
from the stalk at $(q,q^mu)$ to the stalk at $(q,u)$. We write $m^*g$ as shorthand for $\omega_m^*g$. Since it is induced by a group action, the collection of isomorphisms
\[
\{\omega_m^*\K^*_{LT}(LX) \xrightarrow{m} \K^*_{LT}(LX)\}_{m \in \check{T}}
\] 
defines an action of $\check{T}$ on $\K^*_{LT}(LX)$. We have therefore proved the following theorem.
\end{remark}

\begin{theorem}\label{actyon}
There is a $\check{T}$-equivariant structure on the sheaf $\K^*_{LT}(LX)$ of $\Z/2\Z$-graded $\O_{\D^\times \times T_\C}$-algebras, given on stalks by the isomorphism \eqref{hogart}.
\end{theorem}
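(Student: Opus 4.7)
The plan is to formalize the construction already sketched in Remark \ref{melbourne}. The only real content is to check that the pointwise action of $\check{T}\subset LT$ on $LX$, together with the Weyl action on the maximal torus, assembles into a genuine group action on the inverse-limit sheaf $\K^*_{LT}(LX)$ and that it restricts on stalks to the map \eqref{hogart}.

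First I would fix $m\in\check{T}$ and observe two equivariance facts at the space level. On one hand, $m$ acts on $\bT\times T$ by the Weyl automorphism $w_m\colon (r,t)\mapsto (r,r^m t)$; on the other, the pointwise action of $s\mapsto s^m$ on loops sends $LX^{q,u}$ homeomorphically onto $LX^{q,q^m u}$ via $\kappa_m\colon s^{x_1}\!\cdot z\mapsto s^{x_1+m}\!\cdot z$ (using the description in Lemma \ref{pushyy}), and $\kappa_m$ is equivariant with respect to $w_m$. More generally, $Y\mapsto m\cdot Y$ is an inclusion-preserving involution of the poset $\mathcal{D}(X)$, since the isotropy containment condition defining membership in $\mathcal{D}(X)$ is transported by $\kappa_m$ and $w_m$.

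Next I would translate these space-level facts into a map of sheaves. For each $Y\in\mathcal{D}(X)$ the change-of-groups isomorphism associated to $w_m$, composed with pullback along $\kappa_m$, produces a natural isomorphism
\[
\omega_m^*\,\K^*_{\bT\times T}(m\cdot Y)_{\D^\times\times T_\C}\;\xrightarrow{\;\cong\;}\;\K^*_{\bT\times T}(Y)_{\D^\times\times T_\C},
\]
where $\omega_m\colon (q,u)\mapsto(q,q^m u)$. Compatibility with the inclusions $Y\subset Y'$ in $\mathcal{D}(X)$ is automatic because both constructions are functorial and come from a single group action on $LX$. Passing to the inverse limit, using the presentation of Lemma \ref{yes}, produces an isomorphism
\[
m^*\colon\omega_m^*\K^*_{LT}(LX)\;\xrightarrow{\;\cong\;}\;\K^*_{LT}(LX)
\]
of $\Z/2\Z$-graded $\O_{\D^\times\times T_\C}$-algebras.

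The remaining step is the cocycle condition $(m m')^*=(m')^*\circ\omega_{m'}^*(m^*)$ and $0^*=\mathrm{id}$, which I would verify by decomposing $m^*$ into its two factors. The family $\{w_m\}$ is a genuine action of $\check{T}$ on $\bT\times T$, so change-of-groups is multiplicative in $m$ by functoriality; similarly, $\{\kappa_m\}$ comes from the group action of $\check{T}$ on $LX$, so pullbacks compose correctly. Assembling the two yields the cocycle condition. Finally, taking stalks at $(q,u)$ and unwinding the definitions recovers the explicit formula \eqref{hogart}. I do not anticipate any real obstacle: the statement is essentially a bookkeeping exercise collecting Remark \ref{melbourne} into theorem form, and the only point requiring mild care is the compatibility of $m\mapsto(m\cdot-)$ with the diagram of inclusions defining the inverse limit in Lemma \ref{yes}.
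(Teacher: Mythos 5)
Your proposal is correct and follows the same route as the paper: the paper's proof of this theorem simply refers the reader to Remark \ref{melbourne}, and your writeup is a compressed, theorem-proof-style account of exactly that remark, including the use of Lemma \ref{yes} for the inverse-limit presentation and an explicit statement of the cocycle condition that the paper glosses over as ``induced by a group action.'' One small slip: the map $Y\mapsto m\cdot Y$ on $\mathcal{D}(X)$ is an inclusion-preserving bijection (with inverse $Y\mapsto(-m)\cdot Y$), not an involution, since $m$ need not have order two in $\check{T}$.
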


\begin{proof}
See Remark \ref{melbourne}.
\end{proof}

We denote by $\iota_q$ the inclusion of the fiber $T_\C \hookrightarrow \D^\times \times T_\C$ over $q \in \D^\times$. We have a commutative diagram of complex manifolds
\[
\begin{tikzcd}
T_\C \ar[r,hook,"{\iota_q}"] \ar[d,two heads, "{\psi_{T,q}}"] & \D^\times \times T_\C \ar[d,two heads, "{\psi_T}"] \\
C_{T,q} \ar[r,hook] & C_T. 
\end{tikzcd}
\]
With Remark \ref{coherence} in mind, we make the following definition.

\begin{definition}
Define the sheaf of $\Z/2\Z$-graded $\O_{C_T}$-algebras 
\[
\F^*_T(X) := ((\psi_{T})_*\, \K^*_{LT}(LX))^{\check{T}}.
\]
Define the sheaf of $\Z/2\Z$-graded $\O_{C_{T,q}}$-algebras 
\[
\F^*_{T,q}(X) :=  ((\psi_{T,q})_*\, \iota_q^*\, \K_{LT}(LX))^{\check{T}}.
\]
\end{definition}

\begin{remark}
We have that 
\[
\begin{array}{rcl}
\F^*_{T}(\pt) &=& ((\psi_T)_*\O_{\D^\times \times T_\C})^{\check{T}}  \\
&=& \O_{C_T}
\end{array}
\]
and 
\[
\begin{array}{rcl}
\F^*_{T,q}(\pt) &=& ((\psi_{T,q})_*\O_{T_\C})^{\check{T}} \\
&=& \O_{C_{T,q}},
\end{array}
\]
by construction.
\end{remark}

\section{The suspension isomorphism}

We now show that $\F^*_T(X)$ inherits a suspension isomorphism from equivariant K-theory (see Definition \ref{susp}). To do this, we first need a Lemma to which tells us how the suspension functor interacts with the free loop space functor. 

\begin{remark}
We regard the free loop space $L(S^1 \wedge X_+)$ as a pointed $\bT \times T$-CW complex with basepoint given by the loop $\gamma_*: \bT \to \pt \subset S^1 \wedge X_+$. Since the basepoint of $S^1 \wedge X_+$ is fixed by $T$, the loop $\gamma_*$ is fixed by $\bT \times T$, and so it is contained in $L(S^1 \wedge X_+)^{q,u}$ for all $(q,u)$. Therefore, each $Y \in \mathcal{D}(S^1\wedge X_+)$ is a based subcomplex of $L(S^1 \wedge X_+)$.
\end{remark}

\begin{lemma}\label{wedgie}
For each $(q,u) \in \D^\times \times T_\C$, we have an equality
\[
L(S^1\wedge X_+)^{q,u} = S^1 \wedge LX^{q,u}_+ 
\]
of subsets of $L(S^1 \wedge X_+)$.
\end{lemma}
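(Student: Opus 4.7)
The plan is to mimic the proof of the double-loop analogue Lemma \ref{wedger}, but using the explicit description of $T(q,u)$ from Lemma \ref{pushh} and of the fixed-point set from Lemma \ref{pushyy}. First I would prove the unreduced version
\[
L(S^1 \times X_+)^{q,u} = S^1 \times LX^{q,u}_+
\]
as subsets of $L(S^1 \times X_+)$, and then quotient out by the collapsing subspace to obtain the smash-product statement.

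For the unreduced version, let $\gamma \in L(S^1 \times X_+)$ and write $\gamma(s) = (\gamma_1(s),\gamma_2(s))$. By Lemma \ref{pushh}, the group $T(q,u)$ is generated by the elements $(r,r^{x_1})$ for $r \in \bT$ together with $\exp(2\pi ix_2)$, where $u = q^{x_1}\exp(2\pi ix_2)$. Since $\bT \times T$ acts on $S^1 \wedge X_+$ by rotating loops in the $\bT$-factor and acting on the target of $\gamma_2$ only (because $T$ fixes $S^1$), the loop $\gamma$ is fixed by $(r,r^{x_1})$ if and only if
\[
(\gamma_1(s),\gamma_2(s)) = (\gamma_1(sr^{-1}),\, r^{x_1}\cdot \gamma_2(sr^{-1}))
\]
for all $r,s \in \bT$, and is fixed by $\exp(2\pi ix_2)$ if and only if $\exp(2\pi ix_2)$ fixes $\gamma_2(s)$ for every $s$. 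Setting $r = s$ in the first identity shows that this holds if and only if $\gamma_1$ is constant and $\gamma_2(s) = s^{x_1}\cdot \gamma_2(1)$ with $\gamma_2(1) \in X^{a}$, i.e.\ (by Lemma \ref{pushyy}) if and only if $\gamma_1$ is constant in $S^1$ and $\gamma_2 \in LX^{q,u}_+$. This gives the claimed equality $L(S^1 \times X_+)^{q,u} = S^1 \times LX^{q,u}_+$.

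To pass to the smash product, I would argue as in the proof of Lemma \ref{wedger}: the image of such a fixed loop $\gamma$ lies in the collapsing subspace $\{*\} \times X_+ \cup S^1 \times \{*\}$ of $S^1 \times X_+$ if and only if for each $s$ either $\gamma_1(s) = *$ or $\gamma_2(s) = *$; but since $\gamma_1$ is constant and $\gamma_2$ is determined by its value at $1$ up to the action of $s^{x_1}$ (which fixes the basepoint), this happens if and only if $\gamma_1 \equiv *$ or $\gamma_2 \equiv *$, i.e.\ if and only if $\gamma$ lies in $\{*\} \times LX^{q,u}_+ \cup S^1 \times \{*\}$. Quotienting both sides by this subspace yields the desired equality $L(S^1\wedge X_+)^{q,u} = S^1 \wedge LX^{q,u}_+$.

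There is no real obstacle here; the only mildly delicate point is to recognise that the two generators of $T(q,u)$ must be treated together, and that the triviality of the $T$-action on the $S^1$-factor is what forces $\gamma_1$ to be constant while leaving $\gamma_2$ subject to exactly the conditions of Lemma \ref{pushyy}.
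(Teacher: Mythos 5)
Your proof is correct and takes essentially the same route as the paper's: first establish the unreduced equality $L(S^1 \times X_+)^{q,u} = S^1 \times LX^{q,u}_+$ by setting $r = s$ in the fixed-point condition, then pass to the smash product by noting that the constancy of $\gamma_1$ and the form $\gamma_2(s) = s^{x_1}\cdot\gamma_2(1)$ force the two collapse conditions to hold globally or not at all. The only (harmless) difference is that you spell out the two generators of $T(q,u)$ from Lemma \ref{pushh} separately, whereas the paper folds the $\exp(2\pi ix_2)$-condition into the conclusion $\gamma_2(1)\in X^a$ implicitly.
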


\begin{proof}
Suppose that $\gamma$ is a loop in $L(S^1\times X_+)^{q,u}$ sending $s \mapsto (\gamma_1(s),\gamma_2(s))$. Write $\gamma(1) = (z_1,z_2)$ and $u = q^{x_1}\exp(2\pi i x_2)$. The loop $\gamma$ is fixed by $T(q,u)$ if and only if 
\[
(\gamma_1(s),\gamma_2(s)) = r^{x_1}\cdot (\gamma_1(sr^{-1}),\gamma_2(sr^{-1})) =  (\gamma_1(sr^{-1}),r^{x_1}\cdot \gamma_2(sr^{-1}))
\]
for all $r,s \in \bT$, where $S^1$ is fixed by $T$. Setting $r = s$, one sees that this is true if and only if
\[
(\gamma_1(r),\gamma_2(r)) =  (\gamma_1(1),r^{x_1}\cdot \gamma_2(1)) \in S^1 \times LX^{q,u}.
\]
Therefore, we have an equality
\[
L(S^1 \times X_+)^{q,u} = S^1 \times LX^{q,u}_+.
\]
To prove the equality of the lemma, consider that the image of $\gamma$ is contained in $1 \times X_+ \cup S^1 \times \pt$ if and only if, for each $s$, we have either $\gamma_1(s) = 1$ or $\gamma_2(s) = \pt$. By the above calculation, this is true if and only if $\gamma_1(s) = 1$ for all $s$, or $\gamma_2(s) = \pt$ for all $s$, which is true if and only if  $\gamma \in 1 \times LX^{q,u} \cup S^1 \times \pt$. The equality of the lemma follows.
\end{proof}

\begin{definition}
For a pointed finite $T$-CW complex $X$, define
\[
\tilde{\F}_T(X) := \ker(\F^0_T(X) \to \F^0_T(\pt)) \\
\]
\end{definition}

\begin{proposition}\label{gerry}
Let $X$ be a finite $T$-CW complex. We have
\[
\F^{-1}_T(X) = \tilde{\F}_T(S^1 \wedge X_+).
\]
\end{proposition}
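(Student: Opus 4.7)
The plan is to mirror the proof of Proposition \ref{jezza}, substituting for the ordinary cohomology suspension isomorphism the \emph{definitional} identity $\K^{-1}_{\bT \times T}(Y) := \tilde{\K}^0_{\bT \times T}(S^1 \wedge Y_+)$ from Definition \ref{susp}. No nontrivial suspension isomorphism needs to be invoked in the K-theoretic setting; the content is entirely the compatibility of the free loop space functor with reduced suspensions, which is precisely Lemma \ref{wedgie}.

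First I would unfold the definitions, using Lemma \ref{yes}, to write
\[
\F^{-1}_T(X) = \bigl((\psi_T)_* \varprojlim_{Y \in \bD(X)} \K^{-1}_{\bT \times T}(Y)_{\D^\times \times T_\C}\bigr)^{\check{T}}
= \bigl((\psi_T)_* \varprojlim_{Y \in \bD(X)} \tilde{\K}_{\bT \times T}(S^1 \wedge Y_+)_{\D^\times \times T_\C}\bigr)^{\check{T}}.
\]
Next I would reindex the inverse limit. Lemma \ref{wedgie} gives the equality $L(S^1 \wedge X_+)^{q,u} = S^1 \wedge LX^{q,u}_+$ of subsets of $L(S^1 \wedge X_+)$ for each $(q,u)$. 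Since the operation $Y \mapsto S^1 \wedge Y_+$ commutes with finite unions and intersections of $\bT \times T$-subcomplexes (as subspaces of $L(S^1 \wedge X_+)$), this equality extends to an isomorphism of partially ordered sets
\[
\bD(S^1 \wedge X_+) = \{S^1 \wedge Y_+\}_{Y \in \bD(X)},
\]
compatible with inclusions. Thus the inverse limit over $Y \in \bD(X)$ of $\tilde{\K}_{\bT \times T}(S^1 \wedge Y_+)$ coincides with the inverse limit over $Z \in \bD(S^1 \wedge X_+)$ of $\tilde{\K}_{\bT \times T}(Z)$.

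Finally, I would commute the remaining functors with the kernel defining the reduced theory: the inverse limit, the pushforward $(\psi_T)_*$, and the invariants $(-)^{\check{T}}$ are all right adjoint functors, hence each preserves kernels. Writing $\tilde{\K}_{\bT \times T}(Z) = \ker(\K_{\bT \times T}(Z) \to \K_{\bT \times T}(\pt))$ and pulling the kernel outside gives
\[
\F^{-1}_T(X) \;=\; \ker\!\bigl(\F^0_T(S^1 \wedge X_+) \to \F^0_T(\pt)\bigr) \;=:\; \tilde{\F}_T(S^1 \wedge X_+),
\]
as required. Each step is manifestly natural in $X$ by the functoriality of $LX$, of $\bT \times T$-equivariant K-theory, of pushforward, and of invariants.

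The main obstacle, such as it is, is purely bookkeeping: one must be sure that the correspondence $Y \leftrightarrow S^1 \wedge Y_+$ really does produce an order isomorphism of the full generated lattices $\bD(X)$ and $\bD(S^1 \wedge X_+)$, and that the basepoint loop $\gamma_\ast$ sits in every $L(S^1 \wedge X_+)^{q,u}$ so that the reduction makes sense uniformly across the inverse system. Both are immediate from Lemma \ref{wedgie} together with the observation that $\gamma_\ast$ is fixed by the whole of $\bT \times T$. No deeper input (such as equivariant Bott periodicity) is needed for this single-degree statement.
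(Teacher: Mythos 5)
Your proof is correct and takes essentially the same route as the paper: unfold via Lemma \ref{yes}, rewrite $\K^{-1}_{\bT\times T}$ as $\tilde{\K}_{\bT\times T}(S^1\wedge -_+)$ by definition, reindex the inverse system using Lemma \ref{wedgie} to get $\bD(S^1\wedge X_+)=\{S^1\wedge Y_+\}_{Y\in\bD(X)}$, and then pull the kernel outside. You are slightly more explicit than the paper in noting that $(\psi_T)_*$ and $(-)^{\check{T}}$ (not just the inverse limit) must preserve kernels, which is a small but welcome improvement in precision.
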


\begin{proof}
We will show that there are equalities
\[
\begin{array}{rcl}
\F^{-1}_T(X) &:=& ((\psi_T)_* \varprojlim_{Y \in \bD(X)} \K^{-1}_{\bT \times T}(Y)_{\D^\times \times T_\C})^{\check{T}}\\
&:=& ((\psi_T)_* \varprojlim_{Y \in \bD(X)} \tilde{\K}_{\bT \times T}(S^1 \wedge Y_+)_{\D^\times \times T_\C})^{\check{T} }\\
&=& ((\psi_T)_* \varprojlim_{Y \in \bD(S^1 \wedge X_+)} \tilde{\K}_{\bT \times T}(Y)_{\D^\times \times T_\C})^{\check{T}} \\
&=:& ((\psi_T)_* \varprojlim_{Y \in \bD(S^1 \wedge X_+)} \ker(\K^0_{\bT \times T}(Y) \to \K^0_{\bT \times T}(\pt))_{\D^\times \times T_\C})^{\check{T}} \\
&=&  \ker(\F^0_T(S^1\wedge X_+) \to \F^0_T(\pt)) \\
&=:&  \tilde{\F}_T(S^1 \wedge X_+).
\end{array}
\]
The third line holds because the equality $L(S^1\wedge X_+)^{q,u} = S^1 \wedge LX^{q,u}_+$ of Lemma \ref{wedgie} is an equality of subsets of $L(S^1 \wedge X_+)$. It therefore extends to an equality of sets partially ordered by inclusion
\[
\bD(S^1 \wedge X_+) = \{S^1 \wedge Y_+\}_{Y\in \mathcal{D}(X)}.
\]
The fifth equality holds since the inverse limit is a right adjoint functor, and therefore respects all limits, including kernels. The remaining equalities hold by definition.
\end{proof}

\begin{remark}
By an argument exactly analogous to that of Remark \ref{hewson}, the map sending $X \mapsto \widetilde{\Ell}_T^*(X)$ defines a reduced cohomology theory on finite $T$-CW complexes, once it is equipped with the suspension isomorphisms of Proposition \ref{gerry}.
\end{remark}

\section{A local description}

In this section, given a finite $T$-CW complex $X$, we obtain a local description of the sheaf $\F_{T,q}$ over a cover adapted to $X$. We then use the local description to show that $\F_{T,q}(X)$ is naturally isomorphic to Grojnowski's construction for the curve $E_\tau$, where $\tau \in \bH$ is any complex number satisfying $q = e^{2\pi i \tau}$. This isomorphism turns out to be compatible with suspension isomorphisms, so that it is an isomorphism of cohomology theories on finite $T$-CW complexes.

\begin{notation}
Let $t_{q,u}$ denote translation by $(q,u)$ in $\C^\times \times T_\C$, and $t_u$ translation by $u$ in $T_\C$. If $f$ is a map of compact Lie groups, we will abuse notation and also write $f$ for the induced map of complex Lie groups. 
\end{notation}

\begin{remark}
It will be essential to the proof of the following Lemma to show that the diagram
\begin{equation}\label{daam}
\begin{tikzcd}
T_\C \ar[r,"p_{a} \circ t_{u^{-1}}",two heads] \ar[d,hook,"\iota_q"] & (T/T(a))_\C \ar[d,"\eta","\cong"'] \\
\C^\times \times T_\C \ar[r,"p_{q,u}", two heads] & ((\bT \times T)/T(q,u))_\C
\end{tikzcd}
\end{equation}
commutes.
\end{remark}


\begin{lemma}\label{equal11}
There is an isomorphism of sheaves of $\O_{T_\C}$-algebras
\[
 t_{u^{-1}}^*\, p_a^*\, \K_{T/T(a)}(X^{a}) \cong \iota_q^* \, p_{q,u}^*\, \K_{(\bT\times T)/T(q,u)}(LX^{q,u})
\]
natural in $X$, which sends an element of the form $[V] \otimes f$ to $[ev_{q,u}^*V] \otimes f$.
\end{lemma}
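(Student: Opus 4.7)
The plan is to mirror the proof of Lemma \ref{equal1}, which handled the analogous statement in the double loop setting. The single loop data we have in hand is: the homeomorphism $ev_{q,u} : LX^{q,u} \cong X^a$ of Lemma \ref{equall}, equivariant with respect to the isomorphism $\eta : T/T(a) \cong (\bT \times T)/T(q,u)$ arising from diagram \eqref{rectt}. The proof reduces to transporting this equivariant homeomorphism to an isomorphism of equivariant K-theory sheaves and then checking that the change-of-base is exactly the one prescribed by the maps $t_{u^{-1}} \circ p_a$ and $\iota_q \circ p_{q,u}$.

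First I would use $ev_{q,u}$ to induce a homeomorphism of Borel constructions
\[
E(T/T(a)) \times_{T/T(a)} X^{a} \cong E((\bT\times T)/T(q,u)) \times_{(\bT\times T)/T(q,u)} LX^{q,u},
\]
natural in $X$ and compatible with the change of groups along $\eta$. Pulling back vector bundles then yields an isomorphism of $K_{T/T(a)} \cong K_{(\bT\times T)/T(q,u)}$-algebras
\[
K^*_{T/T(a)}(X^{a}) \cong K^*_{(\bT\times T)/T(q,u)}(LX^{q,u})
\]
sending $[V] \mapsto [ev_{q,u}^*V]$. Tensoring with holomorphic functions as in Proposition \ref{flatt} turns this into an isomorphism of $\O_{(T/T(a))_\C}$-algebras $\K^*_{T/T(a)}(X^{a}) \cong \eta^*\K^*_{(\bT\times T)/T(q,u)}(LX^{q,u})$, and applying $t_{u^{-1}}^* p_a^*$ produces the desired sheaf on $T_\C$ of the claimed shape $[V] \otimes f \mapsto [ev_{q,u}^*V] \otimes f$.

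The one non-formal step, and the place where the argument could slip, is the commutativity of diagram \eqref{daam}: this is what licenses the identification $t_{u^{-1}}^* p_a^* \eta^* = \iota_q^* p_{q,u}^*$. The map $\iota_q(v) = (q,v)$ is not a group homomorphism, so I would verify commutativity pointwise. Diagram \eqref{rectt} gives $\eta \circ p_a = p_{q,u} \circ \iota_1$, where $\iota_1(v) = (1,v)$, hence
\[
\eta \circ p_a \circ t_{u^{-1}}(v) = p_{q,u}\bigl((1, v u^{-1})\bigr).
\]
On the other hand, because $(q,u) \in T(q,u)_\C$ lies in the kernel of the group homomorphism $p_{q,u}$, we have
\[
p_{q,u} \circ \iota_q(v) = p_{q,u}\bigl((q,v)\bigr) = p_{q,u}\bigl((q,v)\cdot(q^{-1}, u^{-1})\bigr) = p_{q,u}\bigl((1, v u^{-1})\bigr),
\]
so the two composites agree. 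This is the main obstacle: everything else is naturality plus the flatness already invoked earlier in the chapter, but this Lie-group computation is the content which distinguishes the multiplicative (single-loop) picture from the additive (double-loop) picture handled in Lemma \ref{equal1}, where the analogous identity used $p_{t,x}(t,x) = 0$ in the additive group.

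Finally, naturality in $X$ is automatic, since all the ingredients are natural: $ev_{q,u}$, the Borel-construction pullback, the change-of-groups isomorphism of Proposition \ref{Segal1}, and the extension of scalars to holomorphic functions. Putting these together yields the explicit formula $[V]\otimes f \mapsto [ev_{q,u}^*V]\otimes f$ for the isomorphism.
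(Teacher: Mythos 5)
Your proposal matches the paper's proof closely: both proceed by (i) using the $\eta$-equivariant homeomorphism $ev_{q,u}$ to produce the isomorphism $K_{T/T(a)}(X^{a}) \cong K_{(\bT\times T)/T(q,u)}(LX^{q,u})$ sending $[V]\mapsto[ev_{q,u}^*V]$, (ii) extending scalars to holomorphic functions, and (iii) reducing the claim to the commutativity of diagram \eqref{daam}, which both you and the paper establish via the observation that $(q,u)$ lies in $\ker(p_{q,u})$ on complexified Lie groups, so that translation by $(q,u)$ can be absorbed into $p_{q,u}$. Your pointwise verification $p_{q,u}(q,v)=p_{q,u}(1,vu^{-1})$ is the same computation as the paper's factorisation $p_{q,u}=p_{q,u}\circ t_{q,u}$ followed by $t_{q,u}\circ\iota_1 = \iota_q\circ t_u$, and your explicit note that $\iota_q$ is not a group homomorphism is a fair justification for checking the identity pointwise rather than formally.

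One small inaccuracy: you route the K-theoretic isomorphism through Borel constructions, writing a homeomorphism $E(T/T(a))\times_{T/T(a)}X^a \cong E((\bT\times T)/T(q,u))\times_{(\bT\times T)/T(q,u)}LX^{q,u}$ and then pulling back bundles. That is the mechanism used in the Borel-equivariant analogue (Lemma \ref{equal1}), but here the theory is genuine equivariant K-theory, so the isomorphism $K_{T/T(a)}(X^a)\cong K_{(\bT\times T)/T(q,u)}(LX^{q,u})$ is obtained directly from the $\eta$-equivariant homeomorphism $ev_{q,u}$ by pulling back equivariant vector bundles — no passage to Borel constructions is needed or appropriate. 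This detour is harmless, since the conclusion $[V]\mapsto[ev_{q,u}^*V]$ is correct either way, but worth flagging as a conceptual slip between the Borel and genuine settings.
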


\begin{proof}
The map $ev_{q,u}$ of Proposition \ref{equall} induces a natural isomorphism of $K_{T/T(a)}$-algebras
\[
K_{T/T(a)}(X^{a}) \cong K_{(\bT\times T)/T(q,u)}(LX^{q,u}),
\]
sending $[V]$ to $[ev_{q,u}^*V]$. The ring $K_{T/T(a)}$ acts on the target via the isomorphism 
\[
K_{(\bT\times T)/T(q,u)} \cong K_{T/T(a)}
\]
induced by $\eta$. Thus, we have an isomorphism of $\O_{T_\C/T(a)_\C}$-algebras   
\[
\K_{T/T(a)}(X^{a}) \cong \eta^* \, \K_{(\bT\times T)/T(q,u)}(LX^{q,u}),
\]
and hence an isomorphism of $\O_{T_\C}$-algebras,
\[
t_{u^{-1}}^*\, p_a^*\, \K_{T/T(a)}(X^{a}) \cong t^*_{u^{-1}}\, p_{a}^*\,\eta^* \, \K_{(\bT\times T)/T(q,u)}(LX^{q,u})
\]
natural in $X$.

We now show that diagram \eqref{daam} commutes, from which the result follows immediately. By the commutativity of diagram \eqref{rectt}, we have
\[
\eta \circ p_a = p_{q,u} \circ \iota_1.
\]
By taking the complexification of the diagram
\[
\begin{tikzcd}
T(q,u) \ar[r] \ar[d] & \bT \times T \ar[d] \\
1 \ar[r] & (\bT \times T)/T(q,u) 
\end{tikzcd}
\]
of compact abelian groups, we see that $(q,u)$ lies in the kernel of 
\[
p_{q,u}: \C^\times \times T_\C \twoheadrightarrow ((\bT \times T)/T(q,u))_\C.
\]
Therefore,
\[
p_{q,u} = p_{q,u} \circ t_{q,u},
\]
and we have
\[
\begin{array}{rcl}
\eta \circ p_a &=& p_{q,u} \circ t_{q,u} \circ \iota_1 \\
&=& p_{q,u} \circ \iota_q \circ t_{u}. \\
\end{array}
\]
Composing on the right by $t_{u^{-1}}$ now yields the commutativity of diagram \eqref{daam}. 
\end{proof}

\begin{remark}\label{daniell}
Let $X$ be a finite $T$-CW complex and let $\U$ be a cover adapted to $\S(X)$. Let $V_u$ be the component of $\psi_{T,q}^{-1}(U_a) \subset T_\C$ containing $u \in \psi_{T,q}^{-1}(a)$, and choose open subsets $V \subset V_u$ and $U \subset U_a$ such that $V \cong U$ via $\psi_{T,q}$. Consider the sheaf $(t_{u^{-1}}^*\, \K_{T}(X^{a}))_{V_u}$, whose value on $V$ is given by
\[
K_T(X^a) \otimes_{K_T} \O_{T_\C}(Vu^{-1}).
\]
The map $\psi_{T,q}$ induces a complex analytic isomorphism $Vu^{-1} \cong Ua^{-1}$ around $1 \in T_\C$, by means of which holomorphic functions on $Vu^{-1}$ and $Ua^{-1}$ are identified. We therefore have a canonical isomorphism of $\O_{U_a}$-algebras
\[
(t_{u^{-1}}^*\, \K_{T}(X^{a}))_{V_u} \cong (t_{a^{-1}}^*\, \K_{T}(X^{a}))_{U_a}
\]
where the latter sheaf takes the value
\[
K_T(X^a)_{Ua^{-1}} := K_T(X^a) \otimes_{K_T} \O_{C_{T,q}}(Ua^{-1}) 
\]
on $U \subset U_a$. 
\end{remark}

\begin{theorem}\label{isoo}
Recall the setup in Remark \ref{daniell}. There is an isomorphism of $\O_{U_a}$-algebras
\[
\F_{T,q}(X)_{U_a} \cong (t_{a^{-1}}^*\, \K_{T}(X^{a}))_{U_a}
\]
natural in $X$.
\end{theorem}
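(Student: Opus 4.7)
The plan is to proceed in close parallel with the proof of Theorem \ref{iso} in the previous chapter, replacing Borel-equivariant ordinary cohomology and double loop spaces by genuine equivariant $K$-theory and single loop spaces, and replacing $\check{T}^2$-invariants by $\check{T}$-invariants. First I would unfold the definition of $\F_{T,q}(X)_{U_a}$ and compute the pushforward along $\psi_{T,q}$ by decomposing $\psi_{T,q}^{-1}(U_a)$ into its connected components. Since $U_a$ is small by construction (Definition \ref{sarg}), this yields
\[
((\psi_{T,q})_*\, \iota_q^*\, \K^*_{LT}(LX))_{U_a} \;\cong\; \prod_{u \,\in\, \psi_{T,q}^{-1}(a)} (\iota_q^*\, \K^*_{LT}(LX))_{V_u},
\]
where $V_u$ is the component of $\psi_{T,q}^{-1}(U_a)$ containing $u$.

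Next I would apply four successive isomorphisms in each factor, all of which have been established earlier in the chapter: (i) Corollary \ref{stalks} (via the strengthened localisation $\K^*_{LT}(LX)_{V_{q,u}} \cong \K^*_{\bT\times T}(LX^{q,u})_{V_{q,u}}$) to replace $\K^*_{LT}(LX)$ by the equivariant $K$-theory of $LX^{q,u}$; (ii) the change of groups isomorphism of Proposition \ref{Segal} for the surjection $p_{q,u}: \bT\times T \twoheadrightarrow (\bT\times T)/T(q,u)$, which is applicable because $T(q,u)$ acts trivially on $LX^{q,u}$; (iii) Lemma \ref{equal11}, which uses the isomorphism $\eta: T/T(a) \cong (\bT\times T)/T(q,u)$ of Lemma \ref{groups} and the evaluation homeomorphism $ev_{q,u}: LX^{q,u}\cong X^a$ of Lemma \ref{equall} to descend to $\K_{T/T(a)}(X^a)$; and (iv) Proposition \ref{Segal} again, applied to $p_a: T \twoheadrightarrow T/T(a)$, to arrive at $(t_{u^{-1}}^*\, \K^*_T(X^a))_{V_u}$. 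Each of these preserves the $\Z/2\Z$-graded $\O_{V_u}$-algebra structure and is natural in $X$ (refining $\U$ to be adapted to a map $f:X\to Y$ where necessary).

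The composite so far is an isomorphism
\[
\F^*_{T,q}(X)_{U_a} \;\cong\; \biggl(\,\prod_{u \,\in\, \psi_{T,q}^{-1}(a)} (t_{u^{-1}}^*\, \K^*_T(X^a))_{V_u}\biggr)^{\!\check{T}}.
\]
The main obstacle, as in the proof of Theorem \ref{iso}, will be identifying the $\check{T}$-invariants explicitly. I expect to show that the Weyl action of $m \in \check{T}$ merely permutes the indexing set $\psi_{T,q}^{-1}(a)$ (sending the factor at $u$ to the factor at $q^m u$) and acts as the canonical identification between the two factors induced by the translation $V_u \cong V_{q^m u}$. Concretely, this reduces to checking the commutativity of two diagrams analogous to \eqref{pal} and \eqref{pall}: one expressing that the change of groups map of step (ii) commutes with $m^*$ (via an isomorphism of the pullback squares defining the Eilenberg--Moore-style identifications for $\K_{\bT\times T}$ versus $\K_{(\bT\times T)/T(q,u)}$), and one showing that the evaluation isomorphism of step (iii) is compatible with $m^*$, using that $m \in \check{T}$ is induced by the inclusion $T \hookrightarrow \bT\ltimes LT$ of constant loops, so $\eta^{-1}\circ m^* = \eta^{-1}$ and $ev_{q,q^m u}\circ m = ev_{q,u}$.

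Once the $\check{T}$-action is understood as a permutation of factors, the invariants collapse to a single factor $(t_{u^{-1}}^*\, \K^*_T(X^a))_{V_u}$. Finally, applying the canonical identification of Remark \ref{daniell} induced by the complex analytic isomorphism $V u^{-1} \cong U a^{-1}$ yields the stated description $(t_{a^{-1}}^*\, \K^*_T(X^a))_{U_a}$, and naturality in $X$ follows from the naturality of each step.
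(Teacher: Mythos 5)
Your high-level plan is right: the decomposition of $\psi_{T,q}^{-1}(U_a)$ into components $V_u$, the chain of four isomorphisms (Corollary \ref{stalks}, Proposition \ref{Segal}, Lemma \ref{equal11}, Proposition \ref{Segal} again), the reduction to showing that $\check{T}$ permutes the factors of the product, and the final passage to $U_a$ via Remark \ref{daniell} all match the paper's structure. However, there is a genuine gap in how you propose to verify the $\check{T}$-equivariance of the composite.

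You plan to check this "via an isomorphism of the pullback squares defining the Eilenberg--Moore-style identifications for $\K_{\bT\times T}$ versus $\K_{(\bT\times T)/T(q,u)}$," in direct analogy with the verification of diagram \eqref{pal} in the proof of Theorem \ref{iso}. That argument does not transfer: Proposition \ref{changeh} (ordinary cohomology) is proved via the Eilenberg--Moore spectral sequence, but Proposition \ref{Segal1} (K-theory) is not. The K-theory change-of-groups isomorphism is given by the explicit formula $[E_\mu]\mapsto [E_\mu\otimes\C_{-\bar\mu}]\otimes[\C_{\bar\mu}]$, which depends on a chosen extension $\bar\mu\in\hat T$ of $\mu\in\hat H$ and produces tensor factors with constant line bundles. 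When you localise and translate, these factors contribute nontrivial scalar twists, and compatibility with the action of $m\in\check T$ (which sends $\C_{(n,\bar\mu)}$ to $\C_{m^*(n,\bar\mu)}$) requires tracking them. The paper handles this by computing an explicit closed formula $\Phi_{q,u}([V_\mu]\otimes f)=[(ev_{q,u}^{-1})^*(V_\mu\otimes\C_{-(n,\bar\mu)})\otimes\C_{\bar\mu}]\otimes q^n u^{\bar\mu}\,t_u^*f$ for the full composite on a single factor, and then verifying $\Phi_{q,u}\circ m^*=\Phi_{q,q^mu}$ by a direct term-by-term calculation using $m^*\C_\mu=\C_{m^*\mu}$, $w_m\circ\eta=\eta$, and $ev_{q,q^mu}\circ\kappa_m=ev_{q,u}$. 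The cancellation of the twist factors $q^{\lambda(m)}(q^mu)^{\bar\mu}$ against the translated function $t_{q^m}^*f$ is the actual content of the verification, and it has no counterpart in the Borel-equivariant argument of \eqref{pal}. (A minor point: your justification that $\eta^{-1}\circ m^*=\eta^{-1}$ should rest on $\eta$ being induced by the inclusion of constant loops $T\hookrightarrow\bT\times T$, which are fixed by the Weyl action $w_m$; the element $m$ itself is a winding, not a constant, loop.) Your intended conclusions about $\eta$ and $ev$ are correct, but the route to the change-of-groups compatibility needs to be the explicit element-wise computation rather than a diagram argument borrowed from the cohomology case.
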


\begin{proof}
We have shown that there exist isomorphisms of $\O_{U_a}$-algebras   
\[
\begin{array}{rcl}
((\psi_{T,q})_*\, \iota_q^*\,  \K_{LT}(LX))_{U_a} &\cong& \prod_{u\in \psi^{-1}_{T,q}(a)} (\iota_q^*\, \K_{\bT\times T}(LX^{q,u}))_{V_u} \\
&\cong& \prod_{u\in \psi^{-1}_{T,q}(a)} (\iota_q^* \, p_{q,u}^*\, \K_{(\bT\times T)/T(q,u)}(LX^{q,u}))_{V_u} \\
&\cong& \prod_{u\in \psi^{-1}_{T,q}(a)}(t_{u^{-1}}^*\, p_a^*\, \K_{T/T(a)}(X^{a}))_{V_u} \\
&\cong& \prod_{u\in \psi^{-1}_{T,q}(a)}(t_{u^{-1}}^*\, \K_{T}(X^{a}))_{V_u} \\
&=& \prod_{u\in \psi^{-1}_{T,q}(a)} (t_{a^{-1}}^*\, \K_{T}(X^{a}))_{U_a}.
\end{array}
\]
The first map is the isomorphism of Corollary \ref{stalks}. The second and fourth maps are the isomorphism of Proposition \ref{Segal}. The third map is the isomorphism of Lemma \ref{equal11}. The final equality follows from Remark \ref{daniell}. Each isomorphism has been shown to preserve the algebra structure, and to be natural in $X$, once we replace the covering $\U(X)$ with a refinement $\U(f)$ adapted to a $T$-equivariant map $f:X \to Y$. \par

We now show that, under this chain of isomorphisms, the action of $m\in \check{T}$ maps the factor of 
\[
\prod_{u\in \psi_{T,q}^{-1}(a)} (t_{a^{-1}}^*\, \K_{T}(X^{a}))_{U_a},
\]
corresponding to $(q,q^mu)$ identically onto the factor corresponding to $(q,u)$. In other words, we will show that the $\check{T}$-action merely permutes the factors, from which the result will follow immediately. \par

On the factor corresponding to $(q,u)$, the second, third and fourth maps become maps of $\O(V_u)$-modules
\[
\begin{array}{rcl}
K_{\bT \times T}(LX^{q,u}) \otimes_{K_{\bT\times T}} \O(V_{q,u})  & \longrightarrow & K_{(\bT \times T)/T(q,u)}(LX^{q,u}) \otimes_{K_{(\bT \times T)/T(q,u)}} \O(V_{u})\\
& \longrightarrow & \, K_{T/T(a)}(X^a) \otimes_{K_{T/T(a)}} \O(u^{-1}V_{u}) \\
& \longrightarrow & \, K_T(X^a) \otimes_{K_T} \O(u^{-1}V_{u}).
\end{array}
\]
We use $\Phi_{q,u}$ to denote this chain of maps. Note that for any two $u,u' \in \psi_{T,q}^{-1}(a)$, we have $u^{-1}U_{u} = (u')^{-1}U_{u'}$, since $u' = q^mu$ for some $m\in \check{T}$, and $U_{u'} = q^mU_{u}$. The result will follow if we can show that 
\begin{equation}\label{idd}
\Phi_{q,u} \circ m^* = \Phi_{q,q^mu}
\end{equation}
is the identity for all $m,u$. First we find an expression for $\Phi_{q,u}$. \par

Recall that a $\bT \times T$-equivariant vector bundle $V$ over $LX^{q,u}$ splits as a direct sum 
\[
V \cong \bigoplus_{\mu \in \hat{T}(q,u)} V_\mu
\]
where $T(q,u)$ acts on the fibers of $V_\mu$ via the character $\mu$. Consider the surjective map $\hat{\bT} \times \hat{T} \twoheadrightarrow \hat{T}(q,u)$ induced by the inclusion $T(q,u) \hookrightarrow \bT \times T$. Let $\mu \in \hat{T}(q,u)$ and choose a preimage $(n,\bar{\mu})\in \hat{\bT} \times \hat{T}$ of $\mu$. It is sufficient to calculate $\Phi_{q,u}$ on an element of the form $[V_\mu] \otimes f$. Recall the change of groups formula of Theorem \ref{Segal1}. The first two maps in the composite map $\Phi_{q,u}$ are 
\[
\begin{array}{rcl}
{[V_\mu]} \otimes f(z)&\mapsto &  [V_\mu \otimes \C_{-(n,\bar{\mu})}] \otimes q^n z^{\bar{\mu}} f(z) \\
&\mapsto & [(ev_{q,u}^{-1})^* (V_\mu \otimes \C_{-(n,\bar{\mu})})] \otimes q^n (uz')^{\bar{\mu}}f(uz') \\
\end{array}
\]
where $z = uz' \in V_u$, so that $z' \in u^{-1}V_u$. Applying the final map in the composite $\Phi_{q,u}$ yields the expression
\begin{equation}\label{phi}
\Phi_{q,u}([V_\mu] \otimes f) =  [(ev_{q,u}^{-1})^* (V_\mu \otimes \C_{-(n,\bar{\mu})}) \otimes \C_{\bar{\mu}}] \otimes q^n u^{\bar{\mu}}  t_u^*f.
\end{equation}
By Theorem \ref{Segal1}, this does not depend on the choice of $(n,\bar{\mu})$. \par

Now that we have an expression for $\Phi_{q,u}$, we will show that equation \eqref{idd} holds using the formulas in Remark \eqref{melbourne}. We have 
\begin{equation}\label{coo}
\begin{array}{rl}
(\Phi_{q,u}\circ m^*)&([V_\mu] \otimes f) = \Phi_{q,u}([m^*V_\mu] \otimes m^*f) \\
&= [(ev_{q,u}^{-1})^* (m^*V_\mu \otimes \C_{-(m)^*(n,\bar{\mu})}) \otimes \C_{\bar{\mu}}] \otimes q^{n}(q^m u)^{\bar{\mu}} t_{u}^*m^*f \\
&= [(ev_{q,u}^{-1})^* (m^*V_\mu \otimes m^*\C_{-(n,\bar{\mu})}) \otimes \C_{\bar{\mu}}] \otimes  q^{n}(q^m u)^{\bar{\mu}} t_{u}^*t_{q^m}^*f \\
&= [(\kappa_m\circ (ev_{q,u}^{-1}))^*(V_\mu \otimes \C_{-(n,\bar{\mu})}) \otimes \C_{\bar{\mu}}]\otimes q^{n}(q^m u)^{\bar{\mu}} t_{q^mu}^*f \\
&= [(ev_{q,q^mu}^{-1})^* (V_\mu \otimes \C_{-(n,\bar{\mu})}) \otimes \C_{\bar{\mu}}] \otimes q^{n} (q^m u)^{\bar{\mu}} t_{q^mu}^*f \\
&= \Phi_{q,q^mu}([V_\mu] \otimes f).
\end{array}
\end{equation}
The fifth equality holds since
\[
ev_{q,q^mu} =  \kappa_m \circ ev_{q,u}^{-1}.
\]
This completes the proof.
\end{proof}

\begin{remark}\label{gertrude}
Our aim is now to determine the gluing maps associated to the local description in Theorem \ref{isoo}. Let $X$ be a finite $T$-CW complex, let $\U$ be a cover adapted to $\S(X)$, and let $a,b \in C_{T,q}$ such that $U_{a} \cap U_{b} \neq \emptyset$.  Choose $u \in \psi_{T,q}^{-1}(a)$ and $v \in \psi_{T,q}^{-1}(b)$ such that $V_u \cap V_v \neq \emptyset$. Therefore $V_{q,u} \cap V_{q,v} \neq \emptyset$, and by Lemma \ref{hardd} we have either $X^b \subset X^a$ or $X^a \subset X^b$. We may assume that $X^b \subset X^a$. Let $U$ be an open subset of $ U_{a} \cap U_{b}$ and let $V \subset V_u \cap V_v$ be such that $V \cong U$ via $\psi_{T,q}$. Let $H$ be the subgroup of $T$ generated by $T(a)$ and $T(b)$. Consider the composite of isomorphisms
\[
\begin{array}{rcl}
K_T(X^a) \otimes_{K_T} \O_{T_\C}(Vu^{-1}) &\cong & K_T(X^b) \otimes_{K_T}  \O_{T_\C}(Vu^{-1}) \\
&\cong& K_{T/H}(X^b) \otimes_{K_T}  \O_{T_\C}(Vu^{-1}) \\ 
&\cong& K_{T/H}(X^b) \otimes_{K_T}  \O_{T_\C}(Vu^{-1}) \\
&\cong& K_T(X^b) \otimes_{K_T}  \O_{T_\C}(Vu^{-1})
\end{array}
\]
The first map is induced by the inclusion $i_{b,a}: X^b \hookrightarrow X^a$, the second and fourth maps are induced by the change of groups map of Proposition \ref{Segal1}, and the third map is  
\[
\id \otimes t^*_{vu^{-1}}: K_{T/H}(X^b) \otimes_{K_{T/H}} \O_{T_\C}(Vu^{-1}) \longrightarrow K_{T/H}(X^b) \otimes_{K_{T/H}} \O_{T_\C}(Vv^{-1})
\]
which is a well defined map of $\O_{T_\C}$-algebras, by Lemma \ref{gartt}. Thus, all maps preserve the $\O_{T_\C}$-algebra structure and it is straightforward to show that the composite sends an element $[V_\lambda] \otimes f$ to
\begin{equation}\label{gluhh}
[i_{b,a}^* V_\lambda] \otimes (vu^{-1})^{\bar{\lambda}}t_{vu^{-1}}^*f.
\end{equation}
We have a commutative diagram
\[
\begin{tikzcd}
Vu^{-1} \ar[r,"{t_{vu^{-1}}}"] \ar[d,"{\psi_{T,q}}"] & Vv^{-1} \ar[d,"{\psi{T,q}}"] \\
Ua^{-1} \ar[r,"{t_{ba^{-1}}}"] & Ub^{-1} 
\end{tikzcd}
\]
of complex analytic isomorphisms. Via this diagram, the composite above is canonically identified with
\[
\begin{array}{rcl}
K_T(X^a) \otimes_{K_T} \O_{C_{T,q}}(Ua^{-1}) &\cong & K_T(X^b) \otimes_{K_T} \O_{C_{T,q}}(Ua^{-1}) \\
&\cong& K_{T/H}(X^b) \otimes_{K_T} \O_{C_{T,q}}(Ua^{-1}) \\ 
&\cong& K_{T/H}(X^b) \otimes_{K_T} \O_{C_{T,q}}(Ub^{-1}) \\
&\cong& K_T(X^b) \otimes_{K_T} \O_{C_{T,q}}(Ub^{-1}),
\end{array}
\]
by analogy with Grojnowski's construction. We will show in Theorem \ref{duh} that this is the gluing map associated to Theorem \ref{isoo}.
\end{remark}

\begin{lemma}\label{gartt}
With the hypotheses of Remark \ref{gertrude}, the translation map
\[
t^*_{vu^{-1}}: \O_{T_\C}(Vu^{-1}) \longrightarrow \O_{T_\C}(V v^{-1})
\]
is $K_{T/H}$-linear.
\end{lemma}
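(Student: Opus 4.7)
The plan is to adapt the argument of Lemma \ref{gart} to the multiplicative setting. Recall that $K_{T/H}$ is the representation ring of $T/H$, whose elements are $\Z$-linear combinations of characters $\lambda \in \hat{T}$ that restrict trivially to $H$; such a $\lambda$ acts on $\O_{T_\C}(Vu^{-1})$ by multiplication with the holomorphic function $z \mapsto z^{\lambda}$. A direct computation shows
\[
t^*_{vu^{-1}}(\lambda \cdot f)(z) = (vu^{-1}z)^{\lambda}\, f(vu^{-1}z) = (vu^{-1})^{\lambda}\cdot \bigl(\lambda \cdot t^*_{vu^{-1}}(f)\bigr)(z),
\]
so the desired $K_{T/H}$-linearity is equivalent to the statement $(vu^{-1})^{\lambda}=1$ for every character $\lambda$ of $T$ trivial on $H$, which in turn is equivalent to $vu^{-1} \in H_\C$.

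Next I would reduce this to the analog of Lemma \ref{hard3}. Write $u = q^{x_1}\exp(2\pi i x_2)$ and $v = q^{y_1}\exp(2\pi i y_2)$, where $x_1,y_1 \in \t$ are unique and $x_2,y_2 \in \t$ are unique modulo $\check{T}$, and pick $\tau \in \bH$ with $q = e^{2\pi i\tau}$. The hypothesis $V_u \cap V_v \neq \emptyset$ forces $V_{q,u}\cap V_{q,v}\neq \emptyset$, hence by Definition \ref{saag} the open sets $V_{x_1,x_2}$ and $V_{y_1,y_2}$ intersect in $\t\times \t$. Since $u \in \psi_{T,q}^{-1}(a)$ and $v \in \psi_{T,q}^{-1}(b)$, Lemma \ref{pushh} together with Remark \ref{pusha} gives $\pi_T(x_i) \in T(a) \subset H$ and $\pi_T(y_i) \in T(b) \subset H$ for $i=1,2$. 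Thus $(x_1,x_2)$ and $(y_1,y_2)$ both lie in $\pi^{-1}(H\times H)$, and Lemma \ref{hard3} (applicable because $\U$ is adapted to $\S(X)$ and $H \in \S(X)$ as the subgroup generated by two isotropy groups) places them in the same connected component.

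From this I conclude $(y_1 - x_1, y_2 - x_2) \in \mathrm{Lie}(H)\times \mathrm{Lie}(H)$. Then
\[
vu^{-1} = q^{y_1-x_1}\exp\bigl(2\pi i(y_2-x_2)\bigr) = \exp\bigl(2\pi i\tau(y_1-x_1)\bigr)\exp\bigl(2\pi i(y_2-x_2)\bigr),
\]
and each factor lies in the image of $\mathrm{Lie}(H)_\C = \mathrm{Lie}(H)\otimes_\R \C$ under $\exp_T$, so $vu^{-1} \in H_\C$, completing the proof.

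The step requiring the most care is verifying that Lemma \ref{hard3} genuinely applies. Once the points $(x_1,x_2)$, $(y_1,y_2)$ are located in $\pi^{-1}(H\times H)$ and shown to have overlapping neighborhoods in the adapted cover, the remainder is a routine translation of the additive argument into multiplicative notation via $\exp$. I do not expect any serious obstacle beyond bookkeeping of the $\check{T}$-ambiguity in $x_2, y_2$, which is harmless because $\check{T} \subset H$ and the ambiguity lies in $\mathrm{Lie}(H)$.
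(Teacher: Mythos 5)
Your reduction of $K_{T/H}$-linearity to the condition $vu^{-1} \in H_\C$ is correct and is a worthwhile clarification of what is implicit in the paper. The gap is in your parenthetical justification for invoking Lemma \ref{hard3}: you assert that $H \in \S(X)$ \lq\lq as the subgroup generated by two isotropy groups,\rq\rq\ but this is false. The groups $T(a)$ and $T(b)$ are not isotropy subgroups of $X$; by Lemma \ref{pushh} (and the analogue of Definition \ref{teed}) they are the \emph{minimal} closed subgroups $H'$ with $a \in C_{H',q}$, resp.\ $b \in C_{H',q}$, and carry no a priori relation to the stabilizers of points of $X$. Even if they did belong to $\S(X)$, the join $\langle T(a),T(b)\rangle$ of two isotropy groups need not itself be an isotropy group. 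So the hypothesis $H \in \S$ required by Lemma \ref{hard3} simply is not available for $H = \langle T(a),T(b)\rangle$.

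What Lemma \ref{hard3} (via the proof of Lemma \ref{hardd}) actually gives is: for any $z \in X^b$, its isotropy group $H_z \in \S(X)$ contains both $T(a)$ and $T(b)$ (the latter because $z$ is fixed by $T(b)$, the former because $X^b \subset X^a$ forces $T(a)$ to fix $z$ as well), and one deduces $(y_1-x_1,y_2-x_2) \in \mathrm{Lie}(H_z)\times\mathrm{Lie}(H_z)$. But $H \subset H_z$ gives $\mathrm{Lie}(H) \subseteq \mathrm{Lie}(H_z)$, which is the \emph{wrong} inclusion to conclude $(y_1-x_1,y_2-x_2)\in\mathrm{Lie}(H)\times\mathrm{Lie}(H)$. (For instance, when $X = \pt$ the only relevant $H_z$ is $T$ itself, which gives no information, while $H = \langle T(a),T(b)\rangle$ may be finite.) The paper's own proof leans on \lq\lq the argument in the proof of Lemma \ref{hardd}\rq\rq\ at exactly this point and does not resolve the discrepancy between $H$ and $H_z$ either, so you have largely inherited an issue already present in the source rather than introduced a new one; nevertheless the specific claim $H \in \S(X)$ that you supply as the justification is incorrect and cannot stand.
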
 

\begin{proof}
Let $u = \exp(2\pi i (x_1\tau + x_2))$ and $v = \exp(2\pi i (y_1\tau + y_2))$. Since $V_{q,u} \cap V_{q',v} \neq \emptyset$, by the argument in the proof of Lemma \ref{hardd} we have that 
\[
(y_1-x_1, y_2-x_2) \in \mathrm{Lie}(H) \times \mathrm{Lie}(H),
\]
since $T(a),T(b) \subset H$. Therefore
\begin{gather*}
vu^{-1} = \exp(2\pi i (y_1-x_1)\tau + 2\pi i (y_2-x_2)) \\ \in \exp(2\pi i \mathrm{Lie}(H)\tau + 2\pi i \mathrm{Lie}(H)) = \exp(2\pi i \mathrm{Lie}(H)_\C) \subset H_\C.
\end{gather*}
This implies the result.
\end{proof}

\begin{theorem}\label{duh}
With the hypotheses of Remark \ref{gertrude}, the gluing map associated to the local description in Theorem \ref{isoo} on an open subset $U \subset U_{a} \cap U_{b}$ is equal to the composite map in Remark \ref{gertrude}. 
\end{theorem}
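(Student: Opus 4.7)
The plan is to model this proof closely on that of Theorem \ref{duhh}, constructing a large commutative diagram whose outer boundary encodes the equality of the two gluing maps and verifying it region by region. The essential new ingredient compared to the ordinary cohomology setting is that the K-theoretic change of groups isomorphism (Proposition \ref{Segal1}) introduces twisting factors of the form $u^{\bar{\mu}}$, which must be reconciled with the extra factor $(vu^{-1})^{\bar{\lambda}}$ appearing in \eqref{gluhh}.

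First I would set up some preliminaries paralleling those in the proof of Theorem \ref{duhh}. Let $K = \langle T(q,u), T(q,v) \rangle \subset \bT \times T$. By Lemma \ref{pushh} we have $T \cap T(q,u) = T(a)$ and $T \cap T(q,v) = T(b)$, so $H = \langle T(a), T(b) \rangle \subset T \cap K$. A diagram chase shows that the inclusion $T \hookrightarrow \bT \times T$ induces an isomorphism $T/(T \cap K) \cong (\bT \times T)/K$, under which $X^b$, which is fixed by $T \cap K$, is identified $(T/(T\cap K))$-equivariantly with $LX^{q,v} = LX^K$ via $ev_{q,v}$. Combined with Lemma \ref{hardd}, this yields the commutative square
\[
\begin{tikzcd}
LX^{q,v} \ar[r,hook,"{i_{v,u}}"] \ar[d,"{\cong}"',"{ev_{q,v}}"] & LX^{q,u} \ar[d,"{\cong}"',"{ev_{q,u}}"] \\
X^b \ar[r,hook,"{i_{b,a}}"] & X^a,
\end{tikzcd}
\]
which plays the same role here as diagram \eqref{dia2} does in the proof of Theorem \ref{duhh}.

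Next I would construct a K-theoretic analog of diagram \eqref{dirt}, whose nodes are rings of the form $K^*_G(Z) \otimes_{K_G} \O(W)$ for suitable equivariance groups $G$, spaces $Z \in \{X^a, X^b, LX^{q,u}, LX^{q,v}\}$, and open subsets $W$ of the form $\{q\} \times V$, $V$, $Vu^{-1}$, or $Vv^{-1}$. The arrows fall into four families: change of groups maps (Proposition \ref{Segal1}); pullbacks along the inclusions $i_{b,a}$ or $i_{v,u}$; the translation $\id \otimes t^*_{vu^{-1}}$, which is $K_{T/(T\cap K)}$-linear by the argument of Lemma \ref{gartt} since $vu^{-1} \in K_\C$; and maps of the form constructed in Lemma \ref{equal11}. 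The outermost path around this diagram connects the $u$-side composite of Theorem \ref{isoo} to the composite gluing map of Remark \ref{gertrude}. Its interior regions would commute for reasons directly parallel to those in the proof of Theorem \ref{duhh}, namely naturality of change of groups (Proposition \ref{Segal1}), naturality of $ev$ combined with the square above, and a K-theoretic analog of Lemma \ref{kurt} governing the compatibility of two successive change of groups maps with translation.

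The hard part will be tracking the twisting factors contributed by Proposition \ref{Segal1}. The explicit formula \eqref{phi} for $\Phi_{q,u}([V_\lambda] \otimes f)$ contains the factor $q^n u^{\bar{\lambda}}$ for any chosen lift $(n, \bar{\lambda}) \in \hat{\bT} \times \hat{T}$ of $\lambda$; since the same lift is valid on the $v$-side, the analogous factor there is $q^n v^{\bar{\lambda}}$, and the ratio $(vu^{-1})^{\bar{\lambda}}$ is precisely the extra factor appearing in \eqref{gluhh}. To make this rigorous I would formulate and prove a K-theoretic analog of Lemma \ref{kurt}, asserting that two successive applications of the change of groups map composed with the translation $t^*_{vu^{-1}}$ combine so that the characters $(n, \bar{\lambda})$ telescope correctly (the compatibility depending on $vu^{-1}$ lying in $K_\C$). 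Once this is in hand, commutativity of the diagram becomes routine, and equating the two outermost paths yields the theorem.
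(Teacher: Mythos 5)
Your proposal is mathematically sound but takes a genuinely different route from the paper. The paper does not reproduce the big diagram \eqref{dirt}; instead, it exploits the fact that in the course of proving Theorem \ref{isoo}, an explicit formula \eqref{phi} for $\Phi_{q,u}$ was extracted (via the decomposition into isotypic pieces $V_\mu$ and the character-lift $(n,\bar\mu)$). The paper then simply computes $\Phi_{q,v}\circ i_{v,u}^*\circ\Phi_{q,u}^{-1}$ head-on on a generator $[V_\lambda]\otimes f$, using diagram \eqref{dia22} for the identification $ev_{q,u}\circ i_{v,u}\circ ev_{q,v}^{-1}=i_{b,a}$ and a short telescoping calculation with the twist characters, arriving directly at the expression \eqref{gluhh}. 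Your approach would instead mirror the proof of Theorem \ref{duhh}: build a K-theoretic version of diagram \eqref{dirt} and verify each region, which requires you to state and prove a K-theoretic analog of Lemma \ref{kurt}. That lemma is true (it amounts to the independence of the change-of-groups map from the choice of character lift, already noted in the proof of Proposition \ref{Segal1}), so your plan would go through — but it replaces one short algebraic computation with a multi-region diagram chase and an auxiliary lemma. The reason the paper goes the direct route here, unlike in the cohomology case, is precisely that the explicit formula \eqref{phi} is already on hand from Theorem \ref{isoo}; no analogous formula is available in the proof of Theorem \ref{duhh}, which forced the diagrammatic argument there. Your key observation — that the character factors $u^{\bar\lambda}$ and $v^{\bar\lambda}$ produced by Proposition \ref{Segal1} combine to give exactly the factor $(vu^{-1})^{\bar\lambda}$ in \eqref{gluhh} — is the right insight, and it is exactly the bookkeeping the paper carries out in the displayed chain of equalities.
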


\begin{proof}
Consider the commutative diagram
\begin{equation}\label{dia22}
\begin{tikzcd}
LX^{q,v} \ar[r,hook,"{i_{v,u}}"] \ar[d,"{\cong}"',"{ev_{q,v}}"] & LX^{q,u} \ar[d,"{\cong}"',"{ev_{q,u}}"]  \\
X^b \ar[r,hook,"{i_{b,a}}"] & X^a. \\
\end{tikzcd}
\end{equation}
It suffices to calculate the gluing map 
\[
\Phi_{q,v} \circ i_{v,u}^* \circ \Phi_{q,u}^{-1},
\]
on an element of the form 
\[
[V_\lambda] \otimes f \in K_T(X^a) \otimes_{K_T} \O_{T_\C}(Vu^{-1}).
\]
Choose $\bar{\lambda} \in \hat{T}$ restricting to $\lambda \in \hat{T}(a)$. It is straightforward to check, using the formula \eqref{phi} for $\Phi_{q,u}$, that 
\[
\Phi_{q,u}^{-1}([V_\lambda] \otimes f)([ev_{q,u}^* (V_\lambda \otimes \C_{-\bar{\lambda}}) \otimes  \C_{(0,\bar{\lambda})}] \otimes t_{u^{-1}}^*(u^{-\bar{\lambda}}f).
\]
We therefore have
\[
\begin{array}{l}
(\Phi_{q,v} \circ i_{v,u}^* \circ \Phi_{q,u}^{-1})([V_\lambda] \otimes f) =   (\Phi_{q,v} \circ i_{v,u}^*)([ev_{q,u}^* (V_\lambda \otimes \C_{-\bar{\lambda}}) \otimes  \C_{(0,\bar{\lambda})}] \otimes t_{u^{-1}}^*(u^{-\bar{\lambda}}f)) \\
= \Phi_{q,v}( [i_{v,u}^*(ev_{q,u}^* (V_\lambda \otimes \C_{-\bar{\lambda}}) \otimes  \C_{(0,\bar{\lambda})})] \otimes t_{u^{-1}}^*(u^{-\bar{\lambda}}f)) \\
= \Phi_{q,v}( [(ev_{q,u} \circ i_{v,u})^* (V_\lambda \otimes \C_{-\bar{\lambda}}) \otimes  \C_{(0,\bar{\lambda})}] \otimes t_{u^{-1}}^*(u^{-\bar{\lambda}}f)) \\
= [(ev_{q,v}^{-1})^* ((ev_{q,u} \circ i_{v,u})^* (V_\lambda \otimes \C_{-\bar{\lambda}}) \otimes  \C_{(0,\bar{\lambda})} \otimes \C_{-(n,\bar{\lambda})}) \otimes \C_{\bar{\lambda}}] \otimes q^n v^{\bar{\lambda}}  t_v^*t_{u^{-1}}^*(u^{-\bar{\lambda}}f) \\
= [(ev_{q,u} \circ i_{v,u} \circ ev_{q,v}^{-1})^* (V_\lambda \otimes \C_{-\bar{\lambda}}) \otimes  \C_{\bar{\lambda}} ] \otimes v^{\bar{\lambda}}  t_{vu^{-1}}^*(u^{-\bar{\lambda}}f) \\
= [i_{b,a}^* (V_\lambda \otimes \C_{-\bar{\lambda}}) \otimes  \C_{\bar{\lambda}} ] \otimes v^{\bar{\lambda}}  t_{vu^{-1}}^*(u^{-\bar{\lambda}}f) \\
= [i_{b,a}^* V_\lambda] \otimes (vu^{-1})^{\bar{\lambda}}t_{vu^{-1}}^*f
\end{array}
\]
which is equal to \eqref{gluhh}. In the fourth equality, to apply the map $\Phi_{q,v}$, we had to choose an extension of the character of the action of $T(q,v)$ on the fibers of 
\[
(ev_{q,u} \circ i_{v,u})^* (V_\lambda \otimes \C_{-\bar{\lambda}}) \otimes  \C_{(0,\bar{\lambda})}
\]
to $\bT \times T$. Since $T(q,v)$ acts trivially on 
\[
(ev_{q,u} \circ i_{v,u})^* (V_\lambda \otimes \C_{-\bar{\lambda}}),
\]
the extension had to agree with $\bar{\lambda}$ on $T$, and so it took the form $(n,\bar{\lambda})$ for some $n \in \Z$. The fifth equality holds since
\[
[\C_{(0,\bar{\lambda})} \otimes \C_{-(n,\bar{\lambda})}]  \otimes 1 = 1 \otimes q^{-n}.
\]
The final equality holds since $u^{-\bar{\lambda}}$ is a constant, and therefore commutes with $t_{vu^{-1}}^*$. This completes the proof.
\end{proof}

\begin{remark}
Theorems \ref{isoo} and \ref{duh} establish a local description of $\F^*_{T,q}(X)$ only in degree zero. However, the result extends by Remark \ref{harold} and Lemma \ref{gerry} to the entire $\Z/2\Z$-graded $\O_{C_{T,q}}$-algebra.
\end{remark}

\section{The character map}

Recall that $E_\tau := E_{(\tau,1)}$ where $(\tau,1) \in \X^+$, and we write $E_{T,\tau} = \check{T} \otimes E_\tau$. Similarly, we denote Grojnowski's construction of the corresponding elliptic cohomology theory by
\[
\G^*_{T,\tau}(X) := \G^*_{T,(\tau,1)}(X).
\]

\begin{theorem}\label{character}
Let $X$ be an equivariantly formal, finite $T$-CW complex. There is an isomorphism 
\[
\F^*_{T,q}(X) \cong (\sigma_{T,\tau})_* \G^*_{T,\tau}(X)
\]
of $\Z/2\Z$-graded $\O_{C_{T,q}}$-algebras, natural in $X$.
\end{theorem}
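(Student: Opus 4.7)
The strategy is to combine the local descriptions of the two theories furnished by Theorems~\ref{isoo} and \ref{duh} for $\F$, and by Grojnowski's construction together with the gluing formula \eqref{perry} for $\G$, and to bridge them via the equivariant Chern character of Theorem~\ref{chern}. The plan is first to fix a single open cover $\U$ of $T\times T$ adapted to $\S(X)$ (Lemma~\ref{construu}), which induces adapted covers $\{U_a\}_{a\in E_{T,\tau}}$ of $E_{T,\tau}$ and $\{\sigma_{T,\tau}(U_a)\}_{a\in E_{T,\tau}}$ of $C_{T,q}$, matched bijectively by $\sigma_{T,\tau}$. It then suffices to construct, for each $a$, a natural isomorphism between the two local models and to verify that these local isomorphisms respect the gluing data on overlaps.

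On each patch, choose a lift $\tilde a\in \t_\C$ of $a$ under $\zeta_{T,\tau}$, and set $\tilde b = \exp(\tilde a)\in T_\C$, which is a lift of $\sigma_{T,\tau}(a)$ under $\psi_{T,q}$. Let $V\subset \t_\C$ be the component of $\zeta_{T,\tau}^{-1}(U_a)$ containing $\tilde a$; then $\exp$ restricts to a biholomorphism of $V-\tilde a$ onto $\exp(V-\tilde a)$. Combining the canonical identification $\O_{E_{T,\tau}}(U-a)\cong \O_{\t_\C}(V-\tilde a)$ from Remark~\ref{canon} with the analogous identification $\O_{C_{T,q}}(\sigma_{T,\tau}(U)\tilde b^{-1})\cong \O_{T_\C}(\exp(V-\tilde a))$ and applying the equivariant Chern character to $X^a$ yields a natural isomorphism of $\Z/2\Z$-graded algebras
\[
K^*_T(X^a)\otimes_{K_T}\O_{C_{T,q}}(\sigma_{T,\tau}(U)\tilde b^{-1})
\;\cong\;
H^*_T(X^a)\otimes_{H_T}\O_{E_{T,\tau}}(U-a),
\]
which, combined with Theorem~\ref{isoo} and the local model of $\G^*_{T,\tau}$, gives the desired local comparison.

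For compatibility with gluing, I note that the transition maps on both sides are built from the same three ingredients: pullback along the inclusion $i_{b,a}:X^b\hookrightarrow X^a$, the change-of-groups isomorphism for $T\twoheadrightarrow T/H$ with $H=\langle T(a),T(b)\rangle$, and a translation (by $b-a$ on the Grojnowski side, by $ba^{-1}$ on the $\F$ side). The first is handled by naturality of $ch_T$ in $X$, and the second by compatibility of the Chern character with change of groups, which identifies the class of a representation $\C_{\bar\mu}$ with $\exp(c_1(\C_{\bar\mu}))$. The third matches because $\exp$ intertwines additive and multiplicative translations: for a lift $\tilde b-\tilde a\in\mathrm{Lie}(H)_\C$ one verifies $t_{\tilde b-\tilde a}^*\circ\exp^* = \exp^*\circ\, t_{\tilde b\tilde a^{-1}}^*$, and both sides descend to the required translation on the respective quotients $E_{T,\tau}$ and $C_{T,q}$.

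The hard part will be applying Theorem~\ref{chern} on each patch, which requires $X^a$ to be equivariantly formal for every $a\in E_{T,\tau}$. This should follow from equivariant formality of $X$ via a Borel-type localization argument: freeness of $H^*_T(X)$ over $H_T$ together with the localization theorem of Section~\ref{loco} forces $H^*_T(X^a)$ to be free over $H_T$ on each component, whence the Serre spectral sequence for $X^a\hookrightarrow ET\times_T X^a\to BT$ degenerates. The remaining verifications---coherence of the local isomorphisms under refinements of $\U$, naturality in a $T$-equivariant map $f:X\to Y$ (handled by passing to a common refinement adapted to $\S(f)$), and compatibility with the suspension isomorphism of Proposition~\ref{gerry}---then follow formally from the analogous properties of Borel-equivariant cohomology, equivariant K-theory, and the Chern character.
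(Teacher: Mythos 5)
Your proposal follows essentially the same path as the paper's proof: fix a cover of $T\times T$ adapted to $\S(X)$, reduce to a comparison of the two local models over each $U_a$ using Theorem~\ref{isoo} and the definition of $\G^*_{T,\tau}$, bridge them by Rosu's equivariant Chern character applied to $X^a$ (with the translated neighbourhoods $U-a$ and $Ua^{-1}$ identified via $\exp$), and check compatibility with the gluing maps by noting that both transition maps are built from restriction along $i_{b,a}$, the change-of-groups isomorphism for $T\twoheadrightarrow T/H$, and a translation, each of which $ch_T$ intertwines. The one point you flag that the paper passes over in silence is worth keeping explicit: Theorem~\ref{chern} is invoked for $X^a$, not merely for $X$, so one does need that $X^H$ is $T$-equivariantly formal for every closed subgroup $H\subset T$ whenever $X$ is; this is a standard consequence of freeness of $H^*_T(X)$ over $H_T$ together with the localization theorem, roughly as you sketch, but it is a step that deserves either a short proof or a citation rather than just a gesture.
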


\begin{proof}
Choose a cover $\U$ adapted to $\S(X)$, and recall the local description of $\F^*_{T,q}(X)$ from Theorems \ref{isoo} and \ref{duh}. For each $a \in C_{T,q}$ we will define an isomorphism of $\Z/2\Z$-graded $\O_{U_a}$-algebras 
\begin{equation}\label{shirley}
\F^*_{T,q}(X)_{U_a} \xrightarrow{\cong} (\sigma_{T,\tau})_* \G^*_{T,\tau}(X))_{U_a}
\end{equation}
natural in $X$, such that, on each nonempty intersection $U_a \cap U_b$, the diagram 
\[
\begin{tikzcd}
\F^*_{T,q}(X)_{U_a}|_{U_a \cap U_b} \ar[r,"{\cong}"] \ar[d,"{\phi_{b,a}}"] & ((\sigma_{T,\tau})_* \G^*_{T,t}(X))_{U_a}|_{U_a \cap U_b} \ar[d,"{\phi_{b,a}}"]\\
\F^*_{T,q}(X)_{U_b}|_{U_a \cap U_b} \ar[r,"{\cong}"]& ((\sigma_{T,\tau})_* \G^*_{T,t}(X))_{U_b}|_{U_a \cap U_b}
\end{tikzcd}
\]
commutes, where $\phi_{b,a}$ are the respective gluing maps. \par

We must define the natural isomorphism first. Fix $a \in C_{T,q}$. It suffices to define the isomorphism on an arbitrary open subset $U \subset U_a$. By Theorem \ref{isoo}, we have an isomorphism
\begin{equation}\label{betty}
\begin{array}{c}
\F^*_{T,q}(X)_{U_a}(U) \cong K^*_{T}(X^{a}) \otimes_{K_T} \O_{C_{T,q}}(Ua^{-1}) 
\end{array}
\end{equation}
natural in $X$. By definition, we have 
\begin{equation}\label{sue}
\begin{array}{c}
((\sigma_{T,\tau})_* \G^*_{T,\tau}(X))_{U_a}(U) =  H^*_T(X^a) \otimes_{H_T} \O_{E_{T,\tau}}(U - a)
\end{array}
\end{equation}
where, by a severe abuse of notation, we have written $U$ again for $\sigma_{T,\tau}^{-1}(U)$ and $a$ for $\sigma_{T,\tau}^{-1}(a)$, to keep things consistent. We hope that the difference between the additive and multiplicative notation will prevent any confusion arising from this. Since $X$ is equivariantly formal and $U_a$ is small, we may apply the pushforward of the equivariant Chern character over $\psi_{T,q}|_V$, where $V$ is a small neighbourhood of $1$ such that $U_aa^{-1} \subset \psi_{T,q}(V)$. This yields an isomorphism
\begin{equation}\label{marcia}
ch_T: K^*_T(X^a) \otimes_{K_T} \O_{C_{T,q}}(Ua^{-1}) \cong H_T(X^a) \otimes_{H_T} \O_{E_{T,\tau}}(U-a).
\end{equation}
Composing \eqref{betty}, \eqref{sue} and \eqref{marcia} in the obvious way produces the isomorphism \eqref{shirley} over $U \subset U_a$. \par

It remains to show that the local isomorphisms thus defined are compatible with gluing maps. In other words, we must show that the diagrams
\[
\begin{tikzcd}
K_T(X^a)_{Ua^{-1}} \ar[r] \ar[d,"{ch_T}"] & K_T(X^b)_{Ua^{-1}} \ar[r] \ar[d,"{ch_T}"] & K_{T/H}(X^b)_{Ua^{-1}} \ar[d,"{ch_{T/H}}"] \\
H_T(X^a)_{U - a} \ar[r] & H_T(X^b)_{U-a} \ar[r] & H_{T/H}(X^b)_{U-a}.
\end{tikzcd}
\]
and
\[
\begin{tikzcd}
K_{T/H}(X^b)_{Ua^{-1}} \ar[r] \ar[d,"{ch_{T/H}}"] & K_{T/H}(X^b)_{Ub^{-1}} \ar[r] \ar[d,"{ch_{T/H}}"] & K_T(X^b)_{Ub^{-1}} \ar[d,"{ch_T}"] \\
H_{T/H}(X^b)_{U - a} \ar[r] & H_{T/H}(X^b)_{U - b} \ar[r]& H_T(X^b)_{U - b} \\
\end{tikzcd}
\]
commute, where $H = \langle T(a),T(b) \rangle$. The left hand square in the upper diagram commutes since $ch_T$ is natural in $X$. The left hand square in the lower diagram commutes since $t_{ba^{-1}}^*$ is $K_{T/H}$-linear and $t_{b-a}^*$ is $H_{T/H}$-linear. The other two squares commute because the Chern character respects the change of groups maps in Propositions \ref{changeh} and \ref{Segal1}. Indeed, recalling the maps $h$ and $j$ of the pullback diagram \eqref{pullback} and the definition of the Chern character, we have  
\[
\begin{array}{rcl}
h^*ch_{T/H}([V_\lambda \otimes \C_{-\bar{\lambda}}]) \otimes j^*ch_T(\C_{\bar{\lambda}}) &=& ch_T([V_\lambda \otimes \C_{-\bar{\lambda}}]) \otimes ch_T(\C_{\bar{\lambda}}) \\
&=& ch_T([V_\lambda]) \cup ch_T([\C_{-\bar{\lambda}}]) \otimes ch_T(\C_{\bar{\lambda}})  \\
&=& ch_T([V_\lambda]) \otimes e^{-\bar{\lambda}} \otimes e^{\bar{\lambda}}  \\
&=& ch_T ([V_\lambda]) \otimes 1
\end{array}
\]
where $\lambda \in \hat{H}$ and $\bar{\lambda}$ is an extension of $\lambda $ to a character of $T$. This completes the proof.

\end{proof}

\begin{corollary}\label{gluten}
There is an isomorphism of cohomology theories 
\[
\F^*_{T,q} \cong (\sigma_{T,\tau})_* \G^*_{T,\tau}
\]
defined on equivariantly formal, finite $T$-CW complexes and taking values in sheaves of $\Z/2\Z$-graded $\O_{C_{T,q}}$-algebras.
\end{corollary}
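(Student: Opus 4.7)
The plan is to bootstrap Theorem \ref{character} to the level of cohomology theories by verifying compatibility with suspension isomorphisms; everything else (functoriality, exactness, additivity) comes for free once this is done. Theorem \ref{character} already supplies, for each equivariantly formal finite $T$-CW complex $X$, an isomorphism $\F^*_{T,q}(X) \cong (\sigma_{T,\tau})_*\G^*_{T,\tau}(X)$ of $\Z/2\Z$-graded $\O_{C_{T,q}}$-algebras that is natural in $X$. So the only genuine content in upgrading to an isomorphism of cohomology theories is showing that this natural isomorphism intertwines the respective suspension isomorphisms.

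My approach is to trace each suspension isomorphism back to its ordinary (non-elliptic) origin and invoke the classical fact that the equivariant Chern character commutes with suspension. Concretely, on the $\F$-side, the suspension isomorphism was constructed in Proposition \ref{gerry} by taking the inverse limit over $\bD(S^1 \wedge X_+) = \{S^1 \wedge Y_+\}_{Y \in \bD(X)}$ of the suspension isomorphism for $\bT \times T$-equivariant K-theory, then pushing forward along $\psi_T$ and taking $\check{T}$-invariants. On the $\G$-side, the proof of Proposition \ref{kerry} constructs the suspension isomorphism by gluing together the Borel-equivariant suspension isomorphisms $\sigma_a \otimes \id$ on patches $U_a$. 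Both suspensions are therefore ultimately controlled by the classical suspension isomorphisms in equivariant K-theory and Borel-equivariant ordinary cohomology respectively.

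The key observation is then that in Theorem \ref{character}, the natural isomorphism over each patch $U_a$ is built from three ingredients: the local description of $\F^*_{T,q}(X)$ from Theorems \ref{isoo} and \ref{duh}, the analogous local description for $\G^*_{T,\tau}(X)$, and the equivariant Chern character $ch_T$ of Theorem \ref{chern}. The first two ingredients manifestly respect the respective suspension isomorphisms by construction, since in both cases the suspension maps are defined patchwise by the suspension on $K^*_T(X^a)$ and $H^*_T(X^a)$ respectively. The Chern character, being a natural transformation of $\Z/2\Z$-graded cohomology theories (by its definition via Definition \ref{susp} and Remark \ref{harold}), commutes with suspension tautologically: on reduced theories both $\sigma: \tilde{K}_T^0(S^1 \wedge X_+) \to K_T^{-1}(X)$ and $\sigma: \tilde{H}_T^0(S^1 \wedge X_+) \to H_T^{-1}(X)$ are identities by definition, and the Chern character respects the $\Z/2\Z$-grading structure induced by them.

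The hardest step, and really the only one needing any verification, is confirming that the patchwise commutativity of the Chern character with suspension survives the gluing procedure. This should reduce to noting that the gluing maps $\phi_{b,a}$ on both sides are built from the same primitive operations (inclusion-induced pullback $i_{b,a}^*$, change-of-groups for the surjection $T \twoheadrightarrow T/H$, and translation $t_{b-a}^*$ respectively $t_{ba^{-1}}^*$), and the Chern character respects each of these by the argument already given in the proof of Theorem \ref{character}. Once this compatibility is in place, naturality and the remaining cohomology-theory axioms (exactness, additivity) pass through immediately: they hold locally on stalks where, by Theorem \ref{isoo} and Corollary \ref{stalkss}, the theories reduce to equivariant K-theory and Borel cohomology of the finite complexes $X^a$, and the Chern character is already an isomorphism of cohomology theories there.
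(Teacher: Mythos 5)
Your proposal takes essentially the same approach as the paper's proof: invoke Theorem \ref{character} for the natural isomorphism, then trace the two suspension maps back to the suspension isomorphisms of equivariant K-theory (Proposition \ref{gerry}) and Borel-equivariant cohomology (Proposition \ref{kerry}), and observe that the Chern character intertwines them. One minor imprecision: you assert that the suspension $\tilde H^0_T(S^1\wedge X_+)\to H^{-1}_T(X)$ is an identity by definition, but Borel-equivariant cohomology is not set up that way in the paper --- it is $\Z$-graded singular cohomology of the Borel construction, and $\sigma_a$ is the genuine suspension isomorphism. What is actually used (and what the paper cites) is the classical fact that the Chern character is a natural transformation of cohomology theories and hence commutes with suspension; your overall argument still goes through on this basis.
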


\begin{proof}
The natural isomorphism of Theorem \ref{character} is an isomorphism of cohomology theories since, by the proof of Proposition \ref{gerry}, the suspension isomorphisms of $\F^*_T(X)$ are inherited from equivariant K-theory, and these are compatible with the suspension maps of Grojnowski's theory via the Chern character, which is an isomorphism of cohomology theories. 
\end{proof}

\chapter{Kitchloo's elliptic cohomology theory}\label{twist}

In this chapter, we construct a holomorphic sheaf $^k\K_{\widetilde{LT}}(LX)$ in a way that is analogous to Kitchloo's construction in \cite{Kitch1}, only exchanging the simple, simply-connected compact Lie group $G$ for a torus $T$. This modification does not present any difficulties, and the construction goes through almost unchanged. One difference is that we construct a sheaf over $\D^\times \times T_\C$, while Kitchloo constructs his over $\bH \times \t_\C$, but this is not significant as the two spaces are related by the complex exponential map, and Kitchloo's $T$-equivariant construction over $\bH \times \t_\C$ is obtained from ours by pulling back over this map. \par

In the previous chapter, we considered the action of a loop group $LT$ on a space $LX$, and used equivariant K-theory to construct a sheaf $\K_{LT}(LX)$ out of this. However, it was not constructed in such a way that a cocycle is represented by a family of $LT$-representations over $LX$, as it turns out that there are no honest, nontrivial representations of $LT$. To bring any kind of representations of $LT$ into the picture, we really need to consider representations of a central extension $\widetilde{LT}$ of $LT$ by a circle, which are equivalently projective representations of $LT$. \par

It is helpful in this context to consider why there are no nontrivial honest representations of $LT$. Supposing that there were, then we should expect the character of such a representation to induce a $\check{T}$-invariant holomorphic function on $\D^\times \times T_\C$, since this is a subspace of the complexification of the maximal torus $\bT \times T$. However, this would be equivalent to a holomorphic function on the quotient $C_T$, which would then have to be constant since the fiber $C_{T,q} \subset C_T$ is a projective variety. Therefore, the character of the honest representation would have to be trivial. On the other hand, if we consider a level $k$ representation of $\widetilde{LT}$, then we find that the character induces a nontrivial section of a certain line bundle $\L^k$ over $C_T$, under certain conditions. \par

Evaluated on a point, the value of the $\check{T}$-invariants of the pushforward $^k\F_T(X)$ of $^k\K_{\widetilde{LT}}(LX)$ turns out to be the line bundle $\L^k$. For more general $X$, cocycles in $^k\K_{\widetilde{LT}}(LX)$ are represented by $\bT \times T$-equivariant families of Fredholm operators, which are parametrised over finite subcomplexes of $LX$ and which carry a natural action of the Weyl group $\check{T}$. We use the equivariant index theorem of Atiyah and Segal to relate $^k\K_{\widetilde{LT}}(LX)$ to the sheaf $\K_{LT}(LX)$ of the previous chapter, and it follows from this relationship that $^k\F_T(X)$ is isomorphic to $\F_T(X) \otimes \L^k$. Finally, we apply the character map of Theorem \ref{character} to show that $k\F_{T,q}(X)$ is isomorphic to the twisted theory $\G_{T,\tau}(X) \otimes \L^k$ of Grojnowski, for $X$ equivariantly formal and where $q = e^{2\pi i\tau}$.

\begin{notation}
In this chapter, as in the previous chapter, we identify the circle $\bT$ with $S^1 \subset \C$. 
\end{notation}

\section{Representation theory of the loop group $LT$}

In this Our reference for this section is Chapters 4 and 9 of \cite{PS}.

\begin{notation}
We fix a symmetric bilinear form 
\[
I: \check{T} \times \check{T} \longrightarrow \Z.
\]
Recall that, because $T$ is a torus, the perfect pairing between $\check{T}$ and $\hat{T}$ induces a canonical isomorphism $\hat{T} \cong \Hom(\check{T},\Z)$. The adjoint to $I$ therefore takes the form
\[
I: \check{T} \longrightarrow \hat{T}.
\]
Let $\phi(m) := \frac{1}{2}I(m,m)$ be the quadratic form associated to $I$.
\end{notation}

\begin{remark}
The bilinear form $I$ determines a cocycle $\omega$ on the Lie algebra $L\t := C^\infty(\bT;\t)$ of $LT$\footnote{The loop group $LT$ is actually an infinite dimensional Lie group. For details, we refer the reader to sections 3.1 and 3.2 in \cite{PS}}, given by
\[
\omega(\alpha,\beta) := \frac{1}{2\pi} \int_0^{2\pi} I(\alpha'(\theta),\beta(\theta)) d\theta.
\]
This, in turn, determines a central extension $\widetilde{L\t} = L\t \oplus \R$ with bracket given by
\[
[(\alpha,x),(\beta,y)] = ([\alpha,\beta],\omega(\alpha,\beta)).
\]
central extension  
\begin{equation}\label{ext3}
1\longrightarrow U(1) \longrightarrow \widetilde{LT} \longrightarrow LT \longrightarrow 1
\end{equation}
\end{remark}

\begin{remark}\label{convent}
The action of $\bT$ lifts to an action on $\widetilde{LT}$,\footnote{See Chapter 4 of \cite{PS}} and we are thus able to form the semidirect product
\[
\bT \ltimes \widetilde{LT}.
\]
All representations of $\bT \ltimes \tilde{LT}$ and $\widetilde{LT}$ that we consider in this chapter are unitary representations on a Hilbert space, although it may not be explicitly stated. Two representations $E$ and $E'$ are said to be \textit{essentially equivalent} if there is a continuous, equivariant linear map $E \to E'$ which is injective and has dense image. 
\end{remark}

\begin{definition}
A representation $V$ is called \textit{irreducible} if it has no closed invariant subspace. 
\end{definition}

\begin{remark}
The extension \eqref{ext3} splits over the constant loops, and therefore $T$ is a subgroup of $\widetilde{LT}$. If $T$ is a maximal torus in $T$, then 
\[
\bT \times T \times U(1) \subset \bT \ltimes \widetilde{LT}
\]
is a maximal torus. Up to essential equivalence, a representation $V$ of $\bT \ltimes \widetilde{LT}$ decomposes as a direct sum
\begin{equation}\label{decom}
\bigoplus_{(n,\lambda,k)} V_{(n,\lambda,k)},
\end{equation}
of subspaces, where $\bT \times T \times U(1)$ acts on $V_{(n,\lambda,k)}$ by the character $(n,\lambda,k) \in \hat{\bT} \times \hat{T} \times \hat{U(1)}$. A character occurring in the decomposition of $V$ is called a \textit{weight of $V$}.
\end{remark}

\begin{remark}
The parametrisation of $\bT$ induces an identification of $\hat{\bT}$ with $\Z$. For a representation $V$ with decomposition \eqref{decom}, if $V_{(n,\lambda,k)}\neq 0 $ for only finitely many values of $n < 0$, then $V$ is said to be a \textit{positive energy representation}. Henceforth, any representation of $\bT \ltimes \widetilde{LT}$ that we consider will be a positive energy representation, although it may not be explicitly stated.
\end{remark} 

\begin{theorem}[Theorem 9.3.1, \cite{PS}]\label{flo}
Any representation $V$ of $\bT \ltimes \widetilde{LT}$ contains a direct sum of irreducible representations as a dense subspace. Any representation of $\widetilde{LT}$ extends to a representation of $\bT \ltimes \widetilde{LT}$.
\end{theorem}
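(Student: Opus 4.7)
The result is Theorem 9.3.1 of Pressley--Segal \cite{PS}, stated in their book for arbitrary compact Lie groups; here we only need the torus case. The plan is to treat the two assertions separately, using in an essential way the positive-energy assumption recorded in Remark \ref{convent} and the fact that for a torus the central extension of $L\mathfrak{t}$ is just an infinite-dimensional Heisenberg algebra, which massively simplifies the argument.

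First, for the decomposition claim, I would begin by exploiting the positive-energy grading. Write $V = \bigoplus_{n \geq n_0} V_n$ for the eigenspace decomposition under the infinitesimal rotation operator $d$; by the positive-energy hypothesis combined with the fact that each $V_n$ is a $T \times U(1)$-representation carrying only finitely many weights of a fixed central charge $k$, each $V_n$ is finite-dimensional. Next, decompose the lowest-energy space $V_{n_0}$ into one-dimensional weight spaces under $T \times U(1)$, choose a unit weight vector $v$ of weight $(n_0, \lambda, k)$, and let $W_v \subset V$ be the closed subrepresentation generated by $v$ under $\widetilde{LT}$. Using the Heisenberg commutation relations, one checks that $W_v$ is essentially equivalent to the standard Fock module with vacuum of weight $(\lambda, k)$, which is irreducible. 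The orthogonal complement $W_v^\perp$ is again a positive-energy representation, with strictly smaller lowest-energy space, so iterating (transfinitely, via Zorn's lemma on orthogonal families of irreducible subrepresentations) produces an orthogonal family whose span is dense in $V$.

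Second, for the extension of a representation $V$ of $\widetilde{LT}$ to $\bT \ltimes \widetilde{LT}$, the task is to construct the infinitesimal rotation $d$ intrinsically from the $\widetilde{LT}$-action. Here I would invoke the Sugawara (for the torus, simply Heisenberg) construction: choose an orthonormal basis $\{e_i\}$ of $\mathfrak{t}$ with respect to the bilinear form $I$ defining the central extension at level $k$, and define formally
\[
L_0 := \frac{1}{2k} \sum_{i} \sum_{n > 0} e_i(-n) e_i(n) + \text{(zero-mode contribution)},
\]
where $e_i(n)$ denotes the mode $\theta \mapsto e^{in\theta} e_i$ acting on $V$. The commutation relations of the Heisenberg algebra imply $[L_0, e_i(n)] = -n \, e_i(n)$, so that $L_0$ infinitesimally implements rotation on the loop algebra modes. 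One then exponentiates $e^{2\pi i \theta L_0}$ to obtain the desired $\bT$-action; essentially equivalence of completions is used to match this with Hilbert space unitaries. Together with the original $\widetilde{LT}$-action this assembles into an action of $\bT \ltimes \widetilde{LT}$.

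The main obstacle is the analytic content of the Sugawara step: one must show that the formal infinite sum defining $L_0$ is a well-defined essentially self-adjoint operator on the dense subspace of finite-energy vectors (which by the decomposition of the first part can be described as finite sums of Fock vectors), and that its exponential integrates to a genuine unitary representation of the compact group $\bT$. Both points follow by reducing to the lowest-energy vacuum (where $L_0$ acts by a scalar determined by the weight and level) and then transferring via the Heisenberg action; on each summand $W_v$ of the decomposition, $L_0$ acts diagonalizably with integer-spaced spectrum, so exponentiation is immediate. Uniqueness of the extension up to an overall character of $\bT$ follows by noting that any other extension would differ by an operator commuting with $\widetilde{LT}$, hence scalar on each irreducible summand by Schur, and the positive-energy normalization pins down these scalars.
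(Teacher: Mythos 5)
The paper gives no proof of this statement; it simply records Theorem~9.3.1 of Pressley--Segal \cite{PS} as a black box, so there is no internal argument for you to compare against. Taken on its own terms, your reconstruction is essentially the right one: it is the Pressley--Segal strategy specialized to the torus case, where the central extension of $L\mathfrak{t}$ is a Heisenberg algebra, the irreducibles are Fock modules generated from a lowest-energy vacuum, and the intrinsic construction of the rotation generator is the Sugawara (here, purely Heisenberg) $L_0$. The last paragraph correctly identifies the real analytic content --- essential self-adjointness of $L_0$ on finite-energy vectors and integrality of its spectrum so that $e^{2\pi i\theta L_0}$ genuinely factors through $\bT$ --- and the Schur argument for uniqueness up to a character of $\bT$ is the right way to handle the normalization.

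One claim in your first step should be removed or qualified: it is not true that each energy eigenspace $V_n$ of a general positive-energy representation is finite-dimensional. The definition used in the paper only bounds the number of negative energies appearing, and one can take an infinite orthogonal direct sum of copies of the same Fock module to make $V_{n_0}$ infinite-dimensional. Finite-dimensionality of energy eigenspaces is a theorem about \emph{irreducible} positive-energy representations (cf.\ the discussion around Proposition~9.2.3 and Section~9.3 of \cite{PS}), not a hypothesis, and deducing it the way you do begs the question. Fortunately, nothing downstream of it in your argument actually uses it: you only need to pick a single weight vector in $V_{n_0}$, generate a Fock submodule, and run Zorn's lemma on orthogonal families of irreducible closed subrepresentations, arguing that a nonzero orthogonal complement would again be positive-energy and hence contain another Fock submodule, contradicting maximality. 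Phrase the iteration this way rather than as a ``strictly decreasing lowest-energy'' induction, since the latter fails when $V_{n_0}$ is infinite-dimensional. With those two local repairs the proposal is sound, and it correctly mirrors what the cited source does.
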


\begin{remark}
If $V$ is an irreducible representation of $\bT \ltimes \widetilde{LT}$, then, by Schur's lemma, the central circle $U(1)$ acts by scalars, and therefore only one value of $k$ can occur in the decomposition \eqref{decom} of $V$. This is called the \textit{level} of $V$.
\end{remark}

\begin{proposition}[Proposition 9.2.3, \cite{PS}]\label{pos}
The restriction of a positive energy, irreducible representation of $\bT \ltimes \widetilde{LT}$ to $\widetilde{LT}$ is also irreducible.
\end{proposition}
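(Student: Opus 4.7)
The strategy is to show that the commutant $M' := \End_{\widetilde{LT}}(V)$ reduces to $\C \cdot \id$, from which irreducibility of the restriction $V|_{\widetilde{LT}}$ follows. I would combine the positive-energy structure (producing a distinguished lowest-energy subspace $V(n_0)$ on which rotation acts by a single scalar) with the irreducibility as a $\bT \ltimes \widetilde{LT}$-module to pin down $V(n_0)$ both as a $T$-module and as the simultaneous kernel of the lowering Fourier modes of $\widetilde{L\t}$.

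First I would show that $V(n_0)$ is one-dimensional. Given a nonzero $T$-invariant subspace $U \subset V(n_0)$, the closed $\widetilde{LT}$-invariant subspace $W := \overline{\widetilde{LT} \cdot U}$ is automatically $\bT$-invariant: a rotation $r = e^{i\theta} \in \bT$ acts on $V(n_0)$ by the scalar $e^{in_0\theta}$, and the semidirect-product identity gives $r \cdot (g \cdot u) = e^{in_0\theta} (r \cdot g) \cdot u \in W$ for $u \in U$, $g \in \widetilde{LT}$. Irreducibility of $V$ as a $\bT \ltimes \widetilde{LT}$-module then forces $W = V$, and the orthogonal projection of $W$ onto $V(n_0)$ is contained in $U$: the Lie algebra $\widetilde{L\t}$ decomposes by Fourier mode, only the zero-mode preserves $V(n_0)$ (acting there through the $T$-action and the central character), and the Heisenberg relations among raising and lowering modes (which for the abelian $LT$ produce purely central commutators) normal-order any return to $V(n_0)$ into a scalar multiple of a vector in $U$. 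Hence $U = V(n_0)$, and since irreducible representations of the torus $T$ are one-dimensional, $\dim V(n_0) = 1$.

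Next I would identify $V(n_0)$ with $\bigcap_{n > 0,\, X \in \t_\C} \ker(X_{-n})$. The inclusion $\subset$ is automatic. For $\supset$, any vector in the intersection has each energy component also in the intersection, by orthogonality of energy eigenspaces; a nonzero component at energy $m > n_0$ would, by the same Heisenberg reasoning as above, be expressible in terms of raising modes applied to $V(n_0)$, and then some lowering mode $X_{-n}$ would act on it nontrivially through the canonical commutation relations $[X_{-n}, Y_{n}] \propto I(X,Y) \cdot K$, contradicting annihilation. Thus only the $V(n_0)$-component can be nonzero.

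With both pieces in hand the conclusion is short. Any $A \in M'$ commutes with every $X_{-n}$ and hence preserves the common kernel $V(n_0)$; by one-dimensionality, $A|_{V(n_0)} = \alpha \cdot \id$ for some $\alpha \in \C$. Since $\widetilde{LT} \cdot V(n_0)$ is a nonzero $\bT \ltimes \widetilde{LT}$-invariant subspace, it is dense in $V$ by irreducibility, so $A - \alpha \cdot \id$ vanishes on a dense subspace and therefore on all of $V$, giving $M' = \C$. The main obstacle is the Heisenberg/Fock-type structural argument underpinning both the projection calculation and the characterization of $V(n_0)$ as the joint kernel of lowering operators; it relies on nondegeneracy of the bilinear form determined by the level $k$ (the eigenvalue of the central $U(1)$ on $V$), so the degenerate case -- in particular level zero, where $\widetilde{LT} \cong LT \times U(1)$ is abelian and its positive-energy irreducibles are one-dimensional characters -- must be dispatched separately.
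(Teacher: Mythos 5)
Your argument treats $\widetilde{LT}$ as if it were captured by its Lie algebra $\widetilde{L\t}$, but $LT$ has infinitely many connected components, indexed by the winding subgroup $\check{T}\cong\pi_0(LT)$, and these are not reached by exponentiating Fourier modes. This is not a side case to "dispatch separately"; it breaks both of your intermediate claims. By the affine Weyl formula (Proposition \ref{formula}), a winding loop $m\in\check{T}$ carries a weight $(n_0,\lambda,k)$ to $(n_0+\lambda(m)+k\phi(m),\,\lambda+kI(m),\,k)$. Whenever $\lambda(m)+k\phi(m)=0$ for some $m\neq 0$, the element $m$ preserves the lowest energy level $n_0$ while changing the $T$-weight, so $\dim V(n_0)\geq 2$ and $V(n_0)$ is \emph{not} an irreducible $\tilde T$-module. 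For instance, take $T=S^1$ with the standard form $I(m,n)=mn$ and level $k=2$: the irreducible of class $[1]\in\hat T/2\check T$ has its lowest-energy ground states at $T$-charges $+1$ and $-1$, so $V(n_0)$ is two-dimensional. In exactly this situation the orthogonal projection of $\overline{\widetilde{LT}\cdot U}$ onto $V(n_0)$ is \emph{not} contained in $U$: the projection of $m\cdot u$ is a weight vector of a different charge, supplied by the winding loop $m$ that your normal-ordering argument never sees. Step 2 fails for the same reason: the joint kernel $\bigcap_{n>0,\,X}\ker X_{-n}$ consists of the ground state of \emph{every} $T$-charge sector, and these have varying energies (for $\mu\in 1+2\Z$ the ground energies go as $\mu^2/4$), so the joint kernel strictly contains $V(n_0)$, contrary to what you assert.

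The conclusion that $\End_{\widetilde{LT}}(V)=\C\cdot\id$ is of course still correct, but a proof along your lines must engage with the winding action rather than suppress it. A workable repair: an intertwiner $A$ preserves $V(n_0)$ and commutes with the constant loops $T$, hence acts as a scalar on each $T$-weight line in $V(n_0)$; commuting with $\check T$ forces all those scalars to coincide, since the lowest-energy weights form a single $\check T$-orbit, and then density of $\widetilde{LT}\cdot V(n_0)$ finishes as you say. For comparison, the proof in Pressley--Segal (to which the present text defers and gives no proof of its own) avoids the lowest-energy-space bookkeeping entirely, establishing directly that every closed $\widetilde{LG}$-invariant subspace of a positive energy representation is rotation-invariant; that route does not need to know $\dim V(n_0)$ at all.
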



\begin{remark}
The affine Weyl group associated to $\bT \times T \times U(1) \subset \bT \ltimes \widetilde{LT}$ is defined as the quotient
\[
W_{aff} := N_{\bT \ltimes \widetilde{LT}}(\bT \times \tilde{T})/(\bT \times \tilde{T}).
\]
Since the extension by $U(1)$ is central, we have
\[
W_{aff} = \check{T}
\]
and the action of $\check{T}$ on $\bT \times \tilde{T}$ covers the action on $\bT \times T$. 
\end{remark}

\begin{proposition}[\cite{PS}, Proposition 4.9.4]
The Weyl action of $m \in \check{T}$ on $\bT \times T \times U(1)$ is given by
\begin{equation}\label{action1}
(q,u,z) \longmapsto (q,q^mu, q^{\phi(m)}u^{I(m)}z)
\end{equation}
\end{proposition}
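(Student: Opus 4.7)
The plan is to verify the formula by decomposing the Weyl action into its three components corresponding to the factors $\bT$, $T$ and $U(1)$, and computing each separately using the explicit cocycle $\omega$ defining $\widetilde{LT}$. The lift of $m \in \check{T}$ that normalises $\bT \times T \times U(1)$ is obtained by taking any lift $\tilde{\gamma}_m \in \widetilde{LT}$ of the loop $\gamma_m(s) = s^m$; since $U(1)$ is central, the resulting action on the maximal torus is independent of this choice.

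First I would handle the easy components. The central extension $\widetilde{LT} \twoheadrightarrow LT$ is a group homomorphism, so conjugation by $\tilde{\gamma}_m$ covers conjugation by $\gamma_m$ in $LT \subset \bT \ltimes LT$. Applied to the constant loop $u \in T$, this recovers the formula $u \mapsto q^m u$ of Remark \ref{weyl} after complexification, giving the $T$-component. The $\bT$-component is trivial because conjugation by any element of $\widetilde{LT}$ lies over the identity rotation.

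The interesting content lies in the $U(1)$-component, which has two contributions. On one hand, conjugating $\tilde{u} \in \tilde{T} \subset \widetilde{LT}$ by $\tilde{\gamma}_m$ produces a central correction whose value is controlled by the commutator pairing on the abelian group $LT$ associated to the Lie-algebra cocycle $\omega$. A direct calculation with the defining integral, applied to the loop $\gamma_m$ and the constant loop at $\log u$, yields the bilinear pairing $I(m, \log u)$, which exponentiates to the factor $u^{I(m)}$. On the other hand, conjugating $\tilde{\gamma}_m$ by a lift $\tilde{q}$ of the rotation $q$ yields the loop $q \cdot \gamma_m = q^m \gamma_m$ up to a central correction. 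This correction is quadratic in $m$ because it is the self-pairing of $\gamma_m$ under the $\bT$-action cocycle, and integration against $\omega$ produces $\phi(m) = \tfrac{1}{2}I(m,m)$, which exponentiates to $q^{\phi(m)}$. Inverting (since we are computing the Weyl action from the left) assembles both contributions into the single factor $q^{\phi(m)} u^{I(m)} z$ multiplying the $U(1)$-coordinate.

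The main obstacle is the bookkeeping: carefully tracking signs, the direction of conjugation, and the $2\pi$-normalisation in the integral defining $\omega$ so that the bilinear and quadratic contributions come out with exactly the right exponents. Since the paper already cites \cite{PS}, the cleanest presentation is to reduce this to the content of Proposition 4.9.4 there, which handles the general simply-connected compact case; the toral case is the direct specialisation where the Weyl group of $T$ is trivial and the extension is determined solely by the bilinear form $I$.
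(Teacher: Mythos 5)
The paper itself gives no proof here: the statement is a direct citation to Pressley--Segal, Proposition 4.9.4, with no further argument in the text. Your sketch is nonetheless a structurally sound derivation of what that citation asserts. The decomposition into the three coordinate components is right, and the two mechanisms you identify for the $U(1)$-coordinate---the commutator pairing of the lift of $\gamma_m(s) = s^m$ against the constant loop at $u$ producing the factor $u^{I(m)}$, and the quadratic self-pairing of $\gamma_m$ under the rotation action producing $q^{\phi(m)}$---are exactly the two sources of the central correction. One detail you will need to fix when carrying out the bookkeeping: with the paper's convention $r\cdot\gamma(s) = \gamma(sr^{-1})$, the rotated loop is $q\cdot\gamma_m = q^{-m}\gamma_m$ rather than $q^{m}\gamma_m$ as written, though this sign is absorbed by the inversion you already flag. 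Your closing observation---that since the paper cites \cite{PS} anyway, the cleanest route is simply to specialize Proposition 4.9.4 to the torus, where the finite Weyl group is trivial---is precisely the approach the paper takes, so the practical content of your proposal and the paper's treatment coincide, with your sketch additionally documenting why the cited formula should be believed.
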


\begin{proposition}\label{formula}
The Weyl action of $m \in \check{T}$ on a character $(n,\lambda,k)$ is given by
\begin{equation}\label{action}
m \cdot (n,\lambda,k) = (n + \lambda(m) + k \phi(m), \lambda + k I(m), k).
\end{equation}
\end{proposition}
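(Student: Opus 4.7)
The plan is to obtain the character-level formula by dualising the group-level formula of the preceding proposition. The Weyl action of $\check{T}$ on the maximal torus $\bT \times T \times U(1)$ induces a contragredient action on its character group by
\[
(m \cdot \chi)(q,u,z) := \chi(m \cdot (q,u,z)),
\]
and all that is left to do is substitute and simplify. A character $(n,\lambda,k) \in \hat{\bT} \times \hat{T} \times \widehat{U(1)}$ is by definition the homomorphism $(q,u,z) \mapsto q^n \lambda(u) z^k$, using the identification $\hat{\bT} \cong \Z$ from the parametrisation of $\bT$.

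Using the formula \eqref{action1} from the previous proposition, I compute
\[
(m \cdot (n,\lambda,k))(q,u,z) \;=\; q^n\,\lambda(q^m u)\,(q^{\phi(m)} u^{I(m)} z)^k.
\]
The main step is to expand this using the standard identifications of the conventions fixed earlier in Section 2.1: that $\lambda(q^m) = q^{\lambda(m)}$ since $\lambda \in \hat{T}$ and $m \in \check{T}$ pair to an integer $\lambda(m)$, and that $u^{kI(m)}$ denotes the value of the character $kI(m) \in \hat{T}$ on $u \in T$. Multiplicativity of characters then gives
\[
(m \cdot (n,\lambda,k))(q,u,z) \;=\; q^{\,n + \lambda(m) + k\phi(m)}\,(\lambda + kI(m))(u)\,z^k,
\]
which is precisely the character $(n + \lambda(m) + k\phi(m),\, \lambda + kI(m),\, k)$ evaluated at $(q,u,z)$.

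There is no real obstacle here; the entire content of the proposition is bookkeeping, and the only point to watch is that the quadratic contribution $q^{k\phi(m)}$ and the correction $kI(m)$ to the $T$-weight arise from the factor $q^{\phi(m)} u^{I(m)}$ in the central $U(1)$-coordinate of \eqref{action1}, raised to the level $k$. In particular, the level is preserved, as it must be since $U(1)$ is central and the Weyl action covers the action on $\bT \times T$.
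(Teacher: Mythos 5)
Your proof is correct and takes essentially the same approach as the paper's: both define the Weyl action on characters by precomposing with the group-level action map (the paper expresses this via a commutative square, you by the formula $(m\cdot\chi)(q,u,z)=\chi(m\cdot(q,u,z))$), then substitute the formula from the preceding proposition and simplify using $\lambda(q^m)=q^{\lambda(m)}$. The bookkeeping is identical, and your remark that the level $k$ is preserved because $U(1)$ is central matches the structure of the paper's argument.
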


\begin{proof}
An element $m \in \check{T}$ acts on a character $(n,\lambda,k)$ by pullback along the action of $m$, so that the diagram
\[
\begin{tikzcd}
\bT \times \tilde{T} \ar[rr,"{m\cdot (n,\lambda,k)}"] \ar[d,"m"]& &\C^\times \ar[d,"="] \\
\bT \times \tilde{T} \ar[rr, "{(n,\lambda,k)}"] & &\C^\times
\end{tikzcd}
\]
commutes. We therefore have
\[
\begin{array}{rcl}
(m\cdot (n,\lambda,k)) (q,u,z) &=& q^n(q^{m}u)^{\lambda}(q^{\phi(m)}u^{I(m)}z)^k \\
&=& q^{n+\lambda(m) + k\phi(m)}u^{\lambda + kI(m)}z^k.
\end{array}
\]
\end{proof}

\begin{theorem}[Theorem 9.3.5, \cite{PS}]\label{class}
If $k$ is a positive integer and $I$ is positive-definite, then there is a bijection between the set of irreducible level $k$ representations of $\widetilde{LT}$ up to essential equivalence, and the set 
\[
\hat{T}/k\check{T},
\]
where $\check{T}$ is regarded as a subgroup of $\hat{T}$ via the injection
\[
I: \check{T} \longrightarrow \hat{T}.
\]
\end{theorem}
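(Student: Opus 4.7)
The plan is to construct the bijection by sending an irreducible level-$k$ representation $V$ of $\widetilde{LT}$ to the character of its lowest-energy subspace. First I would extend $V$ to a representation of $\bT \ltimes \widetilde{LT}$ using Theorem \ref{flo}; by Proposition \ref{pos} this extension is again irreducible and positive-energy by hypothesis. Decomposing $V$ under the maximal torus $\bT \times T \times U(1)$ as in \eqref{decom}, the positive-energy condition guarantees that there is a minimum integer $n_0$ with $V_{n_0} := \bigoplus_\lambda V_{(n_0,\lambda,k)} \neq 0$; positive-definiteness of $I$ together with the level-$k$ hypothesis ensures this subspace is finite-dimensional.

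Next I would show that $V_{n_0}$ is an irreducible $T$-representation, hence one-dimensional and equal to $\C_\lambda$ for a single $\lambda \in \hat{T}$. The argument is the standard highest/lowest-weight argument: all Fourier modes in $L\t_\C$ of strictly positive energy annihilate $V_{n_0}$ by minimality of $n_0$, so the modes of strictly negative energy, acting on any $T$-invariant subspace $W \subset V_{n_0}$, generate a proper $\widetilde{LT}$-invariant subspace of $V$ unless $W = V_{n_0}$. Since $T$ is abelian, irreducibility forces $\dim V_{n_0} = 1$, giving a well-defined character $\lambda$.

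The assignment $V \mapsto \lambda$ descends to a map to $\hat{T}/k\check{T}$: by Proposition \ref{formula}, $m \in \check{T} = W_{aff}$ sends the weight $(n_0,\lambda,k)$ to $(n_0 + \lambda(m) + k\phi(m),\, \lambda + kI(m),\, k)$, and essentially equivalent representations must have matching weight sets, so $\lambda$ is only defined modulo the sublattice $kI(\check{T}) \subset \hat{T}$. Positive-definiteness of $I$ makes $I : \check{T} \hookrightarrow \hat{T}$ injective, so $kI(\check{T}) \cong k\check{T}$ and the map lands in $\hat{T}/k\check{T}$. Injectivity follows because the subspace generated by a lowest-weight vector of weight $\lambda$ has a unique irreducible quotient, which must coincide with $V$.

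For surjectivity I would construct, for each $\lambda \in \hat{T}$, an irreducible positive-energy level-$k$ representation with lowest weight $\lambda$ via the Fock-space construction: decompose $L\t_\C$ into the constant subalgebra and its positive- and negative-frequency modes; the cocycle $\omega$ restricts to a symplectic pairing between positive and negative modes, scaled by $kI$, which is nondegenerate precisely because $I$ is positive-definite. The resulting Heisenberg algebra has a canonical bosonic Fock representation on which $T$ acts through $\lambda$ on the vacuum, and this yields the desired $\widetilde{LT}$-module. The main obstacle will be this existence step: verifying that the Fock construction produces an irreducible, essentially unitary positive-energy representation and that distinct cosets in $\hat{T}/k\check{T}$ yield non-equivalent Fock modules. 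For tori this is much easier than the simply-connected case because everything reduces to Stone--von Neumann uniqueness for Heisenberg groups, but the bookkeeping of how the affine Weyl shift $\lambda \mapsto \lambda + kI(m)$ intertwines Fock modules is the place where positive-definiteness of $I$ is used most essentially.
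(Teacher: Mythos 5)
The paper cites this result from Pressley--Segal without giving a proof, so there is nothing internal to compare against; your overall strategy --- attach to each irreducible a class in $\hat{T}/k\check{T}$ via its $T$-weights, argue injectivity by a lowest-weight argument, and argue surjectivity via the bosonic Fock construction on the Heisenberg part of $\widetilde{LT}$ --- is indeed the standard route and is how the classification is organized in \cite{PS}.

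There is, however, a concrete error in the second paragraph: the claim that $V_{n_0}$ is an irreducible $T$-module, hence one-dimensional, fails in general. Take $T = S^1$ (so $\check{T} = \hat{T} = \Z$), $I = \mathrm{id}$, $k = 2$, and the irreducible with label $[1] \in \Z/2\Z$. The $T$-characters occurring in this module are exactly the odd integers, each as the vacuum character of a Heisenberg Fock submodule $H_\mu$ with conformal weight proportional to $\mu^2/(2k) = \mu^2/4$; both $\mu = 1$ and $\mu = -1$ minimize this, so $V_{n_0}$ is two-dimensional with distinct $T$-characters $\pm 1$. The source of the failure is visible in the Weyl formula \eqref{action}: the shift $m \cdot (n_0,\lambda,k) = (n_0 + \lambda(m) + k\phi(m), \lambda + kI(m), k)$ preserves the energy $n_0$ whenever $\lambda(m) + k\phi(m) = 0$, which can happen for non-zero $m$ (here $m = -1$, $\lambda = 1$). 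Your irreducibility argument only shows that the span of a lowest-weight vector under the positive and negative Fourier modes --- that is, the $\widetilde{L_0T}$-span --- is proper unless $W = V_{n_0}$; it does not account for the component group $\pi_0(LT) = \check{T}$, which can move $W$ around inside $V_{n_0}$. A related confusion follows in your third paragraph: if $V_{n_0}$ really were one-dimensional with a unique character $\lambda$, then $\lambda$ would be an honest, unambiguous element of $\hat{T}$, and no passage to cosets would be needed (nor would the map be a bijection onto $\hat{T}/k\check{T}$). What is true, and what one should use to define the map, is that the set of \emph{all} $T$-characters occurring anywhere in $V$ is a single coset of $kI(\check{T})$: the Heisenberg subalgebra $\widetilde{L_0T}$ acts without changing $T$-weights, while the component group translates them by $kI(\check{T})$. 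With the invariant defined this way the rest of your outline (Mackey-style injectivity and Fock-space surjectivity, both relying on positive-definiteness of $I$ to make the cocycle nondegenerate and $I : \check{T} \to \hat{T}$ injective) goes through.
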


\begin{remark}
Theorem \ref{class} enables us to classify all irreducible positive energy representations of $\bT \ltimes \widetilde{LT}$ of level $k$. By Proposition \ref{pos}, an irreducible representation of $\bT \ltimes \widetilde{LT}$ restricts to an irreducible representation of $\widetilde{LT}$, and any two representations which restrict to the same representation of $\widetilde{LT}$ can only differ by multiplication by a character of $\bT$. Furthermore, a positive energy representation of $\bT \ltimes \widetilde{LT}$ has only finitely many negative characters $n \in \hat{\bT}$ occurring in the decomposition \eqref{decom}, and the lowest such $n$ is called its \textit{lowest energy}. Also, by Theorem \ref{flo}, any irreducible representation of $\widetilde{LT}$ extends to an irreducible representation of $\bT \ltimes \widetilde{LT}$. Therefore, up to essential equivalence, irreducible level $k$ representations of $\bT \ltimes \widetilde{LT}$ are classified by elements $(n,[\lambda]) \in \Z \times \hat{T}/k\check{T}$.
\end{remark}

\section{Fredholm operators and the equivariant index map}

\begin{definition}
For an integer $k > 0$, define $\H_k$ to be the Hilbert space completion of the direct sum 
\[
\bigoplus_{\mathbb{N}} \bigoplus_{n,[\lambda]} \H_{n,[\lambda],k},
\]
where $n$ ranges over $\Z$, $[\lambda]$ ranges over $\hat{T}/k\check{T}$, and $\H_{n,[\lambda],k}$ denotes the irreducible level $k$ representation of $\bT \ltimes \widetilde{LT}$ corresponding to $[\lambda] \in \hat{T}/k\check{T}$ and with lowest energy $n$. Thus, any level $k$ representation of $\bT \ltimes \widetilde{LT}$ may be identified with a summand of $\H_k$, up to essential equivalence.
\end{definition}

\begin{definition}
A \textit{Fredholm operator} on a Hilbert space is a bounded linear operator with finite dimensional kernel and cokernel. We denote by $\F_k$ the space of all Fredholm operators on the Hilbert space $\H_k$ with the norm topology. The action of $\bT\ltimes \widetilde{LT}$ on $\H_k$ induces an action on $\F_k$ by conjugation. Note that the identity operator is a Fredholm operator.
\end{definition}


\begin{definition}\label{Fredholm}
Let $Y$ be a $\bT \times T$-CW complex. Define the group
\[
^kK_{\bT \times T \times U(1)}(Y) :=   \pi_0 \Map_{\bT \times T \times U(1)}(Y,\F_k)
\]
with identity element the map sending $Y$ to the identity operator, and group operation induced by composition of Fredholm operators.
\end{definition}

\begin{remark}\label{act}
The loop group $\bT \ltimes \widetilde{LT}$ acts on $\Map(LX,\F_k)$ as follows. An element $g \in \bT \ltimes \widetilde{LT}$ sends $F \in \Map(LX,\F_k)$ to the unique map $F^g$ such that 
\[
\begin{tikzcd}
LX \ar[d,"g"] \ar[r,"F^g"] & \F_k \ar[d,"g"] \\
LX \ar[r,"F"] & \F_k \\
\end{tikzcd}
\]
commutes. Thus, writing the map $F$ as $\gamma \mapsto F_\gamma$, we have the formula
\begin{equation}
F^g_{\gamma} = \theta_{g^{-1}}\circ F_{g\cdot \gamma} \circ \theta_g,
\end{equation}
where $\theta_g$ is the image of $g$ in $\Aut(\H_k)$. The subset 
\[
\Map_{\bT \times T \times U(1)}(LX,\F_k) \subset \Map(LX,\F_k)
\]
of $\bT \times T \times U(1)$-equivariant maps is the $\bT \times T \times U(1)$-fixed point set of this action, and therefore carries an action of the affine Weyl group $\check{T}$. More generally, let $Y$ be a $\bT \times T$-equivariant subspace of $LX$. The Weyl action of $m \in \check{T}$, since it is continuous, induces a group homomorphism 
\begin{equation}\label{girl}
m^*: \, ^kK^{*}_{\bT \times T \times U(1)}(m\cdot Y) \longrightarrow  \, ^kK^{*}_{\bT \times T \times U(1)}(Y)
\end{equation}
sending $[F]$ to $[F^m]$.
\end{remark}

\begin{definition}\label{gall}
Let $^kK_{\bT \times T \times U(1)}$ denote the free abelian group generated by the irreducible level $k$ characters of $\bT \times T \times U(1)$, which is evidently a module over the ring $K_{\bT \times T}$ of characters of $\bT \times T$. 
\end{definition}

\begin{remark}\label{galt}
The action of $m \in \check{T}$ induces a group homomorphism 
\[
m^*: \, ^kK_{\bT \times T \times U(1)} \longrightarrow \, ^kK_{\bT \times T \times U(1)}
\]
according to the formula
\begin{equation}
(n,\lambda,k) \mapsto (n + \lambda(m) + k \phi(m), \lambda + k I(m), k)
\end{equation}
of Proposition \ref{formula}.
\end{remark}

\begin{remark}\label{eqind}
We introduce the equivariant index map of Atiyah and Segal. Let $\H$ be a Hilbert space representation of $\bT \times T$ in which each irreducible representation of $\bT \times T$ occurs countably many times. Let $\F$ be the space of Fredholm operators on $\H$ with the norm topology, on which $\bT \times T$ acts by conjugation. Let $Y$ be a finite $\bT \times T$-CW complex and let $F: Y \to \F$, be a $\bT \times T$-equivariant, continuous map. By the results in Appendix 2 and 3 of \cite{Atiyah}, there exists a $\bT \times T$-equivariant subspace $\H' \subset \H$ of finite codimension such that, if $p$ denotes the orthogonal projection $\H \twoheadrightarrow \H'$, then $p \circ F_y$ is surjective for all $y \in Y$. Nonequivariantly, the index of a Fredholm operator $F$ is defined to be the difference between the dimensions of its kernel and cokernel, and it is a well known property that, for a continuous family $\{F_y\}_{y\in Y}$ of Fredholm operators, the index of $F_y$ remains constant as $y$ varies. Since $p \circ F_y$ is surjective for all $y$, the dimension of $\coker(p \circ F_y)$ remains constant (i.e. zero) for all $y$, which means the dimension of $\ker(p \circ F_y)$ remains constant. The bundle of vector spaces 
\[
\{\ker(p \circ F_y)\}_{y\in Y}
\]
is a finite dimensional $\bT \times T$-vector bundle on $Y$. By Corollary A3.2 in \cite{Atiyah}, the map 
\[
\begin{array}{rcl}
\pi_0 \Map_{\bT \times T}(Y,\F) &\longrightarrow& K_{\bT \times T}(Y) \\
{[F]} &\longmapsto & [\{\ker(p \circ F_y)\}_{y\in Y}] - [Y \times \ker(p)]
\end{array}
\]
is an isomorphism of groups, natural in $Y$, and does not depend on the choice of $p$. 
\end{remark}

We should think of the following Proposition as an application of the equivariant index map, combined with the change of groups isomorphism of Proposition \ref{Segal1}.

\begin{proposition}\label{index}
Let $Y$ be a finite $\bT \times T$-CW complex. There is an isomorphism of $K_{\bT \times T}$-modules
\[
^kK_{\bT \times T \times U(1)}(Y) \cong K_{\bT \times T}(Y) \otimes_{K_{\bT \times T}} \, ^kK_{\bT \times T \times U(1)}
\]
natural in $Y$. In particular, 
\[
^kK_{\bT \times T \times U(1)}(\pt) \cong \,  ^kK_{\bT \times T \times U(1)}.
\]
\end{proposition}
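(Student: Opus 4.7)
The plan is to reduce Proposition \ref{index} to the Atiyah--Segal equivariant index theorem cited in Remark \ref{eqind}, applied to the restriction of the $\bT \times T \times U(1)$-action on $\F_k$ to the subgroup $\bT \times T$. The key initial observation is that the central $U(1) \subset \bT \times T \times U(1)$ acts on $\H_k$ by scalars of weight $k$, since by construction every irreducible summand of $\H_k$ has level $k$. Consequently the conjugation action of $U(1)$ on $\F_k$ is trivial, so that every $\bT \times T$-equivariant map $Y \to \F_k$ is automatically $\bT \times T \times U(1)$-equivariant, giving the tautological identification
\[
{}^kK_{\bT \times T \times U(1)}(Y) = \pi_0 \Map_{\bT \times T}(Y,\F_k).
\]

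Next, I would verify the hypothesis of the equivariant index theorem: every irreducible character of $\bT \times T$ must occur countably many times as a weight of $\H_k$. This follows from the weight theory of the level $k$ irreducibles $\H_{n,[\lambda],k}$ of $\bT \ltimes \widetilde{LT}$. Indeed, as $[\lambda]$ ranges over $\hat{T}/k\check{T}$ one exhausts every coset of $\hat{T}$ modulo $k\check{T}$ at lowest energy, while the higher energy levels fill in the remaining weights, and the $\mathbb{N}$-fold outer direct sum in the definition of $\H_k$ then ensures that each character of $\bT \times T$ appears countably often. With this multiplicity condition in hand, Remark \ref{eqind} furnishes a natural isomorphism of $K_{\bT \times T}$-modules
\[
\pi_0 \Map_{\bT \times T}(Y,\F_k) \;\xrightarrow{\cong}\; K_{\bT \times T}(Y),
\]
the module structure being preserved because scalar multiplication by a character of $\bT \times T$ on a family of Fredholm operators multiplies the equivariant index by that character.

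Finally, I would observe that $^kK_{\bT \times T \times U(1)}$ is free of rank one over $K_{\bT \times T}$, generated by the character $(0,0,k)$, so that the right-hand side in the proposition is canonically identified with $K_{\bT \times T}(Y)$ by $\alpha \otimes (0,0,k) \mapsto \alpha$; composing with the index isomorphism above yields the desired natural isomorphism, and the \emph{in particular} clause is the special case $Y = \pt$. The main obstacle, and the step that would need to be executed most carefully, is the verification that each irreducible $\bT \times T$-character really does appear with infinite multiplicity in $\H_k$; once that multiplicity condition is secured, the remainder is a direct application of the Atiyah--Segal index theorem together with the observation that the index map is $K_{\bT \times T}$-linear.
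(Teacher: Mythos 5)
Your proposal is correct and follows essentially the same route as the paper's proof: identify $\Map_{\bT\times T\times U(1)}(Y,\F_k)$ with $\Map_{\bT\times T}(Y,\F_k)$ because the central $U(1)$ acts on $\F_k$ by conjugation trivially, invoke the Atiyah--Segal equivariant index isomorphism after checking the countable-multiplicity condition on $\H_k$, and conclude by noting that $^kK_{\bT\times T\times U(1)}$ is free of rank one over $K_{\bT\times T}$ on the generator $(0,0,k)$. Your write-up is in fact slightly more careful than the paper's in two spots where the paper is terse: you spell out why the multiplicity condition holds (the coset $[\lambda]$ sweep plus the $\mathbb{N}$-fold direct sum) rather than just citing Theorem \ref{class}, and you explicitly record that the index map is $K_{\bT\times T}$-linear, which is needed for the proposition's claim of a module isomorphism but is left implicit in the paper.
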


\begin{proof}
First, we show that the map
\begin{equation}\label{bell}
\Map_{\bT \times T \times U(1)}(Y,\F_k) \longrightarrow \Map_{\bT \times T}(Y,\F_k)
\end{equation}
given by forgetting the $U(1)$-action is an isomorphism. Indeed, an element $s \in U(1)$ acts on $\H_k$ by multiplication by $s^k$, and so the induced action of $U(1)$ on $\F_k$ by conjugation fixes $\F_k$, by the linearity of Fredholm operators. Therefore, any $\bT \times T$-equivariant map $Y \to \F_k$ is in fact $\bT \times T \times U(1)$-equivariant, since $U(1)$ acts trivially on both $Y$ and $\F_k$, and so \eqref{bell} has an inverse. Since the group operation induced by composition of Fredholm operators is clearly preserved, \eqref{bell} is an isomorphism of groups. 

The statement of the proposition now follows by applying the equivariant index map of Remark \ref{eqind}. By Theorem \ref{class}, $\H_k$ contains countably many copies of each irreducible representation of $\bT \times T$. Therefore,
\[
\pi_0 \Map_{\bT \times T}(Y,\F_k) \cong K_{\bT \times T}(Y).
\]
Combined with the map \eqref{bell}, this gives us an isomorphism of groups
\[
^kK_{\bT \times T \times U(1)}(Y) \cong K_{\bT \times T}(Y)
\]
which is natural in $Y$, since the equivariant index map is natural in $Y$.

Since "forgetting" the $U(1)$-action, given by $k \in \hat{U}(1)$, corresponds to tensoring with $\C_{-(0,0,k)}$, the isomorphism of the proposition is given by
\[
[F] \mapsto [\{\ker(p \circ F_y\}_{y \in Y} \otimes [\C_{(0,0,k)}] - [(Y \times \ker(p)) \otimes \C_{-(0,0,k)}] \otimes [\C_{(0,0,k)}].
\]
\end{proof}

\begin{definition}
Let $Y$ be a $\bT \times T$-CW complex. Let $^k\K^{*}_{\bT \times T \times U(1)}(Y)$ denote the sheaf of $\O_{\C^\times \times T_\C}$-modules which takes the value 
\[
^kK_{\bT \times T \times U(1)}(Y) \otimes_{K_{\bT \times T}} \O_{\C^\times \times T_\C}(U)
\]
on an open subset $U \subset \C^\times \times T_\C$. Define the sheaf of $\O_{\D^\times \times T_\C}$-modules
\[
^k\K_{\widetilde{LT}}(LX):= \varprojlim_{Y \subset LX \: \text{finite}} \, ^k\K_{\bT \times T \times U(1)}(Y)_{\D^\times \times T_\C},
\]
where the inverse limit runs over all finite $\bT \times T$-CW subcomplexes $Y \subset LX$.
\end{definition}

\begin{remark}
We define a $\check{T}$-action on $^k\K_{\widetilde{LT}}(LX)$ in a way that is analogous to Remark \ref{melbourne}. Recall that $\omega_m: \D^\times \times T_\C \rightarrow \D^\times \times T_\C$ denotes the action map $(q,u) \mapsto (q,q^mu)$ of $m$. The map $m^*$ of \eqref{girl} induces an isomorphism of sheaves
\[
\omega_m^* \, ^k\K_{\bT \times T \times U(1)}(m\cdot Y)_{\D^\times \times T_\C} \longrightarrow \, ^k\K_{\bT \times T\times U(1)}(Y)_{\D^\times \times T_\C}
\]
for $Y \subset LX$ a finite $\bT \times T$-CW complex. The family of maps thus produced is compatible with inclusions of subsets of $LX$, since it is induced by the action of $\bT \ltimes \widetilde{LT}$. We therefore have an isomorphism of inverse limits
\[
\omega_m^* \, ^k\K_{\widetilde{LT}}(LX) \longrightarrow \, ^k\K_{\widetilde{LT}}(LX).
\]
For each $(q,u)$, this induces an isomorphism, also denoted $m^*$, 
\begin{equation}
\begin{array}{rcl}
m^*: \, ^k\K_{\widetilde{LT}}(LX)_{(q,q^mu)} &\longrightarrow& \, ^k\K_{\widetilde{LT}}(LX)_{(q,u)} \\
{[F]} \otimes g &\longmapsto & [F^m] \otimes m^*g
\end{array}
\end{equation}
from the stalk at $(q,q^mu)$ to the stalk at $(q,u)$. Since it is induced by a group action, the collection of isomorphisms
\[
\{\omega_m^*\, ^k\K_{\widetilde{LT}}(LX) \longrightarrow \, ^k\K_{\widetilde{LT}}(LX)\}_{m \in \check{T}}
\] 
defines an action of $\check{T}$ on $^k\K_{\widetilde{LT}}(LX)$. 
\end{remark}


\begin{notation}
We use $^k\O_{\D^\times \times T_\C}$ to denote the sheaf $^k\K_{\widetilde{LT}}(L*)$. The value of $^k\O_{\D^\times \times T_\C}$ on open subset $U \subset \D^\times \times T_\C$ is
\[
^kK_{\bT \times T \times U(1)} \otimes_{K_{\bT \times T}} \O_{\D^\times \times T_\C}(U).
\]
\end{notation}

\begin{definition}
Define the $\O_{C_T}$-module 
\[
^k\F_T(X) := ((\psi_T)_*\, ^k\K_{\widetilde{LT}}(LX))^{\check{T}}.
\]
Define the \textit{Looijenga line bundle} to be
\[
\L^k := \, ^k\F_T(\pt).
\]
By definition, a section of $\L^k $ is a holomorphic function $s$ on $\D^\times \times T_\C$ which satisfies the transformation property
\[
s(q,u) = s(q,q^mu)u^{I(m)}q^{\phi(m)} 
\]
for all $m \in \check{T}$. Such functions are called \textit{theta functions of degree $k$}.
\end{definition}

\begin{remark}\label{calc}
We make some further remarks concerning the equivariant index map, and its relationship to the action of $\check{T}$. Let $p$ and $F_y$ be as in Remark \ref{eqind}, and let $p^m = \theta_{-m} \circ p \circ \theta_{m}$. Clearly, $p \circ F_y$ is surjective for all $y$ if and only if 
\[
p^m \circ F^m_y = \theta_{-m} \circ p \circ F_y \circ \theta_{m}
\]
is surjective for all $y$. Furthermore, $p^m$ is clearly $\bT \times T$-equivariant whenever $p$ is. Therefore, the map 
\[
\{\ker(p^m \circ F^m_y)\}_{y\in Y} 
\]
is a $\bT \times T$-vector bundle on $Y$, as is $Y \times \ker(p^m)$. In Remark \eqref{melbourne}, given a $\bT \times T$-vector bundle $V$ over $m\cdot Y$, there was only one possible way to define a $\bT \times T$-vector bundle $m^*[V]$ over $Y$. We must have
\[
m^* [\{\ker(p\circ F_{y}) \}_{y\in m\cdot Y}] = [\{\ker(p^m \circ F^m_{y}) \}_{y\in Y}].
\] 
and 
\[
m^* [(m\cdot Y) \times \ker(p)] = [(Y \times \ker(p^m))].
\] 
These two formulas will be essential to the proof of the following proposition.
\end{remark}

\begin{proposition}\label{obg}
There is a $\check{T}$-equivariant isomorphism of $\O_{\D^\times \times T_\C}$-modules
\[
^k\K_{\widetilde{LT}}(LX) \cong  \K_{LT}(LX) \otimes_{\O_{\D^\times \times T_\C}} \, ^k\O_{\D^\times \times T_\C}
\]
natural in $X$.
\end{proposition}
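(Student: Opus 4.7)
The plan is to extend Proposition \ref{index} from finite complexes to the inverse limit sheaf, and then to verify $\check{T}$-equivariance. First, for each finite $\bT \times T$-CW subcomplex $Y \subset LX$, Proposition \ref{index} gives a natural isomorphism of $K_{\bT \times T}$-modules
\[
{}^kK_{\bT \times T \times U(1)}(Y) \cong K_{\bT \times T}(Y) \otimes_{K_{\bT \times T}} {}^kK_{\bT \times T \times U(1)}.
\]
Tensoring both sides over $K_{\bT \times T}$ with $\O_{\D^\times \times T_\C}$ (which is faithfully flat by Proposition \ref{flatt}) produces a natural isomorphism of sheaves
\[
{}^k\K_{\bT \times T \times U(1)}(Y)_{\D^\times \times T_\C} \cong \K_{\bT \times T}(Y)_{\D^\times \times T_\C} \otimes_{\O_{\D^\times \times T_\C}} {}^k\O_{\D^\times \times T_\C},
\]
natural in $Y$, for each such $Y$.

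Second, I would take the inverse limit over finite $\bT \times T$-CW subcomplexes $Y \subset LX$. The key observation is that ${}^kK_{\bT \times T \times U(1)}$ is a free $K_{\bT \times T}$-module of rank one, generated by any fixed level $k$ character (for instance $(0,0,k)$), so the sheaf ${}^k\O_{\D^\times \times T_\C}$ is an invertible $\O_{\D^\times \times T_\C}$-module. In particular it is flat, so tensoring with it commutes with arbitrary inverse limits. Combining this with the previous step yields the required isomorphism of $\O_{\D^\times \times T_\C}$-modules
\[
{}^k\K_{\widetilde{LT}}(LX) \cong \K_{LT}(LX) \otimes_{\O_{\D^\times \times T_\C}} {}^k\O_{\D^\times \times T_\C},
\]
which is natural in $X$ by the naturality of the isomorphism at each finite level, together with the functoriality of the inverse limit.

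The remaining task, which I expect to be the main obstacle, is to check that this isomorphism intertwines the two $\check{T}$-actions. On the left, the action of $m \in \check{T}$ sends $[F] \otimes g$ to $[F^m] \otimes m^*g$, where $F^m_\gamma = \theta_{m^{-1}} \circ F_{m\cdot \gamma} \circ \theta_m$ (Remark \ref{act}); on the right, it is the diagonal of the action on $\K_{LT}(LX)$ from Remark \ref{melbourne} and the action on ${}^k\O_{\D^\times \times T_\C}$ induced by the character transformation formula of Proposition \ref{formula}, namely $(n,\lambda,k) \mapsto (n + \lambda(m) + k\phi(m),\, \lambda + kI(m),\, k)$. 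Via the formulas in Remark \ref{calc}, the conjugation $\theta_{m^{-1}}(-)\theta_m$ converts the equivariant index of $F^m$ into the pullback of the equivariant index of $F$ along the Weyl action, up to an extra twist by the character $(k\phi(m),kI(m),k) - (0,0,k)$ arising from how the distinguished generator $\C_{(0,0,k)}$ transforms under the Weyl action. One then chases that this twist is exactly the discrepancy between the $\check{T}$-action on $\K_{LT}(LX)$ and the full $\check{T}$-action on ${}^k\O_{\D^\times \times T_\C}$, so the two actions agree under the isomorphism. The bookkeeping is delicate but mechanical, relying only on the explicit formula for $\theta_m$ as a shift of weights by the formula of Proposition \ref{formula}.
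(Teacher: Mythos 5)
Your overall strategy is the same as the paper's: apply Proposition \ref{index} termwise over finite subcomplexes $Y \subset LX$, commute the resulting tensor factor past the inverse limit, and then verify $\check{T}$-equivariance by chasing the explicit formulas of Remarks \ref{act}, \ref{calc} and \ref{galt}. One logical repair: flatness of a module $M$ over a ring $R$ does not make $-\otimes_R M$ commute with inverse limits (take $M=\Q$ over $R=\Z$ and the tower $\Z/p^n\Z$); what actually does the work, and which you also observe, is that $^kK_{\bT\times T\times U(1)}$ is free of rank one over $K_{\bT\times T}$, generated by $(0,0,k)$, so $-\otimes_{K_{\bT\times T}} {}^kK_{\bT\times T\times U(1)}$ is an equivalence of module categories and therefore preserves all limits; the paper silently uses the same fact. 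For the equivariance step, the cancellation you sketch --- the Weyl action pulls $\C_{(0,0,k)}$ back to $\C_{(k\phi(m),kI(m),k)}$, and the extra factor $\C_{(k\phi(m),kI(m),0)}$ is absorbed by moving it across $\otimes_{K_{\bT\times T}}$ --- is exactly the mechanism in the paper's proof, which carries it out at the level of representatives $[F]$, the index bundles $\ker(p\circ F_y)$, and the conjugate projection $p^m=\theta_{m^{-1}}\circ p\circ\theta_m$. You have correctly identified this as the crux and outlined the right cancellation, but you stop short of the actual verification, which is where most of the paper's effort lies.
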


\begin{proof}
On an open set $U \subset \D^\times \times T_\C$, there is an isomorphism of $\O_{\D^\times \times T_\C}(U)$-modules
\[
\begin{array}{rcl}
^k\K_{\widetilde{LT}}(LX)(U) &:=& \varprojlim_{Y \subset LX} \, ^kK_{\bT \times T \times U(1)}(Y) \otimes_{K_{\bT \times T}} \O_{\D^\times \times T_\C}(U) \\
&\cong & \varprojlim_{Y \subset LX} \, K_{\bT \times T}(Y) \otimes_{K_{\bT \times T}} \, ^kK_{\bT \times T \times U(1)} \otimes_{K_{\bT \times T}} \O_{\D^\times \times T_\C}(U) \\
&=:& \varprojlim_{Y \subset LX} \, K_{\bT \times T}(Y) \otimes_{K_{\bT \times T}} \, ^k\O_{\D^\times \times T_\C}(U) \\
&= &  \K_{LT}(LX)(U) \otimes_{\O_{\D^\times \times T_\C}(U)} \, ^k\O_{\D^\times \times T_\C}(U)
\end{array}
\]
induced by the natural isomorphism of Proposition \ref{index}. To check $\check{T}$-equivariance, it suffices to show that the diagram
\[
\begin{tikzcd}
^kK_{\bT \times T \times U(1)}(m\cdot Y) \ar[d] \ar[r,"{\cong}"] & \, K_{\bT \times T}(m\cdot Y) \otimes_{K_{\bT \times T}} \, ^kK_{\bT \times T \times U(1)} \ar[d] \\
^kK_{\bT \times T \times U(1)}(Y) \ar[r,"{\cong}"] & \, K_{\bT \times T}(Y) \otimes_{K_{\bT \times T}} \, ^kK_{\bT \times T \times U(1)}
\end{tikzcd}
\]
commutes for a finite $\bT \times T$-CW subcomplex $Y \subset LX$. Here the horizontal arrows are the isomorphism of Proposition \ref{index}, the left vertical arrow is the isomorphism defined in \eqref{girl}, and the right vertical arrow is the group homomorphism determined by
\[
[V] \otimes \C_{(n,\lambda,k)} \longmapsto m^*[V] \otimes m^*\C_{(n,\lambda,k)},
\]
with $m^*[V]$ defined as in Remark \eqref{melbourne} and $m^*\C_{(n,\lambda,k)}$ defined as in Remark \ref{galt}. \par

Let $[F]$ be an element in $^kK_{\bT \times T \times U(1)}(m\cdot Y)$ represented by 
\[
F \in \Map_{\bT \times T \times U(1)}(m\cdot Y,\F_k).
\]
Via the upper horizontal arrow, $[F]$ is sent to
\[
[\{\ker(p \circ F_{y})\}_{ y\in m\cdot Y} \otimes \C_{-(0,0,k)}] \otimes [\C_{(0,0,k)}] - [ (m\cdot Y \times \ker(p))  \otimes \C_{-(0,0,k)}] \otimes [\C_{(0,0,k)}].
\]
The right vertical arrow sends this to
\begin{equation}\label{stove}
\begin{array}{c}
m^*[ \{\ker(p\circ F_{y})\}_{y\in m\cdot Y} \otimes \C_{-(0,0,k)}] \otimes m^*[\C_{(0,0,k)}] \\ - m^*[(m\cdot Y \times \ker(p))  \otimes \C_{-(0,0,k)}] \otimes m^*[\C_{(0,0,k)}].
\end{array}
\end{equation}
Using the formula in Remark \ref{calc}, we can rewrite \eqref{stove} as
\begin{gather*}
m^* [ \{\ker(p\circ F_{m\cdot y}\}_{y\in m\cdot Y} \otimes \C_{-(0,0,k)}] \otimes m^*[\C_{(0,0,k)}] \\ - [(Y \times \ker(p^m))  \otimes \C_{-(k \phi(m), k I(m),k)}] \otimes [\C_{(k \phi(m), k I(m),k)}]
\end{gather*}
which is equal to
\begin{equation}\label{grout}
m^* [ \{\ker(p\circ F_{m\cdot y}\}_{y\in m\cdot Y} \otimes \C_{-(0,0,k)}] \otimes m^*[\C_{(0,0,k)}] - [(Y \times \ker(p^m)) \otimes \C_{-(0,0,k)}] \otimes [\C_{(0,0,k)}].
\end{equation}

The lower horizontal map sends $[F^m]$ to
\[
[\{\ker(p^m \circ \, F^m_{y})\}_{y\in Y} \otimes \C_{-(0,0,k)}] \otimes [\C_{(0,0,k)}] - [(Y \times \ker(p^m))  \otimes \C_{-(0,0,k)}] \otimes [\C_{(0,0,k)}]
\]
which is equal to
\begin{equation}\label{sink}
\begin{array}{c}
[\{\ker(\theta_{-m} \circ p \circ F_{m\cdot y} \circ \theta_{m})\}_{y\in Y} \otimes \C_{-(k \phi(m), k I(m),k)}] \otimes [\C_{(k \phi(m), k I(m),k)}] \\ - [(Y \times \ker(p^m)) \otimes \C_{-(0,0,k)}] \otimes [\C_{(0,0,k)}].
\end{array}
\end{equation}
Again using the formula in Remark \ref{calc}, we can rewrite \eqref{sink} as
\[
m^* [\{\ker(p\circ \, F_{ y})\}_{y\in m \cdot Y} \otimes \C_{-(0,0,k)}] \otimes m^*[\C_{(0,0,k)}] - [(Y \times \ker(p^m))  \otimes \C_{-(0,0,k)}] \otimes [\C_{(0,0,k)}]
\]
which is equal to \eqref{grout}. This is what we wanted to show.
\end{proof}

\begin{theorem}\label{theo}
There is an isomorphism of $\Z/2\Z$-graded $\O_{C_T}$-modules
\[
^k\F^*_T(X) \cong \F^*_T(X) \otimes_{\O_{C_T}} \L^k
\]
natural in $X$.
\end{theorem}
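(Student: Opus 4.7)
The plan is to leverage Proposition \ref{obg}, which has already done the essential work: it provides a $\check{T}$-equivariant isomorphism $^k\K_{\widetilde{LT}}(LX) \cong \K_{LT}(LX) \otimes_{\O_{\D^\times \times T_\C}} \, ^k\O_{\D^\times \times T_\C}$, natural in $X$. Applying the pushforward $(\psi_T)_*$ and then taking $\check{T}$-invariants, the left-hand side becomes $^k\F_T^*(X)$ by definition. It remains to show that the right-hand side becomes $\F_T^*(X) \otimes_{\O_{C_T}} \L^k$; this is essentially a projection formula plus a Galois descent argument. The $\Z/2\Z$-grading is inherited from $\F_T^*(X)$ (with $\L^k$ in degree $0$), so we can work entirely with ungraded sheaves and promote the final isomorphism to a graded one.

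The central ingredient is the identification $^k\O_{\D^\times \times T_\C} \cong \psi_T^*\L^k$ as $\check{T}$-equivariant $\O_{\D^\times \times T_\C}$-modules. Since $\psi_T\colon \D^\times \times T_\C \to C_T$ is the quotient by the free, properly discontinuous $\check{T}$-action, it is a Galois cover with group $\check{T}$, and the functor $\F \mapsto (\psi_T)_*(\F)^{\check{T}}$ is an equivalence between the category of $\check{T}$-equivariant $\O_{\D^\times \times T_\C}$-modules and the category of $\O_{C_T}$-modules, with inverse $\G \mapsto \psi_T^*\G$. Taking $\check{T}$-invariants of the pushforward of $^k\O_{\D^\times\times T_\C}$ recovers $\L^k$ by definition, so descent gives the required equivariant identification.

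Granted this identification, the remainder is formal. By the projection formula, we have
\[
(\psi_T)_*\bigl(\K_{LT}(LX) \otimes_{\O_{\D^\times \times T_\C}} \psi_T^*\L^k\bigr) \cong (\psi_T)_*\K_{LT}(LX) \otimes_{\O_{C_T}} \L^k,
\]
and since the right-hand tensor factor is pulled back from $C_T$, the $\check{T}$-action is trivial on it, so taking $\check{T}$-invariants on the left corresponds to taking $\check{T}$-invariants of $(\psi_T)_*\K_{LT}(LX)$ on the right. This yields
\[
^k\F_T^*(X) \cong \bigl((\psi_T)_*\K_{LT}(LX)\bigr)^{\check{T}} \otimes_{\O_{C_T}} \L^k = \F_T^*(X) \otimes_{\O_{C_T}} \L^k,
\]
and the whole chain is natural in $X$ because each of its ingredients (Proposition \ref{obg}, the projection formula, and descent) is natural.

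The main obstacle I anticipate is making the projection-formula step rigorous at the level of the inverse-limit sheaves $\K_{LT}(LX)$, and confirming that taking $\check{T}$-invariants really does commute with tensoring by a pulled-back sheaf in this analytic (not merely coherent) setting. One can localise the issue: $\psi_T$ is a local biholomorphism, so on small open sets $U \subset C_T$ over which $\psi_T^{-1}(U)$ is a disjoint union of copies of $U$, both sides decompose explicitly and the projection formula reduces to the tautological identity $M \otimes_R R = M$. The identification $^k\O_{\D^\times\times T_\C} \cong \psi_T^*\L^k$ as $\check{T}$-equivariant sheaves can be verified directly from the descriptions of the $\check{T}$-action in Proposition \ref{formula} and the definition of $\L^k$ via the theta-function transformation law.
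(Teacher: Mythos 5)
Your proposal is correct and follows essentially the same route as the paper's proof: start from Proposition \ref{obg}, apply $(\psi_T)_*$ and take $\check{T}$-invariants, then split the tensor product across the invariants using the freeness of the $\check{T}$-action (which the paper states tersely and you unpack as projection formula plus descent along the Galois cover $\psi_T$), finishing by handling the $\Z/2\Z$-grading formally.
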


\begin{proof}
There are natural isomorphisms of $\O_{C_T}$-modules
\[
\begin{array}{rcl}
((\psi_T)_* \, ^k\K_{\widetilde{LT}}(LX))^{\check{T}} &\cong& ((\psi_T)_* (\K_{LT}(LX) \otimes_{\O_{\D^\times \times T_\C}} \, ^k\O_{\D^\times \times T_\C}))^{\check{T}} \\
&\cong &  ((\psi_T)_* \K_{LT}(LX))^{\check{T}} \otimes_{\O_{C_T}}  ((\psi_T)_* \, ^k\O_{\D^\times \times T_\C})^{\check{T}},
\end{array}
\]
where first isomorphism follows directly from Proposition \ref{obg}, and the second isomorphism holds because the $\check{T}$-action on $\D^\times \times T_\C$ is free. This yields the isomorphism of the theorem in degree zero. By Remark \ref{harold}, we can formally extend to the $\Z/2\Z$-graded isomorphism.
\end{proof}

Combining Theorem \ref{theo} with the character map of Theorem \ref{character} gives us the following corollary. Note that here $\L^k_q$ denotes the restriction of $\L^k$ to $C_{T,q}$.

\begin{corollary}\label{wheat}
There is an isomorphism cohomology theories 
\[
^k\F^*_{T,q} \cong  (\sigma_{T,\tau})_* (\G^*_{T,\tau}) \otimes_{\O_{C_{T,q}}} \L^k_q
\]
defined on the full subcategory of equivariantly formal, finite $T$-CW complexes, and taking values in $\Z/2\Z$-graded $\O_{C_{T,q}}$-modules. 
\end{corollary}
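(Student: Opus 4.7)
The plan is to obtain this corollary essentially for free by combining Theorem \ref{theo} with Corollary \ref{gluten}, after restricting everything to the fiber of $C_T \twoheadrightarrow \D^\times$ over the chosen point $q \in \D^\times$. There is no new geometric content to prove; the work consists in verifying that the two natural isomorphisms fit together and that restriction commutes with the tensor product against the Looijenga line bundle.

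First, I would restrict the natural isomorphism of Theorem \ref{theo} to $C_{T,q} \hookrightarrow C_T$. Since $\L^k$ is a line bundle, hence locally free and in particular flat over $\O_{C_T}$, restriction commutes with the tensor product, and since the relevant square of spaces (with $C_{T,q} \hookrightarrow C_T$ and $\{q\} \hookrightarrow \D^\times$) is a pullback and $\psi_T$ is the quotient by a free, properly discontinuous $\check{T}$-action that preserves the fiber, the formation of $\check{T}$-invariants of the pushforward also commutes with restriction. Thus we obtain a natural isomorphism
\[
{}^k\F^*_{T,q}(X) \cong \F^*_{T,q}(X) \otimes_{\O_{C_{T,q}}} \L^k_q
\]
of $\Z/2\Z$-graded $\O_{C_{T,q}}$-modules, valid on all finite $T$-CW complexes.

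Second, on the full subcategory of equivariantly formal, finite $T$-CW complexes, I would apply Corollary \ref{gluten} to replace $\F^*_{T,q}(X)$ with $(\sigma_{T,\tau})_*\, \G^*_{T,\tau}(X)$. Tensoring this natural isomorphism with $\L^k_q$ over $\O_{C_{T,q}}$ yields the desired isomorphism
\[
{}^k\F^*_{T,q}(X) \cong (\sigma_{T,\tau})_*\, \G^*_{T,\tau}(X) \otimes_{\O_{C_{T,q}}} \L^k_q.
\]
Naturality in $X$ is inherited from the naturality statements of Theorem \ref{theo} and Corollary \ref{gluten}.

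Finally, I would check compatibility with suspension isomorphisms to conclude that this is a morphism of cohomology theories. This is immediate since the suspension isomorphism on ${}^k\F^*_{T,q}$ is inherited, via the construction in Proposition \ref{index} and the formalism of Lemma \ref{harold}, from the suspension isomorphism on $\F^*_{T,q}$, while tensoring with the fixed line bundle $\L^k_q$ is an exact operation that preserves suspension. Thus the main potential obstacle — that the tensor factor $\L^k_q$ interacts well both with the restriction to the fiber and with the pushforward along $\sigma_{T,\tau}$ — reduces to the elementary fact that $\L^k_q$ is locally free of rank one, and there is no real difficulty to overcome.
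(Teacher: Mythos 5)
Your proposal is correct and follows essentially the same route as the paper, which proves the corollary in one line by combining Theorem \ref{theo} with the character map of Theorem \ref{character} (upgraded to a cohomology-theory isomorphism by Corollary \ref{gluten}). The only thing I would note is that the appeal to flatness of $\L^k$ is superfluous: restriction to the fiber $C_{T,q} \hookrightarrow C_T$ is a pullback of $\O$-modules, and pullback commutes with tensor product unconditionally, while the genuinely needed observations — that the $\check{T}$-action preserves fibers of $\D^\times \times T_\C \to \D^\times$, and that tensoring with a line bundle is exact and so preserves the suspension isomorphism and the long exact sequences — are exactly the ones you supply.
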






\end{document}